\numberwithin{equation}{section}
\newtheorem*{theorem*}{Theorem}
\newtheorem{lemma}{Lemma}[section]
\newtheorem{proposition}[lemma]{Proposition}
\newtheorem{remark}[lemma]{Remark}
\newtheorem{example}[lemma]{Example}
\newtheorem{theorem}[lemma]{Theorem}
\newtheorem{definition}[lemma]{Definition}
\newtheorem{corollary}[lemma]{Corollary}
\newtheorem*{question*}{Question}
\newtheorem*{assumption*}{Assumption}
\newtheorem*{axiom*}{Axiom}
\newtheorem*{theorem*1}{Theorem (\ref{theta1})}
\newtheorem*{theorem*2}{Theorem (\ref{theta2})}
\newtheorem*{theorem*3}{Theorem (\ref{theta3})}
\newtheorem*{theorem*4}{Theorem (\ref{theta4})}
\newtheorem*{proposition*5}{Proposition (\ref{theta5})}
\newtheorem*{proposition*6}{Proposition (\ref{theta6})}
\sloppy \theoremstyle{plain}
\renewcommand{\Im}{\operatorname{Im}}
\newcommand{\Ind}{\operatorname{Ind}}
\newcommand{\Ha}{\operatorname{H}}
\newcommand{\Id}{\operatorname{Id}}
\newcommand{\C}{\mathbb C}
\newcommand{\F}{\mathbb F}
\newcommand{\R}{\mathbb R}
\newcommand{\Z}{\mathbb Z}
\newcommand{\GL}{\operatorname{GL}}
\newcommand{\GSp}{\operatorname{GSp}}
\newcommand{\Mp}{\operatorname{Mp}}
\newcommand{\Mm}{\operatorname{M}_m}
\newcommand{\Ma}{\operatorname{M}_{2m}}
\newcommand{\SL}{\operatorname{SL}}
\newcommand{\Oa}{\operatorname{O}}
\newcommand{\Sp}{\operatorname{Sp}}
\newcommand{\U}{\operatorname{U}}
\newcommand{\Span}{\operatorname{Span}}
\newcommand{\id}{\operatorname{Id}}
\newcommand{\supp}{\operatorname{supp}}
\newcommand{\sgn}{\operatorname{sgn}}
\begin{document}

\title{Siegel modular  forms associated to Weil representations}
\author{Chun-Hui Wang}
\address{School of Mathematics and Statistics\\Wuhan University \\Wuhan, 430072,
P.R. CHINA}
\keywords{$2$-cocycle, Metaplectic group, Weil representation, Siegel modular form}
\subjclass[2010]{11F27, 20C25} 
\email{cwang2014@whu.edu.cn}
\begin{abstract} 
We study some explicit  Siegel modular forms from Weil representations.   For  the classical theta group $\Gamma_m(1,2)$ with  $m > 1$, there are  some eighth roots of unity associated with  these modular forms, as noted in  the works of Andrianov,  Friedberg,  Maloletkin, Stark, Styer, Richter, and others. We apply $2$-cocycles    introduced by Rao, Kudla, Perrin,  Lion-Vergne, Satake-Takase to investigate  these unities. We extend our study  to the full Siegel group   $\Sp_{2m}(\Z)$ and   obtain two matrix-valued Siegel modular forms  from Weil representations; these  forms arise from  a finite-dimensional representation $\Ind_{\widetilde{\Gamma}'_m(1,2)}^{\widetilde{\Sp}'_{2m}(\Z)} (1_{\Gamma_m(1,2)} \cdot \Id_{\mu_8})^{-1}$, which is related to Igusa's quotient group $\tfrac{\Sp_{2m}(\Z)}{\Gamma_m(4,8)}$.
\end{abstract}
\maketitle

\setcounter{secnumdepth}{3}
\tableofcontents{}

\section{Introduction}\label{In}
 \subsection{Notations and conventions}
 Let $\R$ and $\C$ denote  the usual real numbers and complex numbers.  Let $(W, \langle, \rangle)$ be a symplectic vector space  of  dimension $2m$   over  $\R$.  Let $\Ha(W)=W\oplus \R$ denote the usual Heisenbeg group, defined  by
$$(w, t)\cdot(w',t')=(w+w', t+t'+\tfrac{\langle w,w'\rangle}{2}),
\quad\quad w, w'\in W, t,t'\in \R.$$
 Let   $\Sp(W)$ denote the corresponding symplectic group.  Let $\{e_1, \cdots, e_m; e_1^{\ast}, \cdots, e_m^{\ast}\}$ be a symplectic basis of $W$. Let $X=\Span_\R\{ e_1, \cdots, e_m\}$, $X^{\ast}=\Span_\R\{e_1^{\ast}, \cdots, e_m^{\ast}\}$.

 In $\Sp(W)$, let  $P_{X^{\ast}}(W)=\{ g\in \Sp(W) \mid X^{\ast}g=X^{\ast}\}=\{ \begin{pmatrix} a& b\\ 0& (a^{\ast})^{-1}\end{pmatrix}\in \Sp(W)\}$, $N_{X^{\ast}}(W)=\{ \begin{pmatrix} 1_m& b\\ 0& 1_m\end{pmatrix} \in \Sp(W)\}$, $M_{X^{\ast}}(W)=\{ \begin{pmatrix} a& 0\\ 0& (a^{\ast})^{-1}\end{pmatrix}\in \Sp(W)\}$, $N^{-}_{X^{\ast}}(W)=\{ \begin{pmatrix} 1_m& 0\\ c& 1_m\end{pmatrix} \in \Sp(W)\}$. For a subset $S\subseteq \{1, \cdots, m\}$, let us define an element   $\omega_S$ of  $ \Sp(W)$ as follows: $ (e_i)\omega_S=\left\{\begin{array}{lr}
-e_i^{\ast}& i\in S,\\
 e_i & i\notin S,
 \end{array}\right.$ and $ (e_i^{\ast})\omega_S=\left\{\begin{array}{lr}
e_i^{\ast}& i\notin S,\\
 e_i & i\in S.
 \end{array}\right.$  If  $S=\{1, \cdots, m\}$, we   write  $\omega$ for simplicity. For elements within  $\Sp(W)$, we denote by $u(b)=\begin{pmatrix}
  1_m&b\\
  0 & 1_m
\end{pmatrix}$,  $u_-(c)=\begin{pmatrix}
  1_m&0\\
  c & 1_m
\end{pmatrix}$, $h(a)=\begin{pmatrix}
  a&0\\
  0 & (a^{\ast})^{-1}
\end{pmatrix} $.  Let $S_m$ denote the permutation group on $m$ elements. For $s\in S_m$, let $\omega_s$ be an element of $\Sp(W)$ defined as $(e_i)\omega_s=e_{s(i)}$ and $(e_j^{\ast}) \omega_s=e_{s(j)}^{\ast}$. Let  $\mathfrak{W}_{P_{X^{\ast}}}=\{ \omega_{S_i} \mid S_0=\emptyset \textrm{  or } S_i= \{1, \cdots,i\} \}$ 
and   $\mathfrak{W}=\{ \omega_s\omega_S \mid S \subseteq   \{1, \cdots, m\}, s\in S_m\}$.  Given the basis $\{ e_1, \cdots, e_m; e_1^{\ast}, \cdots, e_m^{\ast}\}$ of $W$, we identity $\Sp(W)$ with $\Sp_{2m}(\R)$. We use the same notations for elements of these two groups and replace 
$W$ by $\R$ for the corresponding subgroups.   Let $N_0(\R)=\{ h(a)\in \Sp_{2m}(\R)\mid  a =\begin{pmatrix} 1& * & *\\ & \ddots & * \\ & & 1\end{pmatrix} \in \GL_m(\R)\}$.  Let $N^-_0(\R)= \omega^{-1} N_0(\R)\omega $,  $N(\R)=N_{X^{\ast}}(\R)N_0(\R)$ and $N^-(\R)=N_{X^{\ast}}^-(\R)N^-_0(\R)$.

For the elements in $\Mm(\R)$ or $\Ma(\R)$, we denote by  $\epsilon_{ij}(t)$  the $m\times m$-matrix where the entry in the $i$-th row and $j$-th column set to $t$ and all other entries set to $0$. We then define the matrices $u_{ii}(t), u^-_{ii}(t), u_{ij}(t), u_{ij}^-(t)$, $v_{ij}(t)$ ($i\neq j$) as follows: $u_{ii}(t)= \begin{pmatrix} 1_m& \epsilon_{ii}(t) \\0 & 1_m\end{pmatrix}$, $u^-_{ii}(t)= \begin{pmatrix} 1_m& 0 \\ \epsilon_{ii}(-t) & 1_m\end{pmatrix}$,  $u_{ij}(t)= \begin{pmatrix} 1_m& \epsilon_{ij}(t) +\epsilon_{ji}(t)\\0 & 1_m\end{pmatrix} $,and  $u_{ij}^{-}(t)= \begin{pmatrix} 1_m& 0\\\epsilon_{ij}(-t) +\epsilon_{ji}(-t) & 1_m\end{pmatrix} $, $v_{ij}(t)=\begin{pmatrix} 1_m +\epsilon_{ij}(t) & 0\\0 &  1_m +\epsilon_{ji}(-t)\end{pmatrix}$.  If $t=1$, we simply write them as  $u_{ii}$, $u_{ii}^-$, $u_{ij}$, $u_{ij}^-$, $v_{ij}$.

If $\psi$ is a non-trivial character of $\R$ and $a\in \R$,   we will  write   $\psi^a$ for the character: $t\longrightarrow \psi(at)$.  Let $i=\sqrt{-1}$. We will let  $\psi_0$ denote the fixed character of $\R$ defined as: $ t \longmapsto e^{2\pi it}$, for $t\in \R$. Let $\mu_n=\langle e^{\tfrac{2\pi i}{n}}\rangle$,  $ e^{\tfrac{2\pi i}{n}} \in \C^{\times}$. Let $T=\{ e^{i\theta}\mid \theta \in \R\}$. For a finite set  $G$, let $|G|$ denote the cardinality of $G$. On $\R^m$, let $du$ denote the Lebsegue measure. On $\C^m$, let  $dz=du+idv$, and $d(z)=dudv$.
\subsection{Weil representations}
Let $\psi$ be a non-trivial continuous unitary character of $\R$.  According to the Stone-von Neumann's theorem, there exists  only one unitary irreducible complex representation of $\Ha(W)$ with central character $\psi$, known as the Heisenberg representation. Details are in Section  \ref{heisenbergrepm}. 

 According to Weil's work, the Heisenberg representation  can be extended to  a projective representation of $\Sp(W)$, and then  to an actual representation of a $\C^{\times}$-covering group over $\Sp(W)$. This group is called the Metaplectic group, and the representation is called the Weil representation.   In the real field case,  this special  central cover can descend  to an  $8$-degree or  $2$-degree cover over $\Sp(W)$. There are three common models for realizing the Weil representation: the Sch\"odinger model, the Lattice model and the Fock model.  Details are in Section \ref{Weilmodel}.
\subsection{ Siegel modular forms}
Recall that the complex domain 
$$\mathbb{H}_m=\{ x+iy \in  \Mm(\C) \mid x^T=x, y^T=y, y>0\}$$
is called the Siegel upper half space.  There exists an action of $\Sp_{2m}(\R)$ on $\mathbb{H}_m$, given by 
$$z \longrightarrow g (z)= (a z+b)(cz+d)^{-1},$$
for $g=\begin{pmatrix}a & b\\ c& d\end{pmatrix}\in \Sp_{2m}(\R)$, $z\in \mathbb{H}_m$.  Let $\U_m(\C)=\{ g\in \Sp_{2m}(\R)\mid g(i1_m)=i1_m\}=\{\begin{pmatrix}a & b\\ -b& a\end{pmatrix}\in \Sp_{2m}(\R)\}$. 
It is clear that $\mathbb{H}_m \simeq \Sp_{2m}(\R)/\U_m(\C)$. Throughout the text, we will fix an element $z_0=i1_m\in \mathbb{H}_m$.

 Assuming   $m>1$,  following \cite{Fr2,Ge,Kl,Sc},   let $\rho: \GL(m, \C) \to  \GL(V_{\rho})$ be  a rational representation on a finite dimensional complex vector space $V_{\rho}$. Let $\Gamma$ be a discrete group of $\Sp_{2m}(\R)$. Let $\lambda(-)$ be a multiplier system  of  weight $1/2$,  with respect to $\Gamma$. Let $\gamma$ be  a representations of $\Gamma$. Let  $M_{\rho,\gamma}$ be a vector space over $\C$ that  is   a left $\rho$-module and a right $\gamma$-module, comprised of   matrices.
\begin{definition}
 A matrix-valued function $M: \mathbb{H}_m \to M_{\rho,\gamma}$ is called a $M_{\rho,\gamma}$-valued Siegel modular form with a multiplier
system $\lambda(-)$ of  weight $1/2$, with respect to $\Gamma$ if: 
\begin{itemize}
\item[(1)] $M$ is  holomorphic,
\item[(2)]  $M(gz)=\lambda(g)\sqrt{\det(cz+d)} \rho(cz+d) M(z) \gamma(g^{-1})$, for any $g=\begin{pmatrix} a & b\\ c& d\end{pmatrix}\in \Gamma$.
\end{itemize}
\end{definition}
 This definition is mainly for number-theoretic research and giving clear examples. In Langlands programs, it relates to automorphic forms and automorphic representations. For deeper understanding, see Borel and Jacquet's paper \cite{BoJa}, or  Takase's \cite{Ta0}. As shown in \cite{BeCaRaRo}, some central coverings over discrete groups don't split. In such cases, the definition can be extended to a  covering group  $\widetilde{\Sp}_{2m}(\R)$  by replacing $ \U_m(\C)$ and $\Gamma$  with their  covering subgroups  $\widetilde{\U}_{m}(C)$ and $\widetilde{\Gamma}$.
\subsection{The main results}  
\subsubsection{} For $z\in \mathbb{H}_m$, the theta function and the theta vector function associated with the Weil representation are defined as:
\begin{align*}
\theta_{1/2}(z) &= \sum_{n\in \Z^m} e^{i  \pi n z n^T}, \\
\theta_{3/2}(z) &= \sum_{n\in \Z^m} n^T e^{i  \pi n z n^T}.
\end{align*}
Let  $\Gamma_m(1, 2)=\{ g=\begin{pmatrix} a & b \\ c& d\end{pmatrix} \in \Sp_{2m}(\Z) \mid \textrm{ diagronal } a c^T \textrm{ even and } \textrm{ diagronal } b d^T \textrm{ even}\}$. For $g=\begin{pmatrix} a & b \\ c& d\end{pmatrix}\in \Sp_{2m}(\R)$, $z\in \mathbb{H}_m$, $r\in \Gamma_m(1,2)$,  we define:
\begin{itemize}
\item[(1)] $J(g, z)= cz+d$.
\item[(2)] $J_{1/2}(g, z)=   \epsilon(g; z,z_0) \cdot |\det J(g, z)|^{1/2}$.
\item[(3)] $\sqrt{\det(cz+d)}= J_{1/2}(g, z)m_{X^{\ast}}(g)^{-1}$.
\item[(4)] $\lambda(r)= m_{X^{\ast}}(r)\widetilde{\beta}^{-1}(r)$.
\end{itemize}
\begin{theorem}[Thm.\ref{mainthm1}]\label{thetagamm12}
For $r=\begin{pmatrix} a & b \\ c& d\end{pmatrix}\in \Gamma_m(1, 2)$, we have:
\begin{itemize}
\item[(1)] $\theta_{1/2}(r z)=\lambda(r) \sqrt{\det(cz+d)}\theta_{1/2}( z)$.
\item[(2)] $\theta_{3/2}(r z)=\lambda(r) \sqrt{\det(cz+d)}(cz+d)\theta_{3/2}( z)$.
\end{itemize}
\end{theorem}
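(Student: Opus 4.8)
The plan is to realize both series as pairings of the standard theta distribution against explicit Gaussians inside a model of the Weil representation, and then to transport the transformation law from the $\Gamma_m(1,2)$-invariance of that distribution while tracking the $2$-cocycle. I would work in the Schr\"odinger model of the Weil representation $\omega_{\psi_0}$ on $\mathcal S(\R^m)=\mathcal S(X)$ attached to the polarization $W=X\oplus X^{\ast}$, with $\widetilde{g}\mapsto\omega_{\psi_0}(\widetilde{g})$ the genuine action of the metaplectic cover determined by the chosen section and cocycle. For $z=x+iy\in\mathbb H_m$ set $\varphi_z(v)=e^{i\pi vzv^{T}}$ and $\Phi_z(v)=v^{T}e^{i\pi vzv^{T}}$ (the latter $\C^{m}$-valued), and let $\Theta=\sum_{\xi\in\Z^{m}}\delta_\xi$, so that $\theta_{1/2}(z)=\langle\Theta,\varphi_z\rangle$ and $\theta_{3/2}(z)=\langle\Theta,\Phi_z\rangle$. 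Choosing $g_z=u(x)h(y^{1/2})\in N_{X^{\ast}}(\R)M_{X^{\ast}}(\R)$ we have $g_z(z_0)=z$, and the explicit Schr\"odinger formulas for $\omega_{\psi_0}$ on $N_{X^{\ast}}$ and on the Levi $M_{X^{\ast}}$ give $\omega_{\psi_0}(\widetilde{g}_z)\varphi_{z_0}=s(z)\varphi_z$ and $\omega_{\psi_0}(\widetilde{g}_z)\Phi_{z_0}=s(z)\,y^{1/2}\Phi_z$, where $\varphi_{z_0}(v)=e^{-\pi vv^{T}}$ is the standard Gaussian and $s(z)$ is a power of $\det y$ times an eighth root of unity; comparison with items~(1)--(4) shows $s(z)$ is exactly the combination of $m_{X^{\ast}}(g_z)$, $\epsilon(g_z;z,z_0)$ and $|\det J(g_z,z_0)|^{1/2}$ that the statement needs.

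The central input is that for $r=\begin{pmatrix}a&b\\ c&d\end{pmatrix}\in\Gamma_m(1,2)$ the distribution $\Theta$ spans a line stable under the contragredient action of the appropriate lift $\widetilde{r}$, with $\widetilde{r}$ acting by a scalar $\nu(r)$: this is the representation-theoretic form of the classical theta transformation formula, proved by Poisson summation, and it is here that ``diagonal $ac^{T}$ even, diagonal $bd^{T}$ even'' enters (these conditions trivialize the quadratic Gauss-sum factors), while the normalization of $\widetilde{r}$ against the fixed section manufactures $\widetilde{\beta}(r)$, so that $\nu$ and $\widetilde{\beta}$ differ only by the bookkeeping recorded in $\lambda(r)=m_{X^{\ast}}(r)\widetilde{\beta}^{-1}(r)$. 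Then I would write $g_{rz}=r\,g_z\,k$ with $k\in\U_m(\C)=\Stab_{\Sp}(z_0)$, lift it through the cocycle as $\widetilde{g}_{rz}=\mu(r,z)\,\widetilde{r}\,\widetilde{g}_z\,\widetilde{k}$, observe that $\omega_{\psi_0}(\widetilde{k})$ acts on $\varphi_{z_0}$ by a scalar $\chi(\widetilde{k})$ and on $\Phi_{z_0}$ by $\chi(\widetilde{k})$ times the natural action of $k$ on $\C^{m}$, and pair with $\Theta$, moving $\omega_{\psi_0}(\widetilde{r})$ onto $\Theta$ by duality. One obtains
\[
\theta_{1/2}(rz)=s(rz)^{-1}\,\mu(r,z)\,\chi(\widetilde{k})\,\nu(r)^{-1}\,s(z)\;\theta_{1/2}(z)=:C(r,z)\,\theta_{1/2}(z),
\]
and likewise $\theta_{3/2}(rz)=C(r,z)\,M(r,z)\,\theta_{3/2}(z)$, where $M(r,z)$ is the matrix produced by the $\GL_m$-action on the $\C^{m}$-valued part of $\Phi$.

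It then remains to establish the two identities $C(r,z)=\lambda(r)\sqrt{\det(cz+d)}$ and $M(r,z)=cz+d$, and this is the main obstacle, being entirely a matter of reconciling normalizations. For $C(r,z)$ I would evaluate the $2$-cocycle on the pairs $(r,g_z)$, $(rg_z,k)$ and $(g_z,k)$ occurring in the lift, identify the Weil-index and Leray-invariant contributions with the factor $\epsilon(g;z,z_0)$ via the defining relation $\sqrt{\det(cz+d)}=J_{1/2}(g,z)m_{X^{\ast}}(g)^{-1}$, extract the factor $|\det(cz+d)|^{1/2}$ from $s(rz)^{-1}s(z)$ using $\det\Im(rz)=\det\Im(z)\,|\det(cz+d)|^{-2}$, and propagate the computation from $z_0$ to arbitrary $z$ by the automorphy identity $J_{1/2}(rg,z_0)=J_{1/2}(r,gz_0)J_{1/2}(g,z_0)$; the eighth-root factors $\mu(r,z)$, $\chi(\widetilde{k})$, $\nu(r)^{-1}$ should then collapse to $\widetilde{\beta}(r)^{-1}\epsilon(r;z,z_0)$ once the explicit comparisons among the Rao, Kudla, Perrin, Lion--Vergne and Satake--Takase cocycles recalled earlier are invoked, reducing everything to rank-one parabolic pieces where the constants are classical Gauss sums and Weil indices of $\psi_0$. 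For $M(r,z)$ one checks $\Im(rz)^{-1/2}\kappa(k)^{T}\Im(z)^{1/2}=cz+d$, where $\kappa(k)$ is the $\GL_m(\C)$-matrix attached to $k\in\U_m(\C)$, so that the $\C^{m}$-twist is exactly multiplication by $cz+d$; part~(2) of the statement is then immediate from part~(1).
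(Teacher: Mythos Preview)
Your approach is essentially the same as the paper's, but the paper streamlines the bookkeeping in two places that let the constants fall out cleanly rather than requiring the cocycle chase you anticipate.

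First, instead of working with the theta distribution $\Theta$ in the Schr\"odinger model and invoking Poisson summation, the paper passes to the \emph{lattice model} via the explicit intertwiner $\theta_{L,X^{\ast}}$; there $\langle\Theta,f\rangle$ is simply evaluation at $0\in W$, and the paper computes by hand (Cases~1--4) that $\pi_{L,\psi}(r)f(w)=\epsilon_r f(wr)$ for $r\in\Gamma_m(1,2)$, so evaluation at $0$ is preserved up to the scalar $\epsilon_r$. The identification $\epsilon_r=\widetilde\beta(r)^{-1}$ is then established separately via the Lion--Vergne intertwiners and the symplectic Gauss sum, which is where your $\nu(r)$ gets its name. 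Your ``Poisson summation plus diagonal-even conditions'' is the same content, but the lattice-model packaging makes the scalar explicit rather than leaving it to be reconciled later.

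Second, and more to the point of your final paragraph: the paper never writes $g_{rz}=rg_z k$ and tracks three cocycle values. Instead it first shows (using the Fock model for the $\U_m(\C)$-action, exactly the fact you cite about $\varphi_{z_0}$ and $\Phi_{z_0}$) that $\theta_{1/2}(g)=\theta_{1/2}(p_g)$ depends only on $z_g$, and then computes $\theta_{1/2}(rp_g)$ directly. Because $p_g\in P_{X^{\ast}}$ and the Rao cocycle satisfies $\widetilde c_{X^{\ast}}(r,p_g)=1$, the lift $\pi_{X^{\ast},\psi}(rp_g)$ factors as $\pi_{X^{\ast},\psi}(r)\pi_{X^{\ast},\psi}(p_g)$ with no correction, and $J_{1/2}(rp_g,z_0)=J_{1/2}(r,z_g)J_{1/2}(p_g,z_0)$ likewise. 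The whole collapse you describe---$\mu(r,z)$, $\chi(\widetilde k)$, the comparison of five cocycles---becomes a single line: $\theta_{1/2}(rp_g)=J_{1/2}(r,z_g)\epsilon_r\,\theta_{1/2}(p_g)$, after which $J_{1/2}(r,z_g)=\sqrt{\det(cz_g+d)}\,m_{X^{\ast}}(r)$ and $\epsilon_r=\widetilde\beta(r)^{-1}$ give $\lambda(r)\sqrt{\det(cz_g+d)}$ immediately. Your route works, but the paper's choice of $p_g$ over $g_z$ as the section $\mathbb H_m\to\Sp_{2m}(\R)$ is what makes the ``main obstacle'' disappear.
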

It is well-known that theta functions have a long and rich history. For examples, one can see \cite{Be}, \cite{Fr1}, \cite{Fr2}, \cite{Ig}, \cite{Kl}, \cite{Mu}, \cite{Na}, \cite{Ri1,Ri2}, \cite{Sc}, \cite{Ta2}, \cite{Vi}, \cite{Ya1}.  Here, we approach them in a more  personal way, using the theory of Weil representations.  The  Weil representation $\pi_{\psi_0}$ has two irreducible components, and the above theta functions correspond to each component in a one-to-one manner. Here, we derive Theorem \ref{thetagamm12}(1) using  a single irreducible Weil representation of the Metaplectic group, without the assistance of  Jacobi forms and   Jacobi groups. To obtain the above theorem,  we mainly  examined and compared three models associated with the Weil representation: the Schr\"odinger model, the Lattice model, and the Fock model. The methodology  closely follows that of the  case $m=1$, as  done in  Lion-Vergne's book \cite{LiVe} or \cite{Wa}.

In the above theorem, there are some arithmetic constants $\lambda(r)$. These constants have been  extensively studied  by numerous researchers, including Andrianov-Maloletkin \cite{AnMa},  Friedberg \cite{Fr},  Stark \cite{St}, Styer \cite{Sty2}, Richter \cite{Ri1, Ri2}, and others. By observation,  it is evident that $\lambda(r)$ is contingent upon the selection of  $\sqrt{\det(cz+d)}$. In our case, $\epsilon(g; z,z_0) $  is derived from the works of   Satake \cite{Sa1} and  Takase \cite{Ta1}.  $m_{X^{\ast}}(r)$ is a different function  between the $8$-degree cocycle and $2$-degree cocycle associated to the Metaplectic group. This function is attributed to the contributions of Rao  \cite{Ra}, Kudla \cite{Ku}, Perrin \cite{Pe},  Lion-Vergne \cite{LiVe}. The function $\widetilde{\beta}(r)$  is a trivialization  map for the $8$-degree cocycle over $\Gamma_m(1,2)$. It appears in   Lion-Vergne's book \cite{LiVe} and it can be calculated by using   the symplectic Guass sum as introduced by Styer in \cite{Sty1} and \cite{Sty2}.
\subsubsection{} We  further extend the above result to the integral symplectic group $\Sp_{2m}(\Z)$ in some sense.  Let  $\widetilde{c}_{X^{\ast}}$ and $\overline{c}_{X^{\ast}}$ represent   the  Perrin-Rao cocycles over the symplectic group $\Sp(W)$ or $\Sp_{2m}(\R)$, corresponding to degrees 8 and 2, respectively. The associated central extension groups corresponding to $\widetilde{c}_{X^{\ast}}$ and $\overline{c}_{X^{\ast}}$ are denoted by $\widetilde{\Sp}_{2m}(\R)$ and $\overline{\Sp}_{2m}(\R)$, respectively.  The Metaplectic group associated with Satake-Takase's  $2$-cocycle $\beta'_{z_0}$ of center $T$ is represented by  $\Mp^{z_0}_{2m}(\R)$. Let   $\widetilde{\Sp}^{z_0}_{2m}(\R)$ be  the  subgroup of $\Mp^{z_0}_{2m}(\R)$ defined in Section \ref{autoI}.

For our purpose, we explicitly determine the coset $\Gamma_m(1,2)\setminus \Sp_{2m}(\Z)$  by following  R. H. Dye's two papers \cite{Dy1, Dy2} and referring to the works of  D. S. Kim, I.-S. Lee, and Y. H. Park\cite{KiLe, KiPa,Ki}.

 Following  Section \ref{setminus},  let $\iota=(\iota_1, \cdots, \iota_m): \SL_2(\R) \times  \cdots \times \SL_2(\R) \longrightarrow \Sp_{2m}(\R)$. Define $\mathcal{M}'_{2}=\{ \begin{pmatrix} 1& 0\\ 0& 1\end{pmatrix},\begin{pmatrix} 1& 1\\ 0& 1\end{pmatrix},\begin{pmatrix} 1& 0\\-1& 1\end{pmatrix}\}\subseteq \SL_2(\R)$ and $\mathcal{N}'_{2}=\{ \begin{pmatrix} 0& 1\\-1&0\end{pmatrix}\}\subseteq \SL_2(\R)$.   Set  $\iota_i(\begin{pmatrix} 1& 0\\ 0& 1\end{pmatrix})=1_{2m}$, $u_{i}= \iota_i(\begin{pmatrix} 1& 1\\ 0& 1\end{pmatrix})$, $u^-_i=\iota_i(\begin{pmatrix} 1& 0\\-1& 1\end{pmatrix})$, $\omega_{i}= \iota_i(\begin{pmatrix} 0& 1\\-1&0\end{pmatrix})$. Let $V=\sum_{i=1}^m \F_2 e_i + \F_2 e_i^{\ast} $ and  $Q_0(v)=\sum_{i=1}^m x_ix_i^{\ast}$ for $v=\sum_{i=1}^m (x_i e_i+ x^{\ast}_i e_i^{\ast})\in V$. Each $q\in Q_0^{-1}(0)$ corresponds to a matrix $M'_q$.   Let $\mathcal{M}'_{2m}$ be the set of all such $M'_q$.
\begin{lemma}[Lem.\ref{represnetcoset}]
The coset $\Gamma_{m}(1,2) \setminus \Sp_{2m}(\Z)$ can be represented by $\mathcal{M}'_{2m}$.
\end{lemma}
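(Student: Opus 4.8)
The plan is to pass to reduction modulo $2$, translate the parity conditions defining $\Gamma_m(1,2)$ into the assertion that the reduction fixes the quadratic form $Q_0$, and then parametrize the resulting finite coset space by $Q_0^{-1}(0)$, realizing each coset by a symplectic transvection. First, the reduction homomorphism $\pi\colon\Sp_{2m}(\Z)\to\Sp_{2m}(\F_2)$ is surjective, and its kernel, the principal congruence subgroup $\Gamma_m(2)$, lies in $\Gamma_m(1,2)$ (if $a\equiv d\equiv 1_m$ and $b\equiv c\equiv 0\pmod 2$ then $ac^T\equiv bd^T\equiv 0\pmod 2$, so their diagonals are even). Hence $\Gamma_m(1,2)\setminus\Sp_{2m}(\Z)\cong\overline{\Gamma}\setminus\Sp_{2m}(\F_2)$ for $\overline{\Gamma}=\pi(\Gamma_m(1,2))$, and it suffices to represent this finite coset space and then lift the representatives.

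Next I would set up the orthogonal picture. Let $\langle,\rangle$ also denote the symplectic form on $V$ reduced modulo $2$; it is the polarization of $Q_0$. Let $\Sp_{2m}(\F_2)$ act on the set of quadratic forms with polarization $\langle,\rangle$ by $(Q\cdot g)(v)=Q(vg)$. The key claim --- the one step needing a genuine, if classical (going back to Igusa), computation --- is that for $g=\begin{pmatrix}a&b\\c&d\end{pmatrix}\in\Sp_{2m}(\Z)$ one has $g\in\Gamma_m(1,2)$ if and only if $\pi(g)$ fixes $Q_0$; this is checked by evaluating $Q_0\bigl(v\,\pi(g)\bigr)$ on the standard basis vectors and using the symplectic relations $ab^T=ba^T$, $cd^T=dc^T$, $ad^T-bc^T=1_m$ to see that the discrepancy from $Q_0(v)$ is exactly governed by the diagonals of $ac^T$ and $bd^T$ modulo $2$. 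It follows that $\overline{\Gamma}=\Stab_{\Sp_{2m}(\F_2)}(Q_0)=\Oa(Q_0)=:\Oa^+_{2m}(\F_2)$, that $\Gamma_m(1,2)=\pi^{-1}(\Oa^+_{2m}(\F_2))$, and, by orbit-stabilizer, that $\overline{\Gamma}\setminus\Sp_{2m}(\F_2)$ is equivariantly identified with the $\Sp_{2m}(\F_2)$-orbit of $Q_0$.

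I would then parametrize this orbit by $Q_0^{-1}(0)$ and produce the matrices. Every quadratic form with polarization $\langle,\rangle$ is uniquely of the form $Q_0+\langle q,\cdot\rangle$ for some $q\in V$, and from $\mathrm{Arf}\bigl(Q_0+\langle q,\cdot\rangle\bigr)=\mathrm{Arf}(Q_0)+Q_0(q)$ together with the classical transitivity of $\Sp_{2m}(\F_2)$ on forms of fixed Arf invariant (Witt/Dieudonn\'e; compare the cited papers of Dye and of Kim--Lee--Park), the orbit of $Q_0$ is precisely $\{\,Q_0+\langle q,\cdot\rangle : q\in Q_0^{-1}(0)\,\}$. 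For $q\in Q_0^{-1}(0)$ the symplectic transvection $t_q(v)=v+\langle q,v\rangle q$ lies in $\Sp_{2m}(\F_2)$ and, by the polarization identity together with $Q_0(q)=0$, satisfies $Q_0\cdot t_q=Q_0+\langle q,\cdot\rangle$ (and $t_0=\mathrm{id}$). Lifting $q$ to a $\{0,1\}$-vector $\tilde q\in\Z^{2m}$, the attached representative $M'_q\colon v\mapsto v+\langle\tilde q,v\rangle\tilde q$ --- which for $m=1$ recovers the three matrices of $\mathcal{M}'_2$ --- is symplectic (the alternating form gives $\langle\tilde q,\tilde q\rangle=0$), lies in $\Sp_{2m}(\Z)$, and reduces to $t_q$. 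Finally, for $q,q'\in Q_0^{-1}(0)$ one has $M'_q(M'_{q'})^{-1}\in\Gamma_m(1,2)$ iff $\pi\bigl(M'_q(M'_{q'})^{-1}\bigr)$ fixes $Q_0$ iff $\langle q,\cdot\rangle=\langle q',\cdot\rangle$ iff $q=q'$ (nondegeneracy), so the $M'_q$ lie in pairwise distinct cosets; since $|\mathcal{M}'_{2m}|=|Q_0^{-1}(0)|=[\Sp_{2m}(\Z):\Gamma_m(1,2)]$, they exhaust $\Gamma_m(1,2)\setminus\Sp_{2m}(\Z)$.

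The main obstacle is the translation in the second paragraph: although it is essentially Igusa's classical dictionary between theta characteristics and quadratic forms over $\F_2$, carrying it out cleanly requires careful bookkeeping of the parity of the diagonal entries of $ac^T$ and $bd^T$ under the symplectic relations. Everything else --- surjectivity of reduction modulo $2$, orbit-stabilizer, the Arf-invariant count, and the verification that integral transvections are symplectic --- is routine or directly citable.
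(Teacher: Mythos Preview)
Your proposal is correct and follows essentially the same route as the paper: reduce modulo $2$, identify $\Gamma_m(1,2)$ with the preimage of $\Oa^+_{2m}(\F_2)=\Stab(Q_0)$, parametrize $\Oa^+_{2m}(\F_2)\setminus\Sp_{2m}(\F_2)$ by the orbit of $Q_0$ (i.e.\ by $Q_0^{-1}(0)$) via transvections $T_q$, and then lift the integral matrices $M'_q$. The only organizational difference is that the paper splits off the $\F_2$-statement as a separate lemma and then proves the present lemma in one line via the reduction map, and it cites the identification $\Gamma_m(1,2)/\Gamma_m(2)\simeq\Oa^+_{2m}(\F_2)$ from \cite{Be,Mu} rather than recomputing the diagonal-parity condition you outline.
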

To decompose each $M_q'$, we modify it by another matrix $M_q$ as given  in Section \ref{modification11}.  Let $\mathcal{S}^q_0=\{ i_1, \cdots, i_r\}$ and $\mathcal{S}^q_1=\{ (j_1, k_1), \cdots, (j_s, k_s)\}$ be the sets defined in Section  \ref{modification11}. Each  $M_q$ can then  be decomposed as  
$$M_q= (\prod_{i\in \mathcal{S}^q_0} M_q^{i}) \cdot (\prod_{ (j,k)\in\mathcal{S}^q_1}M_q^{(j,k)}),$$  where $M_q^{i}\in \iota_i(\mathcal{M}'_{2})$ and $M_q^{(j,k)}=\iota_{(j,k)}(u_{1111}^1)\iota_{(j,k)}(u_{1111}^2)$.  For each $M_q$,  define:
\begin{itemize}
\item $m_q=(k_1, \cdots, k_m)$, for $k_i=\left\{ \begin{array}{ll} 1 & \textrm{ if }  i\in \mathcal{S}^q_0, M_q^i= u_i,\\
-1& \textrm{ if } i=j \textrm{ or } k, (j,k)\in \mathcal{S}^q_1, \\  0 &  \textrm{ if } i\in \mathcal{S}^q_0,   M_q^{i}=1_{2m}  \textrm{ or }u_i^- .\end{array}\right.$
\item $\epsilon_{q}=(a_1, \cdots, a_m)$, for $a_i=\left\{ \begin{array}{ll} 1 & \textrm{ if } i\in \mathcal{S}^q_0, M_q^i=u_i^-,\\ -1& \textrm{ if } i=j \textrm{ or } k, (j,k)\in \mathcal{S}^q_1, \\ 0 &  \textrm{ if } i\in \mathcal{S}^q_0, M_q^{i}=1_{2m}, \textrm{ or }u_i. \end{array}\right.$
\item $m_{X^{\ast}}(q)= m_{X^{\ast}}(M_q)=\big(\prod_{i\in \mathcal{S}^q_0} m_{X^{\ast}}(M_q^{i}) \big)\cdot \big(\prod_{ (j,k)\in\mathcal{S}^q_1}m_{X^{\ast}}(M_q^{(j,k)})\big)$.
\end{itemize} For $\widetilde{M_q}=(M_q, t_{M_q})\in  \widetilde{\Sp}^{z_0}_{2m}(\R)$, let $\widetilde{M}_q=(\prod_{i\in \mathcal{S}^q_0} \widetilde{M}_q^{i}) \cdot (\prod_{ (j,k)\in\mathcal{S}^q_1}\widetilde{M}_q^{(j,k)})$.  For $z\in \mathbb{H}_m$,  we define:
\begin{align*}
\theta_{1/2}^{\widetilde{M_q}}(z)
&=m_{X^{\ast}}(q)^{-1}t_{M_q}\sum_{n=(n_1,\cdots,n_m)\in \Z^m} (-1)^{m_q \cdot n^T}e^{i  \pi (n+\tfrac{1}{2}\epsilon_{q}) z (n+\tfrac{1}{2}\epsilon_{q})^T}.\\
& \\
\theta_{3/2}^{\widetilde{M_q}}(z)&=m_{X^{\ast}}(q)^{-1}t_{M_q}\sum_{n=(n_1,\cdots,n_m)\in \Z^m} (-1)^{m_q \cdot n^T}(n+\tfrac{1}{2}\epsilon_{q})^Te^{i  \pi (n+\tfrac{1}{2}\epsilon_{q}) z (n+\tfrac{1}{2}\epsilon_{q})^T}.
\end{align*}
 Let $M_{q_1}, \cdots, M_{q_{n}}$ be  the complete sets of $\mathcal{M}_{2m}$.  We now define:
$$\Theta_{1/2}:  \mathbb{H}_m \longrightarrow M_{1n}(\C); z\longmapsto (\theta_{1/2}^{\widetilde{M_{q_1}}}(z), \cdots, \theta_{1/2}^{\widetilde{M_{q_n}}}(z))$$
$$\Theta_{3/2}:  \mathbb{H}_m \longrightarrow M_{mn}(\C); z\longmapsto (\theta_{3/2}^{\widetilde{M_{q_1}}}(z), \cdots, \theta_{3/2}^{\widetilde{M_{q_n}}}(z)).$$
Let $\overline{\Sp}_{2m}(\Z)$, $\overline{\Gamma}_m(2)$ and $\overline{\Gamma}_{m}(1,2)$ denote the inverse images of  $\Sp_{2m}(\Z)$,  $\Gamma_m(2)$ and $\Gamma_m(1,2)$ in $\overline{\Sp}_{2m}(\R)$, respectively. Define:
\begin{itemize}
\item[(1)] $\overline{\lambda}(r,\epsilon)=\lambda(r) \epsilon$, for $(r,\epsilon)\in \overline{\Gamma}_m(1,2)$.
\item[(2)] $\overline{\gamma}= \Ind_{\overline{\Gamma}_m(1,2)}^{\overline{\Sp}_{2m}(\Z)} \overline{\lambda}^{-1} $.
\end{itemize}

\begin{theorem}[Thm.\ref{mainthm112}]
For $r=\begin{pmatrix}a& b\\ c&d\end{pmatrix}\in \Sp_{2m}(\Z)$, $\epsilon\in \mu_2$ and $\overline{r}=(r,\epsilon)\in \overline{\Sp}_{2m}(\Z)$,  we have:
\begin{itemize}
\item[(1)] $\Theta_{1/2}(\overline{r}z)=\epsilon\sqrt{\det(cz+d)}\Theta_{1/2}(z)\overline{\gamma}(\overline{r}^{-1})$.
\item[(2)] $\Theta_{3/2}(\overline{r}z)=\epsilon \sqrt{\det(cz+d)}(cz+d)\Theta_{3/2}(z)\overline{\gamma}(\overline{r}^{-1})$.
\end{itemize}
\end{theorem}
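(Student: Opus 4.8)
The plan is to derive Theorem \ref{mainthm112} from Theorem \ref{thetagamm12} by an induced-representation bookkeeping, after first reinterpreting each summand $\theta_{1/2}^{\widetilde{M_q}}$ of $\Theta_{1/2}$ (and $\theta_{3/2}^{\widetilde{M_q}}$ of $\Theta_{3/2}$) as the normalized translate of the single function $\theta_{1/2}$ (resp. $\theta_{3/2}$) by a lift of the coset representative $M_q$. To set up, for $\overline g=(g,\epsilon)\in\overline{\Sp}_{2m}(\R)$ with $g=\begin{pmatrix}a&b\\c&d\end{pmatrix}$ write $j(\overline g,z)=\epsilon\sqrt{\det(cz+d)}$, using the branch $\sqrt{\det(cz+d)}=J_{1/2}(g,z)\,m_{X^{\ast}}(g)^{-1}$ of item~(3). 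Since $\overline{\Sp}_{2m}(\R)$ is the central extension attached to the $2$-degree Perrin--Rao cocycle $\overline{c}_{X^{\ast}}$, and by the defining relations of $m_{X^{\ast}}$ and of $\epsilon(g;z,z_0)$ (Satake, Takase), $j$ is a genuine weight-$\tfrac12$ automorphy factor on the whole $2$-cover, $j(\overline g_1\overline g_2,z)=j(\overline g_1,g_2z)\,j(\overline g_2,z)$, and $j(\overline g,z)\,\rho(cz+d)$ is one for a rational $\rho$ of $\GL_m(\C)$. Rephrased this way, Theorem \ref{thetagamm12} says that for $\overline r\in\overline{\Gamma}_m(1,2)$ one has $\theta_{1/2}(\overline r z)=j(\overline r,z)\,\overline\lambda(\overline r)\,\theta_{1/2}(z)$ and $\theta_{3/2}(\overline r z)=j(\overline r,z)\,\rho(cz+d)\,\overline\lambda(\overline r)\,\theta_{3/2}(z)$ (using $\overline\lambda(r,\epsilon)=\lambda(r)\epsilon$ and $\epsilon^2=1$), where $\overline\lambda$ is a genuine character of $\overline{\Gamma}_m(1,2)$ --- a formal consequence, since $\theta_{1/2}$ is a function on $\mathbb H_m$ and the $2$-cocycle $\overline{c}_{X^{\ast}}$ measures the failure of $\sqrt{\det(cz+d)}$ to be multiplicative.

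The first genuine step is the identity, for every $M_q\in\mathcal M_{2m}$,
\[
\theta_{1/2}^{\widetilde{M_q}}(z)=j(\overline{M_q},z)^{-1}\theta_{1/2}(M_q z),\qquad \theta_{3/2}^{\widetilde{M_q}}(z)=j(\overline{M_q},z)^{-1}\rho(cz+d)^{-1}\theta_{3/2}(M_q z),
\]
where $\overline{M_q}\in\overline{\Sp}_{2m}(\Z)$ is the image of $\widetilde{M_q}\in\widetilde{\Sp}^{z_0}_{2m}(\R)$ under projection from the $8$-degree to the $2$-degree cover --- this projection being exactly where the prefactor $m_{X^{\ast}}(q)^{-1}$ in the definition of $\theta_{1/2}^{\widetilde{M_q}}$ comes from, while $t_{M_q}$ is the coordinate of the fixed lift. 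To prove it I would feed the decomposition $M_q=\bigl(\prod_{i\in\mathcal S^q_0}M_q^i\bigr)\bigl(\prod_{(j,k)\in\mathcal S^q_1}M_q^{(j,k)}\bigr)$ into the chain rule for $j$: because $M_q^i=\iota_i(\cdot)$ affects only the $i$-th coordinate and $M_q^{(j,k)}=\iota_{(j,k)}(u_{1111}^1)\iota_{(j,k)}(u_{1111}^2)$ only the pair $(j,k)$, the computation of $\theta_{1/2}(M_q z)$ splits into rank-$1$ and rank-$2$ pieces. The rank-$1$ pieces are precisely the $m=1$ theta transformations under $\begin{pmatrix}1&1\\0&1\end{pmatrix}$, $\begin{pmatrix}1&0\\-1&1\end{pmatrix}$ and $\begin{pmatrix}0&1\\-1&0\end{pmatrix}$ in the Schr\"odinger model of the Weil representation (as in \cite{LiVe}, \cite{Wa}): they change the lattice point $n$ into $n+\tfrac12\epsilon_q$, produce the signs $(-1)^{m_q\cdot n^T}$, and introduce the metaplectic scalars recorded in $m_{X^{\ast}}(q)$; the rank-$2$ pieces are treated identically. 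Reassembling factor by factor recovers exactly the series defining $\theta_{1/2}^{\widetilde{M_q}}$, and the $\theta_{3/2}$-case carries the extra $(n+\tfrac12\epsilon_q)^T$ along, which accounts for the $\rho(cz+d)^{-1}$.

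Granting this, the rest is the induction argument. Fix $\overline r=(r,\epsilon)\in\overline{\Sp}_{2m}(\Z)$ with $r=\begin{pmatrix}a&b\\c&d\end{pmatrix}$. By Lemma \ref{represnetcoset} and the modification of Section \ref{modification11}, $\{M_{q_1},\dots,M_{q_n}\}$, lifted by the $\overline{M_{q_i}}$, is a full set of representatives for $\overline{\Gamma}_m(1,2)\backslash\overline{\Sp}_{2m}(\Z)$, so there are $\overline\delta_i\in\overline{\Gamma}_m(1,2)$ and $\sigma\in S_n$ with $\overline{M_{q_i}}\,\overline r=\overline\delta_i\,\overline{M_{q_{\sigma(i)}}}$. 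Using the chain rule for $j$, the identity just discussed, and Theorem \ref{thetagamm12} applied to $\overline\delta_i$, I would obtain
\[
\theta_{1/2}^{\widetilde{M_{q_i}}}(\overline r z)=j(\overline{M_{q_i}},rz)^{-1}\theta_{1/2}(M_{q_i}rz)=\epsilon\sqrt{\det(cz+d)}\;\overline\lambda(\overline\delta_i)\;\theta_{1/2}^{\widetilde{M_{q_{\sigma(i)}}}}(z),
\]
and the same with $\rho(cz+d)$ inserted for $\theta_{3/2}^{\widetilde{M_{q_i}}}$. Packaging the $n$ components into the row vector gives $\Theta_{1/2}(\overline r z)=\epsilon\sqrt{\det(cz+d)}\,\Theta_{1/2}(z)\,P(\overline r)$, where $P(\overline r)$ is the monomial matrix with $(\sigma(i),i)$-entry $\overline\lambda(\overline\delta_i)$; and this $P(\overline r)$ is exactly the matrix of $\bigl(\Ind_{\overline{\Gamma}_m(1,2)}^{\overline{\Sp}_{2m}(\Z)}\overline\lambda^{-1}\bigr)(\overline r^{-1})=\overline\gamma(\overline r^{-1})$ in the chosen basis --- the appearance of $\overline r^{-1}$ and of $\overline\lambda^{-1}$ being the usual feature of a right action on the function model of an induced representation. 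This proves (1); part (2) follows verbatim from Theorem \ref{thetagamm12}(2), the only extra ingredients being the chain rule $\rho(c_{r_1r_2}z+d_{r_1r_2})=\rho(c_{r_1}(r_2z)+d_{r_1})\,\rho(c_{r_2}z+d_{r_2})$ and the fact that $\overline\gamma$ acts on the right of the coefficient module $M_{mn}(\C)$.

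The hard part will be the reconciliation of eighth roots of unity across the two metaplectic covers --- the bookkeeping hidden in ``the chain rule for $j$'' and in passing from $\widetilde{M_q}$ to $\overline{M_q}$. Concretely one must verify that the coordinate $t_{M_q}$ produced by multiplying the generator lifts, the scalar $m_{X^{\ast}}(q)$, and the trivialization $\widetilde\beta$ of the $8$-degree cocycle over $\Gamma_m(1,2)$ fit together so that (i) $\theta_{1/2}^{\widetilde{M_q}}$ really is the $j$-normalized translate of $\theta_{1/2}$ by a well-defined element $\overline{M_q}$ of the $2$-cover, and (ii) in $\overline{M_{q_i}}\,\overline r=\overline\delta_i\,\overline{M_{q_{\sigma(i)}}}$ the accumulated phases collapse to $\overline\lambda(\overline\delta_i)$ times a single $\mu_2$-factor, which is absorbed into $\epsilon$. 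This is where the explicit Rao--Kudla--Perrin--Lion--Vergne formulas for $\widetilde{c}_{X^{\ast}}$ and $\overline{c}_{X^{\ast}}$, the identity $\lambda=m_{X^{\ast}}\widetilde\beta^{-1}$ with $\widetilde\beta$ evaluated through Styer's symplectic Gauss sums, and the rank-$\le 2$ reduction must all be used at once; once that is in place the remaining manipulations are routine, and normal convergence of the defining series on $\mathbb H_m$ makes $\Theta_{1/2},\Theta_{3/2}$ holomorphic, so they are genuine Siegel modular forms in the covering sense of the Definition.
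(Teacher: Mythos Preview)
Your proposal is correct and follows the paper's overall strategy: reinterpret each $\theta_{1/2}^{\widetilde{M_q}}$ as a normalized translate of $\theta_{1/2}$, apply Theorem~\ref{mainthm1} on $\Gamma_m(1,2)$, and identify the resulting monomial matrix with $\overline{\gamma}(\overline r^{-1})$. The paper organizes this slightly differently: it first runs the whole argument on the cover $\widetilde{\Sp}^{z_0}_{2m}(\R)$ (Sections~\ref{gamma12433}--\ref{Mqpr} and the theorem immediately preceding Section~\ref{finitI}), and only then deduces Theorem~\ref{mainthm112} in five lines by transporting along the isomorphism $\iota\colon\overline{\Sp}_{2m}(\R)\to\widetilde{\Sp}^{z_0}_{2m}(\R)$ of Lemma~\ref{imbed}, using $\overline{\gamma}=\widetilde{\gamma}\circ\iota$ and $J_{1/2}(\overline r,z)m_{X^{\ast}}(r)^{-1}=\epsilon^{-1}\sqrt{\det(cz+d)}$.

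One point of precision: your phrase ``the computation of $\theta_{1/2}(M_q z)$ splits into rank-$1$ and rank-$2$ pieces'' is not literally correct, since the matrix $z$ couples all coordinates and the series does not factor. What the paper does instead is work in the lattice model: the operators $\pi_{L,\psi}(M_q^i)$ and $\pi_{L,\psi}(M_q^{(j,k)})$ are computed explicitly (Section~\ref{exlicit}, Cases~6,~7,~9), they commute pairwise, and each acts on a lattice function by a shift and a character in a single coordinate (or coordinate pair). Applying their product to $\theta_{L,X^{\ast}}\bigl(J_{1/2}(p_g,z_0)\pi_{X^{\ast},\psi}(p_g)A\bigr)(0)$ then produces exactly the series defining $\theta_{1/2}^{\widetilde{M_q}}$, together with the scalar $m_{X^{\ast}}(q)^{-1}t_{M_q}$. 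Your Schr\"odinger-model sketch is pointing at the same mechanism, but it is the factorization of the \emph{Weil operators}, not of the theta series itself, that makes the reduction to low rank rigorous.
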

By modifying the cocycle, we can  reduce $\overline{\gamma}$ to be a representation of a finite group related to Igusa's quotient group $\tfrac{\Sp_{2m}(\Z)}{\Gamma_m(4,8)}$.(cf. Sections \ref{finitI}, \ref{finitII})
\subsection{Questions}
Finally, let us list some questions,  whose correctness and difficulty are uncertain. 
\begin{itemize}
\item[(1)] How can the results be extended to  $\GSp_{2m}$?  Interpreting the Fock space in a nice  form in this case appears challenging initially.
\item[(2)] How can the results be generalized for any $z\in \mathbb{H}_m$ rather than just  $z=z_0$? This ties to Siegel paramodular forms and the works of Johnson-Leung-Roberts \cite{JLRo1,JLRo2} and Poor-Yuen \cite{PoYu}.
\item[(3)] Let $V$ be a $2m$-dimensional vector space over $\F_2$. Then $(\pi, \Sp_{2m}(\F_2), \C[V])$ is a representation via permutation action on $V$. According to N. F. J. Inglis \cite[Theorem 1]{In} or  Guest-Previtali-Spiga \cite[Theorem 1.1]{GuPrSp},  $$\pi=\pi^+\oplus\pi^-, \quad\quad  \C[V]=\C[\Oa_{2m}^+(\F_2)\setminus \Sp_{2m}(\F_2)]\oplus  \C[\Oa^-_{2m}(\F_2)\setminus \Sp_{2m}(\F_2)].  $$
     In our case, the finite-dimensional representation $\overline{\gamma}$ is weekly associated  with $\pi^+$. The question is, in which ``minimal representation" would the corresponding modular form be linked to $\pi^-$. 
\end{itemize}
\section{The Metaplectic group and the  relevant groups}
\subsection{Symplectic Group over $\F_2$} Let $\F_2$ denote the field of $2$ elements. Denote by $\Sp_{2m}(\F_2)$  the symplectic group (for details, see \cite{Ki}). 
Let $P_{X^{\ast}}(\F_2)$, $N_{X^{\ast}}(\F_2)$, $M_{X^{\ast}}(\F_2)$,  $N_{X^{\ast}}^-(\F_2)$,   $B(\F_2)$, $N_0(\F_2)$, $N_0^-(\F_2)$, $N(\F_2)$, $N^-(\F_2)$ be the corresponding subgroups of $\Sp_{2m}(\F_2)$. For $\omega\in \mathfrak{W}$, let $N_{\omega}^-(\F_2)=[\omega N^-(\F_2) \omega^{-1}]\cap N(\F_2)$, $N_{\omega}^{--}(\F_2)=[\omega^{-1} N^-(\F_2) \omega] \cap N(\F_2)$.
\begin{theorem}[Bruhat decomposition]\label{Buha}
\begin{itemize}
\item $\Sp_{2m}(\F_2)=\sqcup_{\omega\in\mathfrak{W}_{P_{X^{\ast}}}} P_{X^{\ast}}(\F_2)\omega P_{X^{\ast}}(\F_2)$.
\item $\Sp_{2m}(\F_2)=\sqcup_{\omega\in \mathfrak{W}} B(\F_2)\omega B(\F_2)=\sqcup_{\omega\in \mathfrak{W}} N(\F_2) \omega B(\F_2)=\sqcup_{\omega\in \mathfrak{W}}N_{\omega}^-(\F_2) \omega B(\F_2)=\sqcup_{\omega\in \mathfrak{W}} B(\F_2)\omega N_{\omega}^{--}(\F_2)$.
\end{itemize}
\end{theorem}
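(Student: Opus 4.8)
The plan is to reduce everything to the general theory of $BN$-pairs (Tits systems), which applies to $\Sp_{2m}$ over any field since it is a split Chevalley group scheme over $\Z$, and then to spell out only the features special to $k=\F_2$. The one simplification we really use is that $\F_2^{\times}=\{1\}$: the diagonal maximal torus $T$ of $\Sp_{2m}(\F_2)$ is then trivial, so $B(\F_2)=N(\F_2)$, and the monomial elements $\{\omega_s\omega_S\}$ are pairwise distinct (from the matrix of $\omega_s\omega_S$ one recovers both $s$ and $s^{-1}(S)$) and closed under multiplication, so $\mathfrak{W}$ is a subgroup of $\Sp_{2m}(\F_2)$ of order $2^m m!$ isomorphic to the Weyl group $(\Z/2)^m\rtimes S_m$. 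Granting the classical Bruhat decomposition for the $BN$-pair $\bigl(B(\F_2),\mathfrak{W}\bigr)$ (see \cite{Ki} and the references there for the symplectic case), one obtains $\Sp_{2m}(\F_2)=\sqcup_{\omega\in\mathfrak{W}}B(\F_2)\omega B(\F_2)$ with the union disjoint, and the equality with $\sqcup_{\omega}N(\F_2)\omega B(\F_2)$ is then immediate since $B(\F_2)=N(\F_2)$.

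For the sharp forms I would use the standard ordered factorization of the unipotent radical along a Weyl element. Let $\Phi^+$ be the positive roots of $\Sp_{2m}$ relative to $B$, let $U_\alpha$ denote the root subgroups, and put $\Phi_\omega=\{\alpha\in\Phi^+\mid\omega^{-1}\alpha\in\Phi^-\}$; then $N_\omega^-(\F_2)=[\omega N^-(\F_2)\omega^{-1}]\cap N(\F_2)=\prod_{\alpha\in\Phi_\omega}U_\alpha(\F_2)$, and for a suitable ordering of $\Phi^+$ one has the decomposition of subsets
\[
N(\F_2)=\Bigl(\prod_{\alpha\in\Phi_\omega}U_\alpha(\F_2)\Bigr)\Bigl(\prod_{\alpha\in\Phi^+\setminus\Phi_\omega}U_\alpha(\F_2)\Bigr)=N_\omega^-(\F_2)\cdot\bigl(N(\F_2)\cap\omega N(\F_2)\omega^{-1}\bigr).
\]
Substituting this into $N(\F_2)\omega B(\F_2)$ and commuting the second factor through $\omega$ (legitimate because $\omega^{-1}(N(\F_2)\cap\omega N(\F_2)\omega^{-1})\omega\subseteq N(\F_2)\subseteq B(\F_2)$) gives $N(\F_2)\omega B(\F_2)=N_\omega^-(\F_2)\omega B(\F_2)$; the classical refinement of Bruhat says moreover that each $g\in B(\F_2)\omega B(\F_2)$ is uniquely $n\omega b$ with $n\in N_\omega^-(\F_2)$, $b\in B(\F_2)$, which also re-proves disjointness of the pieces. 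The last equality $\sqcup_\omega N_\omega^-(\F_2)\omega B(\F_2)=\sqcup_\omega B(\F_2)\omega N_\omega^{--}(\F_2)$ follows from the mirror-image argument — factor $N(\F_2)$ in the opposite order and commute the leading factor through $\omega$ from the left — or, equivalently, by applying the preceding to $\omega^{-1}$ and inverting.

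For the first displayed formula (decomposition along the Siegel parabolic $P_{X^{\ast}}$, whose Levi is $\GL_m$ with Weyl group $W_P=S_m$, the permutation part of $\mathfrak{W}$), I would invoke the relative Bruhat decomposition $\Sp_{2m}(\F_2)=\sqcup_\omega P_{X^{\ast}}(\F_2)\omega P_{X^{\ast}}(\F_2)$, the union over representatives of the double cosets $W_P\backslash\mathfrak{W}/W_P$. Since $\mathfrak{W}/W_P$ is the sign-change group $(\Z/2)^m$, on which $W_P=S_m$ acts by permuting coordinates, these double cosets are indexed by Hamming weight $i\in\{0,1,\dots,m\}$, so there are exactly $m+1$ of them, and $\omega_{S_i}$ with $S_i=\{1,\dots,i\}$ (and $S_0=\emptyset$) represents the weight-$i$ class; this is exactly $\mathfrak{W}_{P_{X^{\ast}}}$. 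The cleanest way to make disjointness transparent — and the route I would actually write out — is geometric: $P_{X^{\ast}}(\F_2)\backslash\Sp_{2m}(\F_2)$ is the set of Lagrangian subspaces of $(\F_2^{2m},\langle,\rangle)$, and by Witt's extension theorem two ordered pairs of Lagrangians lie in the same $\Sp_{2m}(\F_2)$-orbit if and only if the dimensions of their intersections coincide; since $\dim\bigl(X^{\ast}\cap X^{\ast}\omega_{S_i}\bigr)=m-i$ runs through all of $\{0,1,\dots,m\}$, the $\omega_{S_i}$ exhaust the double cosets and are pairwise inequivalent.

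The work is bookkeeping rather than conceptual, and the point that needs care is matching the combinatorial data to the explicit matrices: confirming that $\mathfrak{W}$ and $\mathfrak{W}_{P_{X^{\ast}}}$ are the claimed sets of representatives, that $N_\omega^-(\F_2)$ and $N_\omega^{--}(\F_2)$ are the indicated products of root subgroups, and fixing orderings of $\Phi^+$ that make the factorizations of $N(\F_2)$ valid. If one prefers not to cite the $BN$-pair machinery, the same statements follow from symplectic Gauss elimination — reducing an arbitrary $g\in\Sp_{2m}(\F_2)$ to an element of $\mathfrak{W}$ by symplectic row and column operations (left and right multiplication by elements of $N(\F_2)$ and by the $\omega_s$, $\omega_S$), then tracking which operations were used to obtain the parametrization, the disjointness, and the sharp forms — which is elementary but longer and is the path followed in \cite{Ki}.
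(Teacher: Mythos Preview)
Your proposal is correct and follows exactly the approach of the references the paper cites (Carter, Borel--Tits, Digne--Michel): the paper's own proof is nothing more than a pointer to the standard $BN$-pair/Tits-system machinery, and you have simply unpacked what those references say, including the special feature that $T(\F_2)=\{1\}$ so $B(\F_2)=N(\F_2)$. The geometric argument via Lagrangians and Witt's theorem for the parabolic decomposition is a nice alternative justification of the index set $\mathfrak{W}_{P_{X^{\ast}}}$, but it is equivalent to the combinatorial $W_P\backslash W/W_P$ count you also give.
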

\begin{proof}
See \cite[Sections 2.5--2.9]{Ca}, or \cite{BoTi}, or \cite[p.60]{DiMi}. 
\end{proof}
\subsection{Subgroups of $\Sp_{2m}(\Z)$}
Let  $P_{X^{\ast}}(\Z)$, $N_{X^{\ast}}(\Z)$, $M_{X^{\ast}}(\Z)$,  $N_{X^{\ast}}^-(\Z)$,   $B(\Z)$, $N_0(\Z)$, $N_0^-(\Z)$, $N(\Z)$, $N^-(\Z)$ denote the intersection of their corresponding groups in $\R$ with $\Sp_{2m}(\Z)$.  Following \cite{Ig}, \cite{Mu}, we let 
 \begin{itemize}
 \item $\Gamma_m(2)=\{ g\in \Sp_{2m}(\Z) \mid g\equiv 1_{2m}(\bmod 2)\}$.
 \item $\Gamma_m(1, 2)=\{ g=\begin{pmatrix} a & b \\ c& d\end{pmatrix} \in \Sp_{2m}(\Z) \mid \textrm{ diagronal } a c^T \textrm{ even and } \textrm{ diagronal } b d^T \textrm{ even}\}$.
 \end{itemize}
By \cite[Lemma 25]{Ig},  there exists an exact sequence: 
\begin{equation}\label{gamma}
1\longrightarrow \Gamma_m(2) \longrightarrow \Sp_{2m}(\Z) \stackrel{\kappa_2}{\longrightarrow} \Sp_{2m}(\F_2) \longrightarrow 1.
\end{equation}
Let $I$ denote the inverse image of $B(\F_2)$ in $ \Sp_{2m}(\Z) $. In \cite{Be}, the above group $\Gamma_m(1, 2)$ is written by $\Sp^q(2m,2)$. By \cite[Pro. A 4]{Mu} or \cite{Be}, $\Gamma_m(1,2)$ can be generated by $\omega$, $h(a)$ and $u(b)$, where:  
 \begin{itemize}
 \item $\omega=\begin{pmatrix} 0 & -1_m\\ 1_m & 0\end{pmatrix} \in \Sp_{2m}(\Z)$.
 \item $h(a)=\begin{pmatrix} a & 0\\ 0 &(a^{T})^{-1}\end{pmatrix}$, with $a, (a^{T})^{-1}\in \GL_m(\Z)$.   
 \item $u(b)=\begin{pmatrix} 1_m & b\\ 0 & 1_m\end{pmatrix}$, where $b=b^T$ and  the diagonal entries of $b$ are even integers.
 \end{itemize}
According to \cite[p.207]{Be} or  \cite[Appendix to \S 5]{Mu}, $\Gamma_m(1,2)/\Gamma_m(2)$ is isomorphic to $O_{2m}^+(\F_2)$. 
 \begin{lemma}
 \begin{itemize}
 \item[(1)] $[\Sp_{2m}(\Z):\Gamma_m(2)]=|\Sp_{2m}(\F_2)|= 2^{m^2}\prod_{i=1}^m(2^{2i}-1)$.
 \item[(2)] $[\Gamma_m(1,2):\Gamma_m(2)]=|O_{2m}^+(\F_2)|=2^{m^2-m+1}(2^{m}-1)\prod_{i=1}^{m-1}(2^{2i}-1)$.
 \item[(3)] $[\Sp_{2m}(\Z):\Gamma_m(1,2)]=(2^{m}+1)2^{m-1}$.
 \end{itemize}
 \end{lemma}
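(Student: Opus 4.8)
The plan is to reduce all three indices to orders of finite classical groups over $\F_2$, using the exact sequence \eqref{gamma} and the isomorphism $\Gamma_m(1,2)/\Gamma_m(2)\cong\Oa_{2m}^+(\F_2)$ recalled just above; beyond these, only the classical order formulas are needed, and those I would either cite or reprove by the standard induction.

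For part (1), the sequence \eqref{gamma} identifies $\Sp_{2m}(\Z)/\Gamma_m(2)$ with $\Sp_{2m}(\F_2)$, so $[\Sp_{2m}(\Z):\Gamma_m(2)]=|\Sp_{2m}(\F_2)|$. It then suffices to recall $|\Sp_{2m}(\F_q)|=q^{m^2}\prod_{i=1}^m(q^{2i}-1)$ --- proved by induction on $m$: choose a nonzero vector $e$ ($q^{2m}-1$ choices), then a vector $f$ with $\langle e,f\rangle=1$ ($q^{2m-1}$ choices), and recurse on the symplectic complement of $\langle e,f\rangle$ --- and to specialize $q=2$. For part (2), the cited isomorphism gives $[\Gamma_m(1,2):\Gamma_m(2)]=|\Oa_{2m}^+(\F_2)|$, and I would invoke the classical order of the full orthogonal group of a $2m$-dimensional split quadratic space, $|\Oa_{2m}^+(\F_q)|=2\,q^{m(m-1)}(q^m-1)\prod_{i=1}^{m-1}(q^{2i}-1)$ (Witt-theorem induction: split off a hyperbolic plane after counting the nonzero isotropic vectors). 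Setting $q=2$ then gives $2\cdot 2^{m(m-1)}(2^m-1)\prod_{i=1}^{m-1}(2^{2i}-1)=2^{m^2-m+1}(2^m-1)\prod_{i=1}^{m-1}(2^{2i}-1)$, as claimed.

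For part (3), I would first record that $\Gamma_m(2)=\Ker\kappa_2$ is normal in $\Sp_{2m}(\Z)$ and is contained in $\Gamma_m(1,2)$: for $g=\begin{pmatrix}a&b\\ c&d\end{pmatrix}\equiv 1_{2m}\ (\bmod\ 2)$ the blocks $b$ and $c$ are even, hence the diagonal entries of $ac^T$ and of $bd^T$ are even. The tower law then gives
$$[\Sp_{2m}(\Z):\Gamma_m(1,2)]=\frac{[\Sp_{2m}(\Z):\Gamma_m(2)]}{[\Gamma_m(1,2):\Gamma_m(2)]}=\frac{|\Sp_{2m}(\F_2)|}{|\Oa_{2m}^+(\F_2)|}.$$
Substituting parts (1) and (2): the $2$-powers contribute $2^{m^2}/2^{m^2-m+1}=2^{m-1}$, and the products telescope, $\prod_{i=1}^m(2^{2i}-1)\big/\big[(2^m-1)\prod_{i=1}^{m-1}(2^{2i}-1)\big]=(2^{2m}-1)/(2^m-1)=2^m+1$, so the index is $2^{m-1}(2^m+1)$. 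As a cross-check that also anticipates the later combinatorics, $2^{m-1}(2^m+1)$ equals the number of quadratic forms of $+$ type refining the standard symplectic form on $\F_2^{2m}$ --- equivalently the cardinality of $Q_0^{-1}(0)$ --- on which $\Sp_{2m}(\F_2)$ acts transitively (Witt's extension theorem) with $\Oa_{2m}^+(\F_2)$ the stabilizer of $Q_0$; orbit--stabilizer then recovers the same value and confirms consistency with the description of $\Gamma_m(1,2)\setminus\Sp_{2m}(\Z)$ by $\mathcal{M}'_{2m}$.

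I do not expect a genuine obstacle: the lemma is bookkeeping with two standard order formulas. The one point requiring care is to use the order of the \emph{full} orthogonal group $\Oa_{2m}^+(\F_2)$ of split ($+$) Witt type --- matching the isomorphism $\Gamma_m(1,2)/\Gamma_m(2)\cong\Oa_{2m}^+(\F_2)$ --- rather than a proper subgroup such as $\SO_{2m}^+$ or $\Omega_{2m}^+$; this is exactly what supplies the leading factor $2$ and, after cancellation, the single surviving power $2^{m-1}$ in part (3). The inclusion $\Gamma_m(2)\subseteq\Gamma_m(1,2)$ needed for the tower law is immediate from the congruence computation above.
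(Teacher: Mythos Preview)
Your proposal is correct and follows essentially the same route as the paper: the paper's proof simply cites \cite{Ki} for the order of $\Sp_{2m}(\F_2)$ in (1), cites \cite{Be} and \cite{KiLe} for the order of $\Oa_{2m}^+(\F_2)$ in (2), and obtains (3) by dividing (1) by (2). Your version fills in the standard inductive arguments behind those order formulas and adds the orbit--stabilizer cross-check with $|Q_0^{-1}(0)|$, but the underlying logic is identical.
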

 \begin{proof}
 Part (1) follows from \cite[Theorem 3.2]{Ki}. Part (2) follows from \cite[p.232]{Be} and \cite[Theorem 3.4]{KiLe}. Part (3) is a consequence of (1)(2).
 \end{proof}

 \subsection{Iwahori decomposition and application}
 Let $S_{i}=\{ 1, \cdots, i\}$ and $S_0=\emptyset$. Under the basis $\{e_1, \cdots, e_m; e_1^{\ast}, \cdots, e_m^{\ast}\}$, $\omega_{S_i}$ corresponds to the matrix 
 $$\omega_{S_i}=\begin{pmatrix}
0&0&-1_{i}&0\\
0&1_{m-i}&0&0\\
1_i&0&0&0\\
0&0&0&1_{m-i}
\end{pmatrix}.$$
Then $\omega_{S_i}\in \Gamma_m(1,2)$. Consequently, all $\omega_{S}\in \Gamma_m(1,2)$.  From the above Bruhat decomposition for $\Sp_{2m}(\F_2)$, we have:
\begin{theorem}[Bruhat, Iwahori]\label{BrIw}
\begin{itemize}
\item[(1)] $\Sp_{2m}(\Z)=\sqcup_{\omega\in\mathfrak{W}_{P_{X^{\ast}}}} \Gamma_m(2) P_{X^{\ast}}(\Z)\omega P_{X^{\ast}}(\Z)$.
\item[(2)] $\Sp_{2m}(\Z)=\sqcup_{\omega\in \mathfrak{W}} I\omega I$.
 \end{itemize}
\end{theorem}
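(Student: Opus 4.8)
The plan is to obtain both identities by pulling the $\F_2$-Bruhat decompositions of Theorem~\ref{Buha} back along the reduction homomorphism $\kappa_2\colon \Sp_{2m}(\Z)\to\Sp_{2m}(\F_2)$ of the exact sequence~\eqref{gamma}, whose kernel is the normal subgroup $\Gamma_m(2)$. First note that every $\omega\in\mathfrak{W}$ (in particular every $\omega_{S_i}\in\mathfrak{W}_{P_{X^{\ast}}}$) is an integral matrix lying in $\Sp_{2m}(\Z)$, since $\omega_s$ is a permutation matrix and each $\omega_S$ lies in $\Gamma_m(1,2)$; thus $\kappa_2$ carries $\mathfrak{W}$ onto the Weyl elements appearing on the $\F_2$-side. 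Moreover $\kappa_2$ is injective on $\mathfrak{W}$: from the image of a basis vector one recovers both the permutation $s$ and the subset $S$ (namely $\kappa_2(\omega_s\omega_S)(e_i)$ equals $e_{s(i)}^{\ast}$ when $s(i)\in S$ and $e_{s(i)}$ otherwise). Hence the index sets on the two sides of each stated identity correspond bijectively under $\kappa_2$, and it remains only to identify the $\kappa_2$-preimage of each $\F_2$-double coset.

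For part~(2): by construction $I=\kappa_2^{-1}(B(\F_2))$, so $\kappa_2(I)=B(\F_2)$ and $\Gamma_m(2)=\ker\kappa_2\subseteq I$. Given $g\in\Sp_{2m}(\Z)$, write $\kappa_2(g)=\overline{b}_1\,\kappa_2(\omega)\,\overline{b}_2$ with $\overline{b}_i\in B(\F_2)$ and $\omega\in\mathfrak{W}$ (Theorem~\ref{Buha}), lift $\overline{b}_i$ to $b_i\in I$, and note $b_1^{-1}g\,b_2^{-1}\omega^{-1}\in\ker\kappa_2=\Gamma_m(2)\subseteq I$, whence $g\in I\omega I$. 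Conversely $\kappa_2(I\omega I)=B(\F_2)\kappa_2(\omega)B(\F_2)$, so $\kappa_2^{-1}\big(B(\F_2)\kappa_2(\omega)B(\F_2)\big)=I\omega I$. Applying $\kappa_2^{-1}$ to the disjoint decomposition $\Sp_{2m}(\F_2)=\sqcup_{\omega\in\mathfrak{W}}B(\F_2)\kappa_2(\omega)B(\F_2)$, and using that preimages of disjoint sets are disjoint together with the injectivity of $\omega\mapsto\kappa_2(\omega)$ on $\mathfrak{W}$, gives $\Sp_{2m}(\Z)=\sqcup_{\omega\in\mathfrak{W}}I\omega I$.

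For part~(1) the one extra ingredient is that $\kappa_2$ maps $P_{X^{\ast}}(\Z)$ \emph{onto} $P_{X^{\ast}}(\F_2)$, equivalently $\kappa_2^{-1}(P_{X^{\ast}}(\F_2))=\Gamma_m(2)P_{X^{\ast}}(\Z)$. This follows from the explicit description of $P_{X^{\ast}}$ in terms of a $\GL_m$-matrix together with a symmetric matrix: any class in $\GL_m(\F_2)=\SL_m(\F_2)$ lifts through the surjection $\SL_m(\Z)\twoheadrightarrow\SL_m(\F_2)$, and any symmetric matrix over $\F_2$ lifts to an integral symmetric matrix. With this in hand the pull-back argument of part~(2) applies with $I$ replaced by $P_{X^{\ast}}(\Z)$: for $g\in\Sp_{2m}(\Z)$ one writes $\kappa_2(g)=\overline{p}_1\,\kappa_2(\omega)\,\overline{p}_2$ with $\overline{p}_i\in P_{X^{\ast}}(\F_2)$ and $\omega\in\mathfrak{W}_{P_{X^{\ast}}}$, lifts to $p_i\in P_{X^{\ast}}(\Z)$, finds $p_1^{-1}g\,p_2^{-1}\in\Gamma_m(2)\,\omega$, and — using normality of $\Gamma_m(2)$ to rewrite $P_{X^{\ast}}(\Z)\Gamma_m(2)=\Gamma_m(2)P_{X^{\ast}}(\Z)$ — concludes $g\in\Gamma_m(2)P_{X^{\ast}}(\Z)\,\omega\,P_{X^{\ast}}(\Z)$. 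Thus $\kappa_2^{-1}\big(P_{X^{\ast}}(\F_2)\kappa_2(\omega)P_{X^{\ast}}(\F_2)\big)=\Gamma_m(2)P_{X^{\ast}}(\Z)\,\omega\,P_{X^{\ast}}(\Z)$, and pulling back the disjoint decomposition $\Sp_{2m}(\F_2)=\sqcup_{\omega\in\mathfrak{W}_{P_{X^{\ast}}}}P_{X^{\ast}}(\F_2)\kappa_2(\omega)P_{X^{\ast}}(\F_2)$ yields~(1).

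The only step that is not purely formal is the surjectivity $\kappa_2\colon P_{X^{\ast}}(\Z)\to P_{X^{\ast}}(\F_2)$ used in part~(1) — in part~(2) there is nothing to verify, as $I$ is by definition the full preimage of $B(\F_2)$; everything else is bookkeeping with the normal subgroup $\Gamma_m(2)$ and with the disjointness of the Bruhat cells over $\F_2$ furnished by Theorem~\ref{Buha}.
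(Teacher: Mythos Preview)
Your proposal is correct and matches the paper's (implicit) approach: the paper simply states that Theorem~\ref{BrIw} follows ``from the above Bruhat decomposition for $\Sp_{2m}(\F_2)$'' (Theorem~\ref{Buha}) together with the exact sequence~\eqref{gamma}, and you have spelled out precisely this pull-back argument along $\kappa_2$. The surjectivity $\kappa_2\colon P_{X^{\ast}}(\Z)\twoheadrightarrow P_{X^{\ast}}(\F_2)$ that you isolate as the one nontrivial ingredient is the same point the paper uses explicitly in the proof of the subsequent Lemma~\ref{gamma12}.
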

Following \cite{KiPa}, let us  refine the first decomposition of the above theorem.  Let: 
\begin{itemize}
\item $A_{X^{\ast},i}(\F_2)=\{ p\in P_{X^{\ast}}(\F_2) \mid \omega_{S_i}p\omega_{S_i}^{-1} \in  P_{X^{\ast}}(\F_2)\}$, $A_{X^{\ast},i}(\Z)=\{ p\in P_{X^{\ast}}(\Z) \mid \omega_{S_i}p\omega_{S_i}^{-1} \in  P_{X^{\ast}}(\Z)\}$. 
\item $A'_{X^{\ast},i}(\F_2)=A_{X^{\ast},i}(\F_2) \cap O_{2m}^+(\F_2)$, $A'_{X^{\ast},i}(\Z)=A_{X^{\ast},i}(\Z) \cap \Gamma_m(1,2)$. 
\item   $P_{X^{\ast}}'(\F_2)=P_{X^{\ast}}(\F_2) \cap O_{2m}^+(\F_2)$, $P_{X^{\ast}}'(\Z)=P_{X^{\ast}}(\Z)\cap \Gamma_m(1,2)$.
\end{itemize}
Then by \cite[p.339]{KiPa}, 
 $$ A_{X^{\ast},i}(\F_2)=\left\{ g=h(a)u(b) \,\middle\vert\,
\begin{aligned} a=\begin{pmatrix} a_{11} & 0 \\a_{21} & a_{22} \end{pmatrix} \in \GL_m(\F_2), a_{11}\in \GL_i(\F_2), a_{22}\in \GL_{m-i}(\F_2),  a_{21}\in M_{(m-i)\times i }(\F_2)\\
  b=\begin{pmatrix}0 & b_{12} \\b_{12}^T & b_{22} \end{pmatrix} \in M_{m}(\F_2), b_{12}\in M_{i\times (m-i)}(\F_2), b_{22}=b_{22}^T\in M_{m-i}(\F_2)
  \end{aligned}
\right\}.$$
 $$ A'_{X^{\ast},i}(\F_2)=\left\{ g=h(a)u(b)\,\middle\vert\, h(a), u(b)\in A_{X^{\ast},i}(\F_2),  \textrm{ all diagonal entries of } b \textrm{ zero } \right\}.$$
 $$ P_{X^{\ast}}'(\F_2)=\left\{ g=h(a)u(b)\,\middle\vert\,   \textrm{ all diagonal entries of } b \textrm{ zero } \right\}.$$
\begin{lemma}\label{gamma12}
\begin{itemize}
\item[(1)] $\Sp_{2m}(\Z)=\sqcup_{\omega_{S_i}\in\mathfrak{W}_{P_{X^{\ast}}}} \Gamma_m(2) P_{X^{\ast}}(\Z)\omega_{S_i} (A_{X^{\ast},i}(\Z)\setminus  P_{X^{\ast}}(\Z))$.
\item[(2)] $\Gamma_m(1,2)=\sqcup_{\omega_{S_i}\in\mathfrak{W}_{P_{X^{\ast}}}} \Gamma_m(2) P_{X^{\ast}}'(\Z)\omega_{S_i} (A'_{X^{\ast},i}(\Z)\setminus  P_{X^{\ast}}'(\Z))$.
\end{itemize}
\end{lemma}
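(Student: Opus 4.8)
The plan is to derive Lemma~\ref{gamma12} from the Bruhat--Iwahori decomposition of Theorem~\ref{BrIw}(1) by carefully bookkeeping the double cosets modulo the left-hand stabilizer. I would first recall that Theorem~\ref{BrIw}(1) gives $\Sp_{2m}(\Z)=\sqcup_{\omega_{S_i}} \Gamma_m(2) P_{X^{\ast}}(\Z)\omega_{S_i} P_{X^{\ast}}(\Z)$, so the only issue is to decompose each double coset $\Gamma_m(2) P_{X^{\ast}}(\Z)\omega_{S_i} P_{X^{\ast}}(\Z)$ into disjoint \emph{left} cosets of $\Gamma_m(2) P_{X^{\ast}}(\Z)\omega_{S_i}$. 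For this one uses the standard double-coset counting: the map $P_{X^{\ast}}(\Z)\to \Gamma_m(2) P_{X^{\ast}}(\Z)\omega_{S_i}\backslash \Gamma_m(2) P_{X^{\ast}}(\Z)\omega_{S_i} P_{X^{\ast}}(\Z)$, $p\mapsto \Gamma_m(2) P_{X^{\ast}}(\Z)\omega_{S_i} p$, is surjective, and $\Gamma_m(2) P_{X^{\ast}}(\Z)\omega_{S_i} p = \Gamma_m(2) P_{X^{\ast}}(\Z)\omega_{S_i} p'$ iff $\omega_{S_i} p' p^{-1}\omega_{S_i}^{-1}\in \omega_{S_i} P_{X^{\ast}}(\Z)\omega_{S_i}^{-1}\cap \Gamma_m(2)P_{X^{\ast}}(\Z)$. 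Since $\Gamma_m(2)$ is normal and $P_{X^{\ast}}(\Z)\to P_{X^{\ast}}(\F_2)$ is surjective with kernel inside $\Gamma_m(2)$, this congruence reduces mod $2$ to the condition $\omega_{S_i} \bar p'\bar p^{-1}\omega_{S_i}^{-1}\in P_{X^{\ast}}(\F_2)$, i.e. $\bar p' \bar p^{-1}\in A_{X^{\ast},i}(\F_2)$; lifting back, $p'p^{-1}\in A_{X^{\ast},i}(\Z)$. Hence the left cosets are parametrized exactly by $A_{X^{\ast},i}(\Z)\backslash P_{X^{\ast}}(\Z)$, which gives part (1).

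For part~(2), the strategy is identical but carried out inside $\Gamma_m(1,2)$ rather than $\Sp_{2m}(\Z)$. Here I would first note the key compatibility facts established earlier in the excerpt: $\omega_{S_i}\in\Gamma_m(1,2)$ for all $i$ (proved right before Theorem~\ref{BrIw}), $\Gamma_m(2)\subseteq\Gamma_m(1,2)$ is normal, and $\Gamma_m(1,2)/\Gamma_m(2)\cong O_{2m}^+(\F_2)$ via the reduction map $\kappa_2$ of \eqref{gamma}. Intersecting the decomposition of part~(1) with $\Gamma_m(1,2)$ and using that each $\Gamma_m(2)P_{X^{\ast}}(\Z)\omega_{S_i}A_{X^{\ast},i}(\Z)\backslash P_{X^{\ast}}(\Z)$-piece either lies in $\Gamma_m(1,2)$ or is disjoint from it (because $\Gamma_m(1,2)$ is a union of cosets of $\Gamma_m(2)$, hence determined by its image in $\Sp_{2m}(\F_2)$), the decomposition descends: one gets $\Gamma_m(1,2)=\sqcup_{\omega_{S_i}}\Gamma_m(2)P_{X^{\ast}}'(\Z)\omega_{S_i}(A'_{X^{\ast},i}(\Z)\backslash P_{X^{\ast}}'(\Z))$, once one checks $\Gamma_m(1,2)\cap P_{X^{\ast}}(\Z)=P_{X^{\ast}}'(\Z)$ and $\Gamma_m(1,2)\cap A_{X^{\ast},i}(\Z)=A'_{X^{\ast},i}(\Z)$. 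These two identities are immediate from the definitions: for $g=h(a)u(b)\in P_{X^{\ast}}(\Z)$ one computes $ac^T=0$ (so its diagonal is automatically even) and $bd^T = b(a^T)^{-1}$, whose diagonal is even iff, reducing mod $2$, the quadratic form condition defining $O_{2m}^+(\F_2)$ holds; the explicit description of $P_{X^{\ast}}'(\F_2)$ quoted from \cite[p.339]{KiPa} says precisely that this amounts to the diagonal of $b$ being zero mod $2$, and similarly for $A'_{X^{\ast},i}$.

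The main obstacle I anticipate is not the abstract double-coset combinatorics but rather the bookkeeping needed to verify that the reduction-mod-$2$ stabilizer condition really is $A_{X^{\ast},i}(\F_2)$ and, in the refined setting, $A'_{X^{\ast},i}(\F_2)$ --- that is, making precise that $\omega_{S_i} P_{X^{\ast}}(\Z)\omega_{S_i}^{-1}\cap \Gamma_m(2)P_{X^{\ast}}(\Z)$, after reduction, equals $\omega_{S_i}P_{X^{\ast}}(\F_2)\omega_{S_i}^{-1}\cap P_{X^{\ast}}(\F_2)$ with no extra contribution from $\Gamma_m(2)$. This hinges on the surjectivity in \eqref{gamma} together with the fact that $\Gamma_m(2)\subseteq \ker\kappa_2$ is \emph{exactly} the kernel, so that any element of $P_{X^{\ast}}(\Z)$ congruent mod $2$ to an element of $\omega_{S_i}P_{X^{\ast}}(\Z)\omega_{S_i}^{-1}$ already lies in $A_{X^{\ast},i}(\Z)$ --- here one uses that $\omega_{S_i}$ has integral entries so conjugation by it commutes with reduction mod $2$. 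I would also need to confirm disjointness of the union over different $\omega_{S_i}$, which is inherited directly from Theorem~\ref{BrIw}(1) (for part~1) and then from the fact that distinct pieces have distinct images in $\Sp_{2m}(\F_2)$ (for part~2). Once these reductions are in place, both statements follow by assembling the pieces.
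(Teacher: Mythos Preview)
Your overall strategy---reduce to $\F_2$ via the exact sequence \eqref{gamma} and use the refined Bruhat decomposition there---is the same as the paper's. But two of your stated steps are not correct as written.

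In part~(1), the claim ``lifting back, $p'p^{-1}\in A_{X^{\ast},i}(\Z)$'' does not follow. From $\overline{p'p^{-1}}\in A_{X^{\ast},i}(\F_2)$ you only get that $p'p^{-1}$ lies in the full preimage $A_{X^{\ast},i}(\Z)\cdot\bigl(P_{X^{\ast}}(\Z)\cap\Gamma_m(2)\bigr)$: membership in $A_{X^{\ast},i}(\Z)$ requires the lower-left block of $\omega_{S_i}(p'p^{-1})\omega_{S_i}^{-1}$ to vanish over $\Z$, not merely mod~$2$. The paper records exactly this, writing the bijection as $A_{X^{\ast},i}(\Z)[P_{X^{\ast}}(\Z)\cap\Gamma_m(2)]\backslash P_{X^{\ast}}(\Z)\longrightarrow A_{X^{\ast},i}(\F_2)\backslash P_{X^{\ast}}(\F_2)$. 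This does not affect the set equality asserted in the lemma, but your claim that the left cosets are parametrized \emph{exactly} by $A_{X^{\ast},i}(\Z)\backslash P_{X^{\ast}}(\Z)$ is too strong.

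For part~(2), the assertion that each piece $\Gamma_m(2)P_{X^{\ast}}(\Z)\omega_{S_i}p$ ``either lies in $\Gamma_m(1,2)$ or is disjoint from it'' is false. Such a piece is the union over $q\in P_{X^{\ast}}(\Z)$ of the $\Gamma_m(2)$-cosets $\Gamma_m(2)q\omega_{S_i}p$, and whether a given one lies in $\Gamma_m(1,2)$ depends on $\bar q$ as well as on $\bar p$; one can easily have $q_1\omega_{S_i}p\in\Gamma_m(1,2)$ and $q_2\omega_{S_i}p\notin\Gamma_m(1,2)$ for the same $p$. The paper avoids this by \emph{not} intersecting: it directly invokes the analogue $O^+_{2m}(\F_2)=\sqcup_i P'_{X^{\ast}}(\F_2)\omega_{S_i}\bigl(A'_{X^{\ast},i}(\F_2)\backslash P'_{X^{\ast}}(\F_2)\bigr)$ from \cite[(3.4)]{KiPa} and lifts via the surjectivity of $\kappa_2$ on $P'_{X^{\ast}}$ and $A'_{X^{\ast},i}$, exactly paralleling its argument for~(1). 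Your intersection route can be salvaged, but doing so amounts to proving $P_{X^{\ast}}(\F_2)\omega_{S_i}P_{X^{\ast}}(\F_2)\cap O^+_{2m}(\F_2)=P'_{X^{\ast}}(\F_2)\omega_{S_i}P'_{X^{\ast}}(\F_2)$, which is essentially the content of the cited result.
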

\begin{proof}
1) By \cite[(3.4)]{Ki}, $\Sp_{2m}(\F_2)=\sqcup_{\omega_{S_i}\in\mathfrak{W}_{P_{X^{\ast}}}} P_{X^{\ast}}(\F_2) \omega_{S_i} (A_{X^{\ast},i}(\F_2)\setminus  P_{X^{\ast}}(\F_2))$. Note that the maps  $\kappa_2:P_{X^{\ast}}(\Z) \longrightarrow   P_{X^{\ast}}(\F_2)$ and  $\kappa_2: A_{X^{\ast},i}(\Z) \longrightarrow A_{X^{\ast},i}(\F_2)$ both are surjective. Hence, there exists a bijective map:
$$ A_{X^{\ast},i}(\Z)  [ P_{X^{\ast}}(\Z) \cap\Gamma_m(2)] \setminus  P_{X^{\ast}}(\Z) \longrightarrow  A_{X^{\ast},i}(\F_2) \setminus  P_{X^{\ast}}(\F_2).$$
Consequently, $$ \kappa_2: P_{X^{\ast}}(\Z)\omega_{S_i} (A_{X^{\ast},i}(\Z)\setminus  P_{X^{\ast}}(\Z))  \longrightarrow P_{X^{\ast}}(\F_2) \omega_{S_i} (A_{X^{\ast},i}(\F_2)\setminus  P_{X^{\ast}}(\F_2))$$
is surjective. For an element $g\in \Sp_{2m}(\Z)$, assume that $\kappa_2(g)$ belongs to $P_{X^{\ast}}(\F_2) \omega_{S_i} (A_{X^{\ast},i}(\F_2)\setminus  P_{X^{\ast}}(\F_2))$. Then there exists an element $g_i\in P_{X^{\ast}}(\Z)\omega_{S_i} (A_{X^{\ast},i}(\Z)\setminus  P_{X^{\ast}}(\Z)) $ such that $\kappa_2(g)=\kappa_2(g_i)$. This  implies that $g=\gamma g_i $ for some $\gamma\in \Gamma_m(2)$.\\
2) By \cite[(3.4)]{KiPa}, $\Oa^+_{2m}(\F_2)=\sqcup_{\omega_{S_i}\in\mathfrak{W}_{P_{X^{\ast}}}} P_{X^{\ast}}'(\F_2) \omega_{S_i} (A'_{X^{\ast},i}(\F_2)\setminus  P_{X^{\ast}}'(\F_2))$. Similarly, we have:
 $$P_{X^{\ast}}'(\Z) \longrightarrow   P_{X^{\ast}}'(\F_2), A'_{X^{\ast},i}(\Z) \longrightarrow A'_{X^{\ast},i}(\F_2), $$
 $$A'_{X^{\ast},i}(\Z)  [ P_{X^{\ast}}'(\Z) \cap\Gamma_m(2)] \setminus  P_{X^{\ast}}'(\Z) \longrightarrow  A'_{X^{\ast},i}(\F_2) \setminus  P_{X^{\ast}}'(\F_2),$$
  $$P_{X^{\ast}}'(\Z)\omega_{S_i} (A'_{X^{\ast},i}(\Z)\setminus  P_{X^{\ast}}'(\Z))  \longrightarrow P_{X^{\ast}}'(\F_2) \omega_{S_i} (A'_{X^{\ast},i}(\F_2)\setminus  P_{X^{\ast}}'(\F_2))$$ 
 all are surjective.  The remaining proof  is the same as above. 
\end{proof}
\subsection{The quotient $\Gamma_{m}(1,2) \setminus \Sp_{2m}(\Z)$}\label{setminus}
 Note that  there exists a bijective map from $ \Gamma_{m}(1,2) \setminus \Sp_{2m}(\Z) $ to $\Oa^+_{2m}(\F_2) \setminus \Sp_{2m}(\F_2)$.  We determine the subsequent set by adhering to the two papers by R. H. Dye (cf. \cite{Dy1, Dy2}). Let $V=\sum_{i=1}^m \F_2 e_i + \F_2 e_i^{\ast} $.  Following \cite{Dy1}, let $\mathcal{Q}$ denote the associated quadratic forms $Q$ on $V$ such that 
\[ \langle v, v'\rangle=Q(v+v')-Q(v)-Q(v').\]
In particular, let $Q_0$ be an element in $\mathcal{Q}$, defined as:
\[Q_0(v)=\sum_{i=1}^m x_ix_i^{\ast},\]
where $ v=\sum_{i=1}^m (x_i e_i+ x^{\ast}_i e_i^{\ast})$. Then,  $\Oa^+_{2m}(\F_2)$ is the isometry group of $Q_0$. According to \cite[Lemma 1]{Dy2}, we have:
\begin{itemize}
\item[1.] The other quadratic form $Q$ in  $\mathcal{Q}$ has the form: $Q(v)= Q_0(v)+\langle v, q\rangle^2$, for some $q\in V$. 
\item[2.] There exists an element $g\in \Sp_{2m}(\F_2)$ such that  $Q(v)=Q_0(vg)$ iff $t^2+t+Q_0(q)$ is  reducible in $\F_2$ iff $Q_0(q)=0$.
\end{itemize}
Let: 
\begin{itemize}
    \item[(1)] $ Q_0^{-1}(0) = \{ v \in V \mid Q_0(v) = 0 \}$.
    \item[(2)] $Q_0^{-1}(1) = \{ v\in V \mid Q_0(v) = 1 \}$.
\end{itemize}
\begin{lemma}
    $|Q_0^{-1}(0)| = 2^{2m-1} + 2^{m-1}$ and $|Q_0^{-1}(1)| = 2^{2m} - 2^{2m-1} - 2^{m-1}$.
\end{lemma}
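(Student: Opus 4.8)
The quantity to compute is the number of solutions in $V = \F_2^{2m}$ to the quadratic equation $Q_0(v) = \sum_{i=1}^m x_i x_i^{\ast} = 0$, and the number with $Q_0(v) = 1$. First I would record the trivial constraint $|Q_0^{-1}(0)| + |Q_0^{-1}(1)| = |V| = 2^{2m}$, so it suffices to count one of the two fibers. The plan is to exploit the fact that $Q_0$ splits as an orthogonal sum of $m$ copies of the hyperbolic plane form $q(x, x^{\ast}) = x x^{\ast}$ on $\F_2^2$, and to count by an inductive/generating-function argument on $m$.

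The key computation is the base case $m = 1$: on $\F_2^2$ the form $q(x, x^{\ast}) = x x^{\ast}$ vanishes at $(0,0), (1,0), (0,1)$ and equals $1$ only at $(1,1)$, so $|q^{-1}(0)| = 3$ and $|q^{-1}(1)| = 1$. Now write $N_m^{(0)} = |Q_0^{-1}(0)|$ and $N_m^{(1)} = |Q_0^{-1}(1)|$ for the form in $2m$ variables. Decomposing $v = (v', v'')$ with $v' \in \F_2^{2(m-1)}$ and $v'' \in \F_2^2$, and using that $Q_0$ is the orthogonal sum, one gets $Q_0(v) = Q_0'(v') + q(v'')$, whence the recursion
\begin{align*}
N_m^{(0)} &= N_{m-1}^{(0)} \cdot 3 + N_{m-1}^{(1)} \cdot 1,\\
N_m^{(1)} &= N_{m-1}^{(0)} \cdot 1 + N_{m-1}^{(1)} \cdot 3.
\end{align*}
Adding and subtracting gives $N_m^{(0)} + N_m^{(1)} = 4\,(N_{m-1}^{(0)} + N_{m-1}^{(1)}) = 4^m$ (consistent with $|V| = 2^{2m}$) and $N_m^{(0)} - N_m^{(1)} = 2\,(N_{m-1}^{(0)} - N_{m-1}^{(1)})$, so by induction from $N_1^{(0)} - N_1^{(1)} = 2$ we obtain $N_m^{(0)} - N_m^{(1)} = 2^m$. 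Solving the two linear equations yields $N_m^{(0)} = \tfrac{1}{2}(4^m + 2^m) = 2^{2m-1} + 2^{m-1}$ and $N_m^{(1)} = \tfrac{1}{2}(4^m - 2^m) = 2^{2m} - 2^{2m-1} - 2^{m-1}$, which is exactly the claim.

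The only real point requiring care — the ``obstacle'' — is justifying that the counting genuinely decouples across the $m$ hyperbolic blocks, i.e. that for fixed target value $\delta = Q_0(v)$ the number of $(v', v'')$ with $Q_0'(v') = \delta_1$ and $q(v'') = \delta_2$, $\delta_1 + \delta_2 = \delta$, is the product $N_{m-1}^{(\delta_1)} \cdot |q^{-1}(\delta_2)|$; this is immediate from the definition $Q_0(v) = \sum x_i x_i^{\ast}$ since the two groups of variables are disjoint. Alternatively, one can bypass the induction entirely with a character-sum (Gauss-sum) argument: $N_m^{(0)} = \tfrac{1}{2}\sum_{v \in V}(1 + (-1)^{Q_0(v)}) = 2^{2m-1} + \tfrac{1}{2}\prod_{i=1}^m \big(\sum_{x_i, x_i^{\ast} \in \F_2} (-1)^{x_i x_i^{\ast}}\big)$, and each inner factor equals $3 - 1 = 2$, giving $N_m^{(0)} = 2^{2m-1} + 2^{m-1}$ at once; $N_m^{(1)} = 2^{2m} - N_m^{(0)}$ then follows. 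Either route is short, so I would present the character-sum version as the cleaner argument and remark that it is the elementary shadow of the Gauss-sum computations invoked elsewhere in the paper.
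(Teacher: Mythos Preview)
Your inductive argument is correct and is essentially identical to the paper's own proof: the paper sets $a_m=|Q_0^{-1}(0)|$, $b_m=|Q_0^{-1}(1)|$, records $a_1=3$, $b_1=1$, the same recursions $a_{m+1}=3a_m+b_m$, $b_{m+1}=a_m+3b_m$ together with $a_m+b_m=2^{2m}$, and concludes by induction. You have simply spelled out the solution of the recursion (and added a nice character-sum alternative), so there is nothing to correct.
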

\begin{proof}
We prove by induction on $m$. Let $a_m=|Q_0^{-1}(0)|$, $b_m=|Q_0^{-1}(1)|$. If $m=1$,  $a_m=3$, $b_m=1$.  Additionally, we have the following relations:  
\[\begin{cases}
a_m + b_m = 2^{2m}, \\
a_{m+1} = 3a_m + b_m, \\
b_{m+1} = a_m + 3b_m.
\end{cases}\]
By induction, the result follows.
\end{proof}
For any $q \in V$, let us define the transvection $T_q$ on $V$ as follows:
\[ v \longmapsto v + \langle v, q \rangle q. \]
Let $Q_0^{T_q}$ be defined such that $Q_0^{T_q}(v) = Q_0(v T_q)$, for $v \in V$. According to the proof of Theorem 1 in  \cite{Dy2}, we have:
\[ Q_0^{T_q}(v) = Q_0(v) + \langle v, q \rangle^2(1+Q_0(q)). \]
In particular, for $q\in Q_0^{-1}(0)$, we have:
\begin{align}
 Q_0^{T_q}(v) = Q_0(v) + \langle v, q \rangle^2. 
 \end{align}
For $q = \sum_{i=1}^m x_i e_i + x_i^{\ast} e_i^{\ast} \in V$, $T_q$ corresponds to the matrix
\[ M_q' =1_{2m}+\begin{pmatrix} x_q^{\ast T}\\-x_q^T\end{pmatrix}  \cdot \begin{pmatrix} x_q& x_q^{\ast}\end{pmatrix}, \]
where  $x_q=(x_1,\cdots, x_m)$, $x_q^\ast=(x_1^{\ast},\cdots, x_m^{\ast})$. Let $\mathcal{M}'_{2m}$(resp. $\mathcal{N}'_{2m}$) be the set of all such $M_q'$ as $q$ varies over the elements of $Q_0^{-1}(0) $ (resp. $Q_0^{-1}(1)$).
\begin{lemma}
    The coset $\Oa^+_{2m}(\F_2) \setminus \Sp_{2m}(\F_2)$ can be represented by $\mathcal{M}'_{2m}$.
\end{lemma}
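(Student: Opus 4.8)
The plan is to reinterpret the right coset space $\Oa^+_{2m}(\F_2)\setminus\Sp_{2m}(\F_2)$ in terms of the action of $\Sp_{2m}(\F_2)$ on the set $\mathcal{Q}$ of quadratic forms refining $\langle\,,\,\rangle$, and then to exhibit $\mathcal{M}'_{2m}$ as a transversal by means of the transvections $T_q$. First I would record that $\Sp_{2m}(\F_2)$ acts on $\mathcal{Q}$ by $Q^g(v)=Q(vg)$ (a genuine right action, $Q^{gh}=(Q^g)^h$, and the polarization identity is preserved, so the orbit of $Q_0$ stays in $\mathcal{Q}$), and that by definition $\Oa^+_{2m}(\F_2)=\Stab(Q_0)$. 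Hence there is a bijection from $\Oa^+_{2m}(\F_2)\setminus\Sp_{2m}(\F_2)$ onto the orbit of $Q_0$ (using $\Oa^+_{2m}(\F_2)g\mapsto Q_0^{g^{-1}}$, which is well defined because $\Oa^+_{2m}(\F_2)$ fixes $Q_0$). By Dye's Lemma~1 (quoted above) every $Q\in\mathcal{Q}$ equals $Q_q(v):=Q_0(v)+\langle v,q\rangle^2$ for a unique $q\in V$ — uniqueness because $\langle\,,\,\rangle$ is non-degenerate, so the linear functional $\langle\,\cdot\,,q\rangle$ determines $q$ — and $Q_q$ lies in the $\Sp_{2m}(\F_2)$-orbit of $Q_0$ exactly when $Q_0(q)=0$. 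Thus $q\mapsto Q_q$ restricts to a bijection from $Q_0^{-1}(0)$ onto the orbit of $Q_0$; in particular $|\mathcal{M}'_{2m}|=|Q_0^{-1}(0)|=2^{2m-1}+2^{m-1}=[\Sp_{2m}(\F_2):\Oa^+_{2m}(\F_2)]$, the correct number of cosets.

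Next I would check that each $T_q$ with $q\in Q_0^{-1}(0)$ represents the coset corresponding to $Q_q$. The transvection $T_q$ preserves $\langle\,,\,\rangle$, so $T_q\in\Sp_{2m}(\F_2)$ with matrix $M_q'$; moreover $T_q^2=\mathrm{id}$ since $\langle q,q\rangle=0$, so $M_q'$ is its own inverse. The displayed identity $Q_0^{T_q}(v)=Q_0(v)+\langle v,q\rangle^2$ (valid for $q\in Q_0^{-1}(0)$) says precisely $Q_0^{T_q}=Q_q$. Combining with the previous paragraph, it suffices to show the $M_q'$ lie in pairwise distinct right cosets of $\Oa^+_{2m}(\F_2)$: if $\Oa^+_{2m}(\F_2)M_q'=\Oa^+_{2m}(\F_2)M_{q'}'$, then $M_q'(M_{q'}')^{-1}=M_q'M_{q'}'$ fixes $Q_0$, hence applying $M_{q'}'$ once more (an involution) gives $Q_0^{M_q'}=Q_0^{M_{q'}'}$, i.e.\ $Q_q=Q_{q'}$, whence $q=q'$. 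A set of the right cardinality consisting of pairwise non-equivalent elements is a complete, irredundant transversal, so $\mathcal{M}'_{2m}$ represents $\Oa^+_{2m}(\F_2)\setminus\Sp_{2m}(\F_2)$.

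The only genuinely delicate point is the bookkeeping of left/right conventions: one must set up the $\Sp_{2m}(\F_2)$-action on $\mathcal{Q}$ so that $\Oa^+_{2m}(\F_2)$ is exactly the point-stabilizer of $Q_0$ and so that the $q$ appearing in $Q_0^{T_q}=Q_q$ is the same $q$ as in the matrix $M_q'=1_{2m}+\binom{x_q^{\ast T}}{-x_q^{T}}(x_q\ x_q^{\ast})$; once these are aligned with the displayed formula for $Q_0^{T_q}$, every remaining step (transvections are symplectic, $T_q^2=\mathrm{id}$, non-degeneracy giving uniqueness of $q$, the cardinality count from the earlier lemma) is routine. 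Alternatively the statement can be deduced directly from Dye's Theorem~1, which already classifies the $\Sp_{2m}(\F_2)$-orbits of refining quadratic forms via transvections; I would include the self-contained argument above for clarity.
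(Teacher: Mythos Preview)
Your proposal is correct and follows essentially the same approach as the paper: both use the orbit-stabilizer correspondence between right cosets $\Oa^+_{2m}(\F_2)\setminus\Sp_{2m}(\F_2)$ and the $\Sp_{2m}(\F_2)$-orbit of $Q_0$ in $\mathcal{Q}$, invoke Dye's description of that orbit as $\{Q_q:q\in Q_0^{-1}(0)\}$, observe $Q_0^{T_q}=Q_q$, and deduce distinctness of cosets from the non-degeneracy of $\langle\,,\,\rangle$. Your version is somewhat more explicit about the left/right bookkeeping and the involution $T_q^2=\mathrm{id}$, but the argument is the same.
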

\begin{proof}
For any $g\in \Sp_{2m}(\F_2)$, $Q_0^g\in \mathcal{Q}$. By the results from the previous steps (1 and 2),  there exists some $q\in Q_0^{-1}(0)$ such that 
$$Q_0^g(v)=Q_0(vg)=Q_0(v)+\langle v, q\rangle^2=Q_0^{T_q}(v).$$
This implies that $\Oa^+_{2m}(\F_2) g =\Oa^+_{2m}(\F_2)  T_q$.  For two  $q', q\in Q_0^{-1}(0)$, if $Q_0^{T_q}(v)=Q_0^{T_{q'}}(v)$, then $\langle v, q\rangle^2=\langle v, q'\rangle^2$, which implies $\langle v, q\rangle=\langle v, q'\rangle$ and thus $q=q'$.
\end{proof}
\begin{example}
If $m=1$, $\mathcal{M}'_{2}=\{ \begin{pmatrix} 1& 0\\ 0& 1\end{pmatrix},\begin{pmatrix} 1& 1\\ 0& 1\end{pmatrix},\begin{pmatrix} 1& 0\\-1& 1\end{pmatrix}\}$, $\mathcal{N}'_{2}=\{ \begin{pmatrix}0& 1\\-1& 0\end{pmatrix}\}$.
\end{example}
\begin{example}
In $\F_2$, $0=2$. If $m=2$,  
\[\mathcal{M}'_{4}=\Bigg\{\begin{pmatrix}
1 & 0 & 0 & 0 \\
0 & 1 & 0 & 0 \\
0 & 0 & 1 & 0 \\
0 & 0 & 0 & 1
\end{pmatrix},\begin{pmatrix}
1 & 0 & 0 & 0 \\
0 & 1 & 0 & 1 \\
0 & 0 & 1 & 0 \\
0 & 0 & 0 & 1
\end{pmatrix},\begin{pmatrix}
1 & 0 & 1 & 0 \\
0 & 1 & 0 & 0 \\
0 & 0 & 1 & 0 \\
0 & 0 & 0 & 1
\end{pmatrix},
\begin{pmatrix}
1 & 0 & 1 & 1 \\
0 & 1 & 1 & 1 \\
0 & 0 &1 & 0 \\
0 & 0 & 0 & 1
\end{pmatrix},\begin{pmatrix}
1 & 0 & 0 & 0 \\
0 & 1 & 0 & 0 \\
0 & 0 & 1 & 0 \\
0 & -1 & 0 & 1
\end{pmatrix},\]
\[\begin{pmatrix}
1 & 1 & 1 & 0 \\
0 & 1 & 0 & 0 \\
0 & 0 & 1 & 0 \\
0 & -1 & -1 & 1
\end{pmatrix},\begin{pmatrix}
1 & 0 & 0 & 0 \\
0 & 1 & 0 & 0 \\
-1 & 0 & 1 & 0 \\
0 & 0 & 0 & 1
\end{pmatrix},
\begin{pmatrix}
1 & 0 & 0 & 0 \\
1 & 1 & 0 & 1 \\
-1 & 0 & 1 & -1 \\
0 & 0 & 0 & 1
\end{pmatrix},\begin{pmatrix}
1 & 0 & 0 & 0 \\
0 & 1 & 0 & 0 \\
-1 & -1 & 1 & 0 \\
-1 & -1 & 0 & 1
\end{pmatrix},\begin{pmatrix}
2 & 1 & 1 & 1 \\
1 & 2 & 1 & 1 \\
-1 & -1 & 0 & -1 \\
-1 & -1 & -1 & 0
\end{pmatrix}\Bigg\}.\] \[\mathcal{N}'_{4}=\Bigg\{\begin{pmatrix}
1 & 0 & 0 & 0 \\
0 & 2 & 0 & 1 \\
0 & 0 & 1 & 0 \\
0 & -1 & 0 & 0
\end{pmatrix},\begin{pmatrix}
1 & 1& 1 & 1 \\
0 & 2 & 1 & 1 \\
0 & 0 & 1 & 0 \\
0 & -1 & -1 & 0
\end{pmatrix},\begin{pmatrix}
2 & 0 & 1 & 0 \\
0 & 1 & 0 & 0 \\
-1 & 0 & 0 & 0 \\
0 & 0 & 0 & 1
\end{pmatrix},\]
\[
\begin{pmatrix}
2 & 0 & 1 & 1 \\
1 & 1 & 1 & 1 \\
-1 & 0 & 0 &-1 \\
0 & 0 & 0 & 1
\end{pmatrix},\begin{pmatrix}
1 & 0 & 0 & 0 \\
1 & 2 & 0 & 1 \\
-1 & -1 & 1 & -1 \\
-1 & -1 & 0 & 0
\end{pmatrix},
\begin{pmatrix}
2 & 1 & 1 & 0 \\
0 & 1 & 0 & 0 \\
-1 & -1 & 0 & 0 \\
-1 & -1 & -1& 1
\end{pmatrix}\Bigg\}.\]
\end{example}
  We also consider these matrices as elements of  $\Sp_{2m}(\Z)$.
\begin{lemma}\label{represnetcoset}
The coset $\Gamma_{m}(1,2) \setminus \Sp_{2m}(\Z)$ can be represented by $\mathcal{M}'_{2m}$.
\end{lemma}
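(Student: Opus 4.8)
The plan is to lift the corresponding statement over $\F_2$ — the preceding lemma, that $\Oa^+_{2m}(\F_2)\setminus\Sp_{2m}(\F_2)$ is represented by $\mathcal{M}'_{2m}$ — along the reduction map $\kappa_2$ of the exact sequence $(\ref{gamma})$. Two preliminary facts are needed. First, for $q=\sum_{i}(x_ie_i+x_i^{\ast}e_i^{\ast})\in Q_0^{-1}(0)$ the matrix $M'_q$ has entries in $\{0,\pm1\}\subseteq\Z$ and is the matrix of the symplectic transvection $v\mapsto v+\langle v,q\rangle q$; since this transvection preserves $\langle\,,\,\rangle$ for \emph{any} $q$ (using $\langle q,q\rangle=0$), one has $M'_q\in\Sp_{2m}(\Z)$, with $M'_q$ and $(M'_q)^{-1}$ both integral, and reducing $M'_q$ modulo $2$ gives precisely the transvection $T_q$ on $V$ (because $-1\equiv1\bmod 2$), i.e.\ $\kappa_2(M'_q)=T_q$. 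Second, by the identification of $\Gamma_m(1,2)$ with $\Sp^q(2m,2)$ and the isomorphism $\Gamma_m(1,2)/\Gamma_m(2)\simeq\Oa^+_{2m}(\F_2)$ recalled above, one has $\Gamma_m(1,2)=\kappa_2^{-1}\!\big(\Oa^+_{2m}(\F_2)\big)$, where $\Oa^+_{2m}(\F_2)\subseteq\Sp_{2m}(\F_2)$ is the isometry group of $Q_0$ — i.e.\ exactly the subgroup appearing in the preceding lemma.

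\textbf{Main argument.} Granting these, surjectivity is immediate. Given $g\in\Sp_{2m}(\Z)$, apply the preceding lemma to $\kappa_2(g)\in\Sp_{2m}(\F_2)$ to obtain a (unique) $q\in Q_0^{-1}(0)$ with $\Oa^+_{2m}(\F_2)\,\kappa_2(g)=\Oa^+_{2m}(\F_2)\,T_q$. Then $\kappa_2\big(g(M'_q)^{-1}\big)=\kappa_2(g)\,T_q^{-1}\in\Oa^+_{2m}(\F_2)$, so $g(M'_q)^{-1}\in\kappa_2^{-1}(\Oa^+_{2m}(\F_2))=\Gamma_m(1,2)$, whence $\Gamma_m(1,2)\,g=\Gamma_m(1,2)\,M'_q$. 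For distinctness, suppose $\Gamma_m(1,2)\,M'_q=\Gamma_m(1,2)\,M'_{q'}$ with $q,q'\in Q_0^{-1}(0)$; applying $\kappa_2$ gives $\Oa^+_{2m}(\F_2)\,T_q=\Oa^+_{2m}(\F_2)\,T_{q'}$, and the preceding lemma forces $q=q'$. As a consistency check, $|\mathcal{M}'_{2m}|=|Q_0^{-1}(0)|=2^{2m-1}+2^{m-1}=(2^m+1)2^{m-1}=[\Sp_{2m}(\Z):\Gamma_m(1,2)]$.

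\textbf{Where the difficulty lies.} There is no genuine obstacle: the substantive combinatorics (Dye's classification of the quadratic forms in $\mathcal{Q}$, and the count of cosets) is already carried out in the $\F_2$ statement. The only points demanding care are the two preliminary facts — verifying directly that each $M'_q$ is an honest integral symplectic matrix (equivalently, invoking the standard fact that symplectic transvections are symplectic), and pinning down that the image $\kappa_2(\Gamma_m(1,2))$ is exactly the stabilizer $\Oa^+_{2m}(\F_2)$ of $Q_0$, so that the two coset spaces genuinely correspond under $\kappa_2$. Once these are in place, the lifting argument is formal.
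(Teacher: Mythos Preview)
Your proof is correct and follows exactly the paper's approach: the paper's proof consists of the single sentence ``By using the canonical map $\kappa:\Sp_{2m}(\Z)\to\Sp_{2m}(\F_2)$, we derive the desired result,'' and you have simply unpacked this, verifying that $\kappa_2(M'_q)=T_q$ and $\Gamma_m(1,2)=\kappa_2^{-1}(\Oa^+_{2m}(\F_2))$ so that the bijection $\Gamma_m(1,2)\setminus\Sp_{2m}(\Z)\to\Oa^+_{2m}(\F_2)\setminus\Sp_{2m}(\F_2)$ carries the preceding lemma over directly.
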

\begin{proof}
By using the canonical  map $\kappa:  \Sp_{2m}(\Z) \longrightarrow \Sp_{2m}(\F_2)$,  we derive the desired result.
\end{proof}
\begin{remark}
For the other discussion of  the coset $\Oa^+_{2m}(\F_2) \setminus \Sp_{2m}(\F_2)$, one can also see \cite{DaGe}, \cite{GuPrSp}, \cite{In}.
\end{remark}
\subsection{Some calculation} 
\begin{lemma}\label{Mproddcom}
\begin{itemize}
\item[(1)] $\begin{pmatrix}
2 & 1 & 1 & 1 \\
1 & 2 & 1 & 1 \\
-1 & -1 & 0 & -1 \\
-1 & -1 & -1 & 0
\end{pmatrix}=  \begin{pmatrix}
0 & -1 & 1 & 0 \\
-1 &0 & 0 & 1 \\
0 & 0 & 0 & -1 \\
0 & 0 & -1& 0
\end{pmatrix}\begin{pmatrix}
1 & 0 & -1 & 0 \\
0 & 1 & 0 & -1 \\
0 & 0 & 1 & 0 \\
0 & 0 & 0& 1
\end{pmatrix}\begin{pmatrix}
1 & 0 & 0 & 0 \\
0 & 1 & 0 & 0 \\
1 & 1 & 1& 0 \\
1 & 1 & 0& 1
\end{pmatrix}$.
\item[(2)] $\begin{pmatrix}
1 & 0 & 0 & 0 \\
0 & 1 & 0 & 0 \\
1 & 1 & 1& 0 \\
1 & 1 & 0& 1
\end{pmatrix}=\begin{pmatrix}
1 & -1 & 1 & 0 \\
0 & 1 & 0 & 0 \\
0 & 0 & 1& 0 \\
0 & 0 & 1& 1
\end{pmatrix}\begin{pmatrix}
0 & 0 & -1 & 0 \\
0 & 1 & 0 & 0 \\
1& 0 & 0& 0 \\
0& 0 & 0& 1
\end{pmatrix}\begin{pmatrix}
1 &0  & 1 & 0 \\
0 & 1 & 0 & 0 \\
0 & 0 & 1& 0 \\
0 & 0 & 0& 1
\end{pmatrix}\begin{pmatrix}
1 & 1 & 0 & 0 \\
0 & 1 & 0 & 0 \\
0 & 0 & 1& 0 \\
0 & 0 & -1& 1
\end{pmatrix}$.
\end{itemize}
\end{lemma}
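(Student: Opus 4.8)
The plan is to prove both identities by explicit multiplication in $\Sp_{4}(\Z)$, but to organize the two factorizations around the Bruhat--Iwahori picture of Theorem~\ref{BrIw} so that they are explained rather than merely checked. For part~(1), I would first observe that the left-hand matrix $M$ is the transvection $T_{q}$, equivalently the matrix $M'_{q}$, attached to $q=e_{1}+e_{2}+e_{1}^{\ast}+e_{2}^{\ast}\in Q_{0}^{-1}(0)$ (one has $Q_{0}(q)=x_{1}x_{1}^{\ast}+x_{2}x_{2}^{\ast}=1+1=0$), i.e.\ the last element listed in $\mathcal{M}'_{4}$. Since the $d$-block $D=\left(\begin{smallmatrix}0&-1\\-1&0\end{smallmatrix}\right)$ of $M$ is invertible over $\Z$, $M$ admits the decomposition $M=p'\,u_{-}(c)$ with $p'\in P_{X^{\ast}}(\Z)$ and $u_{-}(c)\in N_{X^{\ast}}^{-}(\Z)$; comparing the $2\times 2$ blocks gives $c=D^{-1}C=\left(\begin{smallmatrix}1&1\\1&1\end{smallmatrix}\right)$ and $p'=\left(\begin{smallmatrix}a&b'\\0&d'\end{smallmatrix}\right)$ with $a=d'=\left(\begin{smallmatrix}0&-1\\-1&0\end{smallmatrix}\right)$ and $b'=\left(\begin{smallmatrix}1&1\\1&1\end{smallmatrix}\right)$. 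Using $a^{2}=1_{2}$ one has $h(a)u(a)=\left(\begin{smallmatrix}a&1_{2}\\0&a\end{smallmatrix}\right)$, which is the first displayed factor, and $h(a)u(a)\cdot u(-1_{2})=h(a)u(a-1_{2})=\left(\begin{smallmatrix}a&1_{2}-a\\0&a\end{smallmatrix}\right)=p'$; hence $M=p'\,u_{-}(c)=\bigl(h(a)u(a)\bigr)\,u(-1_{2})\,u_{-}(c)$, which is the asserted identity once one notes $u(-1_{2})$ and $u_{-}(c)$ are the second and third displayed factors. A single multiplication of the three factors confirms it.

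For part~(2) the point is that $u_{-}(c)$ with $c=\left(\begin{smallmatrix}1&1\\1&1\end{smallmatrix}\right)$ does \emph{not} lie in $\Gamma_{m}(1,2)$, because its $c$-block has odd diagonal entries, so it cannot be written through the generators $\omega,h(a),u(b)$ of $\Gamma_{m}(1,2)$; one must instead realize it inside the ambient $\Sp_{4}(\Z)$ as a word in the elementary elements $v_{ij},u_{ii},\omega_{S_{i}}$, precisely those on which the splitting function $m_{X^{\ast}}$ and the metaplectic $2$-cocycle can later be evaluated one factor at a time. I would verify $u_{-}(c)=\bigl(v_{12}(-1)u_{11}(1)\bigr)\,\omega_{S_{1}}\,u_{11}(1)\,v_{12}(1)$ by computing from the right: first the upper-unipotent block $u_{11}(1)v_{12}(1)$, then left multiplication by $\omega_{S_{1}}=\left(\begin{smallmatrix}0&0&-1&0\\0&1&0&0\\1&0&0&0\\0&0&0&1\end{smallmatrix}\right)$, which interchanges the $e_{1}$ and $e_{1}^{\ast}$ coordinates up to sign and so pushes the block below the diagonal, and finally left multiplication by $v_{12}(-1)u_{11}(1)$, which restores the top-left and bottom-right $2\times 2$ blocks to $1_{2}$. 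The resulting product is $\left(\begin{smallmatrix}1&0&0&0\\0&1&0&0\\1&1&1&0\\1&1&0&1\end{smallmatrix}\right)=u_{-}(c)$, which is the identity in question.

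I do not anticipate a genuine obstacle: both assertions are finite explicit equalities in $\Sp_{4}(\Z)$, and the proof amounts to a short computation. The real content is the \emph{choice} of the two particular factorizations, made so that every factor belongs to the prescribed generating set and is therefore usable in the subsequent evaluation of $m_{X^{\ast}}$ and of the Perrin--Rao cocycle; along the way one also checks, routinely, that each displayed factor is integral and symplectic, via the block relations $ab^{T}=ba^{T}$, $cd^{T}=dc^{T}$, $ad^{T}-bc^{T}=1_{m}$.
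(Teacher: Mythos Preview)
Your proposal is correct. The paper states Lemma~\ref{Mproddcom} without proof, treating both identities as direct matrix computations to be checked; your explicit verification by multiplication, together with the identification of the factors as $h(a)u(a)$, $u(-1_2)$, $u_-(c)$ in part~(1) and as $v_{12}(-1)u_{11}(1)$, $\omega_{S_1}$, $u_{11}(1)$, $v_{12}(1)$ in part~(2), is exactly what is intended and matches the labelling the paper introduces immediately after the statement (the factors $p_1,\omega_S,n_2,m_2$ and $u_{1111}^0,u_{1111}^1,u_{1111}^2$).
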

\begin{itemize}\label{decomsmatr}
\item[$\ast$] In the above equality (1),  let us denote the first matrix as $u_{1111}$, the second as $u_{1111}^0$,   the third as $u_{1111}^1$ and the fourth as $u_{1111}^2$. Note that  the second matrix belongs to $\Gamma_2(1,2)$.
\item[$\ast\ast$] In the above equality (2),  let us denote the second  matrix as $p_1$,  the third as $\omega_{S}$, the fourth as $n_2$ and the fifth as $m_2$, where $S=\{1\}$.
\end{itemize}
\begin{lemma}\label{calculation4}
If $\dim W=4$, then $\widetilde{c}_{X^{\ast}}(u_{1111}^1u_{1111}^2, \omega)^{-1}\stackrel{\textcircled{1}}{=}\widetilde{c}_{X^{\ast}}(u_{1111}^2, \omega)^{-1}\stackrel{\textcircled{2}}{=}m_{X^{\ast}}(u_{1111}^2)\stackrel{\textcircled{3}}{=}m_{X^{\ast}}(u_{1111}^1u_{1111}^2)=e^{-\tfrac{\pi i}{4}}$.
\end{lemma}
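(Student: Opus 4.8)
The plan is to read the displayed chain from right to left: the only genuine computation is the value $\widetilde{c}_{X^{\ast}}(u_{1111}^2,\omega)=e^{\pi i/4}$ (equivalently $m_{X^{\ast}}(u_{1111}^2)=e^{-\pi i/4}$), while the three equalities $\textcircled{1}$, $\textcircled{2}$, $\textcircled{3}$ should come out as formal consequences of the structural properties of the Perrin--Rao cocycles and of the difference function $m_{X^{\ast}}$ recorded above. The two observations that drive everything are that $u_{1111}^1=u(-1_2)$ lies in $N_{X^{\ast}}(W)$, whereas $u_{1111}^2=u_-(c)$ lies in $N^{-}_{X^{\ast}}(W)$ with $c=\begin{pmatrix}1&1\\1&1\end{pmatrix}$ a symmetric matrix of rank $1$.

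For $\textcircled{1}$ and $\textcircled{3}$ I would use that both the $8$-degree cocycle $\widetilde{c}_{X^{\ast}}$ and the $2$-degree cocycle $\overline{c}_{X^{\ast}}$ are trivial on any pair whose first entry lies in $N_{X^{\ast}}(W)$, that $m_{X^{\ast}}$ is trivial on $N_{X^{\ast}}(W)$, and that the two cocycles differ by the coboundary of $m_{X^{\ast}}$, namely $\widetilde{c}_{X^{\ast}}(g,h)=\overline{c}_{X^{\ast}}(g,h)\,m_{X^{\ast}}(g)m_{X^{\ast}}(h)m_{X^{\ast}}(gh)^{-1}$. Then $\textcircled{1}$ falls out of the $2$-cocycle identity applied to the triple $(u_{1111}^1,u_{1111}^2,\omega)$ together with $\widetilde{c}_{X^{\ast}}(u_{1111}^1,u_{1111}^2)=\widetilde{c}_{X^{\ast}}(u_{1111}^1,u_{1111}^2\omega)=1$, and $\textcircled{3}$ falls out of the coboundary relation applied to $(u_{1111}^1,u_{1111}^2)$, using $\widetilde{c}_{X^{\ast}}(u_{1111}^1,u_{1111}^2)=\overline{c}_{X^{\ast}}(u_{1111}^1,u_{1111}^2)=1$ and $m_{X^{\ast}}(u_{1111}^1)=1$. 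Equality $\textcircled{2}$ I would read off as the identity $m_{X^{\ast}}(u_-(c))=\widetilde{c}_{X^{\ast}}(u_-(c),\omega)^{-1}$ on $N^{-}_{X^{\ast}}(W)$ — essentially the way the Satake--Takase square root $\sqrt{\det(cz+d)}=J_{1/2}(g,z)m_{X^{\ast}}(g)^{-1}$ gets matched on the big cell with the section defining $\widetilde{c}_{X^{\ast}}$; for $u_-(c)$ with $c$ of rank $1$ it reduces to comparing two reciprocal Weil indices.

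It then remains to evaluate $\widetilde{c}_{X^{\ast}}(u_{1111}^2,\omega)$. I would first invoke the $M_{X^{\ast}}(W)$-equivariance of $\widetilde{c}_{X^{\ast}}$ — harmless over $\R$, since the relevant $\det$ is a positive real and hence a square — to replace $c=vv^{T}$, $v=(1,1)^{T}$, by its normal form $\diag(1,0)$. After this replacement $u_{1111}^2$ is a lower-unipotent element supported in the image of $\iota_1$, and since $\omega=\omega_{S_1}\cdot\iota_2\!\big(\begin{pmatrix}0&1\\-1&0\end{pmatrix}\big)$ with $\omega_{S_1}$ itself in the image of $\iota_1$, the whole computation factors through $\SL_2(\R)\times\SL_2(\R)$ with the $\iota_2$-factor contributing $1$; by the $\iota$-compatibility of the Perrin--Rao cocycle it then reduces to the classical $m=1$ value of $\widetilde{c}_{X^{\ast}}$ on a lower-unipotent element against the standard Weyl element — a single Weil index $\gamma_{\R}(\psi_0)=e^{\pi i/4}$ (Rao, Lion--Vergne). (Alternatively: feed $u_-(c)$ with $\rank c=1$ and $\omega=\omega_{S_2}$ straight into the Perrin--Rao formula, where all Hilbert symbols are trivial over $\R$ and the rank-$1$ block produces exactly one Weil-index factor.) This gives $\widetilde{c}_{X^{\ast}}(u_{1111}^2,\omega)^{-1}=e^{-\pi i/4}$, and with $\textcircled{2}$, $\textcircled{3}$ also $m_{X^{\ast}}(u_{1111}^2)=m_{X^{\ast}}(u_{1111}^1u_{1111}^2)=e^{-\pi i/4}$, completing the chain.

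The hard part is purely bookkeeping of normalizations: the cocycles of Rao, Kudla, Perrin, Lion--Vergne and the automorphy factor of Satake--Takase use several mutually incompatible conventions, so one must tie everything to the explicit formulas fixed in the earlier sections, track the sign of the Weil index $\gamma_{\R}(\psi_0)$ (whether it is $e^{\pi i/4}$ or $e^{-\pi i/4}$), and check that neither the $M_{X^{\ast}}(W)$-conjugation nor the embeddings $\iota_i$ nor the factorization of $\omega$ introduces a spurious eighth root of unity when one passes from $\Sp_4(\R)$ to $\SL_2(\R)\times\SL_2(\R)$. Everything else reduces to formal manipulation of the cocycle and coboundary identities.
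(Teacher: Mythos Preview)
Your approach is correct and reaches the same $\SL_2$ endpoint, but it diverges from the paper in two places worth noting.

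For $\textcircled{3}$ the paper does not use the coboundary relation; it simply observes from the explicit Bruhat decomposition $u_{1111}^2=p_1\omega_S n_2 m_2$ (provided in the preceding lemma) that $j(h_1h_2)=j(h_2)=1$ and $x(h_1h_2)=x(h_2)$, so the defining formula for $m_{X^{\ast}}$ gives the same value. Your coboundary argument is equally valid and arguably cleaner, since it avoids inspecting the Bruhat cell of $h_1h_2$.

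For $\textcircled{2}$ and the numerical value, the paper does \emph{not} appeal to a general identity $m_{X^{\ast}}(u_-(c))=\widetilde{c}_{X^{\ast}}(u_-(c),\omega)^{-1}$. It computes the two sides separately and checks they coincide: using $u_{1111}^2=p_1\omega_S n_2 m_2$ it strips the parabolic factors from $\widetilde{c}_{X^{\ast}}(h_2,\omega)$, factors $\omega=\omega_S\omega_{S'}$, and lands directly on the $\SL_2$ value $\widetilde{c}_{X^{\ast}}(\omega_1 u(1),\omega_1)=\gamma(\psi^{1/2})=e^{\pi i/4}$; and $m_{X^{\ast}}(h_2)=\gamma(\psi^{1/2})^{-1}$ follows immediately from $j=1$, $x=1$. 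Your route---conjugate by $h(a)$ with $\det a>0$ to send $c$ to $\diag(1,0)$, then split via $\iota_1\times\iota_2$---is a legitimate alternative reduction to the same $\SL_2$ Weil index, and your remark that positive determinant makes the conjugation invisible to both $\widetilde{c}_{X^{\ast}}$ and $m_{X^{\ast}}$ is exactly right. The paper's path is shorter only because the Bruhat decomposition of $u_{1111}^2$ was already set up in the preceding lemma; your path is more transportable. The one soft spot in your write-up is the justification of $\textcircled{2}$ via the Satake--Takase square root: that sentence is too vague to stand on its own, but it is also unnecessary, since your value computation yields both sides of $\textcircled{2}$ independently.
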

\begin{proof}
Let $h_1=u_{1111}^1$, $h_2=u_{1111}^2$. \\
1) $\widetilde{c}_{X^{\ast}}(h_1h_2, \omega)=\widetilde{c}_{X^{\ast}}(h_1,h_2)\widetilde{c}_{X^{\ast}}(h_1h_2, \omega)=\widetilde{c}_{X^{\ast}}(h_1,h_2\omega)\widetilde{c}_{X^{\ast}}(h_2, \omega)=\widetilde{c}_{X^{\ast}}(h_2, \omega).$ So the equality $\textcircled{1}$ holds.\\
2)  By (\ref{chap2mx}), $m_{X^{\ast}}(h_2)=  \gamma(x(h_2), \psi^{\tfrac{1}{2}})^{-1} \gamma(\psi^{\tfrac{1}{2}})^{-j(h_2)}=\gamma(x(h), \psi^{\tfrac{1}{2}})^{-1} \gamma(\psi^{\tfrac{1}{2}})^{-j(h)}=m_{X^{\ast}}(h)$. So the equality $\textcircled{3}$ holds.\\
3) Recall  ($\ast\ast$) after Lemma \ref{Mproddcom}. Let $S'=\{2\}$, and $\omega_{S'}=\begin{pmatrix}
1& 0 &  0& 0 \\
0 & 0 & 0 & -1 \\
0& 0 & 1& 0 \\
0& 1 & 0& 0
\end{pmatrix}$. So $\omega=\omega_S\omega_{S'}$. Then:
  \[m_{X^{\ast}}(h_2)=\gamma(x(h_2), \psi^{\tfrac{1}{2}})^{-1} \gamma(\psi^{\tfrac{1}{2}})^{-j(h_2)}=\gamma(\psi^{\tfrac{1}{2}})^{-1}=e^{-\tfrac{\pi i}{4}};\]
  \[\widetilde{c}_{X^{\ast}}(h_2, \omega)=\widetilde{c}_{X^{\ast}}(p_1\omega_Sn_2m_2, \omega)=\widetilde{c}_{X^{\ast}}(\omega_Sn_2, \omega_S\omega_{S'})=\widetilde{c}_{X^{\ast}}(\omega_Sn_2, \omega_S)\]
  \[=\widetilde{c}_{X^{\ast}}(\begin{pmatrix}
0& -1  \\
 1& 0
\end{pmatrix}\begin{pmatrix}
1& 1  \\
 0&     1
\end{pmatrix}, \begin{pmatrix}
0& -1  \\
 1& 0
\end{pmatrix})=\gamma(\psi^{\tfrac{1}{2}})=e^{\tfrac{\pi i}{4}}\]
\end{proof}
\subsection{Modification of $M_q'$}\label{modification11}
For later use, let us consider the composition of the matrix $M_q'$  in terms of its indices $i$. Consider $q= \sum_{i=1}^m x_i e_i + x_i^{\ast} e_i^{\ast}\in Q_0^{-1}(0)$. Write:
\begin{itemize}
\item $q_i=x_i e_i + x_i^{\ast} e_i^{\ast}$.
\item $\mathcal{S}^q_0=\{ i \mid Q_0(q_i)=0\}=\{ i_1, \cdots, i_r\}$.
\item $\mathcal{S'}^q_1=\{ i \mid Q_0(q_i)=1\}$.
\end{itemize}
Note that $Q_0(q)=\sum_{i=1}^m Q_0(q_i)=0$, so the number of $\mathcal{S'}^q_1$ is even. We partition $\mathcal{S'}^q_1$
  into distinct pairs  $(j_1,k_1), \cdots, (j_s, k_s)$.  Fixing such a decomposition, we denote $\mathcal{S}^q_1$ as  the set of all such pairs. 
In relation to $T_q$, we define the map  $T_q'$ of composition of transvections on $V$ as follows:
\[T_q': v \longmapsto v + \sum_{i\in \mathcal{S}^q_0} \langle v, q_i \rangle q_i +  \sum_{(j,k) \in \mathcal{S}^q_1} \langle v, q_j+ q_k\rangle (q_j+ q_k).\]
Then:
\begin{align*}
&Q_0^{T'_q}(v) =Q_0(vT'_q)\\
&=  Q_0(v) + \sum_{i\in \mathcal{S}^q_0} \langle v, q_i \rangle^2 Q_0( q_i)+ \sum_{(j,k) \in \mathcal{S}^q_1} \langle v, q_j+ q_k\rangle^2 Q_0((q_j+ q_k))+\sum_{i\in \mathcal{S}^q_0} \langle v, q_i \rangle^2 +  \sum_{(j,k) \in \mathcal{S}^q_1} \langle v, q_j+ q_k\rangle^2\\
&= Q_0(v)+\sum_{i\in \mathcal{S}^q_0} \langle v, q_i \rangle^2 +  \sum_{(j,k) \in \mathcal{S}^q_1} \langle v, q_j+ q_k\rangle^2\\
&= Q_0(v) + \langle v, q \rangle^2=Q_0^{T_q}(v). 
\end{align*}
Hence $\Oa^+_{2m}(\F_2) T_q =\Oa^+_{2m}(\F_2) T_q'$.  Let $M_q''$  denote the corresponding  matrix.

 Let $V_i=\F_2e_i+\F_2e_i^{\ast}$ and $V_{(j,k)}=\F_2e_j+\F_2e_j^{\ast}+\F_2e_k+\F_2e_k^{\ast}$. Then there exists a group homomorphism:
$$\iota=(\iota_{i_1}, \cdots, \iota_{i_r}, \iota_{(j_1, k_1)}, \cdots, \iota_{(j_s, k_s)} ): \SL_2(V_{i_1}) \times  \cdots \times \SL_2(V_{i_r}) \times \Sp(V_{(j_1,k_1)}) \times  \cdots \times \Sp(V_{(j_s,k_s)})\longrightarrow \Sp(V).$$
\begin{itemize}
\item For each $i\in \mathcal{S}^q_0$, let  $x^{i}_q=(0,\cdots,0, x_i, 0,\cdots,0)$, $x_q^{\ast i}=(0,\cdots,0, x_i^{\ast}, 0,\cdots,0)$ and  $M_q^{'i}= 1_{2m}+\begin{pmatrix}(x^{\ast i}_q)^T\\-(x_q^{ i})^T\end{pmatrix}  \cdot \begin{pmatrix} x^{i}_q& x_q^{\ast i}\end{pmatrix}$.   we treat $\mathcal{M}'_{2}$,  $\mathcal{N}'_{2}$ as the subsets of $ \SL_2(V_i)$.  Then $M_q^{'i}=\iota_i(u)$, for $u\in \mathcal{M}'_{2}$.
\item For each $(j,k)\in \mathcal{S}^q_1$, let  $x^{(j,k)}_q=(0,\cdots,0, x_j, 0,\cdots,0,x_k, 0,\cdots,0,)$, $x_q^{\ast(j,k)}=(0,\cdots,0, x_j^{\ast}, 0,\cdots,0, x_k^{\ast}, 0,\cdots,0,)$ and $M_q^{'(j,k)}= 1_{2m}+\begin{pmatrix}(x_q^{\ast(j,k)})^T\\-(x^{(j,k)}_q)^T\end{pmatrix}  \cdot \begin{pmatrix}x^{(j,k)}_q& x_q^{\ast(j,k)}\end{pmatrix}=\iota_{(j,k)}(u_{1111})$.
\item  These matrices $M_q^{'i}$, $M_q^{'(j,k)}$ are pairwise commutative with each other.
\item $M_q''=(\prod_{i\in \mathcal{S}^q_0} M_q^{'i}) \cdot (\prod_{ (j,k)\in\mathcal{S}^q_1}M_q^{'(j,k)})$.
\end{itemize}
We also consider these matrices   as  elements of  $\Sp_{2m}(\Z)$. By Lemma \ref{Mproddcom}, we have:
 \begin{itemize}
 \item $u_{1111}=u_{1111}^0 u_{1111}^1u_{1111}^2$, and $u_{1111}^0\in \Gamma_2(1,2)$.
 \item $M_q^{'(j,k)}=\iota_{(j,k)}(u_{1111})=\iota_{(j,k)}(u_{1111}^0)\iota_{(j,k)}(u_{1111}^1)\iota_{(j,k)}(u_{1111}^2)$, with $\iota_{(j,k)}(u_{1111}^0)\in \Gamma_m(1,2)$.
 \item These matrices $\iota_{(j,k)}(u_{1111}^{l})$ also commute with the other  matrices $M_q^{'i}$ and  $M_q^{'(j',k')}$ of different indexes $i$ and $(j',k')$.
 \end{itemize}
 Finally, let us modify the matrices $M_q'$ and $M_q''$ as follows:
 \begin{definition}\label{Mqijk}
 \begin{itemize}
 \item $M_q^{i}=M_q^{'i}$, for $i\in\mathcal{S}^q_0$. 
 \item  $M_q^{(j,k)}=\iota_{(j,k)}(u_{1111}^1)\iota_{(j,k)}(u_{1111}^2)$, for $(j,k)\in \mathcal{S}^q_1$.
 \item $M_q=(\prod_{i\in \mathcal{S}^q_0} M_q^{i}) \cdot (\prod_{ (j,k)\in\mathcal{S}^q_1}M_q^{(j,k)})$.
 \item  Let $\mathcal{M}_{2m}$ be the set of all such $M_q$ as $q$ varies over the elements of $Q_0^{-1}(0) $.
 \end{itemize}
 \end{definition}
 It is clear that $\Gamma_m(1,2) M_q'=\Gamma_m(1,2) M_q''=\Gamma_m(1,2) M_q$.
\subsection{The group $\Gamma_m(d,2d)$}
For a matrix $g$, let $g(i,j)$ denote the element of its  $(i,j)$-position. Denote by $\mathfrak{sp}_{2m}(\Z/d\Z)$ the additive group of all $2m \times 2m$ matrices $g$ with entries in $\Z/d\Z$ such that $({g}^T)\omega +\omega g =0$.   Let: 
\begin{itemize}
\item $\Gamma_m(d)=\{ g\in \Sp_{2m}(\Z) \mid g=1_{2m}+ d g', g' \in M_{2m}(\Z) \}$. 
\item $\Gamma_m(d,2d)=\{ g\in \Gamma_m(d) \mid g=1_{2m}+ d g', g'(m+i,i)\equiv  g'(i,m+i)\equiv 0 (\bmod 2), i = 1, 2, \cdots, m \}$.
\end{itemize}

\begin{proposition}\label{gamma2q}
For an even $d$ and $m\geq 2$, we have:
\begin{itemize}
\item[(1)] $\Gamma_m(d,2d)$ is a normal subgroup of $\Sp_{2m}(\Z)$.
\item[(2)] $[\Gamma_m(d),\Gamma_m(d)]=\Gamma_m(d^2,2d^2)$.
\item[(3)]  There exists an exact sequence:
$ 0 \rightarrow\frac{\Gamma_m(d^2)}{\Gamma_m(d^2,2d^2)} \longrightarrow \frac{\Gamma_m(d)}{\Gamma_m(d^2,2d^2)}\longrightarrow \mathfrak{sp}_{2m}(\Z/d\Z)\longrightarrow 0.$
\item[(4)] The images of all elements $u_{ij}(d)$,  $u^-_{ij}(d)$, $v_{ij}(d)$  generate $\frac{\Gamma_m(d)}{\Gamma_m(d^2,2d^2)}$.
\item[(5)] The images of all  elements $u_{ii}(d^2)$,   $u^-_{ii}(d^2)$ generate $\frac{\Gamma_m(d^2)}{\Gamma_m(d^2,2d^2)}$.
\end{itemize}
\end{proposition}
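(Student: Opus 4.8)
The common engine for parts (3)--(5) is the \emph{linearization isomorphism}
$$\Gamma_m(d)/\Gamma_m(d^2)\ \xrightarrow{\ \sim\ }\ \mathfrak{sp}_{2m}(\Z/d\Z),\qquad 1_{2m}+dg'\ \longmapsto\ g'\bmod d,$$
which I would establish first. For $g=1_{2m}+dg'\in\Sp_{2m}(\Z)$ the relation $g^{T}\omega g=\omega$ forces $(g')^{T}\omega+\omega g'=-d\,(g')^{T}\omega g'\equiv 0\ (\bmod\ d)$, so $g'\bmod d$ indeed lies in $\mathfrak{sp}_{2m}(\Z/d\Z)$; the assignment is a homomorphism since $(g_1g_2)'\equiv g_1'+g_2'\ (\bmod\ d)$, its kernel is $\Gamma_m(d^2)$, and it is surjective because $\Sp_{2m}(\Z)\twoheadrightarrow\Sp_{2m}(\Z/d^2\Z)$ while $\ker\bigl(\Sp_{2m}(\Z/d^2\Z)\to\Sp_{2m}(\Z/d\Z)\bigr)=\{1_{2m}+dX:X\in\mathfrak{sp}_{2m}(\Z/d\Z)\}$. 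Running the same computation with $d^{2}$ in place of $d$ and reduction $\bmod\ 2$ in place of $\bmod\ d$, and using $2\mid d$ to annihilate the quadratic correction term, yields a surjection $\psi\colon\Gamma_m(d^{2})\twoheadrightarrow\F_2^{2m}$, $1_{2m}+d^{2}g'\mapsto\bigl(g'(m+i,i),\,g'(i,m+i)\bigr)_{i}\bmod 2$, whose kernel is exactly $\Gamma_m(d^{2},2d^{2})$; since $u_{ii}(d^{2})$ and $u^{-}_{ii}(d^{2})$ map to the standard basis of $\F_2^{2m}$, this already gives (5). The identical formula one level lower defines $\phi\colon\Gamma_m(d)\twoheadrightarrow\F_2^{2m}$ with $\ker\phi=\Gamma_m(d,2d)$.

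For (1) I would observe that $\phi$ factors as $\Gamma_m(d)\to\mathfrak{sp}_{2m}(\F_2)\xrightarrow{\,Q\,}\Hom(V,\F_2)$, where $Q(X)$ is the functional $v\mapsto\langle v,Xv\rangle$. Over $\F_2$ this functional is \emph{linear} in $v$---for $X\in\mathfrak{sp}$ one has $\langle v,Xw\rangle=\langle w,Xv\rangle$, so the cross term is $2\langle v,Xw\rangle=0$---the map $X\mapsto Q(X)$ is $\Sp_{2m}(\F_2)$-equivariant for the contragredient action, and $Q(X)(e_i)$ and $Q(X)(e_i^{\ast})$ are, up to the (irrelevant) sign, the two diagonal entries defining $\phi$; hence $\ker Q$ is exactly the subspace cutting out $\Gamma_m(d,2d)$. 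Because the $\Sp_{2m}(\Z)$-conjugation action on $\Gamma_m(d)/\Gamma_m(d^{2})\cong\mathfrak{sp}_{2m}(\Z/d\Z)$ reduces $\bmod\ 2$ to the adjoint action of $\Sp_{2m}(\F_2)$, the preimage of the submodule $\ker Q$---namely $\Gamma_m(d,2d)$---is conjugation-stable in $\Sp_{2m}(\Z)$, which is (1). Part (3) is then formal: apply the linearization isomorphism to $\Gamma_m(d^{2},2d^{2})\subseteq\Gamma_m(d^{2})\subseteq\Gamma_m(d)$ and identify the left-hand term via $\psi$.

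For (2), the inclusion $\subseteq$ is a short computation: with $g=1_{2m}+dg'$ and $h=1_{2m}+dh'$ one has $[g,h]=1_{2m}+d^{2}[g',h']+O(d^{3})$, and the $(i,m+i)$- and $(m+i,i)$-entries of $([g,h]-1_{2m})/d^{2}$ come out even, using that the off-diagonal $m\times m$ blocks of $g',h'$ are symmetric modulo $d$, that the lower-right block of each is minus the transpose of its upper-left block modulo $d$, and that $2\mid d$; hence $[g,h]\in\Gamma_m(d^{2},2d^{2})$. The reverse inclusion is the heart of the proposition and the step I expect to be the main obstacle. One must exhibit a generating family of $\Gamma_m(d^{2},2d^{2})$---the short-root elementary matrices $u_{ij}(d^{2}),u^{-}_{ij}(d^{2}),v_{ij}(d^{2})$ ($i\ne j$) together with the long-root matrices $u_{ii}(2d^{2}),u^{-}_{ii}(2d^{2})$, up to $\Sp_{2m}(\Z)$-conjugacy if necessary---and write each as a product of commutators of elements of $\Gamma_m(d)$ by means of the Chevalley commutator relations of the type-$C_m$ group $\Sp_{2m}$. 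The place where the factor $2$ in $\Gamma_m(d^{2},2d^{2})$ is forced is the relation $[x_{e_i-e_j}(d),x_{e_i+e_j}(d)]=x_{2e_i}(\pm 2d^{2})$, whose structure constant is $\pm2$ precisely because the long root $2e_i$ is a sum of two short roots; the hypothesis $m\ge 2$ is exactly what provides the auxiliary index $j$ needed to run these identities. Granting all this, (4) follows by assembling (3), (5) and the same root-group bookkeeping: the images of $u_{ij}(d),u^{-}_{ij}(d),v_{ij}(d)$ span the image of $\Gamma_m(d)$ in $\mathfrak{sp}_{2m}(\Z/d\Z)$ (for the toral directions one uses their $\Sp_{2m}(\Z)$-conjugates, equivalently the elementary generation of $\Sp_{2m}(\Z,d\Z)$ available for $m\ge 2$), while the $d$-th powers $u_{ii}(d)^{d}=u_{ii}(d^{2})$ and $u^{-}_{ii}(d)^{d}=u^{-}_{ii}(d^{2})$ exhaust $\Gamma_m(d^{2})/\Gamma_m(d^{2},2d^{2})$ by (5); an element on which both pieces of data vanish lies in $\Gamma_m(d^{2},2d^{2})$, so the listed elements generate $\Gamma_m(d)/\Gamma_m(d^{2},2d^{2})$.
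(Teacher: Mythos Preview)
The paper does not actually prove this proposition; it simply cites \cite[Section 10]{Sato}. Your sketch is therefore doing far more than the paper does, and the architecture you propose---the linearization isomorphism $\Gamma_m(d)/\Gamma_m(d^2)\simeq\mathfrak{sp}_{2m}(\Z/d\Z)$, the $\Sp_{2m}(\F_2)$-equivariant quadratic functional $Q$ to prove normality in~(1), and the Chevalley commutator relations for~(2)---is exactly the right toolkit and is essentially what underlies Sato's argument. Parts (1), (3), (5) and the inclusion $\subseteq$ in (2) are handled correctly; you are also right that the reverse inclusion in (2) is the substantive step and that the commutator identity $[x_{e_i-e_j}(d),x_{e_i+e_j}(d)]=x_{2e_i}(\pm 2d^{2})$ is where the factor~$2$ and the hypothesis $m\ge 2$ enter.

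There is one genuine soft spot, in your argument for~(4). You claim the images of $u_{ij}(d),u^{-}_{ij}(d),v_{ij}(d)$ span all of $\mathfrak{sp}_{2m}(\Z/d\Z)$, handling the ``toral directions'' by appealing to $\Sp_{2m}(\Z)$-conjugates or to elementary generation of $\Gamma_m(d)$. But the images of those generators are precisely the root vectors, which span only $\bigoplus_\alpha\mathfrak g_\alpha$ and miss the Cartan $\mathfrak h$ (the diagonal-$A$ part) entirely. Since by~(2) the quotient $\Gamma_m(d)/\Gamma_m(d^2,2d^2)$ is abelian, generation coincides with $\Z$-span of images, so passing to $\Sp_{2m}(\Z)$-conjugates inside the quotient buys nothing, and the ``elementary generation'' input you invoke is itself delicate over~$\Z$: the standard results (Bass--Milnor--Serre, Tits) give $\Gamma_m(d)$ as the \emph{normal} closure of the level-$d$ root groups, and the quick normality argument via Chevalley relations breaks down for a pair $\alpha,-\alpha$ of opposite roots. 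Concretely, for $d=2$ the element $h(\mathrm{diag}(-1,1,\dots,1))$ lies in $\Gamma_m(2)$ and maps to a nonzero Cartan element in $\mathfrak{sp}_{2m}(\F_2)$, which is not in the span of root vectors. So either a sharper generation theorem is needed here, or Sato's precise formulation of (3)--(4) organizes the Cartan contribution differently; it is worth consulting \cite{Sato} to see exactly how this is bookkept.
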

\begin{proof}
See \cite[Section 10, Prop.10.1, (10.3)]{Sato}.
\end{proof}

\section{The Heisenberg representation and its models}\label{heisenbergrepm}

\subsection{Heisenberg representation I}
  A $\R$-subspace $Y$  is called a \emph{lagrangian} in $W$ if
$Y$ is a maximal isotropic subspace. Let $\psi_{Y}$ denote   a  character of $Y\times \R$ extended from $\psi$.  Let
$$ \pi_{Y,\psi}=\Ind_{Y\times \R}^{\Ha(W)}\psi_{Y}, \quad\quad  V_{Y, \psi}=\Ind_{Y \times \R}^{\Ha(W)} \C.$$
Then $\pi_{Y,\psi}$ defines a Heisenberg representation of  $\Ha(W)$ with central character $\psi$. 
\begin{example}[Schr\"odinger Model]\label{ex1}
If   $Y=X^{\ast}$, let $\mathcal{H}(X^{\ast})$ be  the set of measurable functions $f: \Ha(W) \longrightarrow \C$ such that
\begin{itemize}
\item[(i)] $f([x^{\ast},t]h)=\psi(t) f(h)$, all $x^{\ast}\in X^{\ast}$, $t\in \R$, and almost all $h\in \Ha(W)$;
\item[(ii)] $\int_{(X^{\ast}\times \R)\setminus \Ha(W)} ||f(\dot{h})||^2 d\dot{h}<+\infty$, where  $d\dot{h}$ is an $\Ha(W)$-right invariant measure on $(X^{\ast}\times \R)\setminus \Ha(W)$.
\end{itemize}
As  $(X^{\ast}\times \R)\setminus \Ha(W)$ is homeomorphic to $X^{\ast}$,   $\pi_{X^{\ast},\psi}$ can be realized on  $L^2(X)$ by the following formulas:
\begin{equation}\label{chap8representationsp211}
\pi_{X^{\ast},\psi}[(x,0)(x^{\ast},0)(0,k)]g(y)=\psi(k+\langle x+y,x^{\ast}\rangle) g(x+y),
\end{equation}
for  $g\in L^2(X)$, $x\in X$, $x^{\ast}\in X^{\ast}$, $k\in \R$.
\end{example}

A set $L$ is called  a \emph{lattice} in $W$ if  it is a free $\Z$-module of full rank in $W$.  For a  lattice $L$, we define:
$$L^{\ast}=\{ w\in W\mid \psi(\langle w, l\rangle)=1, \textrm{ for all } l\in L\}.$$
If $L^{\ast}=L$, we call $L$ a \emph{self-dual lattice} with respect to $\psi$. Now let $L$ be a self-dual lattice. Let $\Ha(L)=L\times \R$  denote  the correspondence Heisenberg  subgroup of $\Ha(W)$.  Let $\psi_{L}$ denote an extended  character of $\Ha(L)$  from $\psi$ of  $\R$.
Let us define the representation:
 $$(\pi_{L,\psi}=\Ind_{\Ha(L)}^{\Ha(W)} \psi_{L}, \quad V_{L,\psi}=\Ind_{\Ha(L)}^{\Ha(W)} \C).$$
Then $\pi_{L,\psi}$ also defines a Heisenberg representation of $\Ha(W)$ associated to  the center character $\psi$.
\begin{example}[Lattice model]\label{ex2}
If  $\psi=\psi_0$, $L=\Z e_1 \oplus  \cdots \oplus \Z e_m \oplus \Z e_1^{\ast} \oplus \cdots \oplus \Z e_m^{\ast}$, we let $\mathcal{H}(L)$ be the set of measurable functions $f: W \longrightarrow \C$ such that
\begin{itemize}
\item[(i)] $f(l+w)=\psi(-\tfrac{\langle x_1, x_l^{\ast}\rangle }{2}-\tfrac{\langle l, w\rangle}{2}) f(w)$, for all $l=x_l+x_{l}^{\ast}\in L=(L\cap X)\oplus (L\cap X^{\ast})$, almost all  $w\in W$;
\item[(ii)] $\int_{L\setminus W} ||f(w)||^2 dw<+\infty$.
\end{itemize}
Here, we choose a $W$-right invariant measure on $L\setminus W$. Then $\pi_{L,\psi}$ can be realized on $\mathcal{H}(L)$ by the following formulas:
$$\pi_{L,\psi}([w',t])f(w)=\psi(t+\tfrac{\langle w, w'\rangle}{2})f(w+w')$$
for $w,w'\in W$, $t\in \R$.
\end{example}
\subsection{Heisenberg representation II}
Let $V $ be a complex vector space of dimension $m$ with a  basis $\{f_1, \cdots, f_m\}$. Let $(,)_{V}$ be a  Hermitian form on $V$ given as follows:
$$(v, v')_V=\sum_{i=1}^m \overline{z}_iz'_i, \quad\quad v=\sum_{i=1}^m z_i f_i, v'=\sum_{i=1}^m z_i' f_i.$$
Let $\Ha(V)=V\oplus \R$ be the group of elements $(v, t)$,  with the
multiplication law given by
$$(v, t)(v',t')=(v+v', t+t'+ \tfrac{\Im(v,v')_V}{2}),   \quad\quad v, v'\in V, t,t'\in \R.$$
Then there exists a group isomorphism:
\begin{align}
\phi : &\quad\quad\quad\quad\quad \Ha(V) \longrightarrow \Ha(W)  ;\\
          &(\sum^m_{k=1} (a_k +ib_k ) f_k, t)\longmapsto  (\sum^m_{k=1} (a_k e_k +b_k e_k^{\ast}),t).
 \end{align}
 Let us recall the Fock model, which realizes the Heisenberg representation. Under the basis  $\{ f_1, \cdots, f_m\}$, we identity the space $V$ with $\C^m$. 
 \begin{example}[Fock model]\label{ex3}
If  $\psi=\psi_0$, we let $ \mathcal{H}_{F}$ denote the Fock space   of holomorphic   functions $f$ on $\C^m$ such that 
 \[\Vert f\Vert^2=\int_{\C^m} \mid f(w)\mid^2 e^{-\pi \Vert w\Vert^2} d(w)<+\infty,\]
 where $d(w)= 2^{-m} |dw \wedge d\overline{w}|$. The Heisenberg representation   of $\Ha(V)$ associated to $\psi$  can be realized on $\mathcal{H}_{F}$ by the following formulas:
$$\pi_{ F, \psi}([w',t])f(w)=\psi(t)e^{-\tfrac{\pi}{2}\Vert w'\Vert^2-\pi w\overline{w'}}f(w+w')$$
for $w,w'\in V$, $t\in \R$.
\end{example}
\begin{proof}
See \cite[p.43, (1.71)]{Fo}.
\end{proof}
In accordance with  the works of Satake-Takase(as cited in\cite{Sa1,Sa2,Ta1,Ta2}), a generalized Fock space can be associated with each element $z$ in $\mathbb{H}_m$.  Let us recall their results with the help of J.-H. Yang's two papers(\cite{Ya1, Ya2}) and Namikawa's book \cite{Na}.
\begin{itemize}\label{ccccc}
\item Identity $W$ with $\R^m \times \R^m$ and $V$ with $\C^m$ as above.  
\item Write the element of $W$ or $V$ in a row form. 
\item Write $z=P_z+iQ_z$ and $\widehat{z}=-\overline{z}$, for $z\in \mathbb{H}_m$.
\item Define $d_z(v)=\det(Q_z)^{-1}d(v)=d_{\widehat{z}}(v)$ on $\C^m$, where  $d(v)=2^{-m} |dv \wedge d\overline{v}|$.
\item Define $q_{z}(v)=e^{2\pi i\big(-\tfrac{1}{2}vz^{-1}v^{T}\big)}$ on $V$.
\item Define the kernel function \( \kappa(z', v'; z, v) \) as:
  $$\kappa(z',v'; z,v)=e^{\pi i(v'-\overline{v})\cdot (z'-\overline{z})^{-1} \cdot (v'-\overline{v})^T},$$ for $z,z'\in \mathbb{H}_m$ and $v,v'\in V$.
\item Define \( \kappa_z(v', v) \) as:
 $$\kappa_z(v',v)=\kappa(z,v'; z,v)=e^{\tfrac{\pi}{2} (v'-\overline{v}) \cdot  Q^{-1}_z \cdot (v'-\overline{v})^T},$$ for $v,v'\in V$.
\item Let $\mathcal{X}=\{ S\in \Mm(\C)\mid S^T=S, Re(S)>0\}$. Define the function:
$${\det}^{-1/2}:\mathcal{X} \to \C^{\times}; S \longmapsto {\det}^{-1/2}(S),$$
where  ${\det}^{-1/2}(S)=\int_{\R^m} e^{-\pi x\cdot S \cdot x^T} dx$.
\item Define $ \gamma(z',z)= \det^{-1/2}\big(\tfrac{z'-\overline{z}}{2i}\big) \cdot \det(\Im z')^{1/4}\cdot \det(\Im z)^{1/4}$.
\item Denote  $h_{-1}=\begin{pmatrix}
1_m& 0\\ 0& -1_m\end{pmatrix}\in \GSp_{2m}(\R)$. Denote $g^{h_{-1}}=(h_{-1})^{-1}gh_{-1}$.
\item Define $\epsilon(g;\widehat{z'},\widehat{z})= \gamma(g(\widehat{z'}),g(\widehat{z}))/\gamma(\widehat{z'},\widehat{z})$, for   $g\in \Sp_{2m}(\R)$. 
\item $\beta_z(g_1,g_2)=\epsilon(g_1^{h_{-1}};z, g_2^{h_{-1}}(z))$, for $g_i\in \Sp_{2m}(\R)$. 
\item Define an action of $\Sp_{2m}(\R)\ltimes \Ha(W)$ on $\mathbb{H}_m \times V$ by $[g,h]\cdot [z,v] \longrightarrow [g(z), (v+xz+x^{\ast})J(g,z)^{-1}]$, for $h=(x,x^{\ast};t)\in \Ha(W)$.
\item Define an automorphy factor $\eta$ on $ [\Sp_{2m}(\R) \ltimes \Ha(W)] \times [\mathbb{H}_m \times V] $ as follows:
\begin{itemize}
\item[(a)]   $\eta:  \Ha(W) \times [\mathbb{H}_m \times V] \longrightarrow \C^{\times};([x,x^{\ast};t], [z,v])\longmapsto e^{2\pi i[t+\frac{1}{2}(xz+x^{*})x^T+ vx^T]}$.
\item[(b)] $\eta: \Sp_{2m}(\R) \times [\mathbb{H}_m \times V] \longrightarrow \C^{\times};(g, [z,v]) \longmapsto e^{2\pi i[-\frac{1}{2} v  (cz+d)^{-1}cv^T]}$, for $g=\begin{pmatrix}
   a& b \\
   c & d
  \end{pmatrix}$.
\item[(c)] For $(g,h)=(1,h^{g^{-1}})(g,0)\in  \Sp_{2m}(\R) \ltimes \Ha(W)$, define 
$$\eta([g,h], [z,v])=\eta\Big(h^{g^{-1}}, [g(z),vJ(g,z)^{-1}]\Big) \eta(g,[z,v]).$$ 
\end{itemize}
\end{itemize}
\begin{lemma}
For $\widetilde{g}_1=[g_1,h_1],\widetilde{g}_2=[g_2,h_2]\in\Sp_{2m}(\R) \ltimes \Ha(W) $,  and $ \widetilde{z}=[z,v]\in \mathbb{H}_m \times V$, we have:
$$\eta(\widetilde{g}_1\widetilde{g}_2, \widetilde{z})=\eta(\widetilde{g}_1, \widetilde{g}_2(\widetilde{z}))\eta(\widetilde{g}_2, \widetilde{z}).$$
\end{lemma}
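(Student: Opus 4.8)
The plan is to reduce the identity, via the normal form of the semidirect product, to a handful of elementary verifications and then reassemble them. Put $G:=\Sp_{2m}(\R)\ltimes\Ha(W)$ and write a general element in the normal form $[g,h]=[1,h^{g^{-1}}]\,[g,0]$; the defining formula (c) then reads $\eta([g,h],\widetilde z)=\eta(h^{g^{-1}},[g(z),vJ(g,z)^{-1}])\,\eta(g,\widetilde z)$ for $\widetilde z=[z,v]$, where the first factor on the right is the Heisenberg automorphy factor (a) and the second the symplectic one (b). In particular $\eta$ restricted to $[1,\Ha(W)]$ is precisely (a) and restricted to $[\Sp_{2m}(\R),0]$ is precisely (b). Since $G$ is generated by these two subgroups, a routine induction on word length---peeling off one generator at a time and using that $[z,v]\mapsto\widetilde g(\widetilde z)$ is a left action, so $(\widetilde g_1\widetilde g_2)(\widetilde z)=\widetilde g_1(\widetilde g_2(\widetilde z))$, which itself follows from $J(g_1g_2,z)=J(g_1,g_2(z))J(g_2,z)$ and the multiplication law of $G$---shows that the cocycle identity for arbitrary $\widetilde g_1,\widetilde g_2$ follows once it holds for each pair of generators. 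Alternatively one may simply expand both sides of the desired identity for general $[g_1,h_1]$, $[g_2,h_2]$ through (c), using $[g_1,h_1][g_2,h_2]=[g_1g_2,h_1^{g_2}h_2]$, and compare.

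It then remains to treat three cases. (1) For $\widetilde g_i=[1,h_i]$ with $h_i=(x_i,x_i^{\ast};t_i)$: inserting (a) and the multiplication law of $\Ha(W)$, both sides become the exponential of $2\pi i$ times a quadratic polynomial in the coordinates, and the equality collapses to $z^T=z$ together with the triviality of transposition on scalars (e.g. $x_2^{\ast}x_1^T=x_1(x_2^{\ast})^T$); this is a one-line check. (2) For $\widetilde g_i=[g_i,0]$, writing $g_i=\begin{pmatrix}a_i&b_i\\c_i&d_i\end{pmatrix}$, $g=g_1g_2$, $w=g_2(z)$: since $v$ is transported to $vJ(g_2,z)^{-1}$, the identity amounts to the matrix equality $(cz+d)^{-1}c=(c_2z+d_2)^{-1}c_2+J(g_2,z)^{-1}(c_1w+d_1)^{-1}c_1\,(J(g_2,z)^T)^{-1}$; clearing denominators with $J(g_1g_2,z)=J(g_1,g_2(z))J(g_2,z)$ reduces this to $a_2=(a_2z+b_2)(c_2z+d_2)^{-1}c_2+((c_2z+d_2)^T)^{-1}$, which is a restatement of the symplectic relations $c_2d_2^T=d_2c_2^T$ and $a_2d_2^T-b_2c_2^T=1_m$ for $g_2$ (using $z^T=z$). (3) For the mixed pairs: the order $[1,h][g,0]$ is tautological from (c), while $[g,0][1,h]=[1,h^{g^{-1}}][g,0]$ reduces to the identity $\eta(h^{g^{-1}},[g(z),vJ(g,z)^{-1}])\,\eta(g,[z,v])=\eta(g,[z,v+xz+x^{\ast}])\,\eta(h,[z,v])$ for $h=(x,x^{\ast};t)$, a quadratic identity in $v,x,x^{\ast}$ whose coefficients again collapse by the $\Sp_{2m}$-relations for $g$ once one records how $(x,x^{\ast})$ changes under $h\mapsto h^{g^{-1}}$.

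The main obstacle is case (2), the symplectic cocycle relation for (b): it is the only place requiring a genuine matrix identity rather than a bookkeeping of linear and bilinear terms, and it is sensitive to the exact normalisations built into $\gamma(z',z)$, $\epsilon(g;\widehat{z'},\widehat z)$ and the action $[z,v]\mapsto[g(z),vJ(g,z)^{-1}]$. I would therefore first record, once and for all, the two auxiliary facts $J(g_1g_2,z)=J(g_1,g_2(z))J(g_2,z)$ and the symmetry of $(cz+d)^{-1}c$ (equivalently $cd^T=dc^T$ with $z^T=z$), then reduce cases (2) and (3) to these and the defining relations of $\Sp_{2m}$, and only afterwards assemble the three cases with the word-length induction to obtain the lemma.
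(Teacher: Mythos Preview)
Your proposal is correct and in fact considerably more detailed than the paper's own treatment: the paper does not prove this lemma at all but simply cites Satake \cite[p.395]{Sa2} and Takase \cite[p.121]{Ta1}. Your reduction via the normal form $[g,h]=[1,h^{g^{-1}}][g,0]$ to the three cases (Heisenberg--Heisenberg, symplectic--symplectic, and the two mixed orders) is the standard direct verification, and your identification of case~(2) as the only substantive step---reducing to $a_2=(a_2z+b_2)(c_2z+d_2)^{-1}c_2+((c_2z+d_2)^T)^{-1}$, which in turn follows from $c_2d_2^T=d_2c_2^T$ and $a_2d_2^T-b_2c_2^T=1_m$---is exactly right. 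One small remark: the phrase ``routine induction on word length'' is slightly glib, since the set of $g_1$ for which the cocycle identity holds with a fixed $g_2$ is not obviously a subgroup; what actually works is the observation that from $E(a,b)=E(b,c)=1$ one gets $E(ab,c)=1\Leftrightarrow E(a,bc)=1$ (where $E$ denotes the cocycle defect), and then your three cases bootstrap to the general statement in two or three applications of this. Your alternative suggestion---direct expansion of both sides through (c) using $[g_1,h_1][g_2,h_2]=[g_1g_2,h_1^{g_2}h_2]$---avoids even that bookkeeping and is perhaps the cleanest route.
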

\begin{proof}
See  \cite[p.395]{Sa2} or \cite[p.121]{Ta1}.
\end{proof}
\begin{lemma}
For $\widetilde{g}=[g,h]\in\Sp_{2m}(\R) \ltimes \Ha(W) $,  and $ \widetilde{z}=[z,v],\widetilde{z'}=[z',v']\in \mathbb{H}_m \times V$, we have:
$$\kappa(\widetilde{g}(\widetilde{z}); \widetilde{g}(\widetilde{z'}))=\eta( \widetilde{g},  \widetilde{z})\kappa(\widetilde{z}; \widetilde{z'})\overline{\eta( \widetilde{g},  \widetilde{z'})}.$$
\end{lemma}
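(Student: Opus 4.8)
The plan is to use the cocycle identity for $\eta$ established in the previous lemma to reduce the statement to two generating cases, and then to verify each by a direct substitution into the explicit formulas. First note that the set $\mathcal{G}$ of all $\widetilde{g}\in\Sp_{2m}(\R)\ltimes\Ha(W)$ for which the asserted identity holds (for all $\widetilde{z},\widetilde{z'}$) is a subgroup: it contains the identity (since $\eta(1,\cdot)=1$ from definition (b) with $c=0$), and the relation $\eta(\widetilde{g}_1\widetilde{g}_2,\widetilde{z})=\eta(\widetilde{g}_1,\widetilde{g}_2(\widetilde{z}))\,\eta(\widetilde{g}_2,\widetilde{z})$ of the preceding lemma shows it is closed under products and inverses, because
$$\kappa\big((\widetilde{g}_1\widetilde{g}_2)\widetilde{z};(\widetilde{g}_1\widetilde{g}_2)\widetilde{z'}\big)=\kappa\big(\widetilde{g}_1(\widetilde{g}_2\widetilde{z});\widetilde{g}_1(\widetilde{g}_2\widetilde{z'})\big)=\eta(\widetilde{g}_1,\widetilde{g}_2\widetilde{z})\,\eta(\widetilde{g}_2,\widetilde{z})\,\kappa(\widetilde{z};\widetilde{z'})\,\overline{\eta(\widetilde{g}_2,\widetilde{z'})}\;\overline{\eta(\widetilde{g}_1,\widetilde{g}_2\widetilde{z'})}$$
collapses by the cocycle relation to $\eta(\widetilde{g}_1\widetilde{g}_2,\widetilde{z})\,\kappa(\widetilde{z};\widetilde{z'})\,\overline{\eta(\widetilde{g}_1\widetilde{g}_2,\widetilde{z'})}$, and applying this to $\widetilde{g}^{-1}\widetilde{z},\widetilde{g}^{-1}\widetilde{z'}$ gives the inverse statement. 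Since $\Sp_{2m}(\R)\ltimes\Ha(W)$ is generated by $\Sp_{2m}(\R)\times\{1\}$ together with $\{1\}\times\Ha(W)$, it suffices to treat $\widetilde{g}=[1,h]$ with $h=(x,x^{\ast};t)$, and $\widetilde{g}=[g,0]$ with $g=\begin{pmatrix}a&b\\ c&d\end{pmatrix}\in\Sp_{2m}(\R)$.

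\emph{The Heisenberg case.} Here $J(1,z)=1_m$ and $\widetilde{g}[z,v]=[z,v+xz+x^{\ast}]$, $\widetilde{g}[z',v']=[z',v'+xz'+x^{\ast}]$, whose conjugate second component is $\overline{v'}+x\overline{z'}+x^{\ast}$ since $x,x^{\ast},t$ are real; the difference of the two second components is $(v-\overline{v'})+x(z-\overline{z'})$, in which $x^{\ast}$ cancels. Substituting into the definition of $\kappa$ and using that $z-\overline{z'}$ is symmetric, the exponent becomes, up to the factor $\pi i$,
$$(v-\overline{v'})(z-\overline{z'})^{-1}(v-\overline{v'})^{T}+2(v-\overline{v'})x^{T}+x(z-\overline{z'})x^{T},$$
and one checks directly from definition (a) that the last two summands are precisely the exponent of $\eta([1,h],[z,v])\,\overline{\eta([1,h],[z',v'])}$, the central contributions $2t$ and $x^{\ast}x^{T}$ cancelling between the two factors. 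This is the claimed identity in this case.

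\emph{The symplectic case.} Here $\widetilde{g}[z,v]=[g(z),vJ(g,z)^{-1}]$, and one uses the classical cocycle identity
$$g(z_1)-g(z_2)=\big((cz_1+d)^{T}\big)^{-1}(z_1-z_2)(cz_2+d)^{-1},\qquad z_1,z_2\in\mathbb{H}_m,$$
which follows from $g^{T}Jg=J$ by evaluating the symplectic pairing on the columns of $\begin{pmatrix}z_i\\ 1_m\end{pmatrix}$. Since $g$ is real, $\overline{g(z')}=g(\overline{z'})$ and $\overline{J(g,z')}=c\overline{z'}+d$, so applying the identity with $z_1=z$, $z_2=\overline{z'}$ turns $\big(g(z)-\overline{g(z')}\big)^{-1}$ into $(c\overline{z'}+d)(z-\overline{z'})^{-1}(cz+d)^{T}$. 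Plugging this and the transformed second components into $\kappa$, the factor $J(g,z)^{-1}$ cancels on one side and $\overline{J(g,z')}^{-1}$ on the other, leaving an expression in $v,\overline{v'},(z-\overline{z'})^{-1}$ and $P:=(cz+d)^{-1}(c\overline{z'}+d)$. Matching this against $\eta([g,0],[z,v])\,\kappa([z,v];[z',v'])\,\overline{\eta([g,0],[z',v'])}$, with $\eta([g,0],[z,v])=e^{-\pi i\,v(cz+d)^{-1}cv^{T}}$ from definition (b), reduces to the matrix identities
$$P(z-\overline{z'})^{-1}=(z-\overline{z'})^{-1}-(cz+d)^{-1}c,\qquad (z-\overline{z'})^{-1}(P^{-1})^{T}=(z-\overline{z'})^{-1}+(c\overline{z'}+d)^{-1}c,$$
together with their formal consequence $P(z-\overline{z'})^{-1}=(z-\overline{z'})^{-1}P^{T}$; the first matches the $vv^{T}$-terms, the second the $\overline{v'}\,\overline{v'}^{T}$-terms, and the third the cross terms. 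Each is proved by clearing denominators and invoking the symplectic relation $cd^{T}=dc^{T}$ (equivalently, the symmetry of $(cz+d)^{-1}c$ for symmetric $z$).

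The only real obstacle is bookkeeping: keeping track of transposes and of which side the matrices act on in the symplectic computation, and isolating the two matrix identities above; nothing conceptual beyond the two cocycle identities and the symplectic relations is needed. Alternatively one may simply cite Satake \cite{Sa2} and Takase \cite{Ta1}, where the automorphy factor $\eta$ is constructed so that exactly this transformation law holds.
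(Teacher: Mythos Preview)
Your argument is correct. The paper itself gives no proof at all---it simply cites Satake \cite[p.396]{Sa2} and Takase \cite[p.121]{Ta1}---so your direct verification via the subgroup reduction and the two generating cases goes well beyond what the paper records. The reduction to generators using the cocycle identity for $\eta$ is clean, the Heisenberg computation is straightforward, and the symplectic case rests on the standard identity $g(z_1)-g(z_2)=((cz_1+d)^{T})^{-1}(z_1-z_2)(cz_2+d)^{-1}$ together with the symmetry of $(cz+d)^{-1}c$; your closing remark that one may alternatively cite Satake--Takase is in fact exactly what the paper does.
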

\begin{proof}
See \cite[p.396]{Sa2} or \cite[p.121]{Ta1}.
\end{proof}
\begin{example}[Generalized Fock model]\label{ex4}
If  $\psi=\psi_0$, we let $ \mathcal{H}_{F_{\widehat{z}}}$ denote the  Fock space   of holomorphic  functions $f$ on $\C^m$ such that 
 \[\Vert f\Vert^2= \int_{\C^m} \mid f(v)\mid^2\kappa_z(v,v)d_z(v)< +\infty.\]
 The Heisenberg representation   of $\Ha(W)$  can be realized on $\mathcal{H}_{F_{\widehat{z}}}$ by the following formulas:
$$\pi_{F_{\widehat{z}}, \psi}(h)f(v)=\eta((h^{h_{-1}})^{-1}, -\overline{z}, v) f(v+x\overline{z}+x^{\ast})=\psi(t)e^{-2\pi i[\frac{1}{2}(x\overline{z}+x^{*})x^T+vx^T]}f(v+x\overline{z}+x^{\ast}).$$
for $w=x+x^{\ast}\in W$,  $t\in \R$, $h=(w,t)\in \Ha(W)$. 
\end{example}
\begin{proof}
See \cite[pp.125-126]{Ta1}. 
\begin{align*}
&\Vert \pi_{F_{\widehat{z}}, \psi}(h)f\Vert^2\\
&=\int_{\C^m} \mid e^{-2\pi i\big[\frac{1}{2}(x\overline{z}+x^{\ast})x^T+ vx^T\big]}f(v+ x\overline{z}+x^{\ast})\mid^2\kappa_z(v,v)d_z(v)\\
&\stackrel{v=v_x+iv_y}{=}\int_{\C^m}  e^{-2\pi xQ_zx^T+4\pi v_y x^T}\mid f(v+ x\overline{z}+x^{\ast})\mid^2e^{-2\pi v_yQ_z^{-1}v_y^T} d_z(v)\\
&\stackrel{v'=v+ x\overline{z}+x^{\ast}}{=}\int_{\C^m}  e^{-2\pi xQ_zx^T+4\pi (v'_y+xQ_z) x^T}\mid f(v')\mid^2e^{-2\pi (v'_y+xQ_z)Q_z^{-1}(v'_y+xQ_z)^T} d_z(v')\\
&=\int_{\C^m}  e^{-2\pi xQ_zx^T+4\pi (v'_y+xQ_z) x^T}\mid f(v')\mid^2 \kappa_z(v',v') e^{-2\pi v'_yx^T} e^{-2\pi x(v'_y)^T}e^{-2\pi xQ_zx^T} d_z(v')\\
&=\int_{\C^m} \mid f(v')\mid^2 \kappa_z(v',v')d_z(v')<+\infty.
\end{align*}
\begin{align*}
&\pi_{F_{\widehat{z}}, \psi}(h')\pi_{F_{\widehat{z}}, \psi}(h)f(v)\\
&=\psi(t')e^{-2\pi i\big[\frac{1}{2}(x'\overline{z}+x^{*'})x^{'T}+ vx^{'T}\big]}\pi_{F_{\widehat{z}}, \psi}(h)f(v+ x'\overline{z}+x^{\ast'})\\
&=\psi(t')e^{-2\pi i\big[\frac{1}{2}(x'\overline{z}+x^{*'})x^{'T}+vx^{'T}\big]}\psi(t)e^{-2\pi i\big[\frac{1}{2}(x\overline{z}+x^{\ast})x^T+(v+ x'\overline{z}+x^{\ast'})x^T\big]}f(v+ x'\overline{z}+x^{\ast'}+ x\overline{z}+x^{\ast})\\
&=\psi(t'+t)e^{-2\pi i\big[\frac{1}{2}(x'\overline{z}+x^{*'})(x^{'T}+x^T)+v(x^{'T}+x^T)+\frac{1}{2}( x\overline{z}+x^{\ast})x^T+\frac{1}{2}(x'\overline{z}+x^{\ast'})x^T\big]}f(v+ x'\overline{z}+x^{\ast'}+ x\overline{z}+x^{\ast})\\
&=\psi(t'+t)e^{-2\pi i\big[\frac{1}{2}(x'\overline{z}+x^{*'}+x\overline{z}+x^{\ast})(x^{'T}+x^T)+ v(x^{'T}+x^T)+\frac{1}{2}(x'\overline{z}+x^{\ast'})x^T-\frac{1}{2}(x\overline{z}+x^{\ast})x^{'T}\big]}f(v+ x'\overline{z}+x^{\ast'}+ x\overline{z}+x^{\ast})\\
&=\psi(t'+t)e^{-2\pi i\big[\frac{1}{2}(x'\overline{z}+x^{*'}+x\overline{z}+x^{\ast})(x^{'T}+x^T)+v(x^{'T}+x^T)\big]}e^{-2\pi i\big[\frac{1}{2} x^{'\ast}x^T-\frac{1}{2}x^{\ast}x^{'T}\big]}f(v+ x'\overline{z}+x^{\ast'}+ x\overline{z}+x^{\ast})\\
&=\psi(t'+t-\tfrac{1}{2} x^{'\ast}x^T+\tfrac{1}{2}x^{\ast}x^{'T})e^{-2\pi i\big[\frac{1}{2}(x'\overline{z}+x^{*'}+x\overline{z}+x^{\ast})(x^{'T}+x^T)+v(x^{'T}+x^T)\big]}f(v+ x'\overline{z}+x^{\ast'}+ x\overline{z}+x^{\ast})\\
&=\pi_{F_{\widehat{z}}, \psi}(h'h)f(v).
\end{align*}
\end{proof}
\begin{remark}\label{uneqi}
Take $z=\widehat{z}=z_0=i1_{m}$. Following  \cite[p.397]{Sa1}, there exists a unitary equivalence $\mathcal{A}$ from $\mathcal{H}_{F_{z_0}}$ to $\mathcal{H}_F$, given by
$$\mathcal{A}: \mathcal{H}_{F_{z_0}} \longrightarrow \mathcal{H}_F; f \longmapsto \phi(v)=  e^{-\tfrac{\pi}{2}v v^T} f(-vi).$$ Furthermore, $\mathcal{A}$ defines an $\mathcal{H}(W)$-intertwining  operator. 
\end{remark}
\begin{proof}
1) $v=v_x+i v_y$, $w=-vi=v_y-v_x i$, $w_x=v_y,w_y=-v_x$.
\begin{align*}
 &\int_{\C^m} \mid \phi(v)\mid^2 e^{-\pi \Vert v\Vert^2} d(v)\\
 &=\int_{\C^m} \mid e^{-\tfrac{\pi}{2}v v^T} f(-vi)\mid^2 e^{-\pi \Vert v\Vert^2} d(v) \\
 &=\int_{\C^m}  e^{-2\pi v_x v_x^T} \mid f(-vi)\mid^2 d(v)\\
  &\stackrel{w=-vi }{=}\int_{\C^m}  e^{-2\pi w_y w_y^T} \mid f(w)\mid^2 d(w)<+\infty.
 \end{align*}
2) 
\begin{align*}
&\mathcal{A}(\pi_{ F_{z_0}, \psi}(h)f)(v)= e^{-\tfrac{\pi}{2}v v^T} [\pi_{F_{z_0}, \psi}(h)f](-vi)\\
&=e^{-\tfrac{\pi}{2}v v^T}\psi(t)e^{-2\pi i[\frac{1}{2}(-xi+x^{*})x^T-vix^T]}f(-vi-xi+x^{\ast});
\end{align*}
\begin{align*}
&\pi_{F, \psi}(h)\mathcal{A}(f)(v)=\psi(t)e^{-\tfrac{\pi}{2}(xx^T+x^{\ast}x^{\ast T})-\pi v(x-x^{\ast} i)^T}\mathcal{A}(f)(v+x+x^{\ast}i)\\
&=e^{-\tfrac{\pi}{2}(v+x+x^{\ast}i) (v+x+x^{\ast}i)^T}\psi(t)e^{-\tfrac{\pi}{2}(xx^T+x^{\ast}x^{\ast T})-\pi v(x-x^{\ast} i)^T}f(-vi-xi+x^{\ast})\\
&=e^{-\tfrac{\pi}{2}v v^T}e^{-2\pi vx^T}\psi(t)e^{-\pi(xx^T+ix^{\ast}x^T)}f(-vi-xi+x^{\ast})\\
&=\mathcal{A}(\pi_{F_{z_0}, \psi}(h)f)(v).
\end{align*}
\end{proof}
\begin{lemma}
For $\widehat{z},\widehat{z'}\in \mathbb{H}_m$, there exists the following unitary equivalence $U_{\widehat{z},\widehat{z'}}$ mapping from $\mathcal{H}_{F_{\widehat{z}}}$ to $\mathcal{H}_{F_{\widehat{z'}}}$, given by
$$U_{\widehat{z'},\widehat{z}}: \mathcal{H}_{F_{\widehat{z}}} \longrightarrow \mathcal{H}_{F_{\widehat{z'}}}; \phi \longmapsto f(v')= \gamma(\widehat{z'}, \widehat{z})\int_{\C^m} \kappa(\widehat{z'},v'; \widehat{z},v)^{-1}\phi(v) \kappa_{\widehat{z}}(v,v) d_{\widehat{z}}(v).$$ Furthermore, $U_{\widehat{z'},\widehat{z}}$ defines an $\Ha(W)$-intertwining  operator. 
\end{lemma}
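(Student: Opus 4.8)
The plan is to prove the two assertions — that $U_{\widehat{z'},\widehat z}$ is a well-defined unitary isomorphism $\mathcal H_{F_{\widehat z}}\to\mathcal H_{F_{\widehat{z'}}}$, and that it intertwines the two Heisenberg actions — in the opposite order from how they are stated: first establish the intertwining property, which is the conceptual core and uses only the two lemmas preceding Example~\ref{ex4}, and then deduce unitarity from Stone--von Neumann irreducibility together with one explicit Gaussian normalization. For well-definedness, observe that for fixed $v'$ the function $v\longmapsto\kappa(\widehat{z'},v';\widehat z,v)^{-1}$ is a Gaussian whose $L^{2}\big(\kappa_{\widehat z}(v,v)\,d_{\widehat z}(v)\big)$-norm is finite, because $\Im z>0$ makes the relevant quadratic form positive definite; hence Cauchy--Schwarz gives absolute convergence of the defining integral, holomorphy in $v'$ follows by the standard Morera / dominated-convergence argument, and a parallel Gaussian estimate shows the image lies in $\mathcal H_{F_{\widehat{z'}}}$.

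For the intertwining property, fix $h=(x+x^{\ast},t)\in\Ha(W)$ and compute $U_{\widehat{z'},\widehat z}\big(\pi_{F_{\widehat z},\psi}(h)\phi\big)(v')$. Inserting the formula of Example~\ref{ex4} for $\pi_{F_{\widehat z},\psi}(h)\phi$ and changing variables $v\mapsto v-x\overline z-x^{\ast}$ in the integral (a translation on $\C^{m}\cong\R^{2m}$, legitimate exactly as in the computations of Example~\ref{ex4}), one must record how the kernel $\kappa(\widehat{z'},v';\widehat z,v)^{-1}$ and the weight $\kappa_{\widehat z}(v,v)\,d_{\widehat z}(v)$ transform. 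Since $J(1,\cdot)=1_m$, the element $\widetilde g=[1,(h^{h_{-1}})^{-1}]$ acts on $\mathbb H_m\times V$ by fixing the $\mathbb H_m$-coordinate and translating the $V$-coordinate by precisely this shift, so both transformations are dictated by the automorphy identity $\kappa(\widetilde g(\widetilde z);\widetilde g(\widetilde z'))=\eta(\widetilde g,\widetilde z)\kappa(\widetilde z;\widetilde z')\overline{\eta(\widetilde g,\widetilde z')}$ of the second lemma before Example~\ref{ex4}, together with the cocycle identity for $\eta$ of the first. Collecting exponentials, the $v$-dependent $\eta$-factors combine and the integral becomes $U_{\widehat{z'},\widehat z}\phi$ times the single remaining factor $\eta((h^{h_{-1}})^{-1},\widehat{z'},v')$, which by the very formula of Example~\ref{ex4} is $\big(\pi_{F_{\widehat{z'}},\psi}(h)\,U_{\widehat{z'},\widehat z}\phi\big)(v')$. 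Hence $U_{\widehat{z'},\widehat z}\circ\pi_{F_{\widehat z},\psi}(h)=\pi_{F_{\widehat{z'}},\psi}(h)\circ U_{\widehat{z'},\widehat z}$ for all $h$.

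It remains to see unitarity. By the previous paragraph $U_{\widehat{z'},\widehat z}$ is a nonzero $\Ha(W)$-morphism between two realizations of the irreducible Heisenberg representation of central character $\psi$, so by Stone--von Neumann and Schur's lemma it equals $\lambda$ times a unitary operator for some $\lambda\in\C^{\times}$; thus $\Vert U_{\widehat{z'},\widehat z}\phi\Vert=|\lambda|\,\Vert\phi\Vert$ for all $\phi$, and it suffices to compute this on one vector. Taking $\phi_{0}\in\mathcal H_{F_{\widehat z}}$ an explicit Gaussian (for instance the image of the Schr\"odinger-model ground state, Example~\ref{ex1}), $U_{\widehat{z'},\widehat z}\phi_{0}$ is again an explicit Gaussian whose $\mathcal H_{F_{\widehat{z'}}}$-norm is evaluated by the formula defining $\det^{-1/2}$, and the constant $\gamma(\widehat{z'},\widehat z)=\det^{-1/2}\!\big(\tfrac{\widehat{z'}-\overline{\widehat z}}{2i}\big)\det(\Im\widehat{z'})^{1/4}\det(\Im\widehat z)^{1/4}$ is arranged precisely so that $\Vert U_{\widehat{z'},\widehat z}\phi_{0}\Vert=\Vert\phi_{0}\Vert$, i.e. $|\lambda|=1$. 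So $U_{\widehat{z'},\widehat z}$ is unitary; its inverse is $U_{\widehat z,\widehat{z'}}$, since the composition law $U_{\widehat{z''},\widehat{z'}}\circ U_{\widehat{z'},\widehat z}=U_{\widehat{z''},\widehat z}$ (again forced up to a scalar by Schur's lemma, and normalized on $\phi_{0}$) specializes at $\widehat{z''}=\widehat z$ to $U_{\widehat z,\widehat z}=\Id$, the reproducing-kernel identity for $\mathcal H_{F_{\widehat z}}$.

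The main obstacle is the exponential bookkeeping in the intertwining step: checking that every quadratic and linear term produced by the change of variables is cancelled by the $\eta$-factors prescribed by the two automorphy lemmas, and the accompanying single Gaussian normalization integral that must match $\lvert\gamma(\widehat{z'},\widehat z)\rvert$. An alternative route that sidesteps the explicit Gaussian computation is to factor $U_{\widehat{z'},\widehat z}=B_{\widehat{z'}}\circ B_{\widehat z}^{-1}$, where $B_{\widehat z}\colon\mathcal H(X^{\ast})\to\mathcal H_{F_{\widehat z}}$ is the generalized Bargmann transform intertwining the Schr\"odinger model (Example~\ref{ex1}) with Example~\ref{ex4}; then unitarity and the intertwining property are automatic, and one only has to identify the integral kernel of the composition with the stated $\kappa$-kernel. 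These detailed computations are carried out by Satake \cite{Sa1,Sa2} and Takase \cite{Ta1}, whom we follow.
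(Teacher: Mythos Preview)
Your approach is correct and, for the intertwining statement, essentially identical to the paper's: the paper also inserts the formula for $\pi_{F_{\widehat z},\psi}(h)\phi$, rewrites the exponential prefactor as $\eta((h^{h_{-1}})^{-1},-\overline z,v)$, applies the automorphy identity $\kappa(\widetilde g\widetilde z;\widetilde g\widetilde{z'})=\eta(\widetilde g,\widetilde z)\kappa(\widetilde z;\widetilde{z'})\overline{\eta(\widetilde g,\widetilde{z'})}$ to both the kernel and the weight $\kappa_{\widehat z}(v,v)$, and then performs the translation $w=v+x\overline z+x^{\ast}$ to match the direct computation of $\pi_{F_{\widehat{z'}},\psi}(h)\circ U_{\widehat{z'},\widehat z}$. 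For the unitary equivalence itself the paper gives no argument at all but simply cites Satake \cite{Sa1} and Takase \cite{Ta1}; your Stone--von Neumann/Schur reduction plus one Gaussian normalization (or the Bargmann factorization $U_{\widehat{z'},\widehat z}=B_{\widehat{z'}}\circ B_{\widehat z}^{-1}$ you mention as an alternative) is exactly the content of those references, so you have supplied what the paper outsources.
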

\begin{proof}
See \cite[p.125]{Ta1} or \cite[Lemma 3]{Sa1}. Let us prove the second statement. Let $w=x+x^{\ast}$, $ h=(w,t)$, $ h'=(-x,x^{\ast};t)=(h^{h_{-1}})^{-1}\in \Ha(W)$.\\
0) 
\[e^{-2\pi i[\frac{1}{2}(x\overline{z}+x^{*})x^T+vx^T]}\psi(t)=\eta(h', [-\overline{z}, v]).\]
\[\kappa_{\widehat{z}}(v,v)=\kappa_{-\overline{z}}(v,v)=\kappa_z(v,v).\]
\[d_{\widehat{z}}(v)=d_{-\overline{z}}(v)=d_{z}(v).\]
\[h'\cdot [-\overline{z},v]=[-\overline{z}, v+x\overline{z}+x^{\ast}].\]
1)
\begin{align}
&U_{\widehat{z'},\widehat{z}}\circ \pi_{F_{\widehat{z}}, \psi}(h)\phi(v')\\
&=  \gamma(-\overline{z'}, -\overline{z})\int_{\C^m} \kappa(-\overline{z'},v'; -\overline{z},v)^{-1}[\pi_{F_{\widehat{z}}, \psi}(h)\phi](v) \kappa_{\widehat{z}}(v,v) d_{\widehat{z}}(v)\\
&= \gamma(-\overline{z'}, -\overline{z})\int_{\C^m} \kappa(-\overline{z'},v'; -\overline{z},v)^{-1}\psi(t)e^{-2\pi i[\frac{1}{2}(x\overline{z}+x^{*})x^T+vx^T]}\phi(v+x\overline{z}+x^{\ast}) \kappa_z(v,v) d_z(v)\\
&= \gamma(-\overline{z'}, -\overline{z})\int_{\C^m} \kappa(-\overline{z'},v'; -\overline{z},v)^{-1}\eta(h', [-\overline{z}, v])\phi(v+x\overline{z}+x^{\ast}) \kappa(-\overline{z}, v;-\overline{z}, v) d_z(v)\\
&= \gamma(-\overline{z'}, -\overline{z})\int_{\C^m} \kappa(h'[-\overline{z'},v']; h'[-\overline{z},v])^{-1}\eta(h', [-\overline{z}, v])\phi(v+x\overline{z}+x^{\ast}) \kappa(h'[-\overline{z}, v];h'[-\overline{z}, v]) d_z(v)\\
&\quad\quad\quad \quad \cdot\eta( h', [-\overline{z'},v'])\overline{\eta(h',  [-\overline{z},v])}\eta( h', [-\overline{z}, v])^{-1}\overline{\eta(h',  [-\overline{z}, v])}^{-1}\\
&\stackrel{w=v+x\overline{z}+x^{\ast}}{=} \gamma(-\overline{z'}, -\overline{z})\eta( h', [-\overline{z'},v'])\int_{\C^m} \kappa(h'[-\overline{z'},v']; [-\overline{z},w])^{-1}\phi(w) \kappa([-\overline{z}, w];[-\overline{z}, w]) d_z(w);
\end{align}
2) 
\begin{align}
&\pi_{F_{\widehat{z}'}, \psi}(h) \circ  U_{\widehat{z'},\widehat{z}}\phi(v')\\
&= \psi(t)e^{-2\pi i[\frac{1}{2}(x\overline{z'}+x^{*})x^T+v'x^T]}[ U_{\widehat{z'},\widehat{z}}\phi](v'+x\overline{z'}+x^{\ast})\\
&= \psi(t)e^{-2\pi i[\frac{1}{2}(x\overline{z'}+x^{*})x^T+v'x^T]}\gamma(-\overline{z'}, -\overline{z})\int_{\C^m} \kappa(-\overline{z'},v'+x\overline{z'}+x^{\ast}; -\overline{z},v)^{-1}\phi(v) \kappa_{\widehat{z}}(v,v) d_{\widehat{z}}(v)\\
 &=\gamma(-\overline{z'}, -\overline{z})\eta( h', [-\overline{z'},v'])\int_{\C^m} \kappa(h'[-\overline{z'},v']; [-\overline{z},v])^{-1}\phi(v) \kappa([-\overline{z}, v];[-\overline{z},v]) d_z(v).
\end{align}
\end{proof}
\begin{lemma}
For $z,z',z''\in \mathbb{H}_m$,  $U_{z,z'}\circ U_{z',z''}=  U_{z,z''}$.
\end{lemma}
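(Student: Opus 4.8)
The plan is to prove the cocycle-type identity $U_{z,z'}\circ U_{z',z''} = U_{z,z''}$ by direct composition of the integral operators, using the kernel identities already established. First I would write out $U_{z',z''}$ applied to $\phi\in\mathcal{H}_{F_{\widehat{z''}}}$ to produce an intermediate function on $\mathcal{H}_{F_{\widehat{z'}}}$, then apply $U_{z,z'}$ to that, obtaining a double integral over $v'\in\C^m$ (the $\widehat{z'}$-variable) and $v''\in\C^m$ (the $\widehat{z''}$-variable):
\[
(U_{z,z'}\circ U_{z',z''}\phi)(v)=\gamma(\widehat{z},\widehat{z'})\gamma(\widehat{z'},\widehat{z''})\int_{\C^m}\!\!\int_{\C^m}\kappa(\widehat{z},v;\widehat{z'},v')^{-1}\kappa(\widehat{z'},v';\widehat{z''},v'')^{-1}\phi(v'')\,\kappa_{\widehat{z'}}(v',v')\,\kappa_{\widehat{z''}}(v'',v'')\,d_{\widehat{z'}}(v')\,d_{\widehat{z''}}(v'').
\]
The natural strategy is to perform the inner Gaussian integral over $v'$ first, so the key step is a reproducing-kernel identity of the shape
\[
\gamma(\widehat{z},\widehat{z'})\gamma(\widehat{z'},\widehat{z''})\int_{\C^m}\kappa(\widehat{z},v;\widehat{z'},v')^{-1}\kappa_{\widehat{z'}}(v',v')\kappa(\widehat{z'},v';\widehat{z''},v'')^{-1}\,d_{\widehat{z'}}(v')=\gamma(\widehat{z},\widehat{z''})\,\kappa(\widehat{z},v;\widehat{z''},v'')^{-1},
\]
after which the remaining single integral over $v''$ is exactly $U_{z,\widehat{z''}}$ applied to $\phi$, giving the claim. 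This reduces everything to an explicit $m$-dimensional complex Gaussian integral: substituting the definitions $\kappa(z',v';z,v)=e^{\pi i(v'-\overline v)(z'-\overline z)^{-1}(v'-\overline v)^T}$ and $\kappa_z(v',v)=e^{\frac{\pi}{2}(v'-\overline v)Q_z^{-1}(v'-\overline v)^T}$, the integrand in $v'$ is $e^{\,(\text{quadratic form in }v',\overline{v'})}$, and one completes the square and evaluates using the normalization $\det^{-1/2}(S)=\int_{\R^m}e^{-\pi x S x^T}dx$ together with the definition $\gamma(z',z)=\det^{-1/2}\big(\tfrac{z'-\overline z}{2i}\big)\det(\Im z')^{1/4}\det(\Im z)^{1/4}$; the $\det(\Im z')^{1/2}$ factor from $\gamma(\widehat z,\widehat z')\gamma(\widehat z',\widehat z'')$ cancels against the $d_{\widehat{z'}}(v')=\det(Q_{\widehat{z'}})^{-1}d(v')$ measure, leaving precisely $\gamma(\widehat z,\widehat z'')\kappa(\widehat z,v;\widehat z'',v'')^{-1}$.

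An alternative, and cleaner, route is to avoid the Gaussian computation entirely: each $U_{z',z}$ is an $\Ha(W)$-intertwining unitary isomorphism between irreducible representations of $\Ha(W)$, so by Schur's lemma $U_{z,z'}\circ U_{z',z''}$ and $U_{z,z''}$ differ by a scalar of absolute value $1$; one then pins down the scalar to be $1$ by testing both sides against a single convenient vector — most naturally the Gaussian ground state $\phi_0(v)=1\in\mathcal{H}_{F_{\widehat z}}$ (a lowest weight vector for the maximal compact), whose image under $U_{z',z}$ is again the ground state up to an explicit positive constant, so the scalar is forced to be a positive real number of modulus $1$, hence $1$. I would present this Schur-lemma argument as the main line and relegate the kernel identity to a supporting computation.

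The main obstacle will be bookkeeping the normalization constants $\gamma$ and the measures $d_{\widehat z}$ so that the Gaussian integral evaluates to exactly the constant $\gamma(\widehat z,\widehat z'')$ with no stray factor of a fourth root of $\det(\Im z')$ or a sign/branch ambiguity in $\det^{-1/2}$; this is precisely the kind of place where the eighth-roots-of-unity phenomena central to the paper originate, so care is needed that the branch of $\det^{-1/2}$ on $\mathcal{X}=\{S: S^T=S,\ \mathrm{Re}(S)>0\}$ is used consistently and that $\big(\tfrac{\widehat z-\overline{\widehat{z'}}}{2i}\big)$, etc., genuinely lie in $\mathcal{X}$ for $\widehat z,\widehat{z'}\in\mathbb H_m$. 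Given that the preceding two lemmas already record $\kappa(\widetilde g(\widetilde z);\widetilde g(\widetilde z'))=\eta(\widetilde g,\widetilde z)\kappa(\widetilde z;\widetilde z')\overline{\eta(\widetilde g,\widetilde z')}$ and the intertwining property of each $U$, the Schur-lemma argument sidesteps most of this, and the only residual point is the elementary verification that $U_{z',z}$ sends the ground state to a positive multiple of the ground state, which follows from the same Gaussian integral evaluated at the single vector $\phi_0\equiv 1$.
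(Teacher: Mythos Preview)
The paper's own ``proof'' is simply the citation ``See \cite[pp.399--400]{Sa1}'', so your direct Gaussian-integral approach is essentially what unpacking that citation would give: compose the two integral operators, collapse the inner integral over $v'$ via a reproducing-kernel identity, and match the normalizations $\gamma$. That line is correct in outline and is the standard computation.

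Your alternative Schur-lemma route has the right architecture but a concrete error in the test vector. In the generalized Fock space $\mathcal{H}_{F_{\widehat z}}$ of this paper (Example~\ref{ex4}), the weight is $\kappa_z(v,v)=e^{-2\pi v_y Q_z^{-1} v_y^T}$, which decays only in the imaginary direction $v_y$; the constant function $\phi_0\equiv 1$ therefore has infinite norm and does \emph{not} lie in $\mathcal{H}_{F_{\widehat z}}$. The actual ground state in this model is a holomorphic Gaussian of the form $e^{\pi i v \widehat{z}^{-1} v^T/\,\cdot}$ (compare Example~\ref{exm}, where for $\widehat z=z_0$ the vacuum is $e^{-\frac{\pi}{2}vv^T}$, not $1$; the constant $1$ is the vacuum only in the standard Bargmann--Fock space $\mathcal{H}_F$ of Example~\ref{ex3}). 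A second issue: even with the correct vacuum, your claim that $U_{z',z}$ sends it to a \emph{positive} multiple of the target vacuum requires justification, since $\gamma(z',z)=\det^{-1/2}\big(\tfrac{z'-\overline z}{2i}\big)\det(\Im z')^{1/4}\det(\Im z)^{1/4}$ is in general complex (the matrix $\tfrac{z'-\overline z}{2i}$ has positive-definite real part but need not be real symmetric), so the scalar produced by Schur is not obviously positive without a further argument. The Schur step reduces the problem to a \emph{single} Gaussian integral rather than none, and you still need to track the branch of $\det^{-1/2}$ to see the scalar is exactly $1$ --- which is the same bookkeeping you flagged for the direct approach.
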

\begin{proof}
See \cite[pp.399-400]{Sa1}.
\end{proof}
\section{The Weil representation and its models}\label{Weilmodel}
By Weil's famous work, the Heisenberg representation can give rise  to a projective representation of $\Sp(W)$ and then  to an actual representation of a $\C^{\times}$-covering group over $\Sp(W)$. As usual, we call the actual representation as a Weil representation. 
\subsection{Schr\"odinger model}\label{schod}
 For  explicit $2$-cocycles,  one can refer to the works  in   \cite{LiVe, Pe, Ra}. Following  the notations used in \cite{Wa}, we denote $\widetilde{c}_{X^{\ast}}$ and $\overline{c}_{X^{\ast}}$ as   Perrin-Rao cocycles over the symplectic group $\Sp(W)$, corresponding to degrees 8 and 2, respectively. By  \cite[p.360, Def.5.2]{Ra},  we can  define the normalizing constant by
\begin{align}\label{chap2mx}
m_{X^{\ast}}: \Sp(W) \longrightarrow \mu_8; g \longmapsto \gamma(x(g), \psi^{\tfrac{1}{2}})^{-1} \gamma(\psi^{\tfrac{1}{2}})^{-j(g)}
\end{align}
 for $g=p_1\omega_Sp_2$, $j(g)=|S|$. Then:
\begin{equation}\label{chap228inter}
\overline{c}_{X^{\ast}}(g_1, g_2)=m_{X^{\ast}}(g_1g_2)^{-1} m_{X^{\ast}}(g_1) m_{X^{\ast}}(g_2) \widetilde{c}_{ {X^{\ast}}}(g_1,g_2), \quad\quad g_i\in \Sp(W).
\end{equation}
Let $\widetilde{\Sp}(W)$ and $\overline{\Sp}(W)$ denote the associated central extension group associated to $\widetilde{c}_{X^{\ast}}$ and $\overline{c}_{X^{\ast}}$ respectively. Let $dx$ denote the self-dual Haar measure on $\R$ with respect to $\psi$.  Let $dx$,  $dx^{\ast}$ be the Haar measures on $X$, $X^{\ast}$, given by the product  measures from $dx$ of  $\R$.

The Weil representation $\pi_{X^{\ast}, \psi}$ of $\widetilde{\Sp}(W) \ltimes \Ha(W)$  can be realized on $L^2(X)$ by the following formulas:
\begin{equation}\label{chap2representationsp11}
\pi_{X^{\ast}, \psi}[(x,0)\cdot (x^{\ast},0)\cdot (0,k)]f(y)=\psi(k+\langle x+ y,x^{\ast}\rangle) f(x+y),
\end{equation}
\begin{equation}\label{chap2representationsp2}
\pi_{X^{\ast}, \psi}( u(b))f(y)=\psi(\tfrac{1}{2}\langle y,yb\rangle) f(y),
\end{equation}
\begin{equation}\label{chap2representationsp3}
\pi_{X^{\ast}, \psi}( h(a))f(y)=|\det(a)|^{1/2} f(ya),
\end{equation}
\begin{equation}\label{chap2representationsp4}
\pi_{X^{\ast}, \psi}([\omega,t])f(y)=\int_{X^{\ast}} t\psi(\langle y, y^{\ast}\rangle) f(y^{\ast}\omega^{-1}) dy^{\ast},
\end{equation}
where $f(y)\in S(X)$, $x,y \in X,  x^{\ast}, y^{\ast}\in   X^{\ast}$, $k\in \R$, $t\in \mu_8$, and $h(a)=\begin{pmatrix}
  a& 0\\
  0 &a^{\ast -1 } \end{pmatrix}$, $u(b)=\begin{pmatrix}
  1_m&b\\
  0 & 1_m
\end{pmatrix}$, $\omega\in \Sp( W)$. 
For the other elements  $\omega_S $, let $X_S=\Span_\R\{ e_i\mid i\in S\}$, $X_{S'}=\Span_\R\{ e_i\mid i\notin S\}$ and $X_S^{\ast}= \Span_\R\{ e^{\ast}_i\mid i\in S\}$, $X_{S'}^{\ast}= \Span_\R\{ e^{\ast}_i\mid i\notin S\}$. By \cite[p.388, Prop.2.1.4]{Pe} or \cite[p.351, Lem.3.2 (3.9)]{Ra}, for $x=x_s+x_{s'}\in X_S\oplus X_{S'}$,
 \begin{equation}
  \begin{split}
 \pi_{X^{\ast},\psi}(\omega_S)f(x_s+x_{s'})&=\int_{X_S^{\ast}} \psi(\langle x_s,  z_s^{\ast}\rangle ) f(x_{s'}\omega_S+z_s^{\ast}\omega^{-1}_S)dz_s^{\ast}.
    \end{split}
  \end{equation}
  \subsection{} Let $g=p_1\omega_S p_2\in \Sp_{2m}(\R)$, $p_i=\begin{pmatrix}
a_i& b_i\\
0& d_i
\end{pmatrix}$, $i=1,2$. Following \cite{Wa}, let us define:
 \begin{equation}\label{chap3alpp}
\nu(-1,g)=(\det (a_1a_2), -1)_{\R} \gamma(-1, \psi^{\tfrac{1}{2}})^{-|S|}.
\end{equation} 
By \cite{Wa}, we have:
\begin{align}\label{chap3equ}
\widetilde{c}_{X^{\ast}}(g_1^{\alpha}, g_2^{\alpha})=\widetilde{c}_{X^{\ast}}(g_1, g_2)\nu(\alpha,g_1)^{-1}\nu(\alpha,g_2)^{-1}\nu(\alpha,g_1g_2).
\end{align}
\begin{lemma}\label{chap3equ}
\begin{itemize}
\item[(1)] $ \gamma(x(g), \psi^{\tfrac{1}{2}})^2=(\det (a_1a_2), -1)_{\R}$.
\item[(2)] $ \gamma(\psi^{\tfrac{1}{2}})^2=\gamma(-1, \psi^{\tfrac{1}{2}})^{-1}$.
\item[(3)] $\nu(-1,g)=m_{X^{\ast}}(g)^{-2}$.
\end{itemize}
\end{lemma}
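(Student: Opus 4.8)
The plan is to derive all three identities from the standard arithmetic of the Weil index over $\R$, combined with the explicit formulas \eqref{chap2mx} for $m_{X^{\ast}}$ and \eqref{chap3alpp} for $\nu(-1,g)$; no new ideas are needed, only a careful assembly.

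For part (1), I would invoke the classical squaring property of the normalized Weil index attached to a non-trivial additive character of a local field: $\gamma(a,\psi^{1/2})^{2}=(a,a)_{\R}$ for every $a\in\R^{\times}$, and then rewrite $(a,a)_{\R}=(a,-1)_{\R}$, which is the formal consequence of $(a,-a)_{\R}=1$ and the bimultiplicativity of the Hilbert symbol. Since Rao defines the $x$-coordinate of $g=p_{1}\omega_{S}p_{2}$ by $x(g)=\det(a_{1}a_{2})$, where $a_{i}$ is the upper-left block of $p_{i}$, substituting $a=x(g)$ gives $\gamma(x(g),\psi^{1/2})^{2}=(\det(a_{1}a_{2}),-1)_{\R}$. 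I would cite \cite[Section 5]{Ra} (and the parallel discussion in \cite{Wa}) both for the squaring identity and for the definition of $x(g)$; one also uses the known fact that $x(g)$, hence the right-hand side, is independent of the chosen decomposition $g=p_{1}\omega_{S}p_{2}$, so the formula is unambiguous.

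For part (2), this is the standard relation between the unnormalized Weil index $\gamma(\psi)$ and the value of the normalized Weil index at $-1$, namely $\gamma(\psi)^{2}=\gamma(-1,\psi)^{-1}$, specialized to the character $\psi^{1/2}$; I would quote it from \cite{Ra}. As a sanity check, for $\psi_{0}$ one has $\gamma(\psi_{0}^{1/2})=e^{i\pi/4}$ and $\gamma(-1,\psi_{0}^{1/2})=-i$, and indeed $(e^{i\pi/4})^{2}=i=(-i)^{-1}$. For part (3) everything then assembles formally: by \eqref{chap2mx}, $m_{X^{\ast}}(g)=\gamma(x(g),\psi^{1/2})^{-1}\gamma(\psi^{1/2})^{-j(g)}$ with $j(g)=|S|$, so
\[
m_{X^{\ast}}(g)^{-2}=\gamma(x(g),\psi^{1/2})^{2}\,\gamma(\psi^{1/2})^{2j(g)}=(\det(a_{1}a_{2}),-1)_{\R}\,\gamma(-1,\psi^{1/2})^{-|S|},
\]
where the first factor is part (1) and the second is part (2); the right-hand side is precisely $\nu(-1,g)$ as defined in \eqref{chap3alpp}, which proves the claim.

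The work here is bookkeeping rather than a genuine obstacle. The one point that truly requires care is to confirm that the normalization conventions for $\gamma(\psi^{1/2})$ and $\gamma(\cdot,\psi^{1/2})$ implicit in \eqref{chap2mx} and \eqref{chap3alpp} coincide with those of the cited references — several mutually incompatible normalizations of the Weil index circulate, differing by signs or by powers of $\gamma(\psi)$ — so that the squaring identities above apply verbatim. Once that convention is pinned down, parts (1)--(3) are immediate.
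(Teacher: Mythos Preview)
Your proposal is correct and follows essentially the same route as the paper: both reduce everything to the standard arithmetic of the Weil index and then assemble part (3) from parts (1), (2) together with the definitions \eqref{chap2mx} and \eqref{chap3alpp}. The only cosmetic difference is that the paper verifies (1) and (2) by direct numerical evaluation over $\R$ (splitting on the sign of $\det(a_{1}a_{2})$ and plugging in $\gamma(\psi^{1/2})=e^{i\pi/4}$), whereas you invoke the general local-field identities $\gamma(a,\psi^{1/2})^{2}=(a,-1)_{\R}$ and $\gamma(\psi)^{2}=\gamma(-1,\psi)^{-1}$; either way the content is identical.
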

\begin{proof}
1)  $x(g)=\det(a_1a_2) \R^{\times 2}$,  \[\gamma(x(g), \psi^{\tfrac{1}{2}})^2=\gamma(\det(a_1a_2), \psi^{\tfrac{1}{2}})^2=\left\{ \begin{array}{cr} 1& \textrm{ if } \det(a_1a_2)>0,\\ (e^{-\tfrac{\pi i}{2}})^2=-1 & \textrm{ if } \det(a_1a_2)<0. \end{array}\right.\]
2)  $\gamma(\psi^{\tfrac{1}{2}})=e^{\tfrac{\pi i}{4}}$, $\gamma(\psi^{\tfrac{1}{2}})^2=e^{\tfrac{\pi i}{2}}$, $\gamma(-1, \psi^{\tfrac{1}{2}})=e^{-\tfrac{\pi i}{4}}/e^{\tfrac{\pi i}{4}}=e^{-\tfrac{\pi i}{2}}$.\\
3) It follows from (\ref{chap2mx})(\ref{chap3alpp}) and the above (1)(2).
\end{proof}
\begin{lemma}\label{h1h1h1h1eq}
 $\widetilde{c}_{X^{\ast}}(g_1^{h_{-1}}, g_2^{h_{-1}})\widetilde{c}_{X^{\ast}}(g_1, g_2)=1$.
\end{lemma}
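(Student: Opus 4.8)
The plan is to reduce the identity to two facts already in hand: the transformation law of the degree-$8$ Perrin--Rao cocycle under the substitution $g\mapsto g^{\alpha}$, and the interpolation formula \eqref{chap228inter} expressing $\overline{c}_{X^{\ast}}$ in terms of $\widetilde{c}_{X^{\ast}}$ and the normalizing constant $m_{X^{\ast}}$. First I would observe that conjugation by $h_{-1}=\diag(1_m,-1_m)$ is exactly the automorphism $g\mapsto g^{\alpha}$ of $\Sp(W)$ with $\alpha=-1$: it is the similitude-$(-1)$ element that negates the symplectic form, which is how the substitution $\psi\mapsto\psi^{-1}$ is realized on the group. Hence, specializing the formula $\widetilde{c}_{X^{\ast}}(g_1^{\alpha},g_2^{\alpha})=\widetilde{c}_{X^{\ast}}(g_1,g_2)\,\nu(\alpha,g_1)^{-1}\nu(\alpha,g_2)^{-1}\nu(\alpha,g_1g_2)$ at $\alpha=-1$ and invoking Lemma \ref{chap3equ}(3), which gives $\nu(-1,g)=m_{X^{\ast}}(g)^{-2}$ for all $g$, I obtain
\[ \widetilde{c}_{X^{\ast}}(g_1^{h_{-1}},g_2^{h_{-1}})=\widetilde{c}_{X^{\ast}}(g_1,g_2)\,m_{X^{\ast}}(g_1)^{2}\,m_{X^{\ast}}(g_2)^{2}\,m_{X^{\ast}}(g_1g_2)^{-2}. \]

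Next I would square the interpolation relation \eqref{chap228inter}, namely $\overline{c}_{X^{\ast}}(g_1,g_2)=m_{X^{\ast}}(g_1g_2)^{-1}m_{X^{\ast}}(g_1)m_{X^{\ast}}(g_2)\,\widetilde{c}_{X^{\ast}}(g_1,g_2)$, and use that $\overline{c}_{X^{\ast}}$ is $\mu_2$-valued, so $\overline{c}_{X^{\ast}}(g_1,g_2)^{2}=1$. This gives
\[ m_{X^{\ast}}(g_1)^{2}\,m_{X^{\ast}}(g_2)^{2}\,m_{X^{\ast}}(g_1g_2)^{-2}=\widetilde{c}_{X^{\ast}}(g_1,g_2)^{-2}. \]
Substituting into the previous display yields $\widetilde{c}_{X^{\ast}}(g_1^{h_{-1}},g_2^{h_{-1}})=\widetilde{c}_{X^{\ast}}(g_1,g_2)\cdot\widetilde{c}_{X^{\ast}}(g_1,g_2)^{-2}=\widetilde{c}_{X^{\ast}}(g_1,g_2)^{-1}$, which is the claimed identity once both sides are multiplied by $\widetilde{c}_{X^{\ast}}(g_1,g_2)$.

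The only real point requiring care is the bookkeeping in the first step: checking that $g^{h_{-1}}$ is genuinely the $\alpha=-1$ instance of the substitution in the cocycle transformation law taken from \cite{Wa}, for the \emph{same} choice of section, rather than merely yielding a cohomologous cocycle. Once the conventions are matched the rest is a short manipulation. It is worth recording the conceptual picture behind the computation: conjugating by $h_{-1}$ amounts to replacing $\psi$ by $\overline{\psi}$, the Weil representation attached to $\overline{\psi}$ is the complex conjugate of the one attached to $\psi$, and since $\widetilde{c}_{X^{\ast}}$ takes unitary (indeed $\mu_8$) values, complex conjugation inverts it; the argument above is the section-level verification of this heuristic, with the factors of $m_{X^{\ast}}$ accounting for the difference between $\overline{c}_{X^{\ast}}$ and $\widetilde{c}_{X^{\ast}}$.
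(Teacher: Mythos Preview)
Your proof is correct and follows essentially the same approach as the paper: both use the transformation law \eqref{chap3equ} at $\alpha=-1$ together with Lemma~\ref{chap3equ}(3) to rewrite $\nu(-1,g)$ as $m_{X^{\ast}}(g)^{-2}$, then invoke \eqref{chap228inter} squared and the fact that $\overline{c}_{X^{\ast}}$ is $\mu_2$-valued. The only difference is cosmetic ordering---the paper directly recognizes the product as $\overline{c}_{X^{\ast}}(g_1,g_2)^2=1$, while you first isolate $\widetilde{c}_{X^{\ast}}(g_1^{h_{-1}},g_2^{h_{-1}})=\widetilde{c}_{X^{\ast}}(g_1,g_2)^{-1}$---and your explicit remark about matching the $h_{-1}$-conjugation with the $\alpha=-1$ substitution is a point the paper leaves implicit.
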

\begin{proof}
\begin{align*}
 &\widetilde{c}_{X^{\ast}}(g_1^{h_{-1}}, g_2^{h_{-1}})\widetilde{c}_{X^{\ast}}(g_1, g_2)\\
 &=\widetilde{c}_{X^{\ast}}(g_1, g_2)^2\nu(-1,g_1)^{-1}\nu(-1,g_2)^{-1}\nu(-1,g_1g_2)\\
 &=\widetilde{c}_{X^{\ast}}(g_1, g_2)^2m_{X^{\ast}}(g_1)^2m_{X^{\ast}}(g_2)^2m_{X^{\ast}}(g_1g_2)^{-2}\\
 &=\overline{c}_{X^{\ast}}(g_1, g_2)^2\\
 &=1.
 \end{align*}
\end{proof}
\subsection{Fock model} 
Follow the notations from  Section \ref{ccccc}. 
\begin{lemma}
For $g=\begin{pmatrix}
a& b\\c& d\end{pmatrix}\in \Sp_{2m}(\R)$, there exists the following unitary equivalence $T^{\widehat{z}}_{g^{h_{-1}}}$ mapping from $\mathcal{H}_{F_{\widehat{z}}}$ to $\mathcal{H}_{F_{g^{h_{-1}}(\widehat{z})}}$, given by
\begin{align}
T^{\widehat{z}}_{g^{h_{-1}}}: \mathcal{H}_{F_{\widehat{z}}} \longrightarrow \mathcal{H}_{F_{g^{h_{-1}}(\widehat{z})}}, \phi \longmapsto & \eta((g^{h_{-1}})^{-1}, g^{h_{-1}}(\widehat{z}), v) \phi(vJ(g^{h_{-1}},\widehat{z}))\\
&=e^{-\pi i v(c\overline{z}+d) c^Tv^T}\phi(v(c\overline{z}+d)).
\end{align} 
Furthermore, the following diagram is commutative for any $h=(x,x^{\ast};t)\in \Ha(W)$.
\begin{equation}\label{eq1}
\begin{CD}
 \mathcal{H}_{F_{\widehat{z}}}@>T^{\widehat{z}}_{g^{h_{-1}}}>> \mathcal{H}_{F_{g^{h_{-1}}(\widehat{z})}} \\
       @V\pi_{F_{\widehat{z}},\psi}(h) VV @VV\pi_{F_{g^{h_{-1}}(\widehat{z})},\psi}(h^{g^{-1}}) V  \\
 \mathcal{H}_{F_{\widehat{z}}}@>T^{\widehat{z}}_{g^{h_{-1}}}>> \mathcal{H}_{F_{g^{h_{-1}}(\widehat{z})}} 
\end{CD}
\end{equation} 
\end{lemma}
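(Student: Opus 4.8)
The plan is to follow Satake and Takase (cf.\ \cite{Sa1,Sa2,Ta1}), reducing both assertions to the cocycle identity $\eta(\widetilde g_1\widetilde g_2,\widetilde z)=\eta(\widetilde g_1,\widetilde g_2(\widetilde z))\eta(\widetilde g_2,\widetilde z)$ and the transformation law $\kappa(\widetilde g(\widetilde z);\widetilde g(\widetilde z'))=\eta(\widetilde g,\widetilde z)\kappa(\widetilde z;\widetilde z')\overline{\eta(\widetilde g,\widetilde z')}$ recalled before Example~\ref{ex4}, and then to the symplectic relations $ab^T=ba^T$, $cd^T=dc^T$, $a^Tc=c^Ta$, $a^Td-c^Tb=1_m$. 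For the closed form of $T^{\widehat z}_{g^{h_{-1}}}$: record $g^{h_{-1}}=\begin{pmatrix}a&-b\\-c&d\end{pmatrix}$, so $J(g^{h_{-1}},\widehat z)=c\overline z+d$ and $g^{h_{-1}}(\widehat z)=-(a\overline z+b)(c\overline z+d)^{-1}=\widehat{g(z)}$; apply the cocycle identity to $\widetilde g_1=(g^{h_{-1}})^{-1}$, $\widetilde g_2=g^{h_{-1}}$ at $[\widehat z,v(c\overline z+d)]$, using $g^{h_{-1}}\cdot[\widehat z,v(c\overline z+d)]=[g^{h_{-1}}(\widehat z),v]$ and $\eta(1,-)=1$, to get $\eta((g^{h_{-1}})^{-1},g^{h_{-1}}(\widehat z),v)=\eta(g^{h_{-1}},\widehat z,v(c\overline z+d))^{-1}$; feeding in item (b) of the definition of $\eta$ (whose lower-left block is now $-c$) and symmetrising with $cd^T=dc^T$ turns the right side into $e^{-\pi i\,v(c\overline z+d)c^Tv^T}$, which together with $\phi(v\,J(g^{h_{-1}},\widehat z))=\phi(v(c\overline z+d))$ is the asserted formula.

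For unitarity, holomorphy of $T^{\widehat z}_{g^{h_{-1}}}\phi$ is immediate; for the isometry I substitute $v=v'(c\overline z+d)$ in $\|T^{\widehat z}_{g^{h_{-1}}}\phi\|^2$. The transformation law for $\kappa$ with $\widetilde g=g^{h_{-1}}$ rewrites $\kappa_{g^{h_{-1}}(\widehat z)}$ as $\kappa_z$ times a factor $|\eta(g^{h_{-1}},\widehat z,\,\cdot\,)|^2$, which cancels the $\eta$-factor in $|T^{\widehat z}_{g^{h_{-1}}}\phi|^2$ by the first paragraph, while $\det(\Im g(z))=|\det(cz+d)|^{-2}\det(\Im z)$ together with $|\det(c\overline z+d)|=|\det(cz+d)|$ makes the weight $d_{g^{h_{-1}}(\widehat z)}$ and the Jacobian collapse to $d_{\widehat z}$; hence $\|T^{\widehat z}_{g^{h_{-1}}}\phi\|=\|\phi\|$. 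Surjectivity follows by exhibiting the inverse $\psi\mapsto\eta(g^{h_{-1}},\widehat z,v)\,\psi\big(v(c\overline z+d)^{-1}\big)$, or, with the next step in hand, by Stone--von Neumann, a nonzero $\Ha(W)$-intertwiner of irreducibles with equal central character being invertible.

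The commuting square is where the content lies. With $h=(x,x^{\ast};t)$ one has $h^{g^{-1}}=\big((x,x^{\ast})g^{-1};t\big)=:(x',x^{\ast'};t)$, i.e.\ $x'a+x^{\ast'}c=x$ and $x'b+x^{\ast'}d=x^{\ast}$. Evaluating the two composites on $\phi$ at a point $v'$ via Example~\ref{ex4} and the first paragraph, the \emph{arguments of $\phi$} are $v'(c\overline z+d)+x\overline z+x^{\ast}$ and $(v'+w)(c\overline z+d)$ with $w:=x'\overline{g(z)}+x^{\ast'}$; they coincide because $\overline{g(z)}(c\overline z+d)=a\overline z+b$ gives $w(c\overline z+d)=x'(a\overline z+b)+x^{\ast'}(c\overline z+d)=x\overline z+x^{\ast}$. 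Matching the Gaussian prefactors, after cancelling the common $e^{-\pi i\,v'(c\overline z+d)c^T(v')^T}$ (legitimate since $(c\overline z+d)c^T$ is symmetric), reduces to one scalar identity whose part linear in $v'$ reads $(c\overline z+d)x^T=(x')^T+(c\overline z+d)c^Tw^T$ --- equivalent to $a^T-c^T\overline{g(z)}=(c\overline z+d)^{-1}$, hence a consequence of $a^Tc=c^Ta$ and $a^Td-c^Tb=1_m$ --- and whose $v'$-free part, once the linear one is used, becomes $(x\overline z+x^{\ast})x^T=w(c\overline z+d)x^T$, already known. Conceptually this whole prefactor identity is just the cocycle relation for $\eta$ applied to the two factorizations of $[g^{h_{-1}},(h^{g^{-1}})^{h_{-1}}]$ in $\Sp_{2m}(\R)\ltimes\Ha(W)$; and since $\Ha(W)$ is generated by the translations along $X$, along $X^{\ast}$ and the centre, it would even suffice to verify the square for these three families.

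The only genuinely delicate point should be the bookkeeping of this third step --- above all keeping the conjugation conventions straight, namely $h^{g^{-1}}=((x,x^{\ast})g^{-1};t)$ for $g\in\Sp_{2m}(\R)$ versus $h^{h_{-1}}=(x,-x^{\ast};-t)$ for the similitude $h_{-1}$ of factor $-1$ --- after which the prefactor identity dissolves into the two elementary symplectic relations above; the first two steps are entirely formal once the two lemmas before Example~\ref{ex4} and item (b) are in hand.
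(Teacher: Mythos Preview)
Your proof is correct and follows essentially the same approach as the paper: both verify the explicit formula for $T^{\widehat z}_{g^{h_{-1}}}$ via the automorphy factor $\eta$, match the arguments of $\phi$ on both sides of the diagram by the identity $w(c\overline z+d)=x\overline z+x^{\ast}$, and reduce the prefactor equality to the cocycle relation for $\eta$ on the semidirect product $\Sp_{2m}(\R)\ltimes\Ha(W)$. The paper invokes that cocycle relation abstractly (computing $\eta([g_1,h^{h_{-1}}]^{-1},g_1(\widehat z),v)$ two ways), whereas you unwind it into the two elementary symplectic identities $a^Tc=c^Ta$ and $a^Td-c^Tb=1_m$; you also supply an explicit unitarity argument via the $\kappa$-transformation law, which the paper omits from this proof.
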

\begin{proof}
1) \[g^{-1}= \begin{pmatrix}
d^{T} & -b^{T} \\
-c^{T} & a^{T}
\end{pmatrix}, g^{h_{-1}}= \begin{pmatrix} a& -b\\-c& d\end{pmatrix}, J(g^{h_{-1}}, \widehat{z})=c\overline{z}+d;\]
 \begin{align*}
 &\eta((g^{h_{-1}})^{-1}, g^{h_{-1}}(\widehat{z}), v)\\
 &=e^{2\pi i\Big[\tfrac{1}{2}\langle vg^{h_{-1}}, vJ((g^{h_{-1}})^{-1},g^{h_{-1}}(\widehat{z}))^{-1}\rangle\Big]}\\
&= e^{2\pi i\Big[\tfrac{1}{2} \langle v(-c), vJ(g^{h_{-1}}, \widehat{z})\rangle\Big]}\\
&=e^{2\pi i\Big[-\tfrac{1}{2} v(c\overline{z}+d) c^Tv^T\Big]}.
\end{align*}
2) Write  $z_1=\widehat{z}$,  $z_2=g^{h_{-1}}(\widehat{z})$, $h=(x,x^{\ast};t)$,  $g_1=g^{h_{-1}}$, $h_1=h^{h_{-1}}$, $h^{g^{-1}}=(x_1,x_1^{\ast};t) $, $w_1=(x_1,x_1^{\ast})$. Then:
 \[h^{g^{-1}}=h^{h_{-1}g_1^{-1}h_{-1}}=h_1^{g_1^{-1}h_{-1}};\]
 \[ (h_1^{g_1^{-1}})^{-1}=[h^{g^{-1}h_{-1}}]^{-1}=[x_1,-x_1^{\ast}; -t]^{-1}=[-x_1,x_1^{\ast};t];\]
 \begin{align*}
 &\pi_{F_{z_2},\psi}(h^{g^{-1}}) \circ T^{z_1}_{g_1} f(v)\\
 &=\eta((h_1^{g_1^{-1}})^{-1}, g_1(z_1), v)  [T^{z_1}_{g_1} f](v+w_1 \begin{pmatrix} -g_1(z_1)\\ 1\end{pmatrix}) \\
  &=\eta\big((h_1^{g_1^{-1}})^{-1}, g_1(z_1), v\big) \eta\big(g_1^{-1}, g_1(z_1), v+w_1 \begin{pmatrix} -g_1(z_1)\\ 1\end{pmatrix}\big) f\bigg([v+w_1 \begin{pmatrix} -g_1(z_1)\\ 1\end{pmatrix}]J(g_1,z_1)\bigg) ;
 & \\
 &T^{z_1}_{g_1}\circ \pi_{F_{z_1},\psi}(h) f(v)\\
 &=\eta(g_1^{-1}, g_1(z_1), v) [\pi_{F_{z_1},\psi}(h) f](vJ(g_1,z_1))\\
 &=\eta(g_1^{-1}, g_1(z_1), v) \eta((h^{h_{-1}})^{-1}, z_1, vJ(g_1,z_1)) f(vJ(g_1,z_1)+w \begin{pmatrix} -z_1\\ 1\end{pmatrix});
  \end{align*}
 i)  \begin{align*}
 &(w_1 \begin{pmatrix} -g_1(z_1)\\ 1\end{pmatrix})J(g_1,z_1)\\
 &=w g^{-1} \begin{pmatrix} -g_1(z_1)\\ 1\end{pmatrix}J(g_1,z_1)\\
 &=wh_{-1} g_1^{-1} \begin{pmatrix} -g_1(z_1)\\ -1\end{pmatrix}J(g_1,z_1)\\
 &=w\begin{pmatrix} -z_1\\ 1\end{pmatrix};
  \end{align*}
 ii) 
  \begin{align*}
  &\eta([g_1, h^{h_{-1}}]^{-1}, g_1(z_1), v)\\
  &=\eta\big([1,  h^{h_{-1}}]^{-1} [g_1,  0]^{-1} , g_1(z_1), v\big)\\
  &=\eta\Big((h^{h_{-1}})^{-1}, z_1, vJ\big(g_1^{-1}, g_1(z_1)\big)^{-1}\Big)\eta\big(g_1^{-1}, g_1(z_1), v\big)\\
  &=\eta\big((h^{h_{-1}})^{-1}, z_1, vJ(g_1,z_1)\big)\eta\big(g_1^{-1}, g_1(z_1), v\big);
   \end{align*}
  \begin{align*}
  &\eta([g_1, h^{h_{-1}}]^{-1}, g_1(z_1), v)\\
  &=\eta\big( [g_1,  0]^{-1} [1,  h_1^{g_1^{-1}}]^{-1} , g_1(z_1), v\big)\\
  &=\eta\big(g_1^{-1}, g_1(z_1), v+(-x_1,x_1^{\ast}) \cdot \begin{pmatrix} g_1(z_1)\\ 1\end{pmatrix}\big) \eta\big(( h_1^{g_1^{-1}})^{-1}, g_1(z_1), v\big).
   \end{align*}
\end{proof}
\begin{lemma}
\begin{itemize}
\item[(1)] $T^{g_2^{h_{-1}}(\widehat{z})}_{g_1^{h_{-1}}} \circ T^{\widehat{z}}_{g_2^{h_{-1}}}=T^{\widehat{z}}_{g_1^{h_{-1}}g_2^{h_{-1}}}$.
\item[(2)] For $g\in \Sp_{2m}(\R)$, $U_{ g^{h_{-1}}(\widehat{z'}), g^{h_{-1}}(\widehat{z})} \circ T^{\widehat{z}}_{g^{h_{-1}}}=\epsilon(g^{h_{-1}};\widehat{z'},\widehat{z}) T^{\widehat{z'}}_{g^{h_{-1}}} \circ U_{\widehat{z'},\widehat{z}}$.
\end{itemize}
\end{lemma}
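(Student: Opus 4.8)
The plan is to prove both identities by a direct computation starting from the explicit formulas for the operators $T^{\widehat z}_{g^{h_{-1}}}$ and $U_{\widehat z',\widehat z}$, and then reconciling the resulting kernels with the cocycle relations for $\epsilon$ and $\gamma$ already recorded in Section~\ref{ccccc}. For part~(1), I would write out $T^{g_2^{h_{-1}}(\widehat z)}_{g_1^{h_{-1}}}\circ T^{\widehat z}_{g_2^{h_{-1}}}$ acting on $\phi$. Each $T$ is of the form $\phi\mapsto \eta((g^{h_{-1}})^{-1},g^{h_{-1}}(\widehat z),v)\,\phi(vJ(g^{h_{-1}},\widehat z))$, so the composite produces the product of two $\eta$-factors times $\phi$ evaluated at $vJ(g_1^{h_{-1}},g_2^{h_{-1}}(\widehat z))J(g_2^{h_{-1}},\widehat z)$. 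The argument simplifies to $vJ(g_1^{h_{-1}}g_2^{h_{-1}},\widehat z)$ by the chain rule for automorphy factors, $J(gg',z)=J(g,g'(z))J(g',z)$, which holds since $(g_1g_2)^{h_{-1}}=g_1^{h_{-1}}g_2^{h_{-1}}$. The product of $\eta$-factors collapses to the single $\eta$ for $g_1^{h_{-1}}g_2^{h_{-1}}$ by the cocycle identity $\eta(\widetilde g_1\widetilde g_2,\widetilde z)=\eta(\widetilde g_1,\widetilde g_2(\widetilde z))\eta(\widetilde g_2,\widetilde z)$ proved in the first Lemma of Section~\ref{ccccc} (applied with the Heisenberg components trivial, or read off from the explicit exponential form of $\eta$ on $\Sp_{2m}(\R)$). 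Assembling these gives exactly $T^{\widehat z}_{g_1^{h_{-1}}g_2^{h_{-1}}}$.

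For part~(2), I would again compute both sides as integral operators $\mathcal H_{F_{\widehat z}}\to\mathcal H_{F_{g^{h_{-1}}(\widehat z')}}$. The left-hand side $U_{g^{h_{-1}}(\widehat z'),g^{h_{-1}}(\widehat z)}\circ T^{\widehat z}_{g^{h_{-1}}}$ is
\[
f(v')=\gamma\big(g^{h_{-1}}(\widehat z'),g^{h_{-1}}(\widehat z)\big)\int_{\C^m}\kappa\big(g^{h_{-1}}(\widehat z'),v';g^{h_{-1}}(\widehat z),v\big)^{-1}\,\eta\big((g^{h_{-1}})^{-1},g^{h_{-1}}(\widehat z),v\big)\,\phi\big(vJ(g^{h_{-1}},\widehat z)\big)\,\kappa_{g^{h_{-1}}(\widehat z)}(v,v)\,d_{g^{h_{-1}}(\widehat z)}(v).
\]
The right-hand side $T^{\widehat z'}_{g^{h_{-1}}}\circ U_{\widehat z',\widehat z}$ has the same shape but with $\gamma(\widehat z',\widehat z)$ and kernels indexed by $\widehat z',\widehat z$, prefixed by the $\eta$-factor $\eta((g^{h_{-1}})^{-1},g^{h_{-1}}(\widehat z'),v')$. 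In the left-hand integral I would substitute $w=vJ(g^{h_{-1}},\widehat z)$, using that $d_{g^{h_{-1}}(\widehat z)}$ is the $\Sp$-invariant measure pushed forward correctly (the Jacobian of $v\mapsto vJ$ cancels against the $\det(\Im -)$ normalizations built into $d_z$), and then invoke the transformation law $\kappa(\widetilde g(\widetilde z);\widetilde g(\widetilde z'))=\eta(\widetilde g,\widetilde z)\kappa(\widetilde z;\widetilde z')\overline{\eta(\widetilde g,\widetilde z')}$ (the second Lemma of Section~\ref{ccccc}) to rewrite $\kappa(g^{h_{-1}}(\widehat z'),v';g^{h_{-1}}(\widehat z),v)$ and $\kappa_{g^{h_{-1}}(\widehat z)}(v,v)$ in terms of the $\widehat z',\widehat z$ kernels. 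The various $\eta$ and $\overline\eta$ factors produced this way should combine, via the $\eta$-cocycle identity, with the explicit $\eta((g^{h_{-1}})^{-1},\dots)$ already present, leaving precisely the $\eta$-factor of the right-hand side free outside the integral. The leftover scalar ratio is $\gamma(g^{h_{-1}}(\widehat z'),g^{h_{-1}}(\widehat z))/\gamma(\widehat z',\widehat z)$, which is the \emph{definition} of $\epsilon(g^{h_{-1}};\widehat z',\widehat z)$. Matching everything yields the claimed identity.

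The main obstacle will be the bookkeeping of the many exponential ($\eta$, $\kappa$) factors in part~(2): one must track the holomorphic-versus-antiholomorphic dependence carefully (the $\phi$ argument is $v(c\overline z+d)$, anti-holomorphic in $z$ through $\overline z$), and verify that the change-of-variables Jacobian together with the $\det(\Im)$-weights in $d_z(v)$ and $\kappa_z(v,v)$ behaves exactly as in Satake's Lemma~3 and the composition lemma $U_{z,z'}\circ U_{z',z''}=U_{z,z''}$. I would organize this by first establishing part~(1) (which is purely a $\Sp_{2m}(\R)$-cocycle statement with no integral), and then for part~(2) checking the identity on a spanning set — it suffices, by continuity and the intertwining property, to verify it after composing with $\pi_{F,\psi}(h)$ for all $h\in\Ha(W)$, reducing to the already-established intertwining diagram~\eqref{eq1} together with the analogous intertwining property of $U_{\widehat z',\widehat z}$. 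This would replace the heavy integral manipulation by a comparison of two $\Ha(W)$-intertwining operators between irreducible Heisenberg representations, which differ only by a scalar; identifying that scalar as $\epsilon(g^{h_{-1}};\widehat z',\widehat z)$ by evaluating both sides on a single convenient vector (e.g.\ the Gaussian $\phi\equiv 1$) finishes the proof. Either route works; the Schur-type argument is cleaner and I expect to present that one, relegating the direct kernel computation to a remark or citing \cite[pp.399--400]{Sa1}.
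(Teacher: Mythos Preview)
Your proposal is correct and matches the paper's approach essentially point for point. For part~(1) the paper does exactly the direct computation you describe, expanding both sides via the definition of $T$ and collapsing the $\eta$-factors through the cocycle identity $\eta((sr)^{-1},sr(\widehat z),v)=\eta(r^{-1},r(\widehat z),vJ(s,r(\widehat z)))\,\eta(s^{-1},sr(\widehat z),v)$ together with $J(sr,\widehat z)=J(s,r(\widehat z))J(r,\widehat z)$. For part~(2) the paper takes precisely your preferred Schur-type route: it observes that both composites are $\Ha(W)$-intertwiners $\mathcal H_{F_{\widehat z}}\to\mathcal H_{F_{g^{h_{-1}}(\widehat z')}}$, hence differ by a scalar, and then identifies that scalar as $\epsilon(g^{h_{-1}};\widehat z',\widehat z)$ by citing \cite[pp.400--401]{Sa1} and \cite[p.125]{Ta1} rather than by evaluating on a Gaussian.
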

\begin{proof}
See \cite[p.125]{Ta1}.\\
1) Put $r=g_2^{h_{-1}}$, $s=g_1^{h_{-1}}$. Then:
 \begin{align*}
 &T^{r(\widehat{z})}_{s} \circ T^{\widehat{z}}_{r} \phi(v)\\
 &=\eta\big(s^{-1}, sr(\widehat{z}), v\big) [T^{\widehat{z}}_{r}\phi]\Big(vJ\big(s,r(\widehat{z})\big)\Big)\\
 &=\eta\big(s^{-1}, sr(\widehat{z}), v\big) \eta\Big(r^{-1}, r(\widehat{z}), vJ\big(s,r(\widehat{z})\big)\Big) \phi\Big(vJ\big(s,r(\widehat{z})J(r,\widehat{z})\big)\Big)\\
 &=\eta\big(s^{-1}, sr(\widehat{z}), v\big) \eta\Big(r^{-1}, r(\widehat{z}), vJ\big(s,r(\widehat{z})\big)\Big) \phi\big(vJ(sr,\widehat{z})\big);\\
 &  \\
& T^{\widehat{z}}_{sr} \phi(v)\\
&=\eta\big((sr)^{-1}, sr(\widehat{z}), v\big)\phi\big(vJ(sr,\widehat{z})\big); \\
& \\
 &\eta\big((sr)^{-1}, sr(\widehat{z}), v\big)\\
 &= \eta\big(r^{-1}s^{-1}, sr(\widehat{z}), v\big)\\
&= \eta\Big(r^{-1}, r(\widehat{z}), v J\big(s^{-1}, sr(\widehat{z})\big)^{-1}\Big)  \eta\big(s^{-1}, sr(\widehat{z}), v\big)\\
&= \eta\Big(r^{-1}, r(\widehat{z}), v J\big(s, r(\widehat{z})\big)\Big)  \eta\big(s^{-1}, sr(\widehat{z}), v\big).
 \end{align*}
 2) 
 \[\begin{CD}
 \mathcal{H}_{F_{\widehat{z}}}@>T^{\widehat{z}}_{g^{h_{-1}}}>> \mathcal{H}_{F_{g^{h_{-1}}(\widehat{z})}}@>U_{ g^{h_{-1}}(\widehat{z'}), g^{h_{-1}}(\widehat{z})} >> \mathcal{H}_{F_{g^{h_{-1}}(\widehat{z'})}} 
 \end{CD}\]
 \[\begin{CD}
 \mathcal{H}_{F_{\widehat{z}}}@>U_{\widehat{z'},\widehat{z}} >> \mathcal{H}_{F_{\widehat{z'}}}@>T^{\widehat{z'}}_{g^{h_{-1}}}>> \mathcal{H}_{F_{g^{h_{-1}}(\widehat{z'})}}
\end{CD}\]
Thus, the difference between $U_{g^{h_{-1}}(\widehat{z'}), g^{h_{-1}}(\widehat{z})} \circ T^{\widehat{z}}_{g^{h{-1}}}$ and $T^{\widehat{z'}}_{g^{h{-1}}} \circ U_{\widehat{z'},\widehat{z}}$ is a constant. This constant is precisely $\epsilon(g^{h_{-1}};\widehat{z'},\widehat{z})$, as can be inferred from the reference \cite[pp.400-401]{Sa1} or \cite[p.125]{Ta1}.
\end{proof}

Then the Heisenberg representation $\pi_{F_{\widehat{z}},\psi}$ of $\Ha(W)$,  as described in Example \ref{ex4}, can be extended to be  a projective representation of   $\Sp_{2m}(\R)\ltimes \Ha(W)$ by defining the action on functions $f(v)\in \mathcal{H}_{F_{\widehat{z}}}$ as follows:
\[
\left\{ \begin{array}{l}
 \pi_{F_{\widehat{z}}, \psi}(h)f(v)=\eta((h^{h_{-1}})^{-1}, \widehat{z}, v) f(v+x\overline{z}+x^{\ast}),\\
 \pi_{F_{\widehat{z}},\psi}(g) f(v)=U_{\widehat{z}, g^{h_{-1}}(\widehat{z})} \circ T^{\widehat{z}}_{g^{h_{-1}}} f(v),
\end{array}\right.\]
where $g \in \Sp_{2m}(\mathbb{R})$ and $h = (x, x^{\ast}; t) \in \Ha(W)$. 
\begin{lemma}
\begin{itemize}
\item[(1)] It is well-defined.
\item[(2)] $\pi_{F_{\widehat{z}},\psi}(g_1) \pi_{F_{\widehat{z}},\psi}(g_2)=\beta_{\widehat{z}}(g_1,g_2)^{-1}\pi_{F_{\widehat{z}},\psi}(g_1g_2)$.
\end{itemize}
\end{lemma}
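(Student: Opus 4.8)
The plan is to deduce both assertions from the three structural facts already established for the generalized Fock model: the unitarity of the intertwiners $U_{\widehat{z'},\widehat{z}}$ and $T^{\widehat{z}}_{g^{h_{-1}}}$, the cocycle law $U_{z,z'}\circ U_{z',z''}=U_{z,z''}$, and the two relations of the lemma immediately preceding (the composition rule $T^{g_2^{h_{-1}}(\widehat{z})}_{g_1^{h_{-1}}}\circ T^{\widehat{z}}_{g_2^{h_{-1}}}=T^{\widehat{z}}_{g_1^{h_{-1}}g_2^{h_{-1}}}$ and the commutation relation $U_{g^{h_{-1}}(\widehat{z'}),g^{h_{-1}}(\widehat{z})}\circ T^{\widehat{z}}_{g^{h_{-1}}}=\epsilon(g^{h_{-1}};\widehat{z'},\widehat{z})\,T^{\widehat{z'}}_{g^{h_{-1}}}\circ U_{\widehat{z'},\widehat{z}}$). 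The one preliminary computation I would record first is that $g^{h_{-1}}(\widehat{z})=\widehat{g(z)}$: writing $g=\begin{pmatrix}a&b\\c&d\end{pmatrix}$ one has $g^{h_{-1}}=\begin{pmatrix}a&-b\\-c&d\end{pmatrix}$, whence $g^{h_{-1}}(-\overline{z})=(-a\overline{z}-b)(c\overline{z}+d)^{-1}=-\overline{g(z)}$. In particular the target base point is again of the admissible form $-\overline{z'}$ with $z'\in\mathbb{H}_m$, and $(g_1g_2)^{h_{-1}}=g_1^{h_{-1}}g_2^{h_{-1}}$ gives $(g_1g_2)^{h_{-1}}(\widehat{z})=g_1^{h_{-1}}g_2^{h_{-1}}(\widehat{z})$.

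For part (1): with the previous observation, $T^{\widehat{z}}_{g^{h_{-1}}}$ is a unitary map $\mathcal{H}_{F_{\widehat{z}}}\to\mathcal{H}_{F_{g^{h_{-1}}(\widehat{z})}}$ and $U_{\widehat{z},g^{h_{-1}}(\widehat{z})}$ is a unitary map $\mathcal{H}_{F_{g^{h_{-1}}(\widehat{z})}}\to\mathcal{H}_{F_{\widehat{z}}}$ by the cited lemmas, so their composite $\pi_{F_{\widehat{z}},\psi}(g)$ is a well-defined unitary operator on $\mathcal{H}_{F_{\widehat{z}}}$; no choices enter the formula. To see that the $\Sp_{2m}(\R)$-part and the $\Ha(W)$-part assemble into a projective representation of $\Sp_{2m}(\R)\ltimes\Ha(W)$, I would feed the $\Ha(W)$-intertwining property of $U_{\widehat{z},g^{h_{-1}}(\widehat{z})}$ into the left column of the commutative square (\ref{eq1}); this yields $\pi_{F_{\widehat{z}},\psi}(g)\,\pi_{F_{\widehat{z}},\psi}(h)=\pi_{F_{\widehat{z}},\psi}(h^{g^{-1}})\,\pi_{F_{\widehat{z}},\psi}(g)$ for every $h\in\Ha(W)$, which is exactly the required compatibility.

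For part (2): put $s_i=g_i^{h_{-1}}$, so $s_1s_2=(g_1g_2)^{h_{-1}}$. By definition $\pi_{F_{\widehat{z}},\psi}(g_1)\,\pi_{F_{\widehat{z}},\psi}(g_2)=U_{\widehat{z},s_1(\widehat{z})}\circ T^{\widehat{z}}_{s_1}\circ U_{\widehat{z},s_2(\widehat{z})}\circ T^{\widehat{z}}_{s_2}$, and I would collapse this fourfold composite in three moves. First, the commutation relation (applied with $g^{h_{-1}}=s_1$ and base points $\widehat{z}$, $s_2(\widehat{z})$) replaces $T^{\widehat{z}}_{s_1}\circ U_{\widehat{z},s_2(\widehat{z})}$ by $\epsilon(s_1;\widehat{z},s_2(\widehat{z}))^{-1}\,U_{s_1(\widehat{z}),s_1s_2(\widehat{z})}\circ T^{s_2(\widehat{z})}_{s_1}$. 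Second, the $U$-cocycle law merges $U_{\widehat{z},s_1(\widehat{z})}\circ U_{s_1(\widehat{z}),s_1s_2(\widehat{z})}=U_{\widehat{z},s_1s_2(\widehat{z})}$. Third, the $T$-composition rule merges $T^{s_2(\widehat{z})}_{s_1}\circ T^{\widehat{z}}_{s_2}=T^{\widehat{z}}_{s_1s_2}$. The result is $\epsilon(s_1;\widehat{z},s_2(\widehat{z}))^{-1}\,U_{\widehat{z},s_1s_2(\widehat{z})}\circ T^{\widehat{z}}_{s_1s_2}=\epsilon(s_1;\widehat{z},s_2(\widehat{z}))^{-1}\,\pi_{F_{\widehat{z}},\psi}(g_1g_2)$, and since by definition $\beta_{\widehat{z}}(g_1,g_2)=\epsilon(g_1^{h_{-1}};\widehat{z},g_2^{h_{-1}}(\widehat{z}))=\epsilon(s_1;\widehat{z},s_2(\widehat{z}))$, this is precisely $\beta_{\widehat{z}}(g_1,g_2)^{-1}\,\pi_{F_{\widehat{z}},\psi}(g_1g_2)$.

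This argument is essentially bookkeeping once the preceding two lemmas are in hand, so I do not expect a serious obstacle; the only place that needs care — and where I would concentrate attention — is keeping track of the $h_{-1}$-twists and of the base points, making sure every $U$ and $T$ in the chain really is taken between Fock spaces with base points in the admissible set $\{-\overline{z}:z\in\mathbb{H}_m\}$ (guaranteed by $g^{h_{-1}}(\widehat{z})=\widehat{g(z)}$) and that the scalar produced in the first move is $\epsilon(s_1;\widehat{z},s_2(\widehat{z}))^{-1}$, matching $\beta_{\widehat{z}}(g_1,g_2)^{-1}$ on the nose rather than its inverse or its conjugate.
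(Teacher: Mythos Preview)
Your proposal is correct and follows essentially the same route as the paper's proof: both parts are obtained by chaining the commutative square \eqref{eq1} with the $\Ha(W)$-intertwining property of $U$ for (1), and by applying the commutation relation $U\circ T=\epsilon\cdot T\circ U$, then the $U$-cocycle law and the $T$-composition rule, for (2). Your added remark that $g^{h_{-1}}(\widehat{z})=\widehat{g(z)}$ is a useful sanity check the paper leaves implicit, but otherwise the arguments coincide step for step.
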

\begin{proof}
\[[g,h]=[g,0]\cdot [1,h]=[1,h^{g^{-1}}]\cdot[g,0].\]
\begin{align*}
&\pi_{F_{\widehat{z}},\psi}(g)  \pi_{F_{\widehat{z}},  \psi}(h)\\
&=U_{\widehat{z}, g^{h_{-1}}(\widehat{z})} \circ T^{\widehat{z}}_{g^{h_{-1}}}\circ \pi_{ F_{\widehat{z}}, \psi}(h)\\
&\xlongequal{\widehat{z}_1=g^{h_{-1}}(\widehat{z})}U_{\widehat{z}, \widehat{z}_1} \circ \pi_{F_{\widehat{z}_1},\psi}(h^{g^{-1}})\circ T^{\widehat{z}}_{g^{h_{-1}}}\\
&= \pi_{F_{\widehat{z}},\psi}(h^{g^{-1}})\circ  U_{\widehat{z}, \widehat{z}_1} \circ T^{\widehat{z}}_{g^{h_{-1}}}\\  
&=\pi_{F_{\widehat{z}},\psi}(h^{g^{-1}}) \pi_{F_{\widehat{z}},\psi}(g).
  \end{align*}
  By the above lemma, 
  \[U_{ g_1^{h_{-1}}(\widehat{z}), g_1^{h_{-1}}g_2^{h_{-1}}(\widehat{z})} \circ T^{g_2^{h_{-1}}(\widehat{z})}_{g_1^{h_{-1}}}=\epsilon(g_1^{h_{-1}};\widehat{z},g_2^{h_{-1}}(\widehat{z})) T^{\widehat{z}}_{g_1^{h_{-1}}} \circ U_{\widehat{z},g_2^{h_{-1}}(\widehat{z})}\]
  \begin{align*}
&\pi_{F_{\widehat{z}},\psi}(g_1)  \pi_{F_{\widehat{z}}, \psi}(g_2)\\
&=U_{\widehat{z}, g_1^{h_{-1}}(\widehat{z})} \circ T^{\widehat{z}}_{g_1^{h_{-1}}}\circ U_{\widehat{z}, g_2^{h_{-1}}(\widehat{z})} \circ T^{\widehat{z}}_{g_2^{h_{-1}}}\\
&=\epsilon(g_1^{h_{-1}};\widehat{z},g_2^{h_{-1}}(\widehat{z}))^{-1}U_{\widehat{z}, g_1^{h_{-1}}(\widehat{z})} \circ U_{ g_1^{h_{-1}}(\widehat{z}), g_1^{h_{-1}}g_2^{h_{-1}}(\widehat{z})} \circ T^{g_2^{h_{-1}}(\widehat{z})}_{g_1^{h_{-1}}}  \circ T^{\widehat{z}}_{g_2^{h_{-1}}}\\
&=\beta_{\widehat{z}}(g_1,g_2)^{-1} \pi_{F_{\widehat{z}},\psi}(g_1g_2). 
  \end{align*}
\end{proof}
\begin{example}\label{exm}
Take $z=z_0=i1_{m}$. Let   $g=\begin{pmatrix}a & -b\\ b& a\end{pmatrix}\in \U_{m}(\C)$. Then $g^{h_{-1}}=\begin{pmatrix}a & b\\ -b& a\end{pmatrix}$. 
Additionally, the representation $\pi_{F_{z_0},\psi}(g)$ acting on a function $f(v)$ is defined as:
$$ \pi_{F_{z_0},\psi}(g) f(v)=e^{-\pi i v(-bi+a) b^Tv^T}f(v(-bi+a)).$$
\begin{itemize}
\item[(1)] If we take $f(v)=e^{-\tfrac{\pi}{2}vv^T}$, then $f(v)\in \mathcal{H}_{F_{z_0}}$. Moreover, $\pi_{F_{z_0},\psi}(g) f(v)=f(v)$.
\begin{align*}
&\int_{\C^m}  e^{-2\pi v_y v_y^T} \mid f(v)\mid^2 d_{z_0}(v)\\
&=\int_{\C^m}  e^{-2\pi v_y v_y^T} e^{-\pi (v_x v_x^T-v_yv_y^T)} d_{z_0}(v)\\
&=\int_{\R^m \times \R^m}  e^{-\pi (v_x v_x^T+v_yv_y^T)} dv_xdv_y<+\infty.
\end{align*}
\begin{align*}
\pi_{F_{z_0},\psi}(g) f(v)&=e^{-\pi i v(-bi+a) b^Tv^T}e^{-\tfrac{\pi}{2}v(-bi+a)(-bi+a)^Tv^T}\\
&=e^{\pi v(-ia b^T)v^T+ v(-bb^T)v^T}e^{\tfrac{\pi}{2}v(-aa^T+bb^T+ab^Ti+ba^Ti)v^T}\\
&=e^{-\tfrac{\pi}{2}vv^T}\\
&=f(v).
\end{align*}
\item[(2)] If we take $f_k(v)=v_kie^{-\tfrac{\pi}{2}vv^T}$ for $v=(v_1, \cdots, v_m)\in V$, then $f_k(v)\in \mathcal{H}_{F_{z_0}}$. Moreover, if let  $F(v)=\begin{pmatrix} f_1(v)\\ \vdots\\f_m(v)\end{pmatrix}$, then   $\pi_{F_{z_0},\psi}(g)F(v)=(-bi+a)^TF(v)$.
\end{itemize}
\end{example}
\begin{align*}
\pi_{F_{z_0},\psi}(g) f_k(v)
&=e^{-\pi i v(-bi+a) b^Tv^T}e^{-\tfrac{\pi}{2}v(-bi+a)(-bi+a)^Tv^T}(iv(-bi+a))_k\\
&=f(v)(iv(-bi+a))_k.\\
& \\
\pi_{F_{z_0},\psi}(g) F(v)&=f(v)(iv(-bi+a))^T\\
&=(-bi+a)^T F(v).
\end{align*}
\begin{example}\label{example232} 
Take $z=\widehat{z}=z_0=i1_{m}$.  Let us give the explicit actions of $ u(b), h(a)$, for  $u(b)=\begin{pmatrix}
  1&b\\
  0 & 1
\end{pmatrix}$,  $h(a)=\begin{pmatrix}
  a&0\\
  0 & (a^T)^{-1}
\end{pmatrix} \in \Sp_{2m}(\R)$. 
\end{example}

1) If $g=h(a)$, then $h(a)^{h_{-1}}=h(a)$,  $\widehat{z'}=g^{h_{-1}}(\widehat{z})=a\widehat{z} a^{T}=iaa^T$. Write $v=v_x+iv_y$ and $v'=v'_x+iv'_y$.
\begin{align*}
 \gamma(\widehat{z}, \widehat{z'})&={\det}^{-1/2}\big(\tfrac{\widehat{z}-\overline{\widehat{z'}}}{2i}\big) \cdot \det(\Im \widehat{z})^{1/4}\cdot \det(\Im \widehat{z'})^{1/4}\\
 &={\det}^{-1/2}\big(\tfrac{1_m+aa^T}{2}\big) \cdot |\det(a)|^{1/2};\\
& \\
 \kappa_{\widehat{z'}}(v'a^{T},v'a^{T}) & =e^{-2\pi v'_ya^{T}[aa^T]^{-1} a(v'_y)^T}\\
 &=e^{-2\pi v'_y(v'_y)^T}\\
 &=\kappa_{z_0}(v',v');\\
&\\
 d_{\widehat{z'}}(v')=&\det(aa^T)^{-1} d_{z_0}(v');
 \end{align*}
  \begin{align*}
 \kappa(\widehat{z},v; \widehat{z'},v'a^T)^{-1} 
 &=e^{-\pi i(v-\overline{v'}a^T)\cdot (\widehat{z}-\overline{\widehat{z'}})^{-1}\cdot \big(v-\overline{v'}a^T\big)^T}\\
  &=e^{-\pi(v-\overline{v'}a^T)\cdot (1_m+aa^T)^{-1}\cdot \big(v-\overline{v'}a^T\big)^T}.
 \end{align*}
\begin{align*}
& \pi_{F_z,\psi}(g) f(v)\\
&=U_{\widehat{z}, g^{h_{-1}}(\widehat{z})} \circ T^{\widehat{z}}_{g^{h_{-1}}} f(v)\\
&= \gamma(\widehat{z}, \widehat{z'})\int_{\C^m} \kappa(\widehat{z},v; \widehat{z'},v'')^{-1}\big[T^{\widehat{z}}_{g^{h_{-1}}} f\big](v'') \kappa_{\widehat{z'}}(v'',v'') d_{\widehat{z'}}(v'')\\
&= \gamma(\widehat{z}, \widehat{z'})\int_{\C^m} \kappa(\widehat{z},v; \widehat{z'},v'')^{-1}f(v''(a^{T})^{-1})\kappa_{\widehat{z'}}(v'',v'') d_{\widehat{z'}}(v'')\\
& \stackrel{v'=v''(a^{T})^{-1}}{=}|\det(a)|^2\gamma(\widehat{z}, \widehat{z'})\int_{\C^m} \kappa(\widehat{z},v; \widehat{z'},v'a^T)^{-1}f(v')\kappa_{\widehat{z'}}(v'a^{T},v'a^{T}) d_{\widehat{z'}}(v')\\
&=|\det(a)|^{\tfrac{1}{2}}{\det}^{-1/2}\big(\tfrac{1_m+aa^T}{2}\big)\int_{\C^m} e^{-\pi(v-\overline{v'}a^T)\cdot (1_m+aa^T)^{-1}\cdot \big(v-\overline{v'}a^T\big)^T}f(v')\kappa_{z_0}(v',v') d_{z_0}(v').
\end{align*}
2) If $g=u(b)$, then $u(b)^{h_{-1}}=u(-b)$,  $\widehat{z'}=g^{h_{-1}}(\widehat{z})=1_m i-b$.
\begin{align*}
\gamma(\widehat{z}, \widehat{z'})
 &={\det}^{-1/2}\big(\tfrac{\widehat{z}-\overline{\widehat{z'}}}{2i}\big) \cdot \det(\Im \widehat{z})^{1/4}\cdot \det(\Im \widehat{z'})^{1/4}\\
 &={\det}^{-1/2}\big(\tfrac{2i 1_m+b}{2i}\big);
 \end{align*}
 \begin{align*}
 \kappa_z(v',v')&\stackrel{v'=v'_x+iv'_y}{=}e^{-2\pi v'_y(v'_y)^T};\\
 & \\
  \kappa(\widehat{z},v; \widehat{z'},v')^{-1}
 &=e^{-\pi i(v-\overline{v'})\cdot (\widehat{z}-\overline{\widehat{z'}})^{-1} \cdot (v-\overline{v'})^T}\\
 &=e^{-\pi i(v-\overline{v'})(2i 1_m+b)^{-1} (v-\overline{v'})^T};
 \end{align*}
\begin{align*}
& \pi_{F_z,\psi}(g) f(v)=U_{\widehat{z}, g^{h_{-1}}(\widehat{z})} \circ T^{\widehat{z}}_{g^{h_{-1}}} f(v)\\
&= \gamma(\widehat{z}, \widehat{z'})\int_{\C^m} \kappa(\widehat{z},v; \widehat{z'},v')^{-1}\big[T^{\widehat{z}}_{g^{h_{-1}}} f\big](v') \kappa_z(v',v') d_z(v')\\
&= \gamma(\widehat{z}, \widehat{z'})\int_{\C^m} \kappa(\widehat{z},v; \widehat{z'},v')^{-1}f(v')\kappa_z(v',v') d_z(v').
\end{align*}
\section{Intertwining operators I}
\subsection{Intertwining operators}\label{interwop} Retain the notations from Examples \ref{ex1} and \ref{ex2}. By \cite[pp.164-165]{We}, or \cite[pp.142-145]{LiVe}, there exists  a pair of explicit isomorphisms between   $\mathcal{H}(X^{\ast})\simeq L^2(X)$ and  $\mathcal{H}(L)$, given as follows:
 \begin{equation}\label{inter11}
 \begin{split}
\theta_{L, X^{\ast}}(f')( w)&=\sum_{l\in L/L\cap X^{\ast}} f'( w+l)\psi(\tfrac{\langle l, w\rangle}{2}+\tfrac{\langle x_{l}, x^{\ast}_l\rangle}{2})\\
&=\sum_{l\in L\cap X}f'(x+l)\psi(\langle l, w\rangle) \psi(\tfrac{\langle x, x^{\ast}\rangle}{2}),
    \end{split}
 \end{equation}
 \begin{equation}\label{inter12}
 \begin{split}
 \theta_{X^{\ast}, L}(f)(x)&=  \int_{X^{\ast}/X^{\ast}\cap L} f([\dot{x}^{\ast},0]\cdot [x, 0]) d\dot{x}^{\ast}\\
  &= \int_{X^{\ast}/X^{\ast}\cap L} \psi(\tfrac{\langle\dot{x}^{\ast}, x\rangle}{2})f(x+\dot{x}^{\ast}) d\dot{x}^{\ast},
  \end{split}
      \end{equation}
    for $ w=x+ x^{\ast} \in W$, $l=x_l+ x_l^{\ast}\in L$, with $x,x_l\in X$,  $x^{\ast},x^{\ast}_l\in X^{\ast}$, and  $f'\in \mathcal{H}(X^{\ast})$ with $f'|_X \in S(X)\subseteq L^2(X)$,  $f\in L^1(W)\cap \mathcal{H}(L)$. Through $\theta_{L, X^{\ast}}$ and $\theta_{X^{\ast}, L}$, we transfer the action of $\Sp(W)$ on $ \mathcal{H}(X^{\ast})$  to $\mathcal{H}(L)$. For $g\in  \Sp(W)$, and $f=\theta_{L, X^{\ast}}(f')\in \mathcal{H}(L)$,  we can define
     \begin{equation}
     \pi_{L, \psi}(g)f=\theta_{L, X^{\ast}}[\pi_{X^{\ast}, \psi}(g)(f')]=\theta_{L, X^{\ast}}[\pi_{X^{\ast}, \psi}(g)\theta_{X^{\ast},L}(f)].
     \end{equation}
Under such action, for $g_1, g_2\in \Sp(W)$,
     \begin{equation}
     \begin{split}
     \pi_{L, \psi}(g_1)[\pi_{L, \psi}(g_2)f]&=\theta_{L, X^{\ast}}[\pi_{X^{\ast}, \psi}(g_1)\theta_{X^{\ast},L}]([\pi_{L, \psi}(g_2)f])\\
     &=\theta_{L, X^{\ast}}(\pi_{X^{\ast}, \psi}(g_1)\theta_{X^{\ast},L}\theta_{L, X^{\ast}}
     [\pi_{X^{\ast}, \psi}(g_2)\theta_{X^{\ast},L}(f)])\\
     &=\theta_{L, X^{\ast}}[\pi_{X^{\ast}, \psi}(g_1)\pi_{X^{\ast}, \psi}(g_2)\theta_{X^{\ast},L}(f)]\\
     &=\widetilde{c}_{X^{\ast}}(g_1, g_2)\pi_{L, \psi}(g_1g_2)f.
     \end{split}
     \end{equation}
 Under the basis $\{ e_1, \cdots, e_m; e_1^{\ast}, \cdots, e_m^{\ast}\}$ of $W$, we  identity $\Sp(W)$ with $\Sp_{2m}(\R)$, and $\Sp(L)$ with $\Sp_{2m}(\Z)$.
 \subsection{Explicit Elements I } 
 \ \\
 Case $1^{}$: $g=u(b)\in \Gamma_m(2)$, with  $b=b^T$.
\begin{equation*}
\begin{split}
 &\pi_{L, \psi}(g)f( w)\\
 &=\sum_{l\in L\cap X}\pi_{X^{\ast}, \psi}(g)(f')(x+l)\psi(\langle l, w\rangle) \psi(\tfrac{\langle x, x^{\ast}\rangle}{2}) \\
&=\sum_{l\in L\cap X}\psi(\tfrac{1}{2}\langle (x+l),(x+l)b\rangle) f'(x+l)\psi(\langle l, w\rangle)\psi(\tfrac{\langle x, x^{\ast}\rangle}{2})\\
&= \sum_{l\in L\cap X} \int_{{X^\ast}/X^{\ast}\cap L}  \psi(\tfrac{1}{2}\langle (x+l),(x+l)b\rangle) \psi(\tfrac{\langle\dot{y}^{\ast}, x+l\rangle}{2})f(x+l+\dot{y}^{\ast})  \psi(\langle l, w\rangle)\psi(\tfrac{\langle x, x^{\ast}\rangle}{2}) d\dot{y}^{\ast}\\
&=  \sum_{l\in L\cap X} \int_{X^{\ast}/X^{\ast}\cap L}   \psi(\tfrac{\langle\dot{y}^{\ast}, x+l\rangle}{2})f(x+l+\dot{y}^{\ast})  \psi(\langle l, x^{\ast}+xb\rangle)\psi(\tfrac{\langle x, x^{\ast}+ xb\rangle}{2}) d\dot{y}^{\ast}\\
&= f(wg).
\end{split}
\end{equation*}   
Case $2$:  $g=h(a)\in \Sp_{2m}(\Z)$.
\begin{equation*}
\begin{split}
&\pi_{L, \psi}(g)f(w)\\
&=\sum_{l\in L\cap X}\pi_{X^{\ast},\psi}(g)(f')(x+l)\psi(\langle l, w\rangle) \psi(\tfrac{\langle x, x^{\ast}\rangle}{2}) \\
&=\sum_{l\in L\cap X}|\det(a)|^{1/2}f'((x+l)a)\psi(\langle l, w\rangle) \psi(\tfrac{\langle x, x^{\ast}\rangle}{2})\\
&= \sum_{l\in L\cap X}\int_{X^{\ast}/X^{\ast}\cap L} \psi(\tfrac{\langle\dot{y}^{\ast}, (x+l)a\rangle}{2})f(xa+la+\dot{y}^{\ast})  \psi(\langle l, w\rangle) \psi(\tfrac{\langle x, x^{\ast}\rangle}{2})d\dot{y}^{\ast}\\
&= \sum_{l\in L\cap X}\int_{X^{\ast}/X^{\ast}\cap L} \psi(\tfrac{\langle\dot{y}^{\ast}, (x+l)a\rangle}{2})f(xa+la+\dot{y}^{\ast}) \psi(\langle la, x^{\ast}(a^{\ast})^{-1} \rangle) \psi(\tfrac{\langle xa, x^{\ast}(a^{\ast})^{-1} \rangle}{2}) d\dot{y}^{\ast}\\
&= \sum_{l\in L\cap X}\int_{X^{\ast}/X^{\ast}\cap L} \psi(\tfrac{\langle\dot{y}^{\ast}, xa+l\rangle}{2})f(xa+l+\dot{y}^{\ast}) \psi(\langle l, x^{\ast}(a^{\ast})^{-1}\rangle) \psi(\tfrac{\langle xa, x^{\ast}(a^{\ast})^{-1}\rangle}{2}) d\dot{y}^{\ast}\\
&=f(wg).
\end{split}
\end{equation*}
Case 3: $g=\omega_S \in \Sp_{2m}(\Z)$.  Let $w=x+ x^{\ast}$, for $x=x_s+x_{s'}\in \mathcal{X}_S\oplus\mathcal{X}_{S'} $, $x^{\ast}=x^{\ast}_s+x^{\ast}_{s'}\in \mathcal{X}^{\ast}_S\oplus\mathcal{X}^{\ast}_{S'} $.
\begin{equation*}\label{eq31}
\begin{split}
&\pi_{L, \psi}(g)f(w)\\
& =\sum_{l\in L\cap X}[\pi_{X^{\ast}, \psi}(g)f'](x+l) \psi(\langle l, w\rangle) \psi(\tfrac{\langle x, x^{\ast}\rangle}{2})\\
&= \sum_{l\in L\cap X} \int_{X_S^{\ast}}\psi(\langle x_s+l_s,  z_s^{\ast}\rangle ) f'((x_{s'}+l_{s'})\omega_S+z_s^{\ast}\omega^{-1}_S) \psi(\langle l, w\rangle) \psi(\tfrac{\langle x, x^{\ast}\rangle}{2}) dz_s^{\ast}\\
&= \sum_{l\in L\cap X}\int_{X_S^{\ast}}\psi(\langle x_s+l_s,  z_s^{\ast}\rangle ) f'((x_{s'}+l_{s'})\omega_S+z_s^{\ast}\omega^{-1}_S) \psi(\langle l, w\rangle) \psi(\tfrac{\langle x, x^{\ast}\rangle}{2}) dz_s^{\ast}\\
&=\sum_{l\in L\cap X} \int_{X_S^{\ast}}   \int_{X^{\ast}/X^{\ast}\cap L}\psi(\langle x_s+l_s,  z_s^{\ast}\rangle )  \psi(\tfrac{\langle\dot{y}^{\ast}, x_{s'}+l_{s'}+z_s^{\ast}\omega^{-1}_S\rangle}{2})f(x_{s'}+l_{s'}+z_s^{\ast}\omega^{-1}_S+\dot{y}^{\ast}) \psi(\langle l, w\rangle) \psi(\tfrac{\langle x, x^{\ast}\rangle}{2})d\dot{y}^{\ast} dz_s^{\ast}\\
&=\sum_{l\in L\cap X} \int_{X_S^{\ast}}   \int_{X^{\ast}/X^{\ast}\cap L} \psi(\langle x_s+l_s,  z_s^{\ast}\rangle ) \psi(\tfrac{\langle\dot{y}^{\ast}, x_{s'}+z_s^{\ast}\omega^{-1}_S\rangle}{2})f(x_{s'}+z_s^{\ast}\omega^{-1}_S+\dot{y}^{\ast}) \psi(-\langle l_{s'}, \dot{y}^{\ast}\rangle)\psi(\langle l, w\rangle) \psi(\tfrac{\langle x, x^{\ast}\rangle}{2})d\dot{y}^{\ast} dz_s^{\ast}\\
&=  \int_{X^{\ast}/X^{\ast}\cap L} f(x_{s'}+z_s^{\ast}\omega^{-1}_S+\dot{y}^{\ast})\psi(\langle x_s,  z_s^{\ast})) \psi(\tfrac{\langle\dot{y}^{\ast}, x_{s'}+z_s^{\ast}\omega^{-1}_S\rangle}{2})\psi(\tfrac{\langle x, x^{\ast}\rangle}{2})\cdot[\sum_{l\in L\cap X}\psi(-\langle l_{s'}, \dot{y}^{\ast}\rangle) \psi(\langle l, w\rangle)\psi(\langle l_s,  z_s^{\ast}\rangle ) ] d\dot{y}^{\ast}dz_s^{\ast}\\
&=\int_{L\cap X_S^{\ast}}   \int_{X_S^{\ast}/X_S^{\ast}\cap L}    f(x_{s'}\omega_S+(-x_{s}^{\ast}+z_s^{\ast})\omega^{-1}_S+\dot{y}_s^{\ast}+ x_{s'}^{\ast})\psi(\langle x_s,  z_s^{\ast})) \psi(\tfrac{\langle\dot{y}_s^{\ast}+ x_{s'}^{\ast}, x_{s'}+(-x_s^{\ast}+z_s^{\ast})\omega^{-1}_S\rangle}{2})\psi(\tfrac{\langle x, x^{\ast}\rangle}{2})dz_s^{\ast}d\dot{y}_s^{\ast}\\
&=\psi(\tfrac{\langle x_s, x_s^{\ast}\rangle}{2})\int_{L\cap X_S^{\ast}}   \int_{X_S^{\ast}/X_S^{\ast}\cap L}    f(x_{s'}\omega_S+x_{s}^{\ast}\omega_S+z_s^{\ast}\omega^{-1}_S+\dot{y}_s^{\ast}+ x_{s'}^{\ast}\omega_S)\psi(\langle x_s,  z_s^{\ast}\rangle)) \psi(\tfrac{\langle\dot{y}_s^{\ast}, (-x_s^{\ast}+z_s^{\ast})\omega^{-1}_S\rangle}{2})dz_s^{\ast}d\dot{y}_s^{\ast}\\
&=\psi(\tfrac{\langle x_s, x_s^{\ast}\rangle}{2})\int_{L\cap X_S^{\ast}}   \int_{X_S^{\ast}/X_S^{\ast}\cap L}    f(x_{s'}\omega_S+z_s^{\ast}\omega^{-1}_S+\dot{y}_s^{\ast}+x^{\ast}\omega_S)\psi(\langle x_s,  z_s^{\ast}\rangle)) \psi(\tfrac{\langle\dot{y}_s^{\ast}, (-x_s^{\ast}+z_s^{\ast})\omega^{-1}_S\rangle}{2})dz_s^{\ast}d\dot{y}_s^{\ast}\\
&=\psi(\tfrac{\langle x_s, x_s^{\ast}\rangle}{2})\int_{L\cap X_S^{\ast}}   \int_{X_S^{\ast}/X_S^{\ast}\cap L}    f(x_{s'}\omega_S+z_s^{\ast}\omega^{-1}_S+\dot{y}_s^{\ast}+x^{\ast}\omega_S)\psi(\langle x_s,  z_s^{\ast}\rangle)) \psi(\tfrac{\langle\dot{y}_s^{\ast}, (-x_s^{\ast}+z_s^{\ast})\omega^{-1}_S\rangle}{2})dz_s^{\ast}d\dot{y}_s^{\ast}\\
&=\psi(\tfrac{\langle x_s\omega_S, x_s^{\ast}\omega_S\rangle}{2})\int_{L\cap X_S}   \int_{X_S^{\ast}/X_S^{\ast}\cap L}      f(x_{s'}\omega_S+x_{s'}^{\ast}\omega_S+l_s+\dot{y}_s^{\ast}+x_s^{\ast}\omega_S)\psi(\langle l_s,x_s\omega_S\rangle)) \psi(\tfrac{\langle\dot{y}_s^{\ast},l_s+x_s^{\ast}\omega_S\rangle}{2})dl_sd\dot{y}_s^{\ast}\\
&=f(w\omega_S ).
\end{split}
\end{equation*}
Case $4$: $g=u_-(c)\in \Gamma_m(2)$, with  $ c=c^T$. Then $u(-c)=\omega u_-(c)\omega^{-1}$. 
\begin{equation}\label{chap6eq4}
\begin{split}
\pi_{L, \psi}(g)f(w)&=\pi_{L,\psi}(\omega^{-1} u(-c)\omega) f(w)\\
&=\widetilde{c}_{X^{\ast}}(\omega^{-1}, u(-c))^{-1}\widetilde{c}_{X^{\ast}}(\omega^{-1}u(-c), \omega)^{-1}\pi_{L,\psi}(\omega^{-1})\pi_{L,\psi}(u(-c)) \pi_{L,\psi}(\omega) f(w)\\
&=\widetilde{c}_{X^{\ast}}(\omega^{-1}u(-c), \omega)^{-1}\pi_{L,\psi}(\omega^{-1})\pi_{L,\psi}(u(-c)) \pi_{L,\psi}(\omega) f(w)\\
&=\widetilde{c}_{X^{\ast}}(\omega^{-1}u(-c), \omega)^{-1} f(wg).
\end{split}
\end{equation}
Note:
\[\widetilde{c}_{X^{\ast}}(\omega^{-1}u(-c), \omega)\widetilde{c}_{X^{\ast}}(\omega^{-1}u(-c) \omega, \omega^{-1})=\widetilde{c}_{X^{\ast}}(\omega^{-1}u(-c) , 1)\widetilde{c}_{X^{\ast}}( \omega, \omega^{-1})=1,\]
\[\widetilde{c}_{X^{\ast}}(\omega^{-1}u(-c), \omega)^{-1} =\widetilde{c}_{X^{\ast}}(\omega^{-1}u(-c) \omega, \omega^{-1})=\widetilde{c}_{X^{\ast}}(u_-(c) , \omega).\]
By Prop.\ref{gamma2q}, for any element $g\in \Gamma_m(2)$ ($m\geq 2$),  it can be expressed as a product of elements of the form  $u(b)$, $u_-(c)$. Consequently, there exists an element  $\epsilon_g\in \mu_8$, which is a product of Weil's indices, such that the following equation holds:
\begin{equation}\label{chapeq5}
 \pi_{L, \psi}(g)f(w)= \epsilon_g f(wg).
 \end{equation}
  Note that $\epsilon_g$ is uniquely determined by $(\ref{chapeq5})$. For $g_1, g_2\in \Gamma_m(2)$, we have:
 \begin{align*}
 &\pi_{L, \psi}(g_1)\pi_{L, \psi}(g_2)f(w)\\
 &=\epsilon_{g_1} \epsilon_{g_2}f(wg_1g_2)\\
 &=\widetilde{c}_{X^{\ast}}(g_1,g_2)\pi_{L, \psi}(g_1g_2)f(w)\\
 &= \widetilde{c}_{X^{\ast}}(g_1,g_2)\epsilon_{g_1g_2}f(wg_1g_2).
 \end{align*}
 Hence
 $$\widetilde{c}_{X^{\ast}}(g_1,g_2)=\epsilon_{g_1} \epsilon_{g_2}\epsilon_{g_1g_2}^{-1}.$$
 \begin{example}
 \begin{itemize}
 \item[(1)] If $g=u(b)\in \Gamma_m(2)$, then $\epsilon_{g}=1 $. 
 \item [(2)] If $g=h(b)\in \Gamma_m(2)$, then $\epsilon_{g}=1 $. 
 \item[(3)] If $g=u_-(c)\in \Gamma_m(2)$, then $\epsilon_{g}=\widetilde{c}_{X^{\ast}}(u_-(c), \omega) $. 
 \end{itemize}
\end{example}
\subsection{Explicit Elements II }\label{exlicit}
We focus on the other elements 
$g$ of the form $g=u(b)\in \Sp_{2m}(\Z) \setminus \Gamma_m(2)$. \\
Case 5: $g=u_{ij}=u(b) \in \Sp_{2m}(\Z)$, for $i\neq j$. 
\begin{equation*}
\begin{split}
 &\pi_{L, \psi}(g)f( w)\\
&= \sum_{l\in L\cap X} \int_{{X^\ast}/X^{\ast}\cap L}  \psi(\tfrac{1}{2}\langle (x+l),(x+l)b\rangle) \psi(\tfrac{\langle\dot{y}^{\ast}, x+l\rangle}{2})f(x+l+\dot{y}^{\ast})  \psi(\langle l, w\rangle)\psi(\tfrac{\langle x, x^{\ast}\rangle}{2}) d\dot{y}^{\ast}\\
&=\sum_{l\in L\cap X} \int_{X^{\ast}/X^{\ast}\cap L} \psi(\tfrac{1}{2}\langle l,lb\rangle) f(x+l+\dot{y}^{\ast}) \psi(-\tfrac{\langle x+l, \dot{y}^{\ast}\rangle}{2})   \psi(\langle l, x^{\ast}+xb)\psi(\tfrac{\langle x, x^{\ast}+xb\rangle}{2})d \dot{y}^{\ast}\\
&=\sum_{l\in L\cap X} \int_{X^{\ast}/X^{\ast}\cap L} f(x+l+\dot{y}^{\ast}) \psi(-\tfrac{\langle x+l, \dot{y}^{\ast}\rangle}{2})   \psi(\langle l, x^{\ast}+xb)\psi(\tfrac{\langle x, x^{\ast}+xb\rangle}{2})d \dot{y}^{\ast}\\
&=f(wg).
\end{split}
\end{equation*}
Case 6: $g=u_{ii}=u(b) \in \Sp_{2m}(\Z)$. 
\begin{equation*}\label{ui}
\begin{split}
 &\pi_{L, \psi}(g)f( w)\\
&=\sum_{l\in L\cap X} \int_{X^{\ast}/X^{\ast}\cap L} \psi(\tfrac{1}{2}\langle l,lb\rangle) f(x+l+\dot{y}^{\ast}) \psi(-\tfrac{\langle x+l, \dot{y}^{\ast}\rangle}{2})   \psi(\langle l, x^{\ast}+xb)\psi(\tfrac{\langle x, x^{\ast}+xb\rangle}{2})d \dot{y}^{\ast}\\
&=\psi(-\tfrac{1}{4}x_i)\sum_{l\in L\cap X} \int_{X^{\ast}/X^{\ast}\cap L} f(x+l+\dot{y}^{\ast}) \psi(-\tfrac{\langle x+l, \dot{y}^{\ast}\rangle}{2})   \psi(\langle l, x^{\ast}+xb+\tfrac{1}{2} e_{i}^{\ast}\rangle)\psi(\tfrac{\langle x, x^{\ast}+xb+\tfrac{1}{2} e_{i}^{\ast}\rangle}{2})d \dot{y}^{\ast}\\
&=\psi(-\tfrac{1}{4}x_i)f(x+x^{\ast}+xb+\tfrac{1}{2} e_i^{\ast} )\\
&=\psi(-\tfrac{1}{4}x_i)f(wg+\tfrac{1}{2} e_i^{\ast}).
\end{split}
\end{equation*} 
Case 7: $g=u^-_{ii}=\omega^{-1} u_{ii} \omega\in \Sp_{2m}(\Z)$, $w=(x,x^{\ast})$, $w\omega^{-1}=(x,x^{\ast})\begin{pmatrix}0 & 1_m \\ -1_m & 0\end{pmatrix}=(-x^{\ast}, x)$.
\begin{equation*}\label{ui-}
\begin{split}
 &\pi_{L, \psi}(g)f( w)\\
 &=\pi_{L, \psi}(\omega^{-1} u_{ii} \omega)f( w)\\
 &=\widetilde{c}_{X^{\ast}}(\omega^{-1}, u_{ii})^{-1}\widetilde{c}_{X^{\ast}}(\omega^{-1}u_{ii}, \omega)^{-1}\pi_{L,\psi}(\omega^{-1})\pi_{L,\psi}(u_{ii}) \pi_{L,\psi}(\omega) f(w)\\
&=\widetilde{c}_{X^{\ast}}(\omega^{-1}u_{ii}, \omega)^{-1}\pi_{L,\psi}(\omega^{-1})\pi_{L,\psi}(u_{ii}) \pi_{L,\psi}(\omega) f(w)\\
&=\widetilde{c}_{X^{\ast}}(\omega^{-1}u_{ii}, \omega)^{-1}[\pi_{L,\psi}(u_{ii}) \pi_{L,\psi}(\omega)] f(w\omega^{-1})\\
&=\widetilde{c}_{X^{\ast}}(\omega^{-1}u_{ii}, \omega)^{-1}\psi(\tfrac{1}{4}x_i^{\ast})[ \pi_{L,\psi}(\omega)] f(w\omega^{-1}u_{ii}+\tfrac{1}{2} e_i^{\ast})\\
&=\widetilde{c}_{X^{\ast}}(\omega^{-1}u_{ii}, \omega)^{-1}\psi(\tfrac{1}{4}x_i^{\ast}) f(wg+\tfrac{1}{2} e_i)\\
&=m_{X^{\ast}}(g)^{-1}\psi(\tfrac{1}{4}x_i^{\ast}) f(wg+\tfrac{1}{2} e_i)\\
&=e^{-\tfrac{\pi i}{4}} \psi(\tfrac{1}{4}x_i^{\ast}) f(wg+\tfrac{1}{2} e_i).
 \end{split}
\end{equation*} 
Case 8: $g=u^-_{ij}=\omega^{-1} u_{ij} \omega\in \Sp_{2m}(\Z)$, for $i\neq j$.
\begin{equation*}\label{ui-}
\begin{split}
 &\pi_{L, \psi}(g)f( w)\\
 &=\pi_{L, \psi}(\omega^{-1} u_{ij} \omega)f( w)\\
 &=\widetilde{c}_{X^{\ast}}(\omega^{-1}, u_{ij})^{-1}\widetilde{c}_{X^{\ast}}(\omega^{-1}u_{ij}, \omega)^{-1}\pi_{L,\psi}(\omega^{-1})\pi_{L,\psi}(u_{ij}) \pi_{L,\psi}(\omega) f(w)\\
&=\widetilde{c}_{X^{\ast}}(\omega^{-1}u_{ij}, \omega)^{-1}\pi_{L,\psi}(\omega^{-1})\pi_{L,\psi}(u_{ij}) \pi_{L,\psi}(\omega) f(w)\\
&=\widetilde{c}_{X^{\ast}}(\omega^{-1}u_{ij}, \omega)^{-1} f(wg).
\end{split}
\end{equation*}
Case 9: $g=M_q^{(jk)}$. (cf. Def. \ref{Mqijk}) Let $g_1=\iota_{(j,k)}(u_{1111}^1)$ and $g_2=\iota_{(j,k)}(u_{1111}^2)$. Then $g=g_1g_2$.  Note:
\begin{itemize}
\item $g_1=\iota_{(j,k)}(u_{1111}^1)=u_{jj}(-1)\cdot u_{kk}(-1)$;
\item $g_2=\iota_{(j,k)}(u_{1111}^2)=u_{jj}^-(1)\cdot u_{kk}^-(1)\cdot u_{jk}^-(1)=\omega^{-1}u_{jj}(-1)u_{kk}(-1)u_{jk}(-1) \omega$. 
\end{itemize}
\begin{equation*}\label{Mqjk1}
\begin{split}
 &\pi_{L, \psi}(g_1)f( w)\\
 &=\pi_{L, \psi}(u_{jj}(-1)\cdot u_{kk}(-1))f( w)\\
 &=\psi(\tfrac{1}{4}x_j+\tfrac{1}{4}x_k)f(wg_1-\tfrac{1}{2} e_j^{\ast}-\tfrac{1}{2} e_k^{\ast}).
\end{split}
\end{equation*}
\begin{equation*}\label{Mqjk2}
\begin{split}
 &\pi_{L, \psi}(g_2)f( w)\\
 &=\pi_{L, \psi}(\omega^{-1}u_{jj}(-1)u_{kk}(-1)u_{jk}(-1) \omega)f( w)\\
 &=\widetilde{c}_{X^{\ast}}(\omega^{-1}, u_{jj}(-1)u_{kk}(-1)u_{jk}(-1))^{-1}\widetilde{c}_{X^{\ast}}(\omega^{-1}u_{jj}(-1)u_{kk}(-1)u_{jk}(-1), \omega)^{-1} \\
 &\cdot \pi_{L, \psi}(\omega^{-1}) \pi_{L, \psi}( u_{jj}(-1)u_{kk}(-1)u_{jk}(-1))\pi_{L, \psi}(\omega)f(w)\\
 &=\widetilde{c}_{X^{\ast}}(g_2\omega^{-1}, \omega)^{-1}\pi_{L, \psi}(\omega^{-1}) \pi_{L, \psi}( u_{jj}(-1)u_{kk}(-1)u_{jk}(-1))\pi_{L, \psi}(\omega)f(w)\\
 &=\widetilde{c}_{X^{\ast}}(g_2, \omega^{-1})^{-1} \pi_{L, \psi}( u_{jj}(-1)u_{kk}(-1)u_{jk}(-1))\pi_{L, \psi}(\omega)f(w\omega^{-1})\\
 &=\widetilde{c}_{X^{\ast}}(g_2, \omega)^{-1}\psi(\tfrac{1}{4}x_j^{\ast}) \pi_{L, \psi}(u_{kk}(-1)u_{jk}(-1))\pi_{L, \psi}(\omega)f(w\omega^{-1} u_{jj}(-1)+\tfrac{1}{2} e_j^{\ast})\\
 &=\widetilde{c}_{X^{\ast}}(g_2, \omega)^{-1}\psi(\tfrac{1}{4}x_j^{\ast}+\tfrac{1}{4}x_k^{\ast}) \pi_{L, \psi}(u_{jk}(-1))\pi_{L, \psi}(\omega)f(w\omega^{-1} u_{jj}(-1)u_{kk}(-1)+\tfrac{1}{2} e_j^{\ast}+\tfrac{1}{2} e_j^{\ast})\\
  &=\widetilde{c}_{X^{\ast}}(g_2, \omega)^{-1}\psi(\tfrac{1}{4}x_j^{\ast}+\tfrac{1}{4}x_k^{\ast})\pi_{L, \psi}(\omega)f(w\omega^{-1} u_{jj}(-1)u_{kk}(-1)u_{jk}(-1)+\tfrac{1}{2} e_j^{\ast}+\tfrac{1}{2} e_j^{\ast})\\
    &=\widetilde{c}_{X^{\ast}}(g_2, \omega)^{-1}\psi(\tfrac{1}{4}x_j^{\ast}+\tfrac{1}{4}x_k^{\ast})f(wg_2+\tfrac{1}{2} e_j+\tfrac{1}{2} e_k).
\end{split}
\end{equation*}
\begin{equation*}\label{Mqjk3}
\begin{split}
 &\pi_{L, \psi}(g)f( w)\\
 &=\pi_{L, \psi}(g_1g_2)f( w)\\
 &=\widetilde{c}_{X^{\ast}}(g_1, g_2)^{-1} \pi_{L, \psi}(g_1)\pi_{L, \psi}(g_2)f( w)\\
 &=\psi(\tfrac{1}{4}x_j+\tfrac{1}{4}x_k)\pi_{L, \psi}(g_2)f(wg_1-\tfrac{1}{2} e_j^{\ast}-\tfrac{1}{2} e_k^{\ast})\\
 &=\widetilde{c}_{X^{\ast}}(g_2, \omega)^{-1} \psi(\tfrac{1}{4}x_j+\tfrac{1}{4}x_k+\tfrac{1}{4}x_j^{\ast}+\tfrac{1}{4}x_k^{\ast})f(wg-\tfrac{1}{2} e_j^{\ast}-\tfrac{1}{2} e_k^{\ast}-\tfrac{1}{2} e_j-\tfrac{1}{2} e_k).
\end{split}
\end{equation*}
\begin{equation*}\label{Mqjk4}
\begin{split}
&\widetilde{c}_{X^{\ast}}(g_2, \omega)^{-1}\\
&=\widetilde{c}_{X^{\ast}}(g_1, g_2\omega)^{-1}\widetilde{c}_{X^{\ast}}(g_2, \omega)^{-1}\\
&=\widetilde{c}_{X^{\ast}}(g, \omega)^{-1}\\
&\stackrel{\textrm{ Lem. } \ref{calculation4}}{=}m_{X^{\ast}}(g)\\
&=e^{-\tfrac{\pi i}{4}}.
\end{split}
\end{equation*}

\subsubsection{} According to Lemma \ref{gamma12}(2), every element $g$ of $\Gamma_m(1,2)$ can be expressed as 
$g=\gamma p_1 \omega_{S_i} p_2$, for $\gamma \in \Gamma_m(2)$, $p_i\in P_{X^{\ast}}'(\Z)$.  By the above cases (1)---(5), we have:
\begin{equation}\label{chapeq8}
 \pi_{L, \psi}(g)f(w)= \epsilon_g f(wg).
 \end{equation}
for some $\epsilon_g\in \mu_8$.  Hence:
\begin{lemma}\label{gamma1SP}
The restriction of the cocycle $[\widetilde{c}_{X^{\ast}}]$ to the subgroup $\Gamma_{m}(1,2)$ is trivial, with an explicit trivialization
 $$\epsilon: \Gamma_m(1,2) \to \mu_8; g\longmapsto \epsilon_g,$$ 
 such that $\widetilde{c}_{X^{\ast}}(g_1, g_2) = \epsilon(g_1) \epsilon(g_2) \epsilon(g_1 g_2)^{-1}$, for  all $g_i \in \Gamma_m(1,2)$.
\end{lemma}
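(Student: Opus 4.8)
The plan is to package the explicit Schr\"odinger–lattice computations already carried out in Cases (1)--(9) into a single coherent trivialization statement for $\widetilde{c}_{X^{\ast}}$ restricted to $\Gamma_m(1,2)$. The starting point is Lemma \ref{gamma12}(2): every $g\in\Gamma_m(1,2)$ admits a decomposition $g=\gamma\, p_1\,\omega_{S_i}\, p_2$ with $\gamma\in\Gamma_m(2)$ and $p_1,p_2\in P'_{X^{\ast}}(\Z)$. First I would record that each of the building blocks appearing here has been handled: elements of $\Gamma_m(2)$ by equation (\ref{chapeq5}) (giving a constant $\epsilon_g\in\mu_8$ with $\pi_{L,\psi}(g)f(w)=\epsilon_g f(wg)$), the generators $u(b)$ and $h(a)$ of $P'_{X^{\ast}}(\Z)$ by Cases 1 and 2 (where the constant is $1$), and $\omega_{S}$ by Case 3 (again constant $1$). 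Since $P'_{X^{\ast}}(\Z)$ is generated by such $u(b)$, $h(a)$ together with permutation and sign matrices $\omega_s$, and all $\omega_S\in\Gamma_m(1,2)$ as noted after Theorem \ref{BrIw}, a straightforward induction on word length shows that for \emph{every} $p\in P'_{X^{\ast}}(\Z)$ and every $\omega_{S}$ one has $\pi_{L,\psi}(p)f(w)=\epsilon_p f(wp)$, $\pi_{L,\psi}(\omega_S)f(w)=\epsilon_{\omega_S}f(w\omega_S)$ with $\epsilon\in\mu_8$; the composition rule $\pi_{L,\psi}(g_1)\pi_{L,\psi}(g_2)=\widetilde{c}_{X^{\ast}}(g_1,g_2)\pi_{L,\psi}(g_1g_2)$ forces the accumulated constants to multiply up to eighth roots of unity.

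The key step is then to define $\epsilon\colon\Gamma_m(1,2)\to\mu_8$ by the formula $\pi_{L,\psi}(g)f(w)=\epsilon(g)\,f(wg)$ and check that this is well posed and yields the cocycle identity. Well-definedness: for $g=\gamma p_1\omega_{S_i}p_2$ all four factors act on $\mathcal{H}(L)$ by ``translation times a scalar in $\mu_8$'', hence so does their product, and the scalar is uniquely determined by evaluating on any $f$ with $f(w_0)\neq 0$ for a suitable $w_0$ — exactly the uniqueness remark made right after (\ref{chapeq5}). Next, for $g_1,g_2\in\Gamma_m(1,2)$ compute $\pi_{L,\psi}(g_1)\pi_{L,\psi}(g_2)f(w)=\epsilon(g_1)\epsilon(g_2)f(wg_1g_2)$ on one hand and $\widetilde{c}_{X^{\ast}}(g_1,g_2)\pi_{L,\psi}(g_1g_2)f(w)=\widetilde{c}_{X^{\ast}}(g_1,g_2)\epsilon(g_1g_2)f(wg_1g_2)$ on the other; comparing and cancelling the (nonzero) translated value of $f$ gives
\[
\widetilde{c}_{X^{\ast}}(g_1,g_2)=\epsilon(g_1)\,\epsilon(g_2)\,\epsilon(g_1g_2)^{-1},
\]
which is precisely the assertion that $[\widetilde{c}_{X^{\ast}}]|_{\Gamma_m(1,2)}$ is the trivial class with trivialization $\epsilon$. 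This mirrors verbatim the computation displayed for $g_1,g_2\in\Gamma_m(2)$ just before the Example, only now over the larger group.

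The main obstacle is making the inductive ``translation times $\mu_8$-scalar'' claim airtight for arbitrary $p\in P'_{X^{\ast}}(\Z)$ and arbitrary $\omega_S$ rather than just the displayed generators: one must verify that the intertwining integrals in Cases 1--3 genuinely produce the shift $f\mapsto f(wg)$ with \emph{no} residual scalar for $h(a)$ with $\det a=\pm1$ and for the sign/permutation matrices, and that composing a $\Gamma_m(2)$-element on the left (Case 4 type manipulation via $\omega$-conjugation) never introduces a non-$\mu_8$ factor — this is guaranteed because every $\widetilde{c}_{X^{\ast}}$-value arising from Perrin--Rao cocycles on these integral elements is a product of Weil indices $\gamma(\psi^{1/2})^{\pm1}$, $\gamma(a,\psi^{1/2})^{\pm1}$ with $a$ rational, hence lies in $\mu_8$, as already exploited in Cases 7--9 and Lemma \ref{calculation4}. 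Once that bookkeeping is in place, the statement follows formally; I expect the proof in the paper to simply cite ``Cases (1)--(5)'' together with Lemma \ref{gamma12}(2) and the uniqueness of $\epsilon_g$, which is exactly the level of detail the surrounding text operates at.
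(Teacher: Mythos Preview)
Your proposal is correct and matches the paper's approach almost exactly: the paper invokes Lemma~\ref{gamma12}(2) to write any $g\in\Gamma_m(1,2)$ as $\gamma\,p_1\,\omega_{S_i}\,p_2$, cites Cases~(1)--(5) to conclude $\pi_{L,\psi}(g)f(w)=\epsilon_g f(wg)$ with $\epsilon_g\in\mu_8$, and the cocycle identity then follows from the composition rule $\pi_{L,\psi}(g_1)\pi_{L,\psi}(g_2)=\widetilde{c}_{X^{\ast}}(g_1,g_2)\pi_{L,\psi}(g_1g_2)$ exactly as you describe. Your closing prediction of what the paper does is spot-on; the only minor slip is that Case~5 (the $u_{ij}$ with $i\neq j$) is genuinely needed alongside Cases~1--2 to generate $P'_{X^{\ast}}(\Z)$, since Case~1 only covers $u(b)$ with $b\equiv 0\pmod 2$.
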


\section{Intertwining operators II}
This section  is intended to  present an explicit trivialization map for the group $\Gamma_{m}(1,2)$ in the above lemma \ref{gamma12}.  A trivialization map for all $m$ is given by the two authors in the classical book \cite{LiVe}. The explicit formula for $m=1$, which refers to the Gauss sum, is what it  calculates. For $m\geq 2$,  the formula pertains to  the symplectic Guass sum, as defined by  Styer in \cite{Sty1}, \cite{Sty2}.
\subsection{Lion-Vergne's result} Retaining the notations from Section \ref{heisenbergrepm}, let 
$Y$ and  $Z$ be two Lagrangian subspaces of $W$.  Correspondingly,  let  $(\pi_{Y,\psi}, \mathcal{H}(Y))$, 
$(\pi_{Z,\psi}, \mathcal{H}(Z))$ be the  Heisenberg representations of $\Ha(W)$ associated to $\psi$. Following  \cite{Pe}, we denote by $A_{YZ}$ the isomorphism from $Y/Y\cap Z$ to $[Z/(Y\cap Z)]^{\ast}$. Let $d\dot{y}$, $d\dot{z}$ represent the quotient measures on  $Y/(Y\cap Z)$ and $Z/(Z\cap Y)$ respectively.   We define an intertwining operator $\mathcal{F}_{ZY}$ as follows:
\[\begin{array}{rccl}
\mathcal{F}_{ZY}: & \mathcal{H}(Y)& \longrightarrow &\mathcal{H}(Z);\\
 &f &\longmapsto &  \mathcal{F}_{ZY}(f)(h)=\int_{Z/Z\cap Y} f([0, z] h) |A_{ZY}|^{1/2} d\dot{z}.
 \end{array}\]
 Associated to three  Lagrangian subspaces $X, Y, Z$, there exists the Leray invariant $L(X, Y, Z)$.(cf. \cite[p.55]{MoViWa})  We will denote the quadratic form associated by $q(X,Y,Z)$. Let $\gamma(q(X, Y, Z))$ denote  the corresponding  Weil index.
 \begin{theorem}
 $\gamma(q(X, Y, Z)) \cdot \mathcal{F}_{XZ} \circ \mathcal{F}_{ZY}\circ \mathcal{F}_{YX}=\id_{\mathcal{H}(X)}$.
 \end{theorem}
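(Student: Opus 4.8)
The plan is to reduce the identity to the classical cocycle relation for the metaplectic representation in the Schrödinger-type model, combined with the known compatibility of the operators $\mathcal{F}_{ZY}$ under composition. First I would record the key functorial property of the partial Fourier transforms: for three Lagrangians $X,Y,Z$ one has $\mathcal{F}_{ZY}\circ\mathcal{F}_{YX}=c(X,Y,Z)\,\mathcal{F}_{ZX}$ for some scalar $c(X,Y,Z)\in\C^{\times}$, which follows from Fubini's theorem applied to the two iterated integrals defining the left-hand side (both integrate a translate of $f$ against the measure coming from $A_{ZY}$ and $A_{YX}$, and the exponent of the resulting oscillatory integral is precisely the Leray quadratic form $q(X,Y,Z)$ up to the normalizing absolute values $|A_{\bullet\bullet}|^{1/2}$). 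Tracing through the definitions of $A_{ZY}$, the quotient measures $d\dot y,d\dot z$, and the Leray invariant $L(X,Y,Z)$, the scalar $c(X,Y,Z)$ is a Gaussian integral $\int e^{2\pi i\,q(X,Y,Z)}$, whose value by Weil's stationary-phase formula is $\gamma(q(X,Y,Z))^{-1}$ times the (trivial, by the normalization $|A|^{1/2}$) modulus factor.

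Next I would specialize to $Z=Y=X$ in two of the three slots to pin down degenerate cases: $\mathcal{F}_{XX}=\id_{\mathcal{H}(X)}$ directly from the formula (the quotient $X/(X\cap X)$ is trivial and $|A_{XX}|^{1/2}=1$), and $q(X,X,Z)$, $q(X,Y,X)$, $q(X,Y,Y)$ are all the zero form, so that $\gamma$ of each equals $1$; this is consistent with the cocycle-type relations one expects and fixes all the normalizing constants. Then for general $X,Y,Z$ I would compose three copies of the functorial identity:
\[
\mathcal{F}_{XZ}\circ\mathcal{F}_{ZY}\circ\mathcal{F}_{YX}
= c(Y,Z,X)^{-1}\,\mathcal{F}_{XZ}\circ\mathcal{F}_{ZX}
= c(Y,Z,X)^{-1}\,c(X,Z,X)^{-1}\,\mathcal{F}_{XX},
\]
or, more symmetrically, I would apply the identity once to $\mathcal{F}_{ZY}\circ\mathcal{F}_{YX}$ to get $c(X,Y,Z)^{-1}\mathcal{F}_{ZX}$ and then use $\mathcal{F}_{XZ}\circ\mathcal{F}_{ZX}=\id_{\mathcal{H}(X)}$ (the $\mathcal{F}_{XZ}$ and $\mathcal{F}_{ZX}$ are mutually inverse unitaries, which follows from the previous case since $q(X,Z,X)=0$). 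This collapses the triple composition to the single scalar $c(X,Y,Z)^{-1}=\gamma(q(X,Y,Z))$, which is exactly the claimed formula after multiplying both sides by $\gamma(q(X,Y,Z))$.

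The main obstacle will be the careful bookkeeping in the Fubini step: one must verify that the product of the three half-densities $|A_{ZY}|^{1/2}|A_{YX}|^{1/2}$ against $|A_{ZX}|^{1/2}$ combines with the quotient measures in such a way that the leftover Gaussian is normalized to have modulus-one Weil index contribution (i.e. that the \emph{absolute value} of $c(X,Y,Z)$ is $1$ and only the phase survives), and that the quadratic form appearing is genuinely the Leray form $q(X,Y,Z)$ rather than a sign- or scaling-variant of it. This is essentially the content of Perrin's and Lion--Vergne's computations, so I would cite \cite{Pe} and \cite[Ch.~1]{LiVe} (together with \cite[p.~55]{MoViWa} for the Leray invariant) for the detailed measure-theoretic identification, and restrict the written proof to the algebraic assembly of the three instances of the functorial relation plus the degenerate-case normalizations.
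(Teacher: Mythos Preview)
The paper does not prove this theorem at all: its entire proof is the single line ``See \cite[Theorem 1.4]{Pe}.'' So any honest attempt on your part automatically goes beyond what the paper does, and there is nothing to compare at the level of strategy.

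Your outline is essentially the standard argument from Perrin and Lion--Vergne that the citation points to, and the skeleton is sound: Stone--von~Neumann forces $\mathcal{F}_{ZY}\circ\mathcal{F}_{YX}$ to be a scalar multiple of $\mathcal{F}_{ZX}$, the scalar is computed as a Gaussian integral whose phase is the Weil index of the Leray form, and the degenerate case $q(X,Z,X)=0$ gives $\mathcal{F}_{XZ}\circ\mathcal{F}_{ZX}=\id$. Two small points of care. First, you have a bookkeeping slip: you write $\mathcal{F}_{ZY}\circ\mathcal{F}_{YX}=c(X,Y,Z)\,\mathcal{F}_{ZX}$ at the start but then say applying it ``gives $c(X,Y,Z)^{-1}\mathcal{F}_{ZX}$''; keep the sign of the exponent straight so that the final scalar really comes out as $\gamma(q(X,Y,Z))^{-1}$. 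Second, the genuine work, as you acknowledge, is the Fubini/stationary-phase step identifying the scalar with the Weil index of exactly $q(X,Y,Z)$ (correct sign, correct normalization of measures and half-densities); since you already plan to cite \cite{Pe} and \cite{LiVe} for that computation, your write-up ultimately collapses to the same citation the paper gives.
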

 \begin{proof}
 See \cite[Theorem 1.4]{Pe}.
 \end{proof}
 Now, consider  a Lagrangian subspace $X^{\ast}$ of $W$. For $g_1, g_2 \in \Sp(W)$, denote  $q_{X^{\ast}}(g_1,g_2)=q(X^{\ast}, X^{\ast} g_2^{-1}, X^{\ast} g_1)$. Then 
 \begin{align}
\widetilde{c}_{X^{\ast}}(g_1, g_2)=\gamma(\psi(q_{X^{\ast}}(g_1, g_2))).
\end{align} 
 \subsubsection{}  Retains the notations from Section \ref{schod}. 
  For $g_1=\begin{pmatrix} a & b\\ c& d\end{pmatrix}\in \Gamma_m(1,2)$,   let $Y^{\ast}= X^{\ast}g_1$ and $Y=Xg_1$. By \cite[Section 2.2.19]{LiVe}(analogously to Section \ref{interwop}),  there also exists  a pair of explicit isomorphisms between   $\mathcal{H}(Y^{\ast})\simeq L^2(Y)$ and  $\mathcal{H}(L)$, given as follows:
 \begin{equation}\label{inter1}
 \begin{split}
\theta_{L, Y^{\ast}}(f')( w)&=\sum_{l\in L/L\cap Y^{\ast}} f'( w+l)\psi(\tfrac{\langle l, w\rangle}{2}+\tfrac{\langle y_{l}, y^{\ast}_l\rangle}{2})\\
&=\sum_{l\in L\cap Y}f'(y+l)\psi(\langle l, w\rangle) \psi(\tfrac{\langle y, y^{\ast}\rangle}{2}),
    \end{split}
 \end{equation}
 \begin{equation}\label{inter2}
 \begin{split}
 \theta_{Y^{\ast}, L}(f)(y)&=  \int_{Y^{\ast}/Y^{\ast}\cap L} f([\dot{y}^{\ast},0]\cdot [y, 0]) d\dot{y}^{\ast}\\
  &= \int_{Y^{\ast}/Y^{\ast}\cap L} \psi(\tfrac{\langle\dot{y}^{\ast}, y\rangle}{2})f(y+\dot{y}^{\ast}) d\dot{y}^{\ast},
  \end{split}
      \end{equation}
    for $ w=y+ y^{\ast} \in W$, $l=y_l+ y_l^{\ast}\in L$, with $y,y_l\in Y$,  $y^{\ast},y^{\ast}_l\in Y^{\ast}$, and  $f'\in \mathcal{H}(Y^{\ast})$ with $f'|_Y \in S(Y)\subseteq L^2(Y)$,  $f\in L^1(W)\cap \mathcal{H}(L)$. 
 Let $g_2\in \Gamma_m(1,2)$ and $Z^{\ast}=X^{\ast} g_2$. Following \cite[p.149]{LiVe}, let  $\beta(Y^{\ast}, Z^{\ast})$ be an element in $T$ such that
     $$ \theta_{Y^{\ast}, L}  \circ  \theta_{ L, Z^{\ast}}=\beta(Y^{\ast}, Z^{\ast}) \mathcal{F}_{Y^{\ast}Z^{\ast}}.$$
     \begin{lemma}
$\beta(X^{\ast}, Z^{\ast})\beta(Z^{\ast},Y^{\ast})\beta(Y^{\ast}, X^{\ast})=\widetilde{c}_{X^{\ast}}(g_2g_1^{-1}, g_1)^{-1}$.
\end{lemma}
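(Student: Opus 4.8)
The idea is to run the three defining relations for $\beta(-,-)$ cyclically around the triple of Lagrangians $X^{\ast}$, $Y^{\ast}=X^{\ast}g_1$, $Z^{\ast}=X^{\ast}g_2$, use that the intertwiners $\theta_{L,W^{\ast}}$ and $\theta_{W^{\ast},L}$ are mutually inverse, and then feed the result into the theorem on the Leray invariant together with the formula $\widetilde{c}_{X^{\ast}}(h_1,h_2)=\gamma\bigl(\psi(q(X^{\ast},X^{\ast}h_2^{-1},X^{\ast}h_1))\bigr)$. Since $g_1,g_2\in\Gamma_m(1,2)$, the maps $\theta_{L,Y^{\ast}},\theta_{Y^{\ast},L},\theta_{L,Z^{\ast}},\theta_{Z^{\ast},L}$ of \eqref{inter1}--\eqref{inter2} are all defined, and all of $X^{\ast},Y^{\ast},Z^{\ast}$ are Lagrangians, so the theorem above applies to this triple.

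\textbf{Step 1 (cyclic composition).} Write the relation $\theta_{A^{\ast},L}\circ\theta_{L,B^{\ast}}=\beta(A^{\ast},B^{\ast})\,\mathcal{F}_{A^{\ast}B^{\ast}}$ for the three ordered pairs $(X^{\ast},Z^{\ast})$, $(Z^{\ast},Y^{\ast})$, $(Y^{\ast},X^{\ast})$ and compose in that order:
\[
(\theta_{X^{\ast},L}\circ\theta_{L,Z^{\ast}})\circ(\theta_{Z^{\ast},L}\circ\theta_{L,Y^{\ast}})\circ(\theta_{Y^{\ast},L}\circ\theta_{L,X^{\ast}})
=\beta(X^{\ast},Z^{\ast})\beta(Z^{\ast},Y^{\ast})\beta(Y^{\ast},X^{\ast})\;\mathcal{F}_{X^{\ast}Z^{\ast}}\circ\mathcal{F}_{Z^{\ast}Y^{\ast}}\circ\mathcal{F}_{Y^{\ast}X^{\ast}}.
\]
On the left the two interior pairs $\theta_{L,Z^{\ast}}\circ\theta_{Z^{\ast},L}$ and $\theta_{L,Y^{\ast}}\circ\theta_{Y^{\ast},L}$ are the identity on $\mathcal{H}(L)$ (this is the normalization of these intertwiners, exactly the one used in Section~\ref{interwop} to get the cocycle relation), so the whole left side collapses to $\theta_{X^{\ast},L}\circ\theta_{L,X^{\ast}}=\id_{\mathcal{H}(X^{\ast})}$. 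Thus
$\beta(X^{\ast},Z^{\ast})\beta(Z^{\ast},Y^{\ast})\beta(Y^{\ast},X^{\ast})\,\mathcal{F}_{X^{\ast}Z^{\ast}}\circ\mathcal{F}_{Z^{\ast}Y^{\ast}}\circ\mathcal{F}_{Y^{\ast}X^{\ast}}=\id_{\mathcal{H}(X^{\ast})}$.
By the theorem above (applied to $X^{\ast},Y^{\ast},Z^{\ast}$), $\mathcal{F}_{X^{\ast}Z^{\ast}}\circ\mathcal{F}_{Z^{\ast}Y^{\ast}}\circ\mathcal{F}_{Y^{\ast}X^{\ast}}=\gamma(q(X^{\ast},Y^{\ast},Z^{\ast}))^{-1}\id_{\mathcal{H}(X^{\ast})}$ is a nonzero scalar multiple of the identity, so cancelling it gives
$\beta(X^{\ast},Z^{\ast})\beta(Z^{\ast},Y^{\ast})\beta(Y^{\ast},X^{\ast})=\gamma(q(X^{\ast},Y^{\ast},Z^{\ast}))$.

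\textbf{Step 2 (identifying $\gamma(q(X^{\ast},Y^{\ast},Z^{\ast}))$ with the cocycle).} Expanding the given formula at $h_1=g_2g_1^{-1}$, $h_2=g_1$ yields $\widetilde{c}_{X^{\ast}}(g_2g_1^{-1},g_1)=\gamma\bigl(\psi(q(X^{\ast},X^{\ast}g_1^{-1},X^{\ast}g_2g_1^{-1}))\bigr)$. Applying $g_1\in\Sp(W)$ to all three arguments and using $\Sp(W)$-invariance of the Leray invariant gives $q(X^{\ast},X^{\ast}g_1^{-1},X^{\ast}g_2g_1^{-1})=q(X^{\ast}g_1,X^{\ast},X^{\ast}g_2)$; since the Leray invariant is alternating under permutations of its three arguments in the Witt group and $\gamma(\psi_{-Q})=\gamma(\psi_Q)^{-1}$, transposing the first two Lagrangians inverts the Weil index, so $\gamma\bigl(\psi(q(X^{\ast}g_1,X^{\ast},X^{\ast}g_2))\bigr)=\gamma\bigl(\psi(q(X^{\ast},X^{\ast}g_1,X^{\ast}g_2))\bigr)^{-1}=\gamma(q(X^{\ast},Y^{\ast},Z^{\ast}))^{-1}$. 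Hence $\widetilde{c}_{X^{\ast}}(g_2g_1^{-1},g_1)=\gamma(q(X^{\ast},Y^{\ast},Z^{\ast}))^{-1}$, and combining with Step~1 gives $\beta(X^{\ast},Z^{\ast})\beta(Z^{\ast},Y^{\ast})\beta(Y^{\ast},X^{\ast})=\widetilde{c}_{X^{\ast}}(g_2g_1^{-1},g_1)^{-1}$. (One can instead note directly that $\gamma(q(X^{\ast},Y^{\ast},Z^{\ast}))=\widetilde{c}_{X^{\ast}}(g_2,g_1^{-1})$ and then use the $2$-cocycle identity with $\widetilde{c}_{X^{\ast}}(g_1,g_1^{-1})=\gamma(\psi(q(X^{\ast},X^{\ast}g_1,X^{\ast}g_1)))=1$.)

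\textbf{Expected obstacle.} The mathematical content is light; the delicate points are purely bookkeeping: (i) arranging the three $\beta$-relations in the correct cyclic order so that the $\theta$'s telescope to $\id_{\mathcal{H}(X^{\ast})}$ and the $\mathcal{F}$'s compose precisely as $\mathcal{F}_{X^{\ast}Z^{\ast}}\circ\mathcal{F}_{Z^{\ast}Y^{\ast}}\circ\mathcal{F}_{Y^{\ast}X^{\ast}}$, which relies on the normalization of $\theta_{L,W^{\ast}},\theta_{W^{\ast},L}$ from \cite{LiVe} recalled in Section~\ref{interwop}; and (ii) pinning down the sign in the transposition law for the Leray/Maslov quadratic form, i.e. that swapping two Lagrangians inverts the associated Weil index, for which I would cite \cite{MoViWa} and \cite{Pe}. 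Once these standard facts are in place the argument is just the assembly above.
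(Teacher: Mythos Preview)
Your proposal is correct and follows essentially the same route as the paper: telescope the cyclic composition $\theta_{X^{\ast},L}\theta_{L,Z^{\ast}}\theta_{Z^{\ast},L}\theta_{L,Y^{\ast}}\theta_{Y^{\ast},L}\theta_{L,X^{\ast}}=\id$, apply Perrin's theorem to identify the product of the three $\beta$'s with $\gamma(\psi(q(X^{\ast},Y^{\ast},Z^{\ast})))$, and then use $\Sp(W)$-invariance plus the alternating property of the Leray invariant to rewrite this as $\widetilde{c}_{X^{\ast}}(g_2g_1^{-1},g_1)^{-1}$. The only cosmetic difference is that the paper applies $g_1^{-1}$ rather than $g_1$ in the invariance step, which is of course equivalent.
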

\begin{proof}
 \begin{align*}
 &1_{\mathcal{H}(X^{\ast})}\\
&=\theta_{X^{\ast}, L}  \circ  \theta_{ L, Z^{\ast}} \circ \theta_{Z^{\ast}, L}  \circ  \theta_{ L, Y^{\ast}}\circ \theta_{Y^{\ast}, L}  \circ  \theta_{ L, X^{\ast}}\\
&=\beta(X^{\ast}, Z^{\ast}) \beta(Z^{\ast}, Y^{\ast}) \beta(X^{\ast}, Z^{\ast})  \mathcal{F}_{X^{\ast}Z^{\ast}} \circ \mathcal{F}_{Z^{\ast}Y^{\ast}} \circ \mathcal{F}_{Y^{\ast}X^{\ast}}\\
&=\beta(X^{\ast}, Z^{\ast}) \beta(Z^{\ast}, Y^{\ast}) \beta(X^{\ast}, Z^{\ast}) \gamma(\psi(q(X^{\ast}, Y^{\ast}, Z^{\ast})) )^{-1} 1_{\mathcal{H}(X^{\ast})}.
\end{align*} 
Hence 
\begin{align*}
&\beta(X^{\ast}, Z^{\ast}) \beta(Z^{\ast}, Y^{\ast}) \beta(X^{\ast}, Z^{\ast})\\
&= \gamma(\psi(q(X^{\ast}, Y^{\ast}, Z^{\ast})) )= \gamma(\psi(q(X^{\ast}, X^{\ast}g_1, X^{\ast} g_2)) )\\
&=\gamma(\psi(-q( X^{\ast}, X^{\ast}g_1^{-1}, X^{\ast}g_2g_1^{-1})) )\\
&=\widetilde{c}_{X^{\ast}}(g_2g_1^{-1}, g_1)^{-1}.
\end{align*} 
\end{proof}
For any $g\in \Gamma_m(1,2)$, let us define $$\widetilde{\beta}(g)=\beta(X^{\ast}g, X^{\ast}).$$
\begin{lemma}\label{widetildec}
$\widetilde{c}_{X^{\ast}}(h_1,h_2)=\widetilde{\beta}(h_1)^{-1}\widetilde{\beta}(h_2)^{-1}\widetilde{\beta}(h_1h_2)$,  for $h_i=\begin{pmatrix} a_i& b_i\\c_i& d_i\end{pmatrix}\in \Gamma_m(1,2)$.
\end{lemma}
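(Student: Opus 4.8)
The plan is to feed the preceding lemma a judicious choice of arguments and then remove the resulting ``mixed'' $\beta$-term by means of two symmetry properties of $\beta$. The two facts I would establish first, for Lagrangians of the form $Y^{\ast}=X^{\ast}g$ and $Z^{\ast}=X^{\ast}g'$ with $g,g'\in\Gamma_m(1,2)$ (hence coming from $\Sp_{2m}(\Z)$, so that $Lg=Lg'=L$), are: an \emph{inversion relation} $\beta(Y^{\ast},Z^{\ast})\beta(Z^{\ast},Y^{\ast})=1$, and a \emph{right $\Sp_{2m}(\Z)$-invariance} $\beta(Y^{\ast}g,Z^{\ast}g)=\beta(Y^{\ast},Z^{\ast})$ for $g\in\Sp_{2m}(\Z)$.

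For the inversion relation I would use that $\theta_{L,Y^{\ast}}$ and $\theta_{Y^{\ast},L}$ are mutually inverse (the isomorphisms of Section~\ref{exlicit}, built exactly as in Section~\ref{interwop}), so that $\theta_{Y^{\ast},L}\circ\theta_{L,Z^{\ast}}\circ\theta_{Z^{\ast},L}\circ\theta_{L,Y^{\ast}}=\id_{\mathcal{H}(Y^{\ast})}$; expanding the inner composition with the defining relation $\theta_{Y^{\ast},L}\circ\theta_{L,Z^{\ast}}=\beta(Y^{\ast},Z^{\ast})\mathcal{F}_{Y^{\ast}Z^{\ast}}$ gives $\beta(Y^{\ast},Z^{\ast})\beta(Z^{\ast},Y^{\ast})\,\mathcal{F}_{Y^{\ast}Z^{\ast}}\circ\mathcal{F}_{Z^{\ast}Y^{\ast}}=\id$, and Perrin's theorem applied to the triple $(Y^{\ast},Y^{\ast},Z^{\ast})$, together with $\mathcal{F}_{Y^{\ast}Y^{\ast}}=\id$ and the vanishing of the Leray form of a degenerate triple, forces $\mathcal{F}_{Y^{\ast}Z^{\ast}}\circ\mathcal{F}_{Z^{\ast}Y^{\ast}}=\id$. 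In particular $\beta(X^{\ast},X^{\ast}g)=\widetilde{\beta}(g)^{-1}$. For the invariance, for $g\in\Sp_{2m}(\Z)$ the right translation $w\mapsto wg$ is symplectic and preserves $L$; carrying it through formulas (\ref{inter1})--(\ref{inter2}) identifies $\theta_{L,Y^{\ast}g},\theta_{Y^{\ast}g,L}$ with $\theta_{L,Y^{\ast}},\theta_{Y^{\ast},L}$ under the natural isometry $\mathcal{H}(Y^{\ast}g)\cong\mathcal{H}(Y^{\ast})$, and through the definition of $\mathcal{F}$ identifies $\mathcal{F}_{Y^{\ast}g,Z^{\ast}g}$ with $\mathcal{F}_{Y^{\ast}Z^{\ast}}$ (the isomorphism $A_{YZ}$ and the quotient measures transport isometrically); comparing, $\beta(Y^{\ast}g,Z^{\ast}g)=\beta(Y^{\ast},Z^{\ast})$.

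With these in hand, I would apply the preceding lemma with $g_1=h_2$ and $g_2=h_1h_2$, so that $g_2g_1^{-1}=h_1$ and the lemma reads
$$\beta(X^{\ast},X^{\ast}h_1h_2)\,\beta(X^{\ast}h_1h_2,X^{\ast}h_2)\,\beta(X^{\ast}h_2,X^{\ast})=\widetilde{c}_{X^{\ast}}(h_1,h_2)^{-1}.$$
Here $\beta(X^{\ast}h_2,X^{\ast})=\widetilde{\beta}(h_2)$ by definition, $\beta(X^{\ast},X^{\ast}h_1h_2)=\widetilde{\beta}(h_1h_2)^{-1}$ by the inversion relation, and $\beta(X^{\ast}h_1h_2,X^{\ast}h_2)=\beta(X^{\ast}h_1,X^{\ast})=\widetilde{\beta}(h_1)$ by right translation invariance under $h_2\in\Sp_{2m}(\Z)$. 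Since $T$ is abelian, substituting yields $\widetilde{c}_{X^{\ast}}(h_1,h_2)=\widetilde{\beta}(h_1)^{-1}\widetilde{\beta}(h_2)^{-1}\widetilde{\beta}(h_1h_2)$, which is the claim.

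The hard part will be the right $\Sp_{2m}(\Z)$-invariance: one must check carefully that the \emph{explicit} intertwiners $\theta_{L,-}$, $\theta_{-,L}$ attached to a translated Lagrangian $Y^{\ast}g$ genuinely match, under $\mathcal{H}(Y^{\ast}g)\cong\mathcal{H}(Y^{\ast})$, those attached to $Y^{\ast}$ — this is exactly where $g\in\Sp_{2m}(\Z)$ (so $Lg=L$) is used, and where the normalizations of the quotient Haar measures on $Y^{\ast}/(Y^{\ast}\cap L)$ and $Y/(Y\cap Z)$, and the factor $|A_{Y^{\ast}Z^{\ast}}|^{1/2}$, must be tracked so that the two sides of $\theta_{Y^{\ast}g,L}\circ\theta_{L,Z^{\ast}g}=\beta(Y^{\ast}g,Z^{\ast}g)\mathcal{F}_{Y^{\ast}g,Z^{\ast}g}$ really reduce to the untranslated identity. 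The inversion relation, by contrast, is essentially formal once the degeneracy of the Leray form of $(Y^{\ast},Y^{\ast},Z^{\ast})$ is noted.
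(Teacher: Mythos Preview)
Your approach is essentially identical to the paper's: both apply the preceding lemma with $g_1=h_2$, $g_2=h_1h_2$ (so $g_2g_1^{-1}=h_1$), then identify the three $\beta$-factors as $\widetilde{\beta}(h_1h_2)^{-1}$, $\widetilde{\beta}(h_1)$, $\widetilde{\beta}(h_2)$ using exactly the inversion relation $\beta(X^{\ast},X^{\ast}g)=\widetilde{\beta}(g)^{-1}$ and the right $\Sp_{2m}(\Z)$-invariance $\beta(X^{\ast}g_2,X^{\ast}g_1)=\beta(X^{\ast}g_2g_1^{-1},X^{\ast})$. The only difference is that the paper invokes these two properties tacitly (they are standard from \cite{LiVe}), whereas you isolate them explicitly and sketch their proofs; your added detail on the invariance verification is correct and is indeed where the hypothesis $h_i\in\Gamma_m(1,2)\subset\Sp_{2m}(\Z)$ (so $Lh_i=L$) is used.
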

\begin{proof}
 Write $h_1=g_2g_1^{-1}$, $h_2=g_1$, $h_1h_2=g_2$. Then: $\widetilde{\beta}(g_2)^{-1}=\beta(X^{\ast}, Z^{\ast})=\widetilde{\beta}([g_2g_1^{-1}]g_1)^{-1}$,
$\beta(Z^{\ast}, Y^{\ast})=\beta(X^{\ast}g_2 g_1^{-1}, X^{\ast})=\widetilde{\beta}(g_2g_1^{-1})$, $\beta(Y^{\ast}, X^{\ast})=\widetilde{\beta}(g_1)$. Hence $\widetilde{\beta}(g_2)^{-1}\widetilde{\beta}(g_2 g_1^{-1})\widetilde{\beta}(g_1)=\widetilde{c}_{X^{\ast}}(g_2g_1^{-1},g_1)^{-1}$. 
\end{proof}   
\subsection{The calculation of $\widetilde{\beta}(g)$ } 
\subsubsection{$\det c\neq 0$} Let $g=\begin{pmatrix} a & b\\ c& d\end{pmatrix}$ with $\det c\neq 0$. Write $Y^{\ast}=X^{\ast}g $. Then $Y^{\ast}$  and $X^{\ast}$ are two transverse Lagrangian subspaces, and  $W=X^{\ast}\oplus Y^{\ast}$. Note that $X^{\ast}\cap L= \sum_{i=1}^m \Z e_i^{\ast}$ and $Y^{\ast}\cap L= \sum_{i=1}^m \Z [e_i^{\ast} c+ e_i^{\ast} d]$.  Then 
$$L/[X^{\ast}\cap L+Y^{\ast}\cap L] \simeq [\sum_{i=1}^m \Z e_i] /[\sum_{i=1}^m \Z (e_i^{\ast})c].$$
The number of elements of  $L/[X^{\ast}\cap L+Y^{\ast}\cap L] $ is just $|\det c|$. Following \cite[Section 2.2.5, Prop.]{LiVe}, 
$$\widetilde{\beta}(g)=|\det c|^{-1/2}\sum_{l\in L/[(X^{\ast} \cap L)\oplus (X^{\ast}g \cap L)]} e^{-\pi i \langle x_l, x_l^{\ast}\rangle} e^{-\pi i \langle p_1(l), p_2(l)\rangle},$$
where $p_1:W\to X^{\ast}$ and $p_2:W \to  Y^{\ast}$, and $l=x_l+x_{l}^{\ast}\in L=(L\cap X)\oplus (L\cap X^{\ast})$.

 For $w=\sum^m_{i=1}x_i e_i +\sum_{j=1}^m x_j^{\ast} e_j^{\ast}\in W$, let us write $ x=( x_1, \cdots,  x_m)$ and $ x^{\ast}=( x_1^{\ast}, \cdots,  x_m^{\ast})$, and $ \underline{e}=(e_1, \cdots,  e_m)$, $ \underline{e}^{\ast}=(e_1^{\ast},\cdots,  e_m^{\ast})$. For $g\in \Sp(W)$, let us write $( \underline{e},  \underline{e}^{\ast})^T g=A_g( \underline{e}, \underline{e}^{\ast})^T $, for some $A_g\in \Sp_{2m}(\R)$. Identity $g$ with $A_g$, and  write also $A_g=\begin{pmatrix} a & b\\ c& d\end{pmatrix}$.
For $ x  \underline{e}^T \in L$, write 
$$ x  \underline{e}^T= xc^{-1}(c  \underline{e}^T +d   \underline{e}^{\ast T}) - xc^{-1}d    \underline{e}^{\ast T}.$$

\begin{align*}
&\widetilde{\beta}(g)=|\det c|^{-1/2}\sum_{ x  \underline{e}^T \in (\Z^m   \underline{e}^T) /(\Z^m c  \underline{e}^T)}  e^{-\pi i \langle - xc^{-1}d   \underline{e}^{\ast T}, xc^{-1}(c  \underline{e}^T +d   \underline{e}^{\ast T})\rangle}\\
&\quad\quad=|\det c|^{-1/2}\sum_{ x  \underline{e}^T \in (\Z^m   \underline{e}^T) /(\Z^m c  \underline{e}^T)}  e^{-\pi i  xc^{-1}d x^T}\\
&\quad\quad=|\det c|^{-1/2} \overline{ G(d,c)}.
\end{align*} 
 
\subsubsection{$\det c=0$}  Set  $X^{\ast} \cap Y^{\ast} =M$ and $L^M= L\cap M^{\bot} +M$. In this case, we   consider the non-degenerate symplectic vector space $W'=M^{\bot}/M$ and $L'=L^M/M $. According to \cite[2.2.6, Lemma]{LiVe},  $L'$ is a self-dual lattice of $W'$. Let $X^{\ast'}=X^{\ast}/M$, $Y^{\ast'}=Y^{\ast} /M$. Then  $Y^{\ast'}$  and $X^{\ast'}$ are two transverse Lagrangian subspaces of $W'$, and  $W'=X^{\ast'}\oplus Y^{\ast'}$. Let $|\det c'|=|L'/[X^{\ast'}\cap L'+Y^{\ast'}\cap L'] |$. Then:
    $$\widetilde{\beta}(g)=|\det c'|^{-1/2}\sum_{l\in L'/[(X^{\ast'} \cap L')\oplus (Y^{\ast'}\cap L')]} e^{-\pi i \langle x_{l}, x_{l}^{\ast}\rangle} e^{-\pi i \langle p_1(l), p_2(l)\rangle},$$
where $p_1:W'\to X^{\ast'}$ and $p_2:W '\to  Y^{\ast'}$, and $l=x_l+x_{l}^{\ast}\in L' \simeq L\cap M^{\bot}/L\cap M$. Moreover, according to \cite[2.2.8, Prop.]{LiVe},
 $$\widetilde{\beta}(g)=|\det c'|^{-1/2}\sum_{l=x^{\ast}+y^{\ast}\in [L\cap (X^{\ast}+Y^{\ast})]/[(X^{\ast} \cap L)+(Y^{\ast}\cap L)]} e^{-\pi i \langle x_{l}, x_{l}^{\ast}\rangle} e^{-\pi i \langle x^{\ast}, y^{\ast}\rangle},$$
 where $|\det c'|=|L'/[X^{\ast'}\cap L'+Y^{\ast'}\cap L'] |=|[L\cap (X^{\ast}+Y^{\ast})]/[(X^{\ast} \cap L)+(Y^{\ast}\cap L)]|$. 
 \begin{example}\label{exampleP}
 If $g\in P_{X^{\ast}}(\Z)\cap \Gamma_m(1,2)$ or $g=\omega_{S_i}$, then $\widetilde{\beta}(g)=1$.
 \end{example}
 Recall $\iota=(\iota_1, \cdots, \iota_m): \SL_2(\Z) \times \cdots \times  \SL_2(\Z) \longrightarrow \Sp_{2m}(\Z)$.
 \begin{example}
 If $g\in \iota_i(g_i)$, for $g_i=\begin{pmatrix} a & b\\ c& d\end{pmatrix}\in \Gamma_1(1,2)\subseteq \SL_2(\Z)$, with $c\neq 0$,  then $\widetilde{\beta}(g)=|\det c|^{-1/2} \overline{ G(d,c)}=\widetilde{\beta}(g_i)$.
 \end{example}
 \begin{proof}
 $Y^{\ast}=(\sum_{j=1,j\neq i}^m\R e_j^{\ast})\oplus \R (e_i^{\ast}c+e_i^{\ast}d)$,  $X^{\ast} \cap Y^{\ast} =M=(\sum_{j=1,j\neq i}^m\R e_j^{\ast})$, $W'=\R e_i\oplus \R e_i^{\ast}$, $L'=\Z e_i \oplus \Z e_i^{\ast}$, $X^{\ast'}=\R e_i^{\ast}$, $ Y^{\ast'}=\R (e_i^{\ast}c+e_i^{\ast}d)$, $|\det c'|=|L'/[X^{\ast'}\cap L'+Y^{\ast'}\cap L'] |=|(\Z e_i +\Z e_i^{\ast})/(\Z e_i^{\ast}+ \Z e_i^{\ast} c)|=|c|$. For $ x=x_ie_i \in L'$, write 
$$ x= x_ic^{-1}(ce_i  +d  e_i^{\ast}) -x_ic^{-1}d   e_i^{\ast}.$$
\begin{align*}
\widetilde{\beta}(g)&=|\det c|^{-1/2}\sum_{x_i\in \Z/c\Z}  e^{-\pi i \langle(-x_ic^{-1}d  )  e_i^{\ast}, x_ic^{-1}(ce_i  +d  e_i^{\ast}))\rangle}\\
&=|\det c|^{-1/2}\sum_{x_i\in\Z/c\Z}  e^{-\pi i  x_ic^{-1}d x_i}\\
&=|\det c|^{-1/2} \overline{ G(d,c)}.
\end{align*}
 \end{proof}
 \begin{example}\label{exampleineqj}
 If $g=u_{ij}^-(2t)$, for $i\neq j$, $t\in \Z$,    then $\widetilde{\beta}(g)=1$.
 \end{example}
 \begin{proof}
 $Y^{\ast}=(\sum_{k=1,k\neq i, j}^m\R e_k^{\ast})\oplus \R (e_j(-2t)+e_i^{\ast})\oplus \R (e_i(-2t)+e_j^{\ast}) $,  $X^{\ast} \cap Y^{\ast} =M=\sum_{k=1,k\neq i, j}^m\R e_k^{\ast}$, $W'=\R e_i\oplus \R e_i^{\ast}\oplus \R e_j\oplus \R e_j^{\ast}$, $L'=\Z e_i \oplus \Z e_i^{\ast}\oplus \Z e_j \oplus \Z e_j^{\ast}$, $X^{\ast'}=\R e_i^{\ast}\oplus \R e_j^{\ast}$, $ Y^{\ast'}=\R (e_j(-2t)+e_i^{\ast})\oplus \R (e_i(-2t)+e_j^{\ast})$, $|\det c'|=|L'/[X^{\ast'}\cap L'+Y^{\ast'}\cap L'] |=|(\Z  e_i +\Z e_j)/(2t\Z e_i+ 2t\Z e_j )|=4|t|^2$. For $ x=x_ie_i+x_j e_j \in L'$, write 
$$ x= x_j(-2t)^{-1}(e_j(-2t)+e_i^{\ast}) -x_j(-2t)^{-1}  e_i^{\ast}+ x_i(-2t)^{-1}(e_i(- 2t)+e_j^{\ast}) -x_i(-2t)^{-1}  e_j^{\ast}.$$
\begin{align*}
\widetilde{\beta}(g)&=|\det c|^{-1/2}\sum_{x_i\in\Z/2t\Z, x_j\in\Z/2t\Z}   e^{-\pi i \langle x_j(2t)^{-1}e_i^{\ast} +x_i(2t)^{-1}  e_j^{\ast}, x_j(-2t)^{-1}(e_j(-2t)+e_i^{\ast})+ x_i(-2t)^{-1}(e_i (-2t)+e_j^{\ast}) \rangle}\\
&=|4t^2|^{-1/2}\sum_{x_i\in\Z/2t\Z, x_j\in\Z/2t\Z}  e^{\pi i \tfrac{1}{t} x_i x_j}\\
&=1.
\end{align*}
 \end{proof}
 Recall the function  $\epsilon$ from Lemma  \ref{gamma1SP}. 
 \begin{lemma}
 $\widetilde{\beta}(g)\epsilon_g=1$, for all $g\in \Gamma_m(1,2)$.
 \end{lemma}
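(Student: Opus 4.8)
The plan is to observe that $\epsilon$ and $\widetilde{\beta}^{-1}$ are two trivializations of one and the same restricted cocycle $\widetilde{c}_{X^{\ast}}|_{\Gamma_m(1,2)}$, so that their ratio is a group homomorphism, and then to check that this homomorphism is trivial by evaluating it on a generating set of $\Gamma_m(1,2)$. Concretely, set $\chi(g):=\widetilde{\beta}(g)\,\epsilon_g$ for $g\in\Gamma_m(1,2)$. Lemma \ref{gamma1SP} gives $\widetilde{c}_{X^{\ast}}(g_1,g_2)=\epsilon(g_1)\epsilon(g_2)\epsilon(g_1g_2)^{-1}$, while Lemma \ref{widetildec} gives $\widetilde{c}_{X^{\ast}}(g_1,g_2)=\widetilde{\beta}(g_1)^{-1}\widetilde{\beta}(g_2)^{-1}\widetilde{\beta}(g_1g_2)$. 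Equating the two expressions and rearranging yields $\widetilde{\beta}(g_1g_2)\epsilon_{g_1g_2}=\bigl(\widetilde{\beta}(g_1)\epsilon_{g_1}\bigr)\bigl(\widetilde{\beta}(g_2)\epsilon_{g_2}\bigr)$, i.e. $\chi:\Gamma_m(1,2)\to T$ is a homomorphism; the assertion to be proved is precisely $\chi\equiv 1$.

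Next I would invoke the generation statement recalled just after \eqref{gamma}: $\Gamma_m(1,2)$ is generated by $\omega=\omega_{S_m}$, by the elements $h(a)$ with $a\in\GL_m(\Z)$, and by the elements $u(b)$ with $b=b^T$ having even diagonal; the abelian group of such $b$ is $\Z$-spanned by the $\epsilon_{ij}(1)+\epsilon_{ji}(1)$ ($i\neq j$) and the $2\epsilon_{ii}(1)$, so every such $u(b)$ is a product of the $u_{ij}$ ($i\neq j$) and the $u_{ii}(2)$. Since $\chi$ is a homomorphism it therefore suffices to check $\chi(g)=1$ for $g\in\{\omega,\ h(a)\ (a\in\GL_m(\Z)),\ u_{ij}\ (i\neq j),\ u_{ii}(2)\}$. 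For the $\widetilde{\beta}$-factor: each of $h(a)$, $u_{ij}$, $u_{ii}(2)$ lies in $P_{X^{\ast}}(\Z)\cap\Gamma_m(1,2)$, and $\omega=\omega_{S_m}$ is of the form $\omega_{S_i}$, so $\widetilde{\beta}(g)=1$ for all four by Example \ref{exampleP}. For the $\epsilon$-factor: the computation in Case~3 gives $\pi_{L,\psi}(\omega)f(w)=f(w\omega)$, hence $\epsilon_\omega=1$; Case~2 gives $\pi_{L,\psi}(h(a))f(w)=f(wg)$, hence $\epsilon_{h(a)}=1$; Case~5 gives $\epsilon_{u_{ij}}=1$ for $i\neq j$; and since $u_{ii}(2)\in\Gamma_m(2)$, Case~1 gives $\epsilon_{u_{ii}(2)}=1$. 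Thus $\chi$ vanishes on a generating set, so $\chi\equiv 1$, which is the claim.

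The only point that genuinely requires attention is the reduction to generators: one must make sure the explicit generating set taken from \cite{Mu},\cite{Be} is exactly matched by the Examples and Cases already proved — in particular that $h(a)$ for \emph{all} $a\in\GL_m(\Z)$ (not merely elementary ones) sits inside $P_{X^{\ast}}(\Z)\cap\Gamma_m(1,2)$ so that Example \ref{exampleP} applies verbatim, that $\omega$ is covered by the $\omega_{S_i}$ case rather than needing a separate computation, and that the symmetric even-diagonal matrices $b$ really are spanned by $\epsilon_{ij}(1)+\epsilon_{ji}(1)$ and $2\epsilon_{ii}(1)$ so that the $u_{ij}$ and $u_{ii}(2)$ suffice. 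No analytic difficulty arises: once the homomorphism property of $\chi$ is in place, the problem collapses from all of $\Gamma_m(1,2)$ to finitely many explicit matrices, and the cocycle-comparison identities of Lemmas \ref{gamma1SP} and \ref{widetildec} do all the work.
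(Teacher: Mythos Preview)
Your proposal is correct and follows essentially the same approach as the paper: both observe that $\chi(g)=\widetilde{\beta}(g)\epsilon_g$ is a character of $\Gamma_m(1,2)$ (by comparing the two trivializations in Lemmas \ref{gamma1SP} and \ref{widetildec}) and then check $\chi=1$ on a generating set. The only cosmetic difference is the choice of generators --- the paper invokes the set $P_{X^{\ast}}(\Z)\cap\Gamma_m(1,2)$ together with the $\omega_{S_i}$ (coming from Lemma \ref{gamma12}(2)), whereas you use the Mumford--Benson generators $\omega,\ h(a),\ u(b)$ and further break $u(b)$ into $u_{ij}$ and $u_{ii}(2)$; both reductions land on the same explicit Cases and on Example \ref{exampleP}.
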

 \begin{proof}
 For any $g_1,g_2\in \Gamma_m(1,2)$, we have the following relationship:
  \begin{equation}\label{trivgamma}
 \widetilde{c}_{X^{\ast}}(g_1,g_2)=\widetilde{\beta}(g_1)^{-1}\widetilde{\beta}(g_2)^{-1}\widetilde{\beta}(g_1g_2)=\epsilon_{g_1}\epsilon_{g_1}\epsilon_{g_1g_2}^{-1}.
 \end{equation} Thus, 
the map  $g \to \widetilde{\beta}(g)\epsilon_g$ defines  a character of $\Gamma_m(1,2)$. Since $\Gamma_m(1,2)$ is  generated by $ P_{X^{\ast}}(\Z)\cap \Gamma_m(1,2)$ and these $\omega_{S_i}$'s,  the result follows.
 \end{proof}

\subsection{Symplectic Gauss Sum}
Let us first recall some results of the symplectic Gauss sum from \cite{Sty1}, \cite{Sty2}. Let  $g= \begin{pmatrix} a & b\\ c& d\end{pmatrix}\in \Gamma_m(1,2)$, with $\det c\neq 0$.  We say two integer vectors $l,l'\in \Z^m$ are congruent modulo $c^T$ iff $l-l' =c^T l_0$, for some $l_0\in \Z^m$. Following \cite{Sty1}, we can define the symplectic Gauss sum:
$$G(d,c)=\sum_{l \bmod c^T}e^{\pi i [lc^{-1}d l^T]}.$$

\section{Intertwining operators III}
 Retain the notations from Examples \ref{ex1} and \ref{ex4}. By \cite[Section 3]{Ta1} or \cite[p.401]{Sa1}, there exists  a pair of  unitary equivalence  between   $\mathcal{H}(X^{\ast})\simeq L^2(X)$ and  $\mathcal{H}_{F_{\widehat{z}}}$. Here, we provide two explicit unitary equivalence maps derived from their papers. For completeness, we will also present the proofs. Identity $W=X\oplus X^{\ast}$ with $\R^m\oplus \R^m$ and denote $q_{\widehat{z}}(v)=e^{-\pi i(v\widehat{z}^{-1}v^T)}$.  
\begin{itemize}
\item $\theta_{ F_{\widehat{z}}, X^{\ast}}: \mathcal{H}(X^{\ast})\simeq L^2(X) \longrightarrow \mathcal{H}_{F_{\widehat{z}}}$.
  \begin{equation}\label{inter3}
 \begin{split}
\theta_{F_{\widehat{z}},X^{\ast}}(f)( v)& =\det(2\Im(z))^{\tfrac{1}{4}} \int_{\R^m} q_{\widehat{z}}(v+x\widehat{z})^{-1}q_{\widehat{z}}(v)f(x)dx\\
&=\det(2\Im(z))^{\tfrac{1}{4}} \int_{\R^m} e^{2\pi i\big( \tfrac{1}{2} x\widehat{z} x^T +xv^T\big)}f(x)dx.
    \end{split}
 \end{equation}
 \item  $\theta_{  X^{\ast},F_{\widehat{z}}}: \mathcal{H}_{F_{\widehat{z}}} \longrightarrow \mathcal{H}(X^{\ast})\simeq L^2(X) $.
 \begin{equation}\label{inter4}
 \begin{split}
\theta_{  X^{\ast},F_{\widehat{z}}}(f')(x)&=\det(2\Im(z))^{\tfrac{1}{4}} \int_{\C^m} \overline{q_{\widehat{z}}(v+x\widehat{z})}(\overline{ q_{\widehat{z}}(v)})^{-1} k_{\widehat{z}}(v,v)f'(v)d_{\widehat{z}}(v)\\
&=  \int_{\C^m} \det(2\Im(z))^{\tfrac{1}{4}} e^{2\pi i\big[\tfrac{1}{2}xzx^T-x\overline{v}^T \big]} k_{\widehat{z}}(v,v)f'(v)d_{\widehat{z}}(v).
  \end{split}
      \end{equation}
      \end{itemize}

 Let us verify that $ \theta_{  X^{\ast},F_{\widehat{z}}}$, $\theta_{  X^{\ast},F_{\widehat{z}}}$ both are $\Ha(W)$-intertwining  operators. Let $h=(y,y^{\ast};t)\in \Ha(W)$.\\
 1) 
 \begin{align*}
 &\theta_{ F_{\widehat{z}}, X^{\ast}} [\pi_{X^{\ast},\psi }(h) f](v)\\
 &=\det(2\Im(z))^{\tfrac{1}{4}} \int_{\R^m} e^{2\pi i\big( \tfrac{1}{2} x\widehat{z} x^T +xv^T\big)}[\pi_{X^{\ast},\psi }(h) f](x)dx\\
 &=\det(2\Im(z))^{\tfrac{1}{4}} \int_{\R^m} e^{2\pi i\big( \tfrac{1}{2} x\widehat{z} x^T +xv^T\big)}\psi(t+\langle x,y^{\ast}\rangle+\tfrac{1}{2}\langle y, y^{\ast}\rangle) f(x+y)dx\\
  &=\det(2\Im(z))^{\tfrac{1}{4}} \int_{\R^m} e^{2\pi i\big( \tfrac{1}{2} (x-y)\widehat{z} (x-y)^T +(x-y)v^T\big)}\psi(t+\langle x-y,y^{\ast}\rangle+\tfrac{1}{2}\langle y, y^{\ast}\rangle) f(x)dx\\
  &=\det(2\Im(z))^{\tfrac{1}{4}}\psi(t-\tfrac{1}{2}\langle y, y^{\ast}\rangle) \int_{\R^m} \psi\big( \tfrac{1}{2} (x-y)\widehat{z} (x-y)^T +(x-y)v^T\big)\psi(\langle x,y^{\ast}\rangle) f(x)dx\\
  &=\det(2\Im(z))^{\tfrac{1}{4}}\psi(t-\tfrac{1}{2} y (y^{\ast})^T+\tfrac{1}{2} y\widehat{z} y^T -yv^T) \int_{\R^m}\psi\big( \tfrac{1}{2} x\widehat{z}x^T -x\widehat{z}y^T  +xv^T+x(y^{\ast})^T\big) f(x)dx;
 \end{align*}
 \begin{align}
 &\pi_{F_{\widehat{z}}, \psi}(h) [\theta_{ F_{\widehat{z}}, X^{\ast}} f](v)\\
 &=\psi(t)e^{-2\pi i[\frac{1}{2}(y\overline{z}+y^{*})y^T+vy^T]}[\theta_{ F_{\widehat{z}}, X^{\ast}} f](v+y\overline{z}+y^{\ast})\\
&=\psi(t)e^{-2\pi i[\frac{1}{2}(y\overline{z}+y^{*})y^T+vy^T]}\det(2\Im(z))^{\tfrac{1}{4}} \int_{\R^m} e^{2\pi i\big( \tfrac{1}{2} x\widehat{z} x^T +x(v+y\overline{z}+y^{\ast})^T\big)} f(x) dx\\
&= \theta_{ F_{\widehat{z}}, X^{\ast}} [\pi_{ X^{\ast},\psi}(h) f](v).
  \end{align}
2)
\begin{align*}
& \kappa_{\widehat{z}}(v-y\overline{z}-y^{\ast},v-y\overline{z}-y^{\ast})\\
& \stackrel{v=v_x+iv_y}{=}e^{-2\pi (v_y+yQ_z)Q_z^{-1}(v_y+yQ_z)^T}\\
&=e^{-2\pi v_yQ_z^{-1}v_y^T}e^{-4\pi v_yy^T}e^{-2\pi yQ_zy^T}\\
&=\kappa_{\widehat{z}}(v,v)e^{2\pi i(-\overline{v} y^T +v y^T)} e^{2\pi i(\tfrac{1}{2}yzy^T-\tfrac{1}{2}y\overline{z}y^T)};
\end{align*}
 \begin{align*}
&\theta_{  X^{\ast},F_{\widehat{z}}}([\pi_{F_{\widehat{z}}, \psi}(h)f'])(x)\\
&= \int_{\C^m} \det(2\Im(z))^{\tfrac{1}{4}} e^{2\pi i\big[\tfrac{1}{2}\langle x,x z\rangle-\langle x,\overline{v}\rangle \big]} \kappa_{\widehat{z}}(v,v)[\pi_{F_{\widehat{z}}, \psi}(h)f'](v)d_{\widehat{z}}(v)\\
&= \int_{\C^m} \det(2\Im(z))^{\tfrac{1}{4}} e^{2\pi i\big[\tfrac{1}{2}\langle x,x z\rangle-\langle x,\overline{v}\rangle \big]} \kappa_{\widehat{z}}(v,v)\psi(t)e^{-2\pi i[\frac{1}{2}(y\overline{z}+y^{*})y^T+vy^T]}f'(v+y\overline{z}+y^{\ast})d_{\widehat{z}}(v)\\
&=  \det(2\Im(z))^{\tfrac{1}{4}} \psi(t) \int_{\C^m}e^{2\pi i\big[\tfrac{1}{2}\langle x,x z\rangle-\langle x,\overline{v}-yz-y^{\ast}\rangle \big]} \kappa_{\widehat{z}}(v-y\overline{z}-y^{\ast},v-y\overline{z}-y^{\ast})e^{-2\pi i[\frac{1}{2}(y\overline{z}+y^{*})y^T+(v-y\overline{z}-y^{\ast})y^T]}f'(v)d_{\widehat{z}}(v)\\
&=  \det(2\Im(z))^{\tfrac{1}{4}} \psi(t) \int_{\C^m}e^{2\pi i\big[\tfrac{1}{2}\langle x,x z\rangle-\langle x,\overline{v}-yz-y^{\ast}\rangle \big]} \kappa_{\widehat{z}}(v-y\overline{z}-y^{\ast},v-y\overline{z}-y^{\ast})e^{-2\pi i[-\frac{1}{2}(y\overline{z}+y^{*})y^T+vy^T]}f'(v)d_{\widehat{z}}(v)\\
&=  \det(2\Im(z))^{\tfrac{1}{4}} \psi(t) \int_{\C^m}e^{2\pi i\big[\tfrac{1}{2}\langle x,x z\rangle-\langle x,\overline{v}-yz-y^{\ast}\rangle \big]} \kappa_{\widehat{z}}(v,v)e^{2\pi i[\frac{1}{2}(yz+y^{*})y^T-\overline{v}y^T]}f'(v)d_{\widehat{z}}(v);
\end{align*}
\begin{align*}
&\pi_{X^{\ast},\psi}(h) [\theta_{  X^{\ast},F_{\widehat{z}}}(f')](x)\\
&=\psi(t+\langle x,y^{\ast}\rangle+\tfrac{1}{2}\langle y, y^{\ast}\rangle) [\theta_{  X^{\ast},F_{\widehat{z}}}(f')](x+y)\\
&=\psi(t+\langle x,y^{\ast}\rangle+\tfrac{1}{2}\langle y, y^{\ast}\rangle) \int_{\C^m} \det(2\Im(z))^{\tfrac{1}{4}} e^{2\pi i\big[\tfrac{1}{2}\langle x+y,(x+y) z\rangle-\langle x+y,\overline{v}\rangle \big]} \kappa_{\widehat{z}}(v,v)f'(v)d_{\widehat{z}}(v)\\
&=\psi(t)\det(2\Im(z))^{\tfrac{1}{4}}  \int_{\C^m} \psi\big( \tfrac{1}{2}xzx^T+ x(yz+y^{\ast}-\overline{v})^T\big) e^{2\pi i\big(\tfrac{1}{2} y (y^{\ast})^T +\tfrac{1}{2}yzy^T-y\overline{v}^T \big)} \kappa_{\widehat{z}}(v,v)f'(v)d_{\widehat{z}}(v)\\
&=\theta_{  X^{\ast},F_{\widehat{z}}}([\pi_{F_{\widehat{z}}, \psi}(h)f'])(x).
\end{align*}
Let us verify that $\theta_{  X^{\ast},F_{\widehat{z}}}$ and  $\theta_{  F_{\widehat{z}},X^{\ast}}$ are indeed  inverses of  one another. It suffices to show the following:
\begin{align*}
&\theta_{  X^{\ast},F_{\widehat{z}}}\Big[\theta_{F_{\widehat{z}},X^{\ast}}(f)\Big](x)\\
&=  \int_{\C^m}\int_{\R^m}  \det(2\Im(z))^{\tfrac{1}{2}} e^{2\pi i\big[\tfrac{1}{2}\langle x,x z\rangle-\langle x,\overline{v}\rangle \big]} \kappa_{\widehat{z}}(v,v) e^{2\pi i\big( \tfrac{1}{2} y\widehat{z} y^T +yv^T\big)}f(y)dy d_{\widehat{z}}(v)\\
&=  \int_{\R^m}\det(2\Im(z))^{\tfrac{1}{2}} e^{2\pi i\big( \tfrac{1}{2} xz x^T\big)} e^{2\pi i\big( \tfrac{1}{2} y\widehat{z} y^T\big)}f(y) \bigg[ \int_{\C^m}  e^{2\pi i\big[-\langle x,\overline{v}\rangle \big]} \kappa_{\widehat{z}}(v,v) e^{2\pi i\big( yv^T\big)}d_{\widehat{z}}(v) \bigg] dy\\
&\stackrel{v=v_x+iv_y}{=}  \int_{\R^m}\det(2\Im(z))^{\tfrac{1}{2}} e^{2\pi i\big( \tfrac{1}{2} xz x^T\big)} e^{2\pi i\big( \tfrac{1}{2} y\widehat{z} y^T\big)}f(y) \bigg[ \int_{\C^m}  e^{-2\pi v_yQ_z^{-1}v_y^T} e^{-2\pi \big( (x+y) v_y^T\big)}e^{2\pi i \big( (-x+y) v_x^T\big)}d_{\widehat{z}}(v) \bigg] dy\\
&=\int_{\R^m}\det(2\Im(z))^{\tfrac{1}{2}} e^{2\pi i\big( \tfrac{1}{2} xP_z x^T- \tfrac{1}{2} yP_zy^T\big)} e^{\pi (x Q_z y^T- \tfrac{1}{2}xQ_zx^T- \tfrac{1}{2}yQ_zy^T)}f(y) dy\\
&\quad\quad \cdot \bigg[ \int_{\C^m}  e^{-2\pi(\tfrac{1}{2}xQ_z+\tfrac{1}{2}yQ_z+ v_y)Q_z^{-1}(\tfrac{1}{2}xQ_z+\tfrac{1}{2}yQ_z+v_y)^T} e^{2\pi i \big( (-x+y) v_x^T\big)}d_{\widehat{z}}(v) \bigg] \\
&=\int_{\R^m} e^{2\pi i\big( \tfrac{1}{2} xP_z x^T- \tfrac{1}{2} yP_zy^T\big)} e^{\pi (x Q_z y^T- \tfrac{1}{2}xQ_zx^T- \tfrac{1}{2}yQ_zy^T)}f(y) dy\bigg[ e^{2\pi i \big( (-x+y) v_x^T\big)}dv_x\bigg] \\
&=f(x).
\end{align*}
In the rest of this section, we assume that $z=\widehat{z_0}=z_0=i1_m$.  Retain the notations from Remark \ref{uneqi}. Recall the Bargmann transformation  from the classical book \cite[p.40 and p.45]{Fo}:
$$\begin{array}{lrcl}
 \mathcal{B}:& L^2(\R^m)  &\longrightarrow & \mathcal{H}_F; \\
 &f& \longmapsto & \mathcal{B}(f)(v)=2^{\tfrac{m}{4}} \int_{\R^m} f(x) e^{-\pi xx^T-\tfrac{\pi}{2} vv^T+2\pi xv^T} dx.
\end{array}$$
$$\begin{array}{lrcl}
 \mathcal{B}^{-1}:& \mathcal{H}_F  &\longrightarrow & L^2(\R^m); \\
 &\phi& \longmapsto & \mathcal{B}^{-1}(\phi)(x)=2^{\tfrac{m}{4}} \int_{\C^m} \phi(v) e^{-\pi xx^T-\tfrac{\pi}{2} \overline{v}(\overline{v})^T+2\pi x\overline{v}^T}  e^{-\pi \mid v\mid^2} dv.
\end{array}$$
\begin{lemma}
$\mathcal{A}\circ \theta_{ F_{z_0}, X^{\ast}}=\mathcal{B}$ and $\theta_{  X^{\ast}, F_{z_0}} \circ \mathcal{A}^{-1}=\mathcal{B}^{-1}$.
\end{lemma}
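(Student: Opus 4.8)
The plan is to reduce both identities to the first, and to prove the first by a direct unwinding of the definitions at $z=z_0=i1_m$. First I would record the specializations at $z_0$: since $\widehat{z_0}=-\overline{z_0}=i1_m=z_0$ and $\Im z_0=1_m$, we have $\det(2\Im z_0)^{1/4}=2^{m/4}$, $Q_{z_0}=1_m$, $\kappa_{z_0}(v,v)=e^{-2\pi v_yv_y^T}$ for $v=v_x+iv_y$, and $d_{z_0}(v)=d(v)=dv_x\,dv_y$. Plugging these into \eqref{inter3} (and using $e^{2\pi i\cdot\frac12 x(i1_m)x^T}=e^{-\pi xx^T}$) gives the simplified form $\theta_{F_{z_0},X^{\ast}}(f)(v)=2^{m/4}\int_{\R^m}e^{-\pi xx^T+2\pi i xv^T}f(x)\,dx$, and similarly \eqref{inter4} becomes $\theta_{X^{\ast},F_{z_0}}(f')(x)=2^{m/4}\int_{\C^m}e^{-\pi xx^T-2\pi i x\overline v^T}e^{-2\pi v_yv_y^T}f'(v)\,dv_x\,dv_y$.

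Next I would compute $\mathcal{A}\circ\theta_{F_{z_0},X^{\ast}}$ pointwise. By Remark \ref{uneqi}, $[\mathcal{A}\phi](v)=e^{-\frac{\pi}{2}vv^T}\phi(-vi)$, so $[\mathcal{A}\circ\theta_{F_{z_0},X^{\ast}}(f)](v)=e^{-\frac{\pi}{2}vv^T}\theta_{F_{z_0},X^{\ast}}(f)(-vi)$. Since $e^{2\pi i x(-vi)^T}=e^{2\pi xv^T}$, this equals $2^{m/4}\int_{\R^m}f(x)\,e^{-\pi xx^T-\frac{\pi}{2}vv^T+2\pi xv^T}\,dx$, which is exactly $\mathcal{B}(f)(v)$ by the Bargmann formula recalled from \cite{Fo}. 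This establishes the first identity.

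For the second identity I would argue that it is formally equivalent to the first. Both $\mathcal{A}$ and $\mathcal{B}$ are unitary equivalences (Remark \ref{uneqi}, and \cite[p.40 and p.45]{Fo}), hence invertible, and $\theta_{F_{z_0},X^{\ast}}$ and $\theta_{X^{\ast},F_{z_0}}$ are mutually inverse, as verified just before the statement. From $\mathcal{A}\circ\theta_{F_{z_0},X^{\ast}}=\mathcal{B}$ one gets $\theta_{F_{z_0},X^{\ast}}=\mathcal{A}^{-1}\circ\mathcal{B}$, and inverting yields $\theta_{X^{\ast},F_{z_0}}=\mathcal{B}^{-1}\circ\mathcal{A}$, i.e.\ $\theta_{X^{\ast},F_{z_0}}\circ\mathcal{A}^{-1}=\mathcal{B}^{-1}$. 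Alternatively, one can check it directly: from $[\mathcal{A}\phi](v)=e^{-\frac{\pi}{2}vv^T}\phi(-vi)$ one reads off $[\mathcal{A}^{-1}\phi](w)=e^{-\frac{\pi}{2}ww^T}\phi(wi)$, substitutes this into the simplified \eqref{inter4}, makes the change of variable $w=vi$ (so $v=-wi$, $\overline v=i\overline w$, $dv_x\,dv_y=dw_x\,dw_y$), and verifies that the Gaussian factors $e^{-\frac{\pi}{2}ww^T}e^{-2\pi v_yv_y^T}$ collapse to the weight $e^{-\frac{\pi}{2}\overline w\,\overline w^T-\pi\|w\|^2}$, reproducing $\mathcal{B}^{-1}$ from \cite{Fo}.

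The computations are all elementary Gaussian algebra, so there is no serious obstacle; the only point requiring care is the bookkeeping of the quadratic exponents under the substitution $v\leftrightarrow\pm vi$ and the matching of $\kappa_{z_0}(v,v)\,d_{z_0}(v)$ with the weight $e^{-\pi\|v\|^2}\,dv$ that appears in the Bargmann inversion formula. One should also note that all manipulations are carried out on the dense subspace of Schwartz functions (resp.\ of polynomials times $e^{-\frac{\pi}{2}vv^T}$), where every integral converges absolutely, and then extended to the whole Hilbert spaces by continuity, using that all operators involved are unitary.
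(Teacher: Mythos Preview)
Your proposal is correct and follows essentially the same route as the paper: both compute $\mathcal{A}\circ\theta_{F_{z_0},X^{\ast}}$ directly by substituting $z=z_0$ and $v\mapsto -vi$ to obtain the Bargmann kernel. For the second identity the paper carries out the direct change of variables $v'=vi$ that you sketch as your alternative, whereas your primary argument (invert the first identity using that $\theta_{F_{z_0},X^{\ast}}$ and $\theta_{X^{\ast},F_{z_0}}$ are mutual inverses) is a legitimate shortcut that the paper does not take.
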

\begin{proof}
1) \begin{align*}
&\mathcal{A}\circ \theta_{ F_{z_0}, X^{\ast}}(f)(v)\\
&=e^{-\tfrac{\pi}{2}v v^T} \theta_{ F_{z_0}, X^{\ast}}(f)(-vi)\\
&=2^{\tfrac{m}{4}} \int_{\R^m}e^{-\tfrac{\pi}{2}v v^T} e^{2\pi i\big( \tfrac{1}{2}i xx^T -ixv^T\big)}f(x)dx\\
&=2^{\tfrac{m}{4}} \int_{\R^m}e^{-\tfrac{\pi}{2}v v^T-\pi  xx^T +2\pi xv^T}f(x)dx.
\end{align*}
2) 
\[ \mathcal{A}^{-1}(\phi)(v)=\phi(vi) e^{-\tfrac{\pi}{2}v v^T};\]
\begin{align*}
& \theta_{  X^{\ast}, F_{z_0}} \circ \mathcal{A}^{-1}(\phi)(x)\\
&=\int_{\C^m} \det(2\Im(z))^{\tfrac{1}{4}} e^{2\pi i\big[\tfrac{1}{2}xzx^T-x\overline{v}^T \big]} k_{z_0}(v,v)\mathcal{A}^{-1}(\phi)(v)d_{z_0}(v)\\
&=2^{\tfrac{m}{4}} \int_{\C^m}  e^{2\pi i\big[\tfrac{1}{2}ixx^T-x\overline{v}^T \big]} e^{-2\pi v_y v_y^T} \phi(vi) e^{-\tfrac{\pi}{2}v v^T}d_{z_0}(v)\\
&=2^{\tfrac{m}{4}} \int_{\C^m} e^{-\pi xx^T} e^{2\pi i\big[-x\overline{v}^T-2\pi v_y v_y^T-\tfrac{\pi}{2}v v^T\big]}  \phi(vi) d_{z_0}(v)\\
&\stackrel{v'=vi}{=}2^{\tfrac{m}{4}} \int_{\C^m} e^{-\pi xx^T} e^{2\pi i\big[-ix\overline{v'}^T\big]}e^{-2\pi (-v'_x) (-v'_x)^T+\tfrac{\pi}{2}v' v'^T}  \phi(v')d_{z_0}(v')\\
&=2^{\tfrac{m}{4}} \int_{\C^m} e^{-\pi xx^T}  e^{2\pi x\overline{v'}^T}  e^{-\tfrac{\pi}{2} \overline{v'}{\overline{v'}}^T-\pi \mid v'\mid^2} \phi(v') d_{z_0}(v').
\end{align*}
\end{proof}
\begin{proposition}\label{Propequ}
$\theta_{ F_{z_0}, X^{\ast}}\big(\pi_{X^{\ast}, \psi}(g)f\big)=\pi_{ F_{z_0}, \psi}(g)\big(\theta_{ F_{z_0}, X^{\ast}}(f)\big)$, for $g\in \Sp_{2m}(\R)$.
\end{proposition}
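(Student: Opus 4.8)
The approach is the Stone--von Neumann mechanism: since $\theta_{F_{z_0},X^\ast}$ of (\ref{inter3}) is already an $\Ha(W)$-intertwiner between $\mathcal{H}(X^\ast)\cong L^2(X)$ and $\mathcal{H}_{F_{z_0}}$ (with inverse $\theta_{X^\ast,F_{z_0}}$ of (\ref{inter4})), it automatically conjugates the Weil representation on the Schr\"odinger model into the one on the Fock model \emph{up to a scalar}, and the content of the proposition is that this scalar is trivial. Concretely, fix $g\in\Sp_{2m}(\R)$ and put $R(g)=\theta_{F_{z_0},X^\ast}\circ\pi_{X^\ast,\psi}(g)\circ\theta_{X^\ast,F_{z_0}}$. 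Using that $\theta_{F_{z_0},X^\ast}$ intertwines the two Heisenberg actions, together with the relation $\pi_{X^\ast,\psi}(g)\pi_{X^\ast,\psi}(h)\pi_{X^\ast,\psi}(g)^{-1}=\pi_{X^\ast,\psi}(h^{g^{-1}})$ on the Schr\"odinger side, one sees that $R(g)$ transforms $\pi_{F_{z_0},\psi}(h)$ into $\pi_{F_{z_0},\psi}(h^{g^{-1}})$ for all $h\in\Ha(W)$; the operator $\pi_{F_{z_0},\psi}(g)$ does the same, by the lemma establishing the projective action of $\Sp_{2m}(\R)$ on $\mathcal{H}_{F_{\widehat z}}$. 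By irreducibility of the Heisenberg representation with central character $\psi$ (Stone--von Neumann) and Schur's lemma, $\pi_{F_{z_0},\psi}(g)=c(g)\,R(g)$ for a unique $c(g)\in T$, and the task is to show $c\equiv1$.

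\emph{Reduction to a cocycle identity.} Feeding $\pi_{X^\ast,\psi}(g_1)\pi_{X^\ast,\psi}(g_2)=\widetilde{c}_{X^\ast}(g_1,g_2)\pi_{X^\ast,\psi}(g_1g_2)$ and $\pi_{F_{z_0},\psi}(g_1)\pi_{F_{z_0},\psi}(g_2)=\beta_{z_0}(g_1,g_2)^{-1}\pi_{F_{z_0},\psi}(g_1g_2)$ into the definition of $c$ gives $c(g_1g_2)=c(g_1)c(g_2)\,\widetilde{c}_{X^\ast}(g_1,g_2)\,\beta_{z_0}(g_1,g_2)$. Hence it suffices to know the identity $\widetilde{c}_{X^\ast}(g_1,g_2)\beta_{z_0}(g_1,g_2)=1$ on $\Sp_{2m}(\R)\times\Sp_{2m}(\R)$: with it, $c$ is a continuous homomorphism $\Sp_{2m}(\R)\to T$, and since $\Sp_{2m}(\R)$ is perfect this forces $c\equiv1$. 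This identity says precisely that, for the base Lagrangian $X^\ast$ and base point $z_0$, the Rao--Perrin cocycle $\widetilde{c}_{X^\ast}(g_1,g_2)=\gamma\big(\psi(q(X^\ast,X^\ast g_1,X^\ast g_2))\big)$ and the Satake--Takase cocycle $\beta_{z_0}(g_1,g_2)=\epsilon\big(g_1^{h_{-1}};z_0,g_2^{h_{-1}}(z_0)\big)$ are mutually inverse; both vanish on $P_{X^\ast}(\R)\times P_{X^\ast}(\R)$ (the Leray invariant of three copies of $X^\ast$ is $0$, and a direct $\gamma$-computation kills $\beta_{z_0}$ there), both are $\mu_8$-valued, and both represent the metaplectic class, so they agree once their values on the Weyl elements $\omega_S$ are matched --- a finite check against computations of the type of Lemma~\ref{calculation4}.

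\emph{An alternative, more hands-on route.} Rather than importing the cocycle identity, one can verify $c(g)=1$ directly for $g$ ranging over a generating set --- presumably the reason Examples~\ref{exm} and \ref{example232} were recorded. Since $\Sp_{2m}(\R)=P_{X^\ast}(\R)\cdot\U_m(\C)$ and $P_{X^\ast}(\R)$ is generated by the $u(b)$ and $h(a)$, it is enough to treat: $g=u(b)$, where substituting (\ref{chap2representationsp2}) into (\ref{inter3}) and using $\widehat{z_0}=z_0=i1_m$ makes the two Gaussian exponents combine, the result matching the explicit $U\circ T$ formula of Example~\ref{example232}(2) after one Gaussian integration; $g=h(a)$, analogously from (\ref{chap2representationsp3}), (\ref{inter3}) and Example~\ref{example232}(1); and $g\in\U_m(\C)$, where $\pi_{F_{z_0},\psi}(g)$ is given cleanly by Example~\ref{exm}, so that it suffices to test on the dense family spanned by the Gaussian $e^{-\frac{\pi}{2}vv^T}=\theta_{F_{z_0},X^\ast}(e^{-\pi xx^T})$ and its Fock-monomial multiples, compared with the $\U_m(\C)$-action on $L^2(X)$. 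Once these hold, the displayed multiplicativity relation shows $\{g:c(g)=1\}$ is a subgroup (and simultaneously forces $\widetilde{c}_{X^\ast}\beta_{z_0}=1$), hence equals $\Sp_{2m}(\R)$.

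The one genuinely laborious point is the Weyl/unitary case (equivalently $g=\omega$): there $\pi_{F_{z_0},\psi}(g)$ is the composite $U_{z_0,\,g^{h_{-1}}(z_0)}\circ T^{z_0}_{g^{h_{-1}}}$ of a change-of-base-point unitary and a multiplier, and matching it against the partial Fourier transform $\pi_{X^\ast,\psi}(\omega)$ of (\ref{chap2representationsp4}) requires evaluating an $m$-dimensional Gaussian integral and, crucially, tracking the branch of $\det^{-1/2}$ entering $\gamma(z',z)$. That branch bookkeeping is exactly what upgrades ``cohomologous'' to ``equal'' for the two cocycles, so on either route it is the step that must be carried out with care.
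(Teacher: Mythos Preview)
Your ``hands-on route'' is essentially what the paper does: it verifies the operator identity by direct Gaussian computation for $g=u(b)$, $g=h(a)$, and $g=\omega$, citing \cite[Thm.~4.2]{Ta1} for the full statement and then recording the cocycle identity $\widetilde c_{X^\ast}=\beta_{z_0}^{-1}$ as a \emph{corollary} (Lemma~\ref{twosame}). One correction to your execution: the $\omega$ case is the easiest, not the hardest. Since $\omega\in\U_m(\C)$ and $\omega^{h_{-1}}(z_0)=z_0$, the base-point-change operator $U_{z_0,\omega^{h_{-1}}(z_0)}$ is the identity, and by Example~\ref{exm} one has simply $\pi_{F_{z_0},\psi}(\omega)f(v)=e^{-\pi vv^T}f(-vi)$; matching this against the Fourier transform on the Schr\"odinger side is a one-line Gaussian with no branch of $\det^{-1/2}$ to track. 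The laborious cases are $u(b)$ and $h(a)$, where $g^{h_{-1}}(z_0)\neq z_0$ and the $U$-integral with its $\det^{-1/2}$ normalization must actually be evaluated---this is precisely why Example~\ref{example232} was recorded.

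There is, however, a genuine logical gap in your closure step. You write that once $c=1$ on generators, ``the displayed multiplicativity relation shows $\{g:c(g)=1\}$ is a subgroup (and simultaneously forces $\widetilde c_{X^\ast}\beta_{z_0}=1$).'' This is backwards: from $c(g_1)=c(g_2)=1$ the relation yields $c(g_1g_2)=\widetilde c_{X^\ast}(g_1,g_2)\beta_{z_0}(g_1,g_2)$, so closure under products \emph{presupposes} the cocycle identity rather than producing it. Your route~1 has the same issue in different clothing: ``both represent the metaplectic class'' gives cohomologous cocycles, not equal ones, and upgrading cohomologous to equal is exactly the content to be proved. The paper does not close this gap internally either---it defers to Takase---so if you want a self-contained argument you must supply one. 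One viable path: check directly (a short $\gamma(z',z)$ computation for $u(b)$ and $h(a)$) that $\beta_{z_0}$, like $\widetilde c_{X^\ast}$, vanishes whenever either argument lies in $P_{X^\ast}(\R)$; then the intertwining propagates from the generators to every Bruhat word $p_1\omega_S p_2$.
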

\subsection{ Proof of Proposition \ref{Propequ}}
See \cite[Theorem 4.2]{Ta1}.  Let $f=\big(\theta_{  X^{\ast},F_{\widehat{z}}}(f')\big)$, for $f' \in S(\R^m)\subseteq L^2(\R^m)$.\\
1) Let  $g=u(b)$.
\begin{align*}
&\theta_{ F_{z_0}, X^{\ast}}\big(\pi_{X^{\ast}, \psi}(g)(f)\big)(v)\\
&=\det(2 \cdot 1_m)^{\tfrac{1}{4}} \int_{\R^m} e^{2\pi i\big( \tfrac{1}{2} x i x^T +xv^T\big)}\pi_{X^{\ast}, \psi}(g)f(x)dx\\
&=2^{\tfrac{m}{4}} \int_{\R^m} e^{2\pi i\big( \tfrac{1}{2} x i x^T +xv^T\big)}\psi(\tfrac{1}{2}\langle x,xb\rangle) f(x)dx\\
&=2^{\tfrac{m}{4}} \int_{\R^m} e^{2\pi i\big( \tfrac{1}{2} x i x^T +xv^T+\tfrac{1}{2}xbx^T\big)}f(x)dx\\
&=2^{\tfrac{m}{4}}\det(2\Im(z_0))^{\tfrac{1}{4}}  \int_{\R^m} \int_{\C^m}  e^{2\pi i\big( \tfrac{1}{2} x i x^T +xv^T+\tfrac{1}{2}xbx^T\big)}e^{2\pi i\big[\tfrac{1}{2}xz_0x^T-x\overline{v}^{'T} \big]} k_{\widehat{z_0}}(v',v')f'(v')d_{\widehat{z_0}}(v')dx\\
&=2^{\tfrac{m}{4}}\det(2\Im(z_0))^{\tfrac{1}{4}}  \int_{\C^m}  \Big[\int_{\R^m} e^{2\pi i\big( \tfrac{1}{2} x i x^T +xv^T+\tfrac{1}{2}xbx^T\big)}e^{2\pi i\big[\tfrac{1}{2}xz_0x^T-x\overline{v}^{'T}  \big]} dx\Big]k_{z_0}(v',v')f'(v')d_{z_0}(v')\\
&=2^{\tfrac{m}{4}}\det(2\Im(z_0))^{\tfrac{1}{4}}  \int_{\C^m}  \Big[\int_{\R^m} e^{-\pi \big(  x (21_m-bi) x^T \big)}e^{2\pi ix\big(v^T-\overline{v}^{'T}  \big)} dx\Big]k_{z_0}(v',v')f'(v')d_{z_0}(v')\\
&\xlongequal{\textrm{ \cite[App.A, Thm.1]{Fo} }} 2^{\tfrac{m}{4}}\det(2\Im(z_0))^{\tfrac{1}{4}}  \int_{\C^m}  \Big[{\det}^{-1/2}( 21_m-ib) e^{-\pi((v-\overline{v}')( 21_m-ib)^{-1} (v^T-\overline{v}^{'T}) ) }\Big]k_{z_0}(v',v')f'(v')d_{z_0}(v')\\
&={\det}^{-1/2}(\tfrac{ 2i1_m+b}{2i}) \int_{\C^m}  \Big[ e^{-\pi i((v-\overline{v}')( 2i1_m+b)^{-1} (v^T-\overline{v}^{'T}) ) }\Big]k_{z_0}(v',v')f'(v')d_{z_0}(v').
\end{align*}
By Example \ref{example232}(2), we have:
\begin{align*}
&\pi_{ F_{z_0}, \psi}(g)(f')(v)\\
&= \gamma(\widehat{z_0}, \widehat{z_0'})\int_{\C^m} \kappa(\widehat{z_0},v; \widehat{z_0'},v')^{-1}f'(v')\kappa_{z_0}(v',v') dz_0(v')\\
&={\det}^{-1/2}\big(\tfrac{2i 1_m+b}{2i}\big) \int_{\C^m} e^{-\pi i(v-\overline{v'})(2i 1_m+b)^{-1} (v-\overline{v'})^T}f'(v')\kappa_{z_0}(v',v') dz_0(v')\\
&={\det}^{-1/2}\big(\tfrac{2i 1_m+b}{2i}\big)2^{\tfrac{m}{4}}  \int_{\R^m} \int_{\C^m} e^{-\pi i(v-\overline{v'})(2i 1_m+b)^{-1} (v-\overline{v'})^T}\kappa_{z_0}(v',v') e^{2\pi i\big( \tfrac{1}{2} x i x^T +xv^{'T}\big)}f(x)dxdz_0(v')\\
&={\det}^{-1/2}\big(\tfrac{2i 1_m+b}{2i}\big)2^{\tfrac{m}{4}}  \int_{\R^m} e^{2\pi i\big( \tfrac{1}{2} x i x^T \big)}f(x)dx \int_{\C^m} e^{-\pi i(v-\overline{v'})(2i 1_m+b)^{-1} (v-\overline{v'})^T}e^{-2\pi v'_y(v'_y)^T}e^{2\pi i(xv^{'T})} dz_0(v')\\
&=\theta_{ F_{z_0}, X^{\ast}}\big(\pi_{X^{\ast}, \psi}(g)(f)\big)(v).
\end{align*}
2) Let  $g=h(a)$.  
\begin{align*}
&\theta_{ F_{z_0}, X^{\ast}}\big(\pi_{X^{\ast}, \psi}(g)(f)\big)(v)\\
&=\det(2 \cdot 1_m)^{\tfrac{1}{4}} \int_{\R^m} e^{2\pi i\big( \tfrac{1}{2} x i x^T +xv^T\big)}\pi_{X^{\ast}, \psi}(g)f(x)dx\\
&=2^{\tfrac{m}{4}} \int_{\R^m} e^{2\pi i\big( \tfrac{1}{2} x i x^T +xv^T\big)}|\det(a)|^{1/2} f(xa)dx\\
&=2^{\tfrac{m}{4}}\det(2\Im(z))^{\tfrac{1}{4}} \int_{\C^m} \int_{\R^m} e^{2\pi i\big( \tfrac{1}{2} x i x^T +xv^T\big)}|\det(a)|^{1/2}  e^{2\pi i\big[\tfrac{1}{2}xaz_0a^Tx^T-xa\overline{v}^{'T} \big]} k_{z_0}(v',v')f'(v')d_{z_0}(v')dx\\
&=2^{\tfrac{m}{2}}|\det(a)|^{1/2}\int_{\C^m} \Big[\int_{\R^m} e^{-\pi (  x (1_m+aa^T) x^T - xx^T)}   e^{2\pi i\big(x(v^T-a\overline{v}^{'T}) \big)} dx\Big] k_{z_0}(v',v')f'(v')d_{z_0}(v') \\
&=2^{\tfrac{m}{2}}|\det(a)|^{1/2}  \int_{\C^m}  \Big[{\det}^{-1/2}(1_m+aa^T) e^{-\pi\big((v-\overline{v}'a^T)( 1_m+aa^T)^{-1} (v^T-(\overline{v}'a^T)^T)\big)}\Big]k_{z_0}(v',v')f'(v')d_{z_0}(v')\\
& ={\det}^{-1/2}(\tfrac{1_m+aa^T}{2})  |\det(a)|^{1/2}\int_{\C^m} \Big[e^{-\pi\big((v-\overline{v}'a^T)( 1_m+aa^T)^{-1} (v^T-(\overline{v}'a^T)^{T}) \big) }\Big]k_{z_0}(v',v')f'(v')d_{z_0}(v')\\
&\xlongequal{\textrm{Ex. \ref{example232}(1) }}\pi_{ F_{z_0}, \psi}(g)(f')(v)\\
&=\pi_{ F_{z_0}, \psi}(g)\big(\theta_{ F_{z_0}, X^{\ast}}(f)\big)(v).
\end{align*}
3) Let  $g=\omega$. 

\begin{align*}
&\theta_{ F_{z_0}, X^{\ast}}\big(\pi_{X^{\ast}, \psi}(g)(f)\big)(v)\\
&=\det(2\cdot 1_m)^{\tfrac{1}{4}} \int_{\R^m} e^{2\pi i\big( \tfrac{1}{2} x i x^T +xv^T\big)}\pi_{X^{\ast}, \psi}(g)f(x)dx\\
&=\det(2\cdot 1_m)^{\tfrac{1}{4}} \int_{\R^m} e^{2\pi i\big( \tfrac{1}{2} x i x^T +xv^T\big)}dx\int_{\R^m} e^{2\pi i x y^T}  f(-y) dy\\
&=\det(2\cdot 1_m)^{\tfrac{1}{4}} \int_{\R^m}  f(-y) dy \Big[\int_{\R^m} e^{2\pi i\big( \tfrac{1}{2} x i x^T +x(v^T+y^T)\big)}dx\Big] \\
&\xlongequal{\textrm{ \cite[App.A, Thm.1]{Fo} }}\det(2\cdot 1_m)^{\tfrac{1}{4}} \int_{\R^m}  f(-y) dy \Big[e^{-\pi\big((v+y)(v^T+y^T)\big)}\Big]; 
\end{align*}
By Example \ref{exm}, we have:
\begin{align*}
&\pi_{ F_{z_0}, \psi}(g)\big(\theta_{ F_{z_0}, X^{\ast}}(f)\big)(v)\\
&=\theta_{ F_{z_0}, X^{\ast}}(f)(v(-1_mi))\\
&=\det(2\Im(z))^{\tfrac{1}{4}} \int_{\R^m} e^{-\pi i v(-1_mi) v^T} e^{2\pi i\big( \tfrac{1}{2} xz_0 x^T -xiv^T\big)}f(x)dx\\
&=\det(2\Im(z))^{\tfrac{1}{4}} \int_{\R^m} e^{-\pi  v v^T} e^{\pi \big( - x x^T +2xv^T\big)}f(x)dx\\
&=\det(2\Im(z))^{\tfrac{1}{4}} \int_{\R^m} e^{-\pi  v v^T} e^{\pi \big( - yy^T -2yv^T\big)}f(-y)dy\\
&=\theta_{ F_{z_0}, X^{\ast}}\big(\pi_{X^{\ast}, \psi}(g)(f)\big)(v).
\end{align*}
\begin{lemma}\label{twosame}
 \begin{itemize}
 \item[(1)] $\beta_{z_0}(g_1, g_2)^{-1}=\widetilde{c}_{X^{\ast}}(g_1, g_2)$, for $g_i\in \Sp_{2m}(\R)$.
 \item[(2)] $\beta_{z_0}(g_1, g_2)^{-1}\beta_{z_0}(g_1^{h_{-1}}, g_2^{h_{-1}})^{-1}=1$, for $g_i\in \Sp_{2m}(\R)$.
 \end{itemize}
\end{lemma}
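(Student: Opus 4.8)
The plan is to derive (1) by observing that the unitary equivalence $\theta_{F_{z_0},X^{\ast}}$ of Proposition \ref{Propequ} intertwines two projective realizations of the Weil representation whose associated $2$-cocycles are, respectively, $\widetilde{c}_{X^{\ast}}$ on the Schr\"odinger side and $\beta_{z_0}^{-1}$ on the Fock side; since the intertwiner is genuine, the two cocycles must coincide. First I would record the cocycle identity on the Schr\"odinger model: transporting the lattice-model relation $\pi_{L,\psi}(g_1)\pi_{L,\psi}(g_2)=\widetilde{c}_{X^{\ast}}(g_1,g_2)\pi_{L,\psi}(g_1g_2)$ from Section \ref{interwop} through the genuine (non-projective) isomorphisms $\theta_{L,X^{\ast}},\theta_{X^{\ast},L}$ yields $\pi_{X^{\ast},\psi}(g_1)\pi_{X^{\ast},\psi}(g_2)=\widetilde{c}_{X^{\ast}}(g_1,g_2)\pi_{X^{\ast},\psi}(g_1g_2)$ for all $g_1,g_2\in\Sp_{2m}(\R)$, with the same cocycle $\widetilde{c}_{X^{\ast}}$.

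Next, for $g_1,g_2\in\Sp_{2m}(\R)$ and any nonzero $f$ in the Schr\"odinger space, I would compute $\theta_{F_{z_0},X^{\ast}}\big(\pi_{X^{\ast},\psi}(g_1)\pi_{X^{\ast},\psi}(g_2)f\big)$ in two ways. Applying Proposition \ref{Propequ} twice and then the Fock-model cocycle relation $\pi_{F_{z_0},\psi}(g_1)\pi_{F_{z_0},\psi}(g_2)=\beta_{z_0}(g_1,g_2)^{-1}\pi_{F_{z_0},\psi}(g_1g_2)$ gives $\beta_{z_0}(g_1,g_2)^{-1}\,\pi_{F_{z_0},\psi}(g_1g_2)\theta_{F_{z_0},X^{\ast}}(f)$. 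On the other hand, substituting the Schr\"odinger cocycle relation above and then Proposition \ref{Propequ} once gives $\widetilde{c}_{X^{\ast}}(g_1,g_2)\,\pi_{F_{z_0},\psi}(g_1g_2)\theta_{F_{z_0},X^{\ast}}(f)$. Because $\theta_{F_{z_0},X^{\ast}}$ is bijective (its inverse $\theta_{X^{\ast},F_{z_0}}$ was exhibited above) and $\pi_{F_{z_0},\psi}(g_1g_2)$ is invertible, the vector $\pi_{F_{z_0},\psi}(g_1g_2)\theta_{F_{z_0},X^{\ast}}(f)$ is nonzero, and since $\beta_{z_0}(g_1,g_2)^{-1},\widetilde{c}_{X^{\ast}}(g_1,g_2)\in\mu_8\subseteq\C^{\times}$ are scalars, we may equate them: $\beta_{z_0}(g_1,g_2)^{-1}=\widetilde{c}_{X^{\ast}}(g_1,g_2)$. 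This is (1).

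For (2) I would simply combine (1) with Lemma \ref{h1h1h1h1eq}: by (1),
\[
\beta_{z_0}(g_1,g_2)^{-1}\beta_{z_0}(g_1^{h_{-1}},g_2^{h_{-1}})^{-1}
=\widetilde{c}_{X^{\ast}}(g_1,g_2)\,\widetilde{c}_{X^{\ast}}(g_1^{h_{-1}},g_2^{h_{-1}}),
\]
and the right-hand side equals $1$ by Lemma \ref{h1h1h1h1eq}.

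The argument is essentially formal once Proposition \ref{Propequ} is available, so I do not expect a genuine obstacle; the only points needing care are bookkeeping ones. One must match the direction of the cocycle against the conventions of Section \ref{Weilmodel} — in particular that the Schr\"odinger model carries the same cocycle $\widetilde{c}_{X^{\ast}}$ as the lattice model, which holds because $\theta_{L,X^{\ast}}$ is a genuine intertwiner rather than a projective one — and one should note that the invertibility of $\theta_{F_{z_0},X^{\ast}}$ and of $\pi_{F_{z_0},\psi}(g_1g_2)$ is exactly what licenses cancelling these operators and reading off the scalar identity. The slight subtlety worth flagging explicitly is that at $z=z_0=i1_m$ one has $\widehat{z_0}=-\overline{z_0}=z_0$, so the Fock space $\mathcal{H}_{F_{z_0}}$ appearing in Proposition \ref{Propequ} is indeed the one on which $\beta_{z_0}$ acts.
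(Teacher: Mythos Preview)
Your proposal is correct and follows essentially the same approach as the paper: the paper's proof simply states that part (1) is a consequence of Proposition \ref{Propequ} and part (2) comes from Lemma \ref{h1h1h1h1eq}, and you have spelled out precisely the standard cocycle-comparison argument behind those references. One small remark: the Schr\"odinger model is already defined in Section \ref{schod} directly with the cocycle $\widetilde{c}_{X^{\ast}}$ (the lattice model is derived from it via $\theta_{L,X^{\ast}}$, not the other way around), so the detour through the lattice model is unnecessary, and you need not assert a priori that $\beta_{z_0}^{-1}\in\mu_8$---it suffices that both sides are nonzero scalars.
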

\begin{proof}
Part (1) is a consequence of the above proposition.  Part (2) comes from Lemma \ref{h1h1h1h1eq}.
\end{proof}
Retain the notations from  Example \ref{exm}.
\begin{lemma}
Assume that $m=1$. Let $k_t(v)=e^{\pi t v-\tfrac{\pi}{2}t^2}$, for $t\in \R$. Then $[\mathcal{B}^{-1}(k_t)](x)= 2^{\tfrac{1}{4}} e^{-\pi x^2+2\pi x t -\pi t^2}$.
\end{lemma}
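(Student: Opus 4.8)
The plan is to sidestep the complex Gaussian integral that defines $\mathcal{B}^{-1}$ and instead verify the equivalent identity
$$\mathcal{B}(g_t) = k_t, \qquad g_t(x) = 2^{1/4}\, e^{-\pi x^2 + 2\pi x t - \pi t^2},$$
from which the assertion follows immediately, since $\mathcal{B}^{-1}$ is by construction the two-sided inverse of $\mathcal{B}$ as recalled from \cite{Fo}. So the claim $[\mathcal{B}^{-1}(k_t)](x) = g_t(x)$ reduces to a single one-dimensional real Gaussian integral.

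First I would substitute $f = g_t$ into the $m=1$ case of the Bargmann transform, obtaining
$$\mathcal{B}(g_t)(v) = 2^{1/4}\!\int_{\R} g_t(x)\, e^{-\pi x^2 - \tfrac{\pi}{2}v^2 + 2\pi x v}\, dx = 2^{1/2}\, e^{-\pi t^2 - \tfrac{\pi}{2}v^2}\!\int_{\R} e^{-2\pi x^2 + 2\pi x(t+v)}\, dx.$$
Then I would complete the square in $x$, namely $-2\pi x^2 + 2\pi x(t+v) = -2\pi\bigl(x - \tfrac{t+v}{2}\bigr)^2 + \tfrac{\pi}{2}(t+v)^2$, and evaluate the Gaussian by the $\pi$-normalized formula $\int_\R e^{-\pi\alpha x^2 + \beta x}\,dx = \alpha^{-1/2} e^{\beta^2/(4\pi\alpha)}$ (here $\int_\R e^{-2\pi x^2}\,dx = 2^{-1/2}$). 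This yields
$$\mathcal{B}(g_t)(v) = 2^{1/2}\cdot 2^{-1/2}\, e^{-\pi t^2 - \tfrac{\pi}{2}v^2 + \tfrac{\pi}{2}(t+v)^2} = e^{-\tfrac{\pi}{2}t^2 + \pi t v} = k_t(v),$$
after expanding $(t+v)^2 = t^2 + 2tv + v^2$ and cancelling, which finishes the proof.

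Alternatively one can compute $[\mathcal{B}^{-1}(k_t)](x)$ directly by writing $v = a+ib$, so $|v|^2 = a^2+b^2$, $\overline v^{\,2} = (a-ib)^2$, and $dv = da\,db$; the exponent is then quadratic in $(a,b)$ with an imaginary cross term, and one integrates first in $b$ (a one-dimensional Gaussian giving $\sqrt{2}\, e^{-\tfrac{\pi}{2}(t+a-2x)^2}$) and then in $a$ (another one-dimensional Gaussian), arriving at the same answer. Either route is elementary; I do not expect any genuine obstacle, only careful bookkeeping of the constant $2^{1/4}$ and of the normalization of the Gaussian integral. The one point that deserves attention in the direct route is that the measure $dv$ in the displayed formula for $\mathcal{B}^{-1}$ is the ordinary Lebesgue measure on $\C \cong \R^2$, not the $2^{-m}$-normalized measure $d(w)$ used elsewhere in the Fock model, so that the constants come out right; the route via $\mathcal{B}(g_t) = k_t$ avoids this subtlety entirely, which is why I would adopt it.
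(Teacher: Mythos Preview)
Your proposal is correct. The paper's own ``proof'' is merely a citation to \cite[Prop.~6.10]{Zh}, so your explicit verification via $\mathcal{B}(g_t)=k_t$ is in fact more detailed than what the paper provides; the computation you give is accurate and the strategy of applying $\mathcal{B}$ rather than $\mathcal{B}^{-1}$ is the natural one.
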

\begin{proof}
See \cite[Prop. 6.10]{Zh}.
\end{proof}
\begin{lemma}
Assume that $m=1$. Let $f(v)=v$. Then $[\mathcal{B}^{-1}(f)](x)= 2^{\tfrac{5}{4}} x e^{-\pi x^2}$.
\end{lemma}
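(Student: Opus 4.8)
The plan is to exploit the family $k_t(v)=e^{\pi tv-\frac{\pi}{2}t^2}$ from the previous lemma: since $\frac{\partial}{\partial t}k_t(v)=(\pi v-\pi t)e^{\pi tv-\frac{\pi}{2}t^2}$, evaluating at $t=0$ gives $\frac{\partial}{\partial t}k_t(v)\big|_{t=0}=\pi v=\pi f(v)$. Both $f(v)=v$ and every $k_t$ belong to $\mathcal{H}_F$, as the integrals $\int_{\C}|v|^2e^{-\pi|v|^2}\,dv$ and $\int_{\C}|k_t(v)|^2e^{-\pi|v|^2}\,dv$ defining the Fock norm are plainly finite. So if differentiation under the integral sign is legitimate, then by linearity of $\mathcal{B}^{-1}$,
\[
[\mathcal{B}^{-1}(f)](x)=\frac{1}{\pi}\,\frac{d}{dt}\Big|_{t=0}[\mathcal{B}^{-1}(k_t)](x).
\]

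To justify the interchange, write $v=v_x+iv_y$ in the defining integral
\[
[\mathcal{B}^{-1}(k_t)](x)=2^{1/4}\int_{\C}k_t(v)\,e^{-\pi x^2-\frac{\pi}{2}\overline{v}^2+2\pi x\overline{v}}\,e^{-\pi|v|^2}\,dv .
\]
The real part of the full exponent is $-\pi x^2-\frac{3\pi}{2}v_x^2-\frac{\pi}{2}v_y^2+(2\pi x+\pi t)v_x-\frac{\pi}{2}t^2$, a negative-definite quadratic in $(v_x,v_y)$ with a linear term, and the $t$-derivative only inserts the polynomial factor $(\pi v-\pi t)$; hence for $(x,t)$ in any compact set both the integrand and its $t$-derivative are dominated by a fixed integrable Gaussian. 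Dominated convergence then allows $\partial_t$ to be moved inside, so the displayed identity for $[\mathcal{B}^{-1}(f)](x)$ holds.

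It remains to plug in the previous lemma $[\mathcal{B}^{-1}(k_t)](x)=2^{1/4}e^{-\pi x^2+2\pi xt-\pi t^2}$ and differentiate: $\frac{d}{dt}e^{-\pi x^2+2\pi xt-\pi t^2}=(2\pi x-2\pi t)\,e^{-\pi x^2+2\pi xt-\pi t^2}$, which at $t=0$ equals $2\pi x\,e^{-\pi x^2}$. Therefore $[\mathcal{B}^{-1}(f)](x)=\frac{1}{\pi}\cdot 2^{1/4}\cdot 2\pi x\,e^{-\pi x^2}=2^{5/4}x\,e^{-\pi x^2}$. The only real obstacle is the differentiation-under-the-integral bookkeeping just described; should one prefer to avoid it, the identity can be obtained by evaluating the genuine Gaussian integral $2^{1/4}\int_{\R^2}(v_x+iv_y)\exp\!\big[-\pi x^2-\frac{3\pi}{2}v_x^2-\frac{\pi}{2}v_y^2+\pi i v_xv_y+2\pi xv_x-2\pi i xv_y\big]\,dv_xdv_y$ directly — completing the square in the vector $(v_x,v_y)$ and reading off the coefficient of the linear term — which yields the same value after routine algebra.
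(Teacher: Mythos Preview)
Your proof is correct and follows essentially the same approach as the paper: differentiate the family $k_t$ at $t=0$ to produce $\pi f$, then apply the previous lemma and compute $\tfrac{1}{\pi}\tfrac{d}{dt}\big|_{t=0}2^{1/4}e^{-\pi x^2+2\pi xt-\pi t^2}=2^{5/4}xe^{-\pi x^2}$. You additionally justify differentiation under the integral sign, which the paper leaves implicit.
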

\begin{proof}
$[\mathcal{B}^{-1}(f)](x)=\tfrac{1}{\pi}\tfrac{d}{dt}[\mathcal{B}^{-1}(k_t)]\mid_{t=0}(x)=\bigg\{\tfrac{1}{\pi} 2^{\tfrac{1}{4}} e^{-\pi x^2+2\pi x t -\pi t^2}[2\pi x-2\pi t]\bigg\}\mid_{t=0}=2^{\tfrac{5}{4}} x e^{-\pi x^2}$.
\end{proof}
\begin{itemize}
\item[1)]  If we take $f(v)=e^{-\tfrac{\pi}{2}vv^T}$, then 
\begin{align*}
\theta_{  X^{\ast},F_{z_0}}(f)(x)&= \int_{\C^m} \det(2\Im(z))^{\tfrac{1}{4}} e^{2\pi i\big[\tfrac{1}{2}xzx^T-x\overline{v}^T \big]} k_{z_0}(v,v)e^{-\tfrac{\pi}{2}vv^T}d_{z_0}(v)\\
&=\mathcal{B}^{-1}(1)\\
&\stackrel{\textrm{ \cite[p.43]{Fo} }}{=}2^{m/4}e^{-\pi x x^T}.
\end{align*}
\item[2)] Take $f_k(v)=v_k ie^{-\tfrac{\pi}{2}vv^T}$, for $v=(v_1, \cdots, v_m)$. We denote $\widehat{v}_k=(v_1, \cdots,v_{k-1}, v_{k+1}, \cdots, v_m)$, and $\widehat{x}_k=(x_1, \cdots,x_{k-1}, x_{k+1}, \cdots, x_m)$.
\begin{align*}
&\theta_{  X^{\ast},F_{z_0}}(f_k)(x)\\
&= \int_{\C^m} \det(2\Im(z))^{\tfrac{1}{4}} v_ki e^{2\pi i\big[\tfrac{1}{2}xzx^T-x\overline{v}^T \big]} k_{z_0}(v,v)e^{-\tfrac{\pi}{2}vv^T}d_{z_0}(v)\\
&=\int_{\C} 2^{\tfrac{1}{4}} v_ki e^{2\pi i\big[\tfrac{1}{2}x_kx_k^T-x_k\overline{v_k}^T \big]} k_{z_0}(v_k,v_k)e^{-\tfrac{\pi}{2}v_kv_k^T}d_{z_0}(v_k) \bigg[\int_{\C^{m-1}} 2^{\tfrac{m-1}{4}}  e^{2\pi i\big[\tfrac{1}{2}\widehat{x}_kz\widehat{x}_k^T-\widehat{x}_k\overline{\widehat{v}_k}^T \big]} k_{z_0}(\widehat{v}_k,\widehat{v}_k)e^{-\tfrac{\pi}{2}\widehat{v}_k\widehat{v}_k^T}d_{z_0}(\widehat{v}_k)\bigg]\\
&=2^{(m-1)/4}e^{-\pi \widehat{x}_k \widehat{x}_k^T}\int_{\C} 2^{\tfrac{1}{4}} v_ki e^{2\pi i\big[\tfrac{1}{2}x_kx_k^T-x_k\overline{v_k}^T \big]} k_{z_0}(v_k,v_k)e^{-\tfrac{\pi}{2}v_kv_k^T}d_{z_0}(v_k)\\
&=2^{(m-1)/4}e^{-\pi \widehat{x}_k \widehat{x}_k^T} \mathcal{B}^{-1}(v_k)\\
&=2\cdot 2^{m/4}x_ke^{-\pi x x^T}.
\end{align*}
\end{itemize}
Let us denote $A(x)=e^{-\pi x x^T}$,  $B_i(x)=x_ie^{-\pi x x^T}$, and $B(x)=\begin{pmatrix} B_1(x)\\ \vdots\\B_m(x)\end{pmatrix}$, for $x=(x_1,\cdots, x_m)\in X$.
\begin{lemma}\label{ABM}
Let  $g=\begin{pmatrix}a & -b\\ b& a\end{pmatrix}\in \U_{m}(\C)$. 
\begin{itemize}
\item[(1)] $\pi_{X^{\ast},\psi }(g) A(x)=A(x)$.
\item[(2)]  $\pi_{X^{\ast},\psi }(g) B(x)=(-bi+a)^TB(x)$.
\end{itemize}
\end{lemma}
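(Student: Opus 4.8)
The plan is to transport both identities from the Fock model, where they are already recorded in Example \ref{exm}, to the Schr\"odinger model by conjugating with the explicit intertwiner $\theta_{F_{z_0},X^{\ast}}$ and its inverse $\theta_{X^{\ast},F_{z_0}}$. Recall that these two maps were shown above to be mutually inverse $\Ha(W)$-isomorphisms between $\mathcal{H}(X^{\ast})\simeq L^2(X)$ and $\mathcal{H}_{F_{z_0}}$, and that Proposition \ref{Propequ} upgrades this to $\theta_{F_{z_0},X^{\ast}}\big(\pi_{X^{\ast},\psi}(g)f\big)=\pi_{F_{z_0},\psi}(g)\big(\theta_{F_{z_0},X^{\ast}}(f)\big)$ for every $g\in\Sp_{2m}(\R)$. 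Since both sides of this relation are projective with the same cocycle $\widetilde c_{X^{\ast}}$ (Lemma \ref{twosame}(1), together with $\beta_{z_0}(g_1,g_2)^{-1}=\widetilde c_{X^{\ast}}(g_1,g_2)$), no choice of metaplectic lift intervenes and the whole computation may be carried out with the fixed set-theoretic section used throughout; moreover $A,B_1,\dots,B_m$ all lie in $S(X)\subseteq L^2(X)$, so the intertwiners genuinely apply to them.

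First I would recall the two values of $\theta_{X^{\ast},F_{z_0}}$ that were computed at the end of Section 7: $\theta_{X^{\ast},F_{z_0}}\big(e^{-\tfrac{\pi}{2}vv^T}\big)=2^{m/4}A(x)$, and, writing $f_k(v)=v_k i\,e^{-\tfrac{\pi}{2}vv^T}$ and $F=(f_1,\dots,f_m)^T$, $\theta_{X^{\ast},F_{z_0}}(F)=2^{1+m/4}B(x)$ (the scalar case following from the two $\mathcal B^{-1}$-lemmas, the general case from the product structure of the Gaussian). Equivalently, applying the inverse, $\theta_{F_{z_0},X^{\ast}}(A)=2^{-m/4}e^{-\tfrac{\pi}{2}vv^T}$ and $\theta_{F_{z_0},X^{\ast}}(B)=2^{-1-m/4}F$, where $\theta_{F_{z_0},X^{\ast}}$ is understood to act componentwise on vector-valued functions.

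For part (1) I would apply $\theta_{F_{z_0},X^{\ast}}$ to $\pi_{X^{\ast},\psi}(g)A$, rewrite by Proposition \ref{Propequ} as $\pi_{F_{z_0},\psi}(g)\big(2^{-m/4}e^{-\tfrac{\pi}{2}vv^T}\big)$, invoke Example \ref{exm}(1) which states that this Fock vector is fixed by $\pi_{F_{z_0},\psi}(g)$, and then apply $\theta_{X^{\ast},F_{z_0}}$ to return: $\pi_{X^{\ast},\psi}(g)A=\theta_{X^{\ast},F_{z_0}}\big(2^{-m/4}e^{-\tfrac{\pi}{2}vv^T}\big)=A$. For part (2) I would run the identical three-step sandwich with $B$ in place of $A$: $\theta_{F_{z_0},X^{\ast}}\big(\pi_{X^{\ast},\psi}(g)B\big)=\pi_{F_{z_0},\psi}(g)\big(2^{-1-m/4}F\big)=2^{-1-m/4}(-bi+a)^TF$ by Example \ref{exm}(2); applying $\theta_{X^{\ast},F_{z_0}}$ and using that this operator is $\C$-linear and componentwise, hence commutes with left multiplication by the constant matrix $(-bi+a)^T$, yields $\pi_{X^{\ast},\psi}(g)B=(-bi+a)^T\,\theta_{X^{\ast},F_{z_0}}\big(2^{-1-m/4}F\big)=(-bi+a)^TB$.

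I do not expect a serious obstacle: the argument is a formal transfer through the known intertwiners, and the only genuine care is the bookkeeping of the normalizing powers of $2$ and the verification that $\theta_{X^{\ast},F_{z_0}}$, acting componentwise and $\C$-linearly, truly intertwines the vector-valued actions so that the constant matrix $(-bi+a)^T$ can be pulled through it. If a self-contained argument were wanted instead, one could compute $\pi_{X^{\ast},\psi}(g)A$ and $\pi_{X^{\ast},\psi}(g)B$ directly as Gaussian integrals after decomposing $g\in\U_m(\C)$ into Bruhat/Iwasawa factors and applying the explicit formulas \eqref{chap2representationsp2}--\eqref{chap2representationsp4}; this is correct but strictly more laborious than quoting Example \ref{exm}, so I would not pursue it.
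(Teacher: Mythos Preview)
Your proposal is correct and is essentially the same argument as the paper's: both transfer Example \ref{exm} from the Fock model to the Schr\"odinger model via the intertwiners $\theta_{F_{z_0},X^{\ast}}$ and $\theta_{X^{\ast},F_{z_0}}$, using Proposition \ref{Propequ} and the computed values $\theta_{X^{\ast},F_{z_0}}(2^{-m/4}f)=A$, $\theta_{X^{\ast},F_{z_0}}(2^{-1-m/4}F)=B$. The only cosmetic difference is that the paper starts from $A=\theta_{X^{\ast},F_{z_0}}(2^{-m/4}f)$ and pushes $\pi_{X^{\ast},\psi}(g)$ through $\theta_{X^{\ast},F_{z_0}}$, whereas you start from $\pi_{X^{\ast},\psi}(g)A$ and push it through $\theta_{F_{z_0},X^{\ast}}$ before coming back; these are the same computation read in opposite directions.
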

\begin{proof}
Let $F(v)=\begin{pmatrix} f_1(v)\\ \vdots\\f_m(v)\end{pmatrix}$. By the above 1) and 2), $\theta_{  X^{\ast},F_{z_0}}(2^{-m/4} f)=A$ and $\theta_{  X^{\ast},F_{z_0}}(2^{-1}\cdot 2^{-m/4} F)=B$. Hence: 
\begin{align*}
\pi_{X^{\ast},\psi }(g) A&=\pi_{X^{\ast},\psi }(g)\theta_{  X^{\ast},F_{z_0}}(2^{-m/4} f)\\
&=\theta_{  X^{\ast},F_{z_0}}\Big(2^{-m/4}\pi_{F_{z_0},\psi}(g)f\Big)\\
&\stackrel{Ex. \ref{exm}}{=}\theta_{  X^{\ast},F_{z_0}}(2^{-m/4} f)=A;\\
& \\
\pi_{X^{\ast},\psi }(g) B&=\pi_{X^{\ast},\psi }(g)\theta_{  X^{\ast},F_{z_0}}(2^{-1}\cdot 2^{-m/4} F)\\
&=\theta_{  X^{\ast},F_{z_0}}\Big(2^{-1}\cdot 2^{-m/4}\pi_{F_{z_0},\psi}(g)F\Big)\\
&\stackrel{Ex. \ref{exm}}{=}\theta_{  X^{\ast},F_{z_0}}\Big(2^{-1}\cdot 2^{-m/4}(-bi+a)^TF\Big)\\
&=(-bi+a)^TB.
\end{align*}
\end{proof}
\section{Siegel modular forms associated to Weil representations}
\subsection{ Iwasawa decomposition}\label{Iwa}
 Let us define: $$P^{>0}_{X^{\ast}}(\R)=\{ p=\begin{pmatrix} a & b\\ 0& (a^T)^{-1}  \end{pmatrix} \in P_{X^{\ast}}(\R)\mid a \textrm{ is a positive definite symmetric matrix}\}.$$
Then there exists the positive Iwasawa decomposition:
$$\Sp_{2m}(\R)=P^{>0}_{X^{\ast}}(\R) \cdot \U_m(\C), \quad \quad P^{>0}_{X^{\ast}}(\R)  \cap \U_m(\C)=1_{2m} .$$ 
For   $g=\begin{pmatrix}a & b\\ c& d\end{pmatrix}\in \Sp_{2m}(\R)$,  we  write $z_g=g(i1_m)=x_g+iy_g\in \mathbb{H}_m$. Then there exists a positive definite symmetric matrix  $\sqrt{y_g}$ such that $(\sqrt{y_g})^2=y_g$. Let:
\begin{itemize}
\item $p_g=\begin{pmatrix} \sqrt{y_g} & x_g(\sqrt{y_g})^{-1}\\ 0& (\sqrt{y_g})^{-1} \end{pmatrix} \in P^{>0}_{X^{\ast}}(\R)$;
\item $k_g=p_g^{-1}g=\begin{pmatrix} \sqrt{y_g}^{-1}  & -(\sqrt{y_g})^{-1}x_g\\ 0& \sqrt{y_g} \end{pmatrix} \cdot \begin{pmatrix}a & b\\ c& d\end{pmatrix}  = \begin{pmatrix}\sqrt{y_g}^{-1}a -(\sqrt{y_g})^{-1}x_g c & \sqrt{y_g}^{-1}b-(\sqrt{y_g})^{-1}x_g d\\ \sqrt{y_g} c &  \sqrt{y_g}d\end{pmatrix} \in  \U_m(\C)$.
\end{itemize}  
\begin{lemma}
$(ci+d)(-ci+d)^Ty_g^T=1_m$.
\end{lemma}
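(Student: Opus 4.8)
The plan is to apply the classical transformation law for the imaginary part of $g\langle z\rangle$ at the single point $z=i1_m$, using only the symplectic relations among the blocks $a,b,c,d$ together with the symmetry of $y_g$. Write $u=ci+d$, so that by definition $z_g=(ai+b)u^{-1}$; since $z_g=x_g+iy_g$ lies in $\mathbb{H}_m$, both $z_g$ and $y_g$ are symmetric, and as $a,b,c,d,x_g,y_g$ are all real, complex conjugation merely flips the sign of the $i$, e.g. $\overline{u}=-ci+d$.

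First I would form the two Hermitian-type pairings $\overline{u}^T z_g u$ and $\overline{u}^T\overline{z_g}\,u$. From $z_g u=ai+b$ one gets $\overline{u}^T z_g u=(-ci+d)^T(ai+b)$, while from $\overline{z_g}\,\overline{u}=-ai+b$ together with the symmetry of $\overline{z_g}$ one gets $\overline{u}^T\overline{z_g}\,u=\bigl(u^T\overline{z_g}\,\overline{u}\bigr)^T=\bigl((ci+d)^T(-ai+b)\bigr)^T=(-ai+b)^T(ci+d)$. Subtracting, and using $z_g-\overline{z_g}=2iy_g$, yields
\begin{equation*}
2i\,(-ci+d)^T\,y_g\,(ci+d)=(-ci+d)^T(ai+b)-(-ai+b)^T(ci+d).
\end{equation*}

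The remaining step is to expand the right-hand side and collapse it with the symplectic identities coming from $g^T J g=J$, namely $a^Tc=c^Ta$, $b^Td=d^Tb$, $a^Td-c^Tb=1_m$ and $d^Ta-b^Tc=1_m$. The real part of the bracket is $(c^Ta-a^Tc)+(d^Tb-b^Td)=0$, while the coefficient of $i$ is $(d^Ta-b^Tc)+(a^Td-c^Tb)=2\cdot 1_m$, so the right-hand side equals $2i\cdot 1_m$. Hence $(-ci+d)^T y_g (ci+d)=1_m$; taking inverses gives $y_g^{-1}=(ci+d)(-ci+d)^T$, and since $y_g^T=y_g$ this is exactly $(ci+d)(-ci+d)^T y_g^T=1_m$.

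There is no genuine obstacle here; the only care needed is in the bookkeeping of transposes versus conjugates and in picking the correct form of the symplectic relations (those from $g^T J g=J$ rather than $g J g^T=J$). Alternatively one may simply invoke the standard identity $\Im\bigl(g\langle z\rangle\bigr)=\overline{(cz+d)}^{-T}(\Im z)(cz+d)^{-1}$ (see e.g. \cite{Na}) at $z=i1_m$, which gives $y_g=(-ci+d)^{-T}(ci+d)^{-1}$ immediately, and then invert.
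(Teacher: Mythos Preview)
Your proof is correct and follows essentially the same idea as the paper's: both compute a bilinear expression pairing $ci+d$ with $ai+b$ (or their conjugates) in two ways and collapse it using the symplectic relations to extract $y_g$. The only cosmetic difference is that the paper works with $(ci+d)\overline{(ai+b)}^T$ and the relations from $gJg^T=J$ to get the claimed product directly, whereas you sandwich $z_g$ as $\overline{u}^T z_g u$, use the relations from $g^TJg=J$ to obtain $(-ci+d)^T y_g(ci+d)=1_m$, and then invert; neither route introduces a genuinely different idea.
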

\begin{proof}
$g(i1_m)=(ai+b)(ci+d)^{-1}=x_g+iy_g$. Then $(ai+b)=(x_g+iy_g)(ci+d)$.
\[(ci+d)(-ci+d)^Ty_g^T=([cd^T-dc^T]i +[dd^T+cc^T])y_g^T=[dd^T+cc^T]y_g^T;\]
 \[(ci+d) \overline{(ai+b)}^T=(ci+d)(-ci+d)^T (x_g-iy_g)^T=[-(ci+d)(-ci+d)^Ty_g^T]i+[(ci+d)(-ci+d)^T x_g];\]
 \[(ci+d) \overline{(ai+b)}^T=(ci+d)(-a^Ti+b^T)=(cb^T-da^T)i+ (db^T+ca^T)=-1_mi +(db^T+ca^T).\]
\end{proof}
\begin{corollary}\label{identi}
$\det (ci+d) \det (-ci+d) \det y_g=1$ and $|\det (ci+d)| |\det (-ci+d)| \det y_g=1$.
\end{corollary}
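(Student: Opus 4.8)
The plan is to simply take determinants in the matrix identity supplied by the preceding lemma, namely $(ci+d)(-ci+d)^{T}y_g^{T}=1_m$. Applying $\det$ to both sides and using multiplicativity of the determinant together with $\det(A^{T})=\det(A)$, I would immediately obtain
\[
\det(ci+d)\,\det(-ci+d)\,\det y_g=\det(1_m)=1,
\]
which is the first assertion.

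For the second assertion I would pass to absolute values. First recall that by definition $z_g=g(i1_m)=x_g+iy_g\in\mathbb{H}_m$, so $y_g$ is a positive definite symmetric real matrix; in particular $\det y_g>0$ and hence $\det y_g=|\det y_g|$. Next, since $c$ and $d$ have real entries, complex conjugation gives $\overline{ci+d}=-ci+d$, so $\det(-ci+d)=\overline{\det(ci+d)}$ and therefore
\[
\det(ci+d)\,\det(-ci+d)=|\det(ci+d)|^{2}\ge 0 .
\]
Taking the modulus of the first identity then yields $|\det(ci+d)|\,|\det(-ci+d)|\,\det y_g=1$, as claimed (and this can equivalently be written $|\det(ci+d)|^{2}\det y_g=1$).

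I do not anticipate any real obstacle here: the statement is a direct determinantal consequence of the lemma, the only points requiring a word of justification being the positivity of $\det y_g$ (which follows from $z_g\in\mathbb{H}_m$) and the reality of the entries of $c,d$ (so that $-ci+d$ is the complex conjugate of $ci+d$). If one wishes, the positivity of $\det y_g$ can instead be read off directly from the Iwasawa data, since $y_g=(\sqrt{y_g})^{2}$ with $\sqrt{y_g}$ symmetric positive definite.
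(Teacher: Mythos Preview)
Your proposal is correct and is exactly the intended argument: the corollary is stated in the paper without proof because it follows immediately by taking determinants in the preceding lemma $(ci+d)(-ci+d)^{T}y_g^{T}=1_m$, and your justification of the second identity via the positivity of $\det y_g$ and the conjugacy $\overline{ci+d}=-ci+d$ is the natural one.
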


\subsection{Automorphic factor I}\label{autoI}
Let  $z\in \mathbb{H}_m$, and $g=\begin{pmatrix} a & b\\ c& d\end{pmatrix}\in \Sp_{2m}(\R)$. Let us define a cocycle $\beta_z'$ such that  $\beta_z'(g_1,g_2)= \beta_z(g^{h_{-1}}_1, g^{h_{-1}}_2)$, for $g_1, g_2\in \Sp_{2m}(\R)$.   Let $\Mp^{z}_{2m}(\R)$ denote  the Metaplectic group associated  to the   $2$-cocycle $\beta'_z$ of center $T$.   Following \cite{Ta1}, define:
\begin{itemize}
\item[(1)] $J(g, z)= cz+d$;
\item[(2)] $J_{1/2}(g, z)=   \epsilon(g; z,z_0) \cdot |\det J(g, z)|^{1/2}$;
\item[(3)] $J_{3/2}(g, z)=  J_{1/2}(g, z)J(g, z)$;
\end{itemize}
\begin{itemize}
\item[(a)] $\alpha_{z}(g)=\frac{\det J(g, z)}{|\det J(g, z)|}$;
\item[(b)] $\widetilde{\Sp}^{z}_{2m}(\R)=\{ (g, t)\in \Mp^{z}_{2m}(\R) \mid  t^2\alpha_{z}(g)=1\}$.
\end{itemize}
\begin{lemma}\label{aleql}
$[\beta_z(g_1^{h_{-1}},g_2^{h_{-1}})]^2= \alpha_{ z}(g_1)\alpha_z(g_2)\alpha_{ z}(g_1g_2)^{-1}$, for $g_1,g_2\in \Sp_{2m}(\R)$.
\end{lemma}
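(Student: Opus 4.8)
The plan is to unwind the definitions and reduce the identity to the elementary transformation rules for the $\Sp_{2m}(\R)$-action on $\mathbb{H}_m$, working throughout with squares so that the branch of ${\det}^{-1/2}$ plays no role. Since $(g^{h_{-1}})^{h_{-1}}=g$, the definitions of $\beta_z$ and $\epsilon$ give
\[
\beta_z(g_1^{h_{-1}},g_2^{h_{-1}})=\epsilon\big(g_1;z,g_2(z)\big)=\frac{\gamma\big(g_1(z),g_1g_2(z)\big)}{\gamma\big(z,g_2(z)\big)},
\]
so it is enough to compute the square of the right-hand side. From $\gamma(z',z)={\det}^{-1/2}\!\big(\tfrac{z'-\overline z}{2i}\big)\det(\Im z')^{1/4}\det(\Im z)^{1/4}$ together with $\big({\det}^{-1/2}(S)\big)^2=\det(S)^{-1}$ for $S\in\mathcal X$ — and $\tfrac{z'-\overline z}{2i}\in\mathcal X$ since its real part is $\tfrac12(\Im z'+\Im z)>0$ — one obtains $\gamma(z',z)^2=\det(\Im z')^{1/2}\det(\Im z)^{1/2}\big/\det\!\big(\tfrac{z'-\overline z}{2i}\big)$.

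The two inputs to feed in, for $g=\begin{pmatrix}a&b\\c&d\end{pmatrix}\in\Sp_{2m}(\R)$ and $J(g,w)=cw+d$, are: the cocycle relation $J(g_1g_2,w)=J\big(g_1,g_2(w)\big)J(g_2,w)$; the imaginary-part rule $\det\Im g(w)=\det\Im w\big/|\det J(g,w)|^2$; and the difference rule $g(w_1)-g(w_2)=\big(J(g,w_2)^{-1}\big)^{T}(w_1-w_2)J(g,w_1)^{-1}$, which gives $\det\big(g(w_1)-g(w_2)\big)=\det(w_1-w_2)\big/\big(\det J(g,w_1)\det J(g,w_2)\big)$. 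As $g$ is real, $\overline{g(w)}=g(\overline w)$ and $\overline{J(g,w)}=J(g,\overline w)$; in particular $\overline{g_1g_2(z)}=g_1\big(\overline{g_2(z)}\big)$ and $\det J\big(g_1,\overline{g_2(z)}\big)=\overline{\det J\big(g_1,g_2(z)\big)}$. Plugging the imaginary-part rule into $\gamma\big(g_1(z),g_1g_2(z)\big)^2\big/\gamma\big(z,g_2(z)\big)^2$ makes all the $\det(\Im-)^{1/2}$ factors collapse to $1/\big(|\det J(g_1,z)|\,|\det J(g_1,g_2(z))|\big)$, while the difference rule applied with $g=g_1$, $w_1=z$, $w_2=\overline{g_2(z)}$ makes the factors $\det\!\big(\tfrac{z-\overline{g_2(z)}}{2i}\big)$ and $\det\!\big(\tfrac{g_1(z)-\overline{g_1g_2(z)}}{2i}\big)$ combine into $\det J\big(g_1,\overline{g_2(z)}\big)\det J(g_1,z)=\overline{\det J\big(g_1,g_2(z)\big)}\,\det J(g_1,z)$. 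Therefore
\[
\beta_z(g_1^{h_{-1}},g_2^{h_{-1}})^2=\frac{\det J(g_1,z)}{|\det J(g_1,z)|}\cdot\frac{\overline{\det J\big(g_1,g_2(z)\big)}}{\big|\det J\big(g_1,g_2(z)\big)\big|}=\alpha_z(g_1)\,\alpha_{g_2(z)}(g_1)^{-1},
\]
the last step because $\alpha_{g_2(z)}(g_1)$ is unimodular, so its conjugate is its inverse.

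Finally, taking determinants and absolute values in the $J$-cocycle relation yields $\alpha_z(g_1g_2)=\alpha_{g_2(z)}(g_1)\,\alpha_z(g_2)$, i.e. $\alpha_{g_2(z)}(g_1)^{-1}=\alpha_z(g_2)\,\alpha_z(g_1g_2)^{-1}$; substituting this into the preceding display gives $\beta_z(g_1^{h_{-1}},g_2^{h_{-1}})^2=\alpha_z(g_1)\alpha_z(g_2)\alpha_z(g_1g_2)^{-1}$, as claimed. I expect the only delicate point to be purely notational bookkeeping: insisting on comparing $\gamma^2$ rather than $\gamma$ so that the branch of ${\det}^{-1/2}$ drops out, and tracking the interplay of complex conjugation with the \emph{real} matrices $g_i$. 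This is not a genuine obstacle; alternatively the statement can be extracted from the automorphy-factor computations of Satake and Takase in \cite{Sa1,Ta1}.
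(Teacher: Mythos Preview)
Your argument is correct. The paper itself gives no proof of this lemma beyond the reference ``See \cite[p.129]{Ta1}'', so your self-contained direct computation is a genuine addition rather than an alternative route: you unpack exactly the content that the cited page of Takase establishes, namely that once one squares $\gamma$ the branch ambiguity of ${\det}^{-1/2}$ disappears and the identity reduces to the standard transformation rules $\det\Im g(w)=\det\Im w/|\det J(g,w)|^{2}$, $g(w_1)-g(w_2)=(J(g,w_2)^{-1})^{T}(w_1-w_2)J(g,w_1)^{-1}$, and the cocycle $J(g_1g_2,w)=J(g_1,g_2(w))J(g_2,w)$, together with the reality of $g_i$ to handle the complex conjugation. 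The only cosmetic point is that the ``difference rule'' you invoke deserves a one-line verification (it follows from $ad^{T}-bc^{T}=1_m$ for $g\in\Sp_{2m}(\R)$), but this is routine and your proof stands.
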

\begin{proof}
See \cite[p.129]{Ta1}. 
\end{proof} 
\begin{lemma}
If $z=z_0$, then:
\begin{itemize}
\item[(1)] $\alpha_{z_0}(g^{h_{-1}}) \alpha_{z_0}(g) =1$.
\item[(2)] $[\beta_{z_0}(g_1,g_2)]^{-2}= \alpha_{ z_0}(g_1)\alpha_{z_0}(g_2)\alpha_{ z_0}(g_1g_2)^{-1}=[\beta_{z_0}(g_1^{h_{-1}},g_2^{h_{-1}})]^2$, for $g_1,g_2\in \Sp_{2m}(\R)$.
\end{itemize}
\end{lemma}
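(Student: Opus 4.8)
The statement to prove is the last displayed lemma: for $z = z_0 = i1_m$ and $g_1, g_2 \in \Sp_{2m}(\R)$,
\begin{itemize}
\item[(1)] $\alpha_{z_0}(g^{h_{-1}})\,\alpha_{z_0}(g) = 1$;
\item[(2)] $[\beta_{z_0}(g_1,g_2)]^{-2} = \alpha_{z_0}(g_1)\alpha_{z_0}(g_2)\alpha_{z_0}(g_1g_2)^{-1} = [\beta_{z_0}(g_1^{h_{-1}},g_2^{h_{-1}})]^2.$
\end{itemize}
Both parts should follow by combining results already in hand with a short explicit computation of $\alpha_{z_0}$.

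\textbf{Part (1).} By definition $\alpha_{z_0}(g) = \det J(g,z_0)/|\det J(g,z_0)|$, and for $g = \begin{pmatrix} a & b \\ c & d\end{pmatrix}$ we have $J(g,z_0) = ci + d$, while $g^{h_{-1}} = \begin{pmatrix} a & -b \\ -c & d\end{pmatrix}$ gives $J(g^{h_{-1}}, z_0) = -ci + d$. So $\alpha_{z_0}(g^{h_{-1}})\alpha_{z_0}(g)$ is the ratio $\frac{\det(ci+d)\det(-ci+d)}{|\det(ci+d)|\,|\det(-ci+d)|}$. By Corollary \ref{identi}, $\det(ci+d)\det(-ci+d)\det y_g = 1$ and $|\det(ci+d)|\,|\det(-ci+d)|\det y_g = 1$; since $y_g > 0$, $\det y_g$ is a positive real, so dividing the first identity by the second gives exactly $\alpha_{z_0}(g^{h_{-1}})\alpha_{z_0}(g) = 1$. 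This is the cleanest route and I do not expect obstacles here — it is essentially a one-line deduction from Corollary \ref{identi}.

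\textbf{Part (2).} The middle equality $[\beta_{z_0}(g_1^{h_{-1}},g_2^{h_{-1}})]^2 = \alpha_{z_0}(g_1)\alpha_{z_0}(g_2)\alpha_{z_0}(g_1g_2)^{-1}$ is the special case $z = z_0$ of Lemma \ref{aleql}. It remains to identify $[\beta_{z_0}(g_1,g_2)]^{-2}$ with the same expression. For this I would invoke Lemma \ref{h1h1h1h1eq} together with Lemma \ref{twosame}(1): by Lemma \ref{twosame}(1), $\beta_{z_0}(g_1,g_2)^{-1} = \widetilde{c}_{X^{\ast}}(g_1,g_2)$ and likewise $\beta_{z_0}(g_1^{h_{-1}},g_2^{h_{-1}})^{-1} = \widetilde{c}_{X^{\ast}}(g_1^{h_{-1}},g_2^{h_{-1}})$; then Lemma \ref{h1h1h1h1eq} says $\widetilde{c}_{X^{\ast}}(g_1^{h_{-1}},g_2^{h_{-1}})\widetilde{c}_{X^{\ast}}(g_1,g_2) = 1$, i.e. $\beta_{z_0}(g_1^{h_{-1}},g_2^{h_{-1}}) = \beta_{z_0}(g_1,g_2)$ — equivalently this is exactly Lemma \ref{twosame}(2). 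Squaring the inverse gives $[\beta_{z_0}(g_1,g_2)]^{-2} = [\beta_{z_0}(g_1^{h_{-1}},g_2^{h_{-1}})]^2$, and combining with the instance of Lemma \ref{aleql} yields the full chain of equalities.

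\textbf{Main obstacle.} There is essentially no serious obstacle: everything needed (Corollary \ref{identi}, Lemma \ref{aleql}, Lemma \ref{h1h1h1h1eq}, Lemma \ref{twosame}) is already established. The only point requiring a moment's care is the bookkeeping in Part (2) — making sure the direction of the inverses and the $h_{-1}$-twist match up, so that Lemma \ref{twosame}(2) (or equivalently the pair Lemma \ref{twosame}(1) + Lemma \ref{h1h1h1h1eq}) is being applied correctly rather than in a way that introduces a spurious sign or conjugate. I would write Part (2) by first recording $\beta_{z_0}(g_1^{h_{-1}},g_2^{h_{-1}}) = \beta_{z_0}(g_1,g_2)$ as a standalone observation, then squaring and substituting into Lemma \ref{aleql}.
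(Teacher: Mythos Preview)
Your approach is correct and essentially identical to the paper's: Part (1) via Corollary \ref{identi}, Part (2) by combining Lemma \ref{aleql} at $z=z_0$ with Lemma \ref{twosame}. The one slip is in your intermediate step: from $\widetilde{c}_{X^{\ast}}(g_1^{h_{-1}},g_2^{h_{-1}})\,\widetilde{c}_{X^{\ast}}(g_1,g_2)=1$ and $\widetilde{c}_{X^{\ast}}=\beta_{z_0}^{-1}$ you get $\beta_{z_0}(g_1^{h_{-1}},g_2^{h_{-1}}) = \beta_{z_0}(g_1,g_2)^{-1}$, not equality --- this is exactly the inverse bookkeeping you yourself flagged, and indeed the paper records it as $\beta_{z_0}(g_1,g_2)^{-1}=\beta_{z_0}(g_1^{h_{-1}},g_2^{h_{-1}})$. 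Your final displayed conclusion $[\beta_{z_0}(g_1,g_2)]^{-2} = [\beta_{z_0}(g_1^{h_{-1}},g_2^{h_{-1}})]^2$ is nonetheless correct and follows immediately from that inverse relation.
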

\begin{proof}
1) For $g=\begin{pmatrix} a& b\\ c& d\end{pmatrix}\in \Sp_{2m}(\R)$,  we have $\alpha_{z_0}(g^{h_{-1}})=\frac{\det (-ci+d)}{|\det (-ci+d)|}$. By Coro.\ref{identi},  $\alpha_{z_0}(g^{h_{-1}})^{-1}=\frac{\det (ci+d)}{|\det (ci+d)|}=\alpha_{z_0}(g)$. \\
2) By Lemma \ref{twosame}, $\beta_{z_0}(g_1,g_2)^{-1}=\beta_{z_0}(g_1^{h_{-1}},g_2^{h_{-1}})$. Then the result follows directly from the above lemma \ref{aleql}.
\end{proof}
\begin{lemma}\label{J121}
  $J_{1/2}(g, z)^2= \alpha_{z_0}(g)^{-1} \det J(g, z)$ and  $J_{1/2}(g_1g_2, z)=J_{1/2}(g_1, g_2(z))J_{1/2}(g_2, z)\beta_{z_0}(g_1^{h_{-1}},g_2^{h_{-1}})$, for $g, g_i\in \Sp_{2m}(\R)$.
\end{lemma}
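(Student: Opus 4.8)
The plan is to prove the two assertions separately: the first by a direct computation with the explicit shape of $\gamma$, the second by a squaring‑plus‑connectedness argument.

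For $J_{1/2}(g,z)^2=\alpha_{z_0}(g)^{-1}\det J(g,z)$ I would start from $J_{1/2}(g,z)^2=\epsilon(g;z,z_0)^2\,|\det J(g,z)|$ and compute $\epsilon(g;z,z_0)^2$. Since we have squared, the branch $\det^{-1/2}$ plays no role and only $\gamma(z',z)^2=\det(\Im z')^{1/2}\det(\Im z)^{1/2}\big/\det\!\big(\tfrac{z'-\overline z}{2i}\big)$ enters. The two inputs are the classical transformation rules for $g=\begin{pmatrix}a&b\\c&d\end{pmatrix}\in\Sp_{2m}(\R)$: first $\det\Im(g(z))=|\det J(g,z)|^{-2}\det\Im(z)$, so $\det\Im(g(z))^{1/2}=|\det J(g,z)|^{-1}\det\Im(z)^{1/2}$; and second, using $\overline{g(z)}=g(\overline z)$ and $c\overline z+d=\overline{J(g,z)}$, the identity $\det\!\big(\tfrac{g(z')-\overline{g(z)}}{2i}\big)=\det J(g,z')^{-1}\,\det\!\big(\tfrac{z'-\overline z}{2i}\big)\,\overline{\det J(g,z)}^{\,-1}$. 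Substituting these into $\epsilon(g;z,z_0)^2=\gamma(g(z),g(z_0))^2/\gamma(z,z_0)^2$ makes the $\det\Im$ and $\det\!\big(\tfrac{\cdot-\overline\cdot}{2i}\big)$ factors cancel, leaving $\epsilon(g;z,z_0)^2=\dfrac{\det J(g,z)}{|\det J(g,z)|}\cdot\dfrac{\overline{\det J(g,z_0)}}{|\det J(g,z_0)|}=\alpha_z(g)\,\overline{\alpha_{z_0}(g)}=\alpha_z(g)\,\alpha_{z_0}(g)^{-1}$, the last step because $|\alpha_{z_0}(g)|=1$. Multiplying by $|\det J(g,z)|=\alpha_z(g)^{-1}\det J(g,z)$ gives the claim. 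Running the same computation with $z_0$ replaced by an arbitrary $z''\in\mathbb{H}_m$ yields $\epsilon(g;z',z'')^2=\alpha_{z'}(g)\,\alpha_{z''}(g)^{-1}$, which I will use below.

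For the cocycle relation for $J_{1/2}$, note first the strict identity $J(g_1g_2,z)=J(g_1,g_2(z))J(g_2,z)$ (a one‑line matrix computation), so the moduli $|\det J|^{1/2}$ multiply without error. Using $(g^{h_{-1}})^{h_{-1}}=g$ one has $\beta_{z_0}(g_1^{h_{-1}},g_2^{h_{-1}})=\epsilon(g_1;z_0,g_2(z_0))$, so the assertion reduces to
\[
\epsilon(g_1g_2;z,z_0)=\epsilon(g_1;g_2(z),z_0)\,\epsilon(g_2;z,z_0)\,\epsilon(g_1;z_0,g_2(z_0)).
\]
Squaring both sides, applying $\epsilon(g;z',z'')^2=\alpha_{z'}(g)\alpha_{z''}(g)^{-1}$ to each factor, and regrouping with the cocycle relation $\alpha_w(g_1g_2)=\alpha_{g_2(w)}(g_1)\,\alpha_w(g_2)$ (again just $J(g_1g_2,\cdot)=J(g_1,g_2(\cdot))J(g_2,\cdot)$ modulo absolute value) shows that both sides square to $\alpha_z(g_1g_2)\,\alpha_{z_0}(g_1g_2)^{-1}$; hence they agree up to a sign $\pm1$. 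Since $\gamma$, and therefore $\epsilon$, is continuous and nowhere zero on the relevant domain — for $z',z\in\mathbb{H}_m$ the matrix $\tfrac{z'-\overline z}{2i}$ has positive‑definite real part, hence lies in the set $\mathcal{X}$ on which $\det^{-1/2}$ is continuous and nonzero — the quotient of the two sides is a continuous $\{\pm1\}$‑valued function of $(g_1,g_2)\in\Sp_{2m}(\R)\times\Sp_{2m}(\R)$ for fixed $z$, hence constant by connectedness of $\Sp_{2m}(\R)$; it equals $1$ at $g_1=g_2=1_{2m}$ because $\epsilon(1_{2m};\cdot,\cdot)=1$. Reassembling the moduli and the $\epsilon$‑factor gives $J_{1/2}(g_1g_2,z)=J_{1/2}(g_1,g_2(z))J_{1/2}(g_2,z)\,\beta_{z_0}(g_1^{h_{-1}},g_2^{h_{-1}})$.

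The only delicate point is exactly this sign/branch bookkeeping: everything in sight is visibly correct after squaring, and the real content is passing from ``equal up to $\pm1$'' to ``equal''. The connectedness argument is the cleanest way to do this; alternatively, the multiplicativity above is precisely the statement that $\beta_{z_0}'(g_1,g_2)=\beta_{z_0}(g_1^{h_{-1}},g_2^{h_{-1}})$ measures the failure of $J_{1/2}$ to be a genuine automorphy factor, which is the content of \cite[Theorem 4.2]{Ta1} and can be quoted there if one prefers to bypass the topological step.
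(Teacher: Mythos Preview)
Your argument is correct. The paper itself gives no proof at all---it simply cites \cite[pp.131--132]{Ta1}---so what you have written is a self-contained unpacking of that reference. Your computation of $\epsilon(g;z',z'')^2=\alpha_{z'}(g)\alpha_{z''}(g)^{-1}$ is exactly the key formula, and it recovers the paper's preceding Lemma~\ref{aleql} as the special case $z'=z_0$, $z''=g_2(z_0)$ (after one use of the cocycle relation for $\alpha$). The only substantive point beyond pure algebra is the passage from ``equal up to $\pm1$'' to ``equal'', and your connectedness argument handles this cleanly; this is also how Takase proceeds, so your approach and the cited one are essentially the same. The closing remark that one may alternatively invoke \cite[Theorem 4.2]{Ta1} directly is apt and matches the paper's own strategy.
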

\begin{proof}
It follows from \cite[pp.131-132]{Ta1}.
\end{proof}
\begin{remark}
$J_{1/2}(g, z_0)=|\det J(g, z_0)|^{1/2}=1$, for $g\in \U_m(\C)$.
\end{remark}
 Recall the notations  $\overline{\Sp}_{2m}(\R)$, $m_{X^{\ast}}$, and   $\overline{c}_{X^{\ast}}$  from Section \ref{schod}.
\begin{lemma}
$ \alpha_{z_0}(g)= [m_{X^{\ast}}(g)]^{-2}$, for  $g\in \Sp_{2m}(\R)$.
\end{lemma}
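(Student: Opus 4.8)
The plan is to obtain this pointwise identity from the vanishing of a single group homomorphism, avoiding any direct computation of $\det(ci+d)$. First I would introduce
\[
\phi\colon \Sp_{2m}(\R)\longrightarrow T,\qquad \phi(g)=\alpha_{z_0}(g)\,m_{X^{\ast}}(g)^{2},
\]
which does take values in $T$ because $\alpha_{z_0}(g)\in T$ and $m_{X^{\ast}}(g)\in\mu_8$, and whose triviality is exactly the asserted equality $\alpha_{z_0}(g)=m_{X^{\ast}}(g)^{-2}$.

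The key step is to check that $\phi$ is multiplicative. On one hand, Lemma~\ref{aleql} with $z=z_0$, combined with Lemma~\ref{twosame} (which gives $\beta_{z_0}(g_1^{h_{-1}},g_2^{h_{-1}})=\beta_{z_0}(g_1,g_2)^{-1}=\widetilde{c}_{X^{\ast}}(g_1,g_2)$), yields
\[
\widetilde{c}_{X^{\ast}}(g_1,g_2)^{2}=\alpha_{z_0}(g_1)\,\alpha_{z_0}(g_2)\,\alpha_{z_0}(g_1g_2)^{-1}.
\]
On the other hand, $\overline{c}_{X^{\ast}}$ has values in $\mu_2$, so squaring the relation (\ref{chap228inter}) and using $\overline{c}_{X^{\ast}}(g_1,g_2)^{2}=1$ gives
\[
\widetilde{c}_{X^{\ast}}(g_1,g_2)^{2}=m_{X^{\ast}}(g_1g_2)^{2}\,m_{X^{\ast}}(g_1)^{-2}\,m_{X^{\ast}}(g_2)^{-2}.
\]
Comparing the two right-hand sides gives $\phi(g_1)\phi(g_2)=\phi(g_1g_2)$. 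Since $T$ is abelian while $\Sp_{2m}(\R)$ is perfect (it is connected with simple Lie algebra, hence equals its own commutator subgroup), the homomorphism $\phi$ must be trivial, and the lemma follows.

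As a concrete alternative to invoking perfectness, once $\phi$ is known to be a homomorphism one may instead evaluate it on the generating set $\{u(b),\,h(a)\ (a\in\GL_m(\R)),\,\omega\}$ of $\Sp_{2m}(\R)$. For $u(b)$ one has $c=0,\,d=1_m$, so $\alpha_{z_0}(u(b))=1$ and, taking the decomposition $u(b)=u(b)\cdot\omega_{\emptyset}\cdot 1_{2m}$, also $m_{X^{\ast}}(u(b))=1$. For $h(a)$ one has $\alpha_{z_0}(h(a))=\sgn(\det a)$, while $m_{X^{\ast}}(h(a))^{2}=\gamma(\det a,\psi^{1/2})^{-2}=(\det a,-1)_{\R}=\sgn(\det a)$ by the Weil-index computation of Lemma~\ref{chap3equ}. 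For $\omega=\omega_{\{1,\dots,m\}}$ one has $c=1_m,\,d=0$, so $\alpha_{z_0}(\omega)=i^{m}$, while $m_{X^{\ast}}(\omega)=\gamma(\psi^{1/2})^{-m}=e^{-\pi i m/4}$, hence $m_{X^{\ast}}(\omega)^{2}=(-i)^{m}$; in all three cases $\phi=1$.

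The only delicate point in the whole argument is the bookkeeping with the Weil indices $\gamma(\psi^{1/2})$, $\gamma(-1,\psi^{1/2})$ and the real Hilbert symbol $(\cdot,-1)_{\R}$ in the explicit generator check; this is routine given Lemma~\ref{chap3equ} and Corollary~\ref{identi}, and the homomorphism-plus-perfectness route bypasses it entirely. A head-on computation of $\det(ci+d)$ from a Bruhat factorization $g=p_1\omega_Sp_2$ together with the explicit formula (\ref{chap2mx}) for $m_{X^{\ast}}$ would also close the argument, but it is notationally heavier and conceptually less transparent, so I would not take that route.
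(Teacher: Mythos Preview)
Your main argument is correct and coincides with the paper's own proof: combine Lemma~\ref{aleql} and Lemma~\ref{twosame} with the square of (\ref{chap228inter}) to see that $g\mapsto \alpha_{z_0}(g)\,m_{X^{\ast}}(g)^{2}$ is a homomorphism into $T$, then invoke that $\Sp_{2m}(\R)$ is perfect. Your added generator check is a correct optional alternative that the paper does not include.
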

\begin{proof}
If $z=z_0$, then $\widetilde{c}_{X^{\ast}}(g_1,g_2)=\beta_{z_0}(g_1,g_2)^{-1}=\beta_{z_0}(g^{h_{-1}}_1,g^{h_{-1}}_2)$. By (\ref{chap228inter}), we have:
$$m_{X^{\ast}}(g_1g_2)^{-2} m_{X^{\ast}}(g_1)^2 m_{X^{\ast}}(g_2)^2  \alpha_{z_0}(g_1)\alpha_{z_0}(g_2)\alpha_{z_0}(g_1g_2)^{-1}=1.$$
Since $\Sp_{2m}(\R)$ is a perfect group, $ \alpha_{z_0}(g)= [m_{X^{\ast}}(g)]^{-2}$.
\end{proof}
As a  consequence, $\widetilde{\Sp}^{z_0}_{2m}(\R)=\{ (g, t)\mid g\in \Sp_{2m}(\R), t=\pm m_{X^{\ast}}(g)=\pm m_{X^{\ast}}(g^{h_{-1}})^{-1}\}$.
\begin{lemma}\label{imbed}
There exists a group isomorphism $\iota: \overline{\Sp}_{2m}(\R) \to \widetilde{\Sp}^{z_0}_{2m}(\R); [g, t]\longmapsto [g, m_{X^{\ast}}(g)t]$.
\end{lemma}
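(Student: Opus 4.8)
The plan is to realize both groups as central $\mu_2$-extensions of $\Sp_{2m}(\R)$, written set-theoretically as $\Sp_{2m}(\R)\times\mu_2$ with explicit twisted multiplications, and then to deduce the homomorphism property of $\iota$ from the comparison identity (\ref{chap228inter}) between $\widetilde c_{X^\ast}$ and $\overline c_{X^\ast}$. Concretely, $\overline{\Sp}_{2m}(\R)$ carries the law $[g_1,t_1][g_2,t_2]=[g_1g_2,\,t_1t_2\,\overline c_{X^\ast}(g_1,g_2)]$, while $\Mp^{z_0}_{2m}(\R)$ carries the law $[g_1,t_1][g_2,t_2]=[g_1g_2,\,t_1t_2\,\beta'_{z_0}(g_1,g_2)]$, where by the definition in Section~\ref{autoI} one has $\beta'_{z_0}(g_1,g_2)=\beta_{z_0}(g_1^{h_{-1}},g_2^{h_{-1}})$. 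The first step is to identify this cocycle with $\widetilde c_{X^\ast}$: Lemma~\ref{twosame}(2) gives $\beta_{z_0}(g_1^{h_{-1}},g_2^{h_{-1}})=\beta_{z_0}(g_1,g_2)^{-1}$, and Lemma~\ref{twosame}(1) gives $\beta_{z_0}(g_1,g_2)^{-1}=\widetilde c_{X^\ast}(g_1,g_2)$, so $\beta'_{z_0}=\widetilde c_{X^\ast}$ as functions on $\Sp_{2m}(\R)\times\Sp_{2m}(\R)$.

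With this in hand, I would first check that $\iota$ is well defined, i.e.\ that $[g,m_{X^\ast}(g)t]$ satisfies the defining relation of $\widetilde{\Sp}^{z_0}_{2m}(\R)$. Since $t\in\mu_2$ we have $t^2=1$, and by the identity $\alpha_{z_0}(g)=m_{X^\ast}(g)^{-2}$ established just above, $(m_{X^\ast}(g)t)^2\alpha_{z_0}(g)=m_{X^\ast}(g)^2\alpha_{z_0}(g)=1$, as required. For the homomorphism property I would expand both sides directly:
\[
\iota([g_1,t_1])\,\iota([g_2,t_2])=[g_1,m_{X^\ast}(g_1)t_1]\,[g_2,m_{X^\ast}(g_2)t_2]=[g_1g_2,\,m_{X^\ast}(g_1)m_{X^\ast}(g_2)\,t_1t_2\,\widetilde c_{X^\ast}(g_1,g_2)],
\]
\[
\iota\big([g_1,t_1][g_2,t_2]\big)=\iota\big([g_1g_2,\,t_1t_2\,\overline c_{X^\ast}(g_1,g_2)]\big)=[g_1g_2,\,m_{X^\ast}(g_1g_2)\,t_1t_2\,\overline c_{X^\ast}(g_1,g_2)],
\]
and these two coincide precisely by (\ref{chap228inter}), which asserts $m_{X^\ast}(g_1g_2)\,\overline c_{X^\ast}(g_1,g_2)=m_{X^\ast}(g_1)m_{X^\ast}(g_2)\,\widetilde c_{X^\ast}(g_1,g_2)$.

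It then remains to see $\iota$ is bijective. It commutes with the projections of $\overline{\Sp}_{2m}(\R)$ and $\widetilde{\Sp}^{z_0}_{2m}(\R)$ onto $\Sp_{2m}(\R)$, so for each fixed $g$ it maps the fibre $\{[g,t]:t\in\mu_2\}$ onto the fibre $\{[g,\pm m_{X^\ast}(g)]\}$ of $\widetilde{\Sp}^{z_0}_{2m}(\R)$ (using the explicit description of $\widetilde{\Sp}^{z_0}_{2m}(\R)$ recalled in Section~\ref{autoI}) via $t\mapsto m_{X^\ast}(g)t$, which is a bijection; hence $\iota$ is a set bijection, and being a homomorphism it is an isomorphism of groups. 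I do not expect a genuine obstacle here: the only point requiring care is the bookkeeping of the $h_{-1}$-twist and of which of the four cocycles $\beta_{z_0},\ \beta'_{z_0},\ \widetilde c_{X^\ast},\ \overline c_{X^\ast}$ governs each extension, and once the identification $\beta'_{z_0}=\widetilde c_{X^\ast}$ is made the statement is the formal consequence of (\ref{chap228inter}) displayed above.
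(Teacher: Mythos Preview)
Your proof is correct and follows essentially the same approach as the paper: both identify the cocycle governing $\widetilde{\Sp}^{z_0}_{2m}(\R)$ with $\widetilde c_{X^\ast}$ via Lemma~\ref{twosame}, and then reduce the homomorphism property to the comparison identity~(\ref{chap228inter}). In fact you are more thorough than the paper, which only displays the homomorphism calculation and leaves the well-definedness (that the image lands in $\widetilde{\Sp}^{z_0}_{2m}(\R)$) and the bijectivity implicit.
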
 
\begin{proof}
\begin{align*}
&\iota([g_1, t_1])\iota([g_2, t_2])\\
&= [g_1, m_{X^{\ast}}(g_1)t_1] [g_2, m_{X^{\ast}}(g_2)t_2]\\
&=[g_1g_2, \beta_{z_0}(g_1,g_2)^{-1} m_{X^{\ast}}(g_1)m_{X^{\ast}}(g_2)t_1 t_2]\\
&=[g_1g_2, \widetilde{c}_{X^{\ast}}(g_1,g_2) m_{X^{\ast}}(g_1)m_{X^{\ast}}(g_2)t_1 t_2]\\
&=[g_1g_2, \overline{c}_{X^{\ast}}(g_1, g_2) m_{X^{\ast}}(g_1g_2)t_1 t_2]\\
&=\iota([g_1g_2, t_1t_2 \overline{c}_{X^{\ast}}(g_1, g_2)])\\
&=\iota([g_1, t_1]\cdot [g_2, t_2]).
\end{align*}
\end{proof}
\begin{lemma}
$[J_{1/2}(g, z)m_{X^{\ast}}(g)^{-1}]^2=\det(cz+d)$, for $g=\begin{pmatrix} a & b\\ c& d  \end{pmatrix} \in \Sp_{2m}(\R)$.
\end{lemma}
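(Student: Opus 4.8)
The plan is to derive the identity purely formally from the two results immediately preceding it, namely Lemma \ref{J121} and the lemma asserting $\alpha_{z_0}(g)=[m_{X^{\ast}}(g)]^{-2}$. No genuinely new computation is needed: the statement is the conjunction of those two facts after squaring and rearranging, so the ``proof'' is a short chain of substitutions. I would therefore present it as a one-paragraph deduction rather than anything structural.

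\textbf{Key steps.} First I would recall from Lemma \ref{J121} that $J_{1/2}(g,z)^2=\alpha_{z_0}(g)^{-1}\det J(g,z)$, where $J(g,z)=cz+d$. Second, I would invoke the preceding lemma to replace $\alpha_{z_0}(g)^{-1}$ by $[m_{X^{\ast}}(g)]^{2}$. Combining these, one computes
\[
[J_{1/2}(g,z)m_{X^{\ast}}(g)^{-1}]^2
= J_{1/2}(g,z)^2\,m_{X^{\ast}}(g)^{-2}
= \alpha_{z_0}(g)^{-1}\det J(g,z)\,m_{X^{\ast}}(g)^{-2}
= [m_{X^{\ast}}(g)]^{2}\det J(g,z)\,[m_{X^{\ast}}(g)]^{-2}
= \det(cz+d),
\]
which is exactly the claim. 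One may optionally remark that this justifies the notation ``$\sqrt{\det(cz+d)}$'' for the quantity $J_{1/2}(g,z)m_{X^{\ast}}(g)^{-1}$ introduced in item (3) of the list before Theorem \ref{thetagamm12}.

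\textbf{Main obstacle.} There is essentially no obstacle at this point: all the analytic content (the behaviour of $\epsilon(g;z,z_0)$, the relation between $\beta_{z_0}$ and $\widetilde{c}_{X^{\ast}}$, and the perfectness argument identifying $\alpha_{z_0}$ with $m_{X^{\ast}}^{-2}$) has already been absorbed into Lemma \ref{J121} and the lemma just above. The only point deserving a word of care is bookkeeping of which argument $z$ versus the base point $z_0$ appears in $\alpha_{z_0}$ and in $\epsilon(g;z,z_0)$, i.e. making sure that the factor $\alpha_{z_0}(g)$ appearing in Lemma \ref{J121} is the same $\alpha_{z_0}(g)$ controlled by the $m_{X^{\ast}}$-lemma; once that is observed, the cancellation of $[m_{X^{\ast}}(g)]^{\pm2}$ is immediate.
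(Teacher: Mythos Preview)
Your proposal is correct and follows exactly the same approach as the paper: the paper's proof is the one-line substitution $[J_{1/2}(g, z)m_{X^{\ast}}(g)^{-1}]^2=\det J(g, z)\alpha_{z_0}(g)^{-1}m_{X^{\ast}}(g)^{-2}=\det J(g, z)$, using Lemma \ref{J121} and the preceding identity $\alpha_{z_0}(g)=[m_{X^{\ast}}(g)]^{-2}$.
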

\begin{proof}
By Lemma \ref{J121}, $[J_{1/2}(g, z)m_{X^{\ast}}(g)^{-1}]^2=\det J(g, z)\alpha_{z_0}(g)^{-1}m_{X^{\ast}}(g)^{-2}=\det J(g, z).$
\end{proof}

\begin{definition}
  $\sqrt{\det(cz+d)}\stackrel{Def.}{=} J_{1/2}(g, z)m_{X^{\ast}}(g)^{-1}$.  
  \end{definition}
  Let $g_3=g_1g_2$ and $g_i=\begin{pmatrix} a_i& b_i\\ c_i &d_i \end{pmatrix}$. 
    \begin{lemma}
    $\sqrt{\det(c_3z+d_3)}=[\sqrt{\det(c_1(g_2 z)+d_1)}]\cdot [\sqrt{\det(c_2 z+d_2)}]\cdot \overline{c}_{X^{\ast}}(g_1,g_2)$.
  \end{lemma}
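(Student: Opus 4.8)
The plan is to unwind everything through the definition $\sqrt{\det(cz+d)} \stackrel{Def.}{=} J_{1/2}(g,z)\,m_{X^{\ast}}(g)^{-1}$ and then invoke the multiplicativity of $J_{1/2}$ together with the comparison between the two Perrin--Rao cocycles. Concretely, I would start from
$$\sqrt{\det(c_3z+d_3)} = J_{1/2}(g_1g_2, z)\, m_{X^{\ast}}(g_1g_2)^{-1},$$
and apply Lemma \ref{J121}, which gives $J_{1/2}(g_1g_2,z) = J_{1/2}(g_1, g_2(z))\,J_{1/2}(g_2,z)\,\beta_{z_0}(g_1^{h_{-1}}, g_2^{h_{-1}})$. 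This reduces the claim to identifying the correction factor $\beta_{z_0}(g_1^{h_{-1}}, g_2^{h_{-1}})\, m_{X^{\ast}}(g_1g_2)^{-1}$ with $m_{X^{\ast}}(g_1)^{-1} m_{X^{\ast}}(g_2)^{-1}\,\overline{c}_{X^{\ast}}(g_1,g_2)$ after pulling out the two factors $J_{1/2}(g_i,\cdot)\,m_{X^{\ast}}(g_i)^{-1}$.

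The next step is to replace $\beta_{z_0}(g_1^{h_{-1}}, g_2^{h_{-1}})$ by $\widetilde{c}_{X^{\ast}}(g_1,g_2)$. By Lemma \ref{twosame}(1) we have $\beta_{z_0}(g_1,g_2)^{-1} = \widetilde{c}_{X^{\ast}}(g_1,g_2)$, and by Lemma \ref{twosame}(2) (equivalently Lemma \ref{h1h1h1h1eq}) we have $\beta_{z_0}(g_1^{h_{-1}}, g_2^{h_{-1}}) = \beta_{z_0}(g_1,g_2)^{-1}$; combining these yields $\beta_{z_0}(g_1^{h_{-1}}, g_2^{h_{-1}}) = \widetilde{c}_{X^{\ast}}(g_1,g_2)$. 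Then the relation (\ref{chap228inter}), namely $\overline{c}_{X^{\ast}}(g_1,g_2) = m_{X^{\ast}}(g_1g_2)^{-1} m_{X^{\ast}}(g_1) m_{X^{\ast}}(g_2)\,\widetilde{c}_{X^{\ast}}(g_1,g_2)$, rearranges to $\widetilde{c}_{X^{\ast}}(g_1,g_2)\, m_{X^{\ast}}(g_1g_2)^{-1} = m_{X^{\ast}}(g_1)^{-1} m_{X^{\ast}}(g_2)^{-1}\,\overline{c}_{X^{\ast}}(g_1,g_2)$, which is exactly what is needed. Assembling the pieces:
\begin{align*}
\sqrt{\det(c_3z+d_3)}
&= J_{1/2}(g_1, g_2(z))\,J_{1/2}(g_2,z)\,\widetilde{c}_{X^{\ast}}(g_1,g_2)\, m_{X^{\ast}}(g_1g_2)^{-1}\\
&= \big[J_{1/2}(g_1, g_2(z))\,m_{X^{\ast}}(g_1)^{-1}\big]\big[J_{1/2}(g_2,z)\,m_{X^{\ast}}(g_2)^{-1}\big]\,\overline{c}_{X^{\ast}}(g_1,g_2)\\
&= \sqrt{\det(c_1(g_2z)+d_1)}\cdot \sqrt{\det(c_2z+d_2)}\cdot \overline{c}_{X^{\ast}}(g_1,g_2),
\end{align*}
where in the last line I use the definition of $\sqrt{\det(\cdot)}$ applied to $g_1$ at the point $g_2(z)$ and to $g_2$ at $z$.

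This is essentially a formal bookkeeping argument, so I do not anticipate any serious obstacle; the one point that requires care is the chain of identifications $\beta_{z_0}(g_1^{h_{-1}}, g_2^{h_{-1}}) = \beta_{z_0}(g_1,g_2)^{-1} = \widetilde{c}_{X^{\ast}}(g_1,g_2)$, since the $h_{-1}$-twist appears in Lemma \ref{J121} but the cocycle comparison (\ref{chap228inter}) is stated for the untwisted cocycles. Once that is in place the computation is a two-line substitution. I would present the argument simply as the displayed chain of equalities above, citing Lemma \ref{J121}, Lemma \ref{twosame}, and equation (\ref{chap228inter}) at the appropriate steps.
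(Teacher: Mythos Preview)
Your proof is correct and follows essentially the same route as the paper: start from the definition $\sqrt{\det(cz+d)} = J_{1/2}(g,z)\,m_{X^{\ast}}(g)^{-1}$, apply Lemma \ref{J121} for the multiplicativity of $J_{1/2}$, identify $\beta_{z_0}(g_1^{h_{-1}},g_2^{h_{-1}})$ with $\widetilde{c}_{X^{\ast}}(g_1,g_2)$ via Lemma \ref{twosame}, and then convert to $\overline{c}_{X^{\ast}}$ using (\ref{chap228inter}). The only difference is that you spell out the $h_{-1}$-twist identification explicitly, whereas the paper writes it as a single unlabeled equality.
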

  \begin{proof}
 \begin{align*}
 \text{Left side}
 &=J_{1/2}(g_1g_2, z)m_{X^{\ast}}(g_1g_2)^{-1}\\
 &\stackrel{\textrm{Lem.} \ref{J121}}{=}J_{1/2}(g_1, g_2(z))J_{1/2}(g_2, z)\beta_{z_0}(g_1^{h_{-1}},g_2^{h_{-1}})m_{X^{\ast}}(g_1g_2)^{-1}\\
 &=J_{1/2}(g_1, g_2(z))J_{1/2}(g_2, z)\widetilde{c}_{X^{\ast}}(g_1,g_2)m_{X^{\ast}}(g_1g_2)^{-1}\\
 &\stackrel{(\ref{chap228inter})}{=}J_{1/2}(g_1, g_2(z))J_{1/2}(g_2, z) m_{X^{\ast}}(g_1)^{-1} m_{X^{\ast}}(g_2)^{-1}\overline{c}_{X^{\ast}}(g_1, g_2)\\
 &= \text{Right side}. 
 \end{align*}
  \end{proof}
 For later use, let us consider $r\in \Gamma_m(1,2)$ and define
   \begin{align}\label{lambda}
   \lambda(r)= m_{X^{\ast}}(r)\widetilde{\beta}^{-1}(r).
    \end{align}
  \begin{lemma}\label{mutirr12}
  $ \lambda(r_1)\lambda(r_2)=\lambda(r_1r_2)\overline{c}_{X^{\ast}}(r_1,r_2)$, for $r_i\in \Gamma_m(1,2)$.
\end{lemma}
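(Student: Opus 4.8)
The plan is to verify the identity by a direct substitution, chaining together the two cocycle relations already established: equation (\ref{chap228inter}), which expresses $\overline{c}_{X^{\ast}}$ in terms of $\widetilde{c}_{X^{\ast}}$ and $m_{X^{\ast}}$, and Lemma \ref{widetildec}, which trivializes $\widetilde{c}_{X^{\ast}}$ on $\Gamma_m(1,2)$ by means of $\widetilde{\beta}$. In other words, the statement is precisely the assertion that $\lambda=m_{X^{\ast}}\cdot\widetilde{\beta}^{-1}$ is a $1$-cochain on $\Gamma_m(1,2)$ whose coboundary equals the restriction of $\overline{c}_{X^{\ast}}$, and this should fall out formally.

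Concretely, I would start from the right-hand side $\lambda(r_1r_2)\overline{c}_{X^{\ast}}(r_1,r_2)$, unfold the definition $\lambda(r)=m_{X^{\ast}}(r)\widetilde{\beta}^{-1}(r)$, and insert (\ref{chap228inter}):
\begin{align*}
\lambda(r_1r_2)\overline{c}_{X^{\ast}}(r_1,r_2)
&= m_{X^{\ast}}(r_1r_2)\widetilde{\beta}^{-1}(r_1r_2)\cdot m_{X^{\ast}}(r_1r_2)^{-1}m_{X^{\ast}}(r_1)m_{X^{\ast}}(r_2)\widetilde{c}_{X^{\ast}}(r_1,r_2)\\
&= \widetilde{\beta}^{-1}(r_1r_2)\,m_{X^{\ast}}(r_1)m_{X^{\ast}}(r_2)\,\widetilde{c}_{X^{\ast}}(r_1,r_2),
\end{align*}
where the $m_{X^{\ast}}(r_1r_2)$-factors cancel because all quantities lie in the abelian group $T$ (in fact in $\mu_8$). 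Then I would substitute $\widetilde{c}_{X^{\ast}}(r_1,r_2)=\widetilde{\beta}(r_1)^{-1}\widetilde{\beta}(r_2)^{-1}\widetilde{\beta}(r_1r_2)$ from Lemma \ref{widetildec}, cancel the $\widetilde{\beta}(r_1r_2)$-factor against $\widetilde{\beta}^{-1}(r_1r_2)$, and regroup the remaining scalars (again using commutativity in $T$) as
$$
m_{X^{\ast}}(r_1)m_{X^{\ast}}(r_2)\widetilde{\beta}(r_1)^{-1}\widetilde{\beta}(r_2)^{-1}
=\big(m_{X^{\ast}}(r_1)\widetilde{\beta}^{-1}(r_1)\big)\big(m_{X^{\ast}}(r_2)\widetilde{\beta}^{-1}(r_2)\big)=\lambda(r_1)\lambda(r_2),
$$
which is the claimed equality.

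I do not expect a genuine obstacle; the only points deserving a word of care are that $m_{X^{\ast}}$ and $\widetilde{\beta}$ are both defined and well-behaved on all of $\Gamma_m(1,2)$ — ensured by the discussion around Lemma \ref{gamma1SP} together with the explicit evaluations of $\widetilde{\beta}$ in the preceding section — and that throughout one uses the restriction of $\overline{c}_{X^{\ast}}$ to $\Gamma_m(1,2)$ with the normalization fixed in Section \ref{schod}. With these observations the chain of equalities above constitutes the full proof.
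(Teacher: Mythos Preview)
Your proof is correct and follows essentially the same approach as the paper: both arguments combine the relation (\ref{chap228inter}) between $\overline{c}_{X^{\ast}}$ and $\widetilde{c}_{X^{\ast}}$ with the trivialization of $\widetilde{c}_{X^{\ast}}$ on $\Gamma_m(1,2)$ from Lemma \ref{widetildec}. The only cosmetic difference is that the paper starts from $\lambda(r_1)\lambda(r_2)$ and works toward $\lambda(r_1r_2)\overline{c}_{X^{\ast}}(r_1,r_2)$, whereas you run the chain of equalities in the opposite direction.
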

\begin{proof}
By Lemma \ref{widetildec}, $\widetilde{\beta}(r_1)^{-1}\widetilde{\beta}(r_2)^{-1}\widetilde{\beta}(r_1r_2)=\widetilde{c}_{X^{\ast}}(r_1,r_2)$. Consequently,
\begin{align*}
& m_{X^{\ast}}(r_1)\widetilde{\beta}(r_1)^{-1} m_{X^{\ast}}(r_2)\widetilde{\beta}(r_2)^{-1}\\
&= m_{X^{\ast}}(r_1)m_{X^{\ast}}(r_2)\widetilde{\beta}(r_1r_2)^{-1}\widetilde{c}_{X^{\ast}}(r_1,r_2)\\
&= \widetilde{\beta}(r_1r_2)^{-1} m_{X^{\ast}}(r_1r_2)\overline{c}_{X^{\ast}}(r_1, r_2).
\end{align*}
\end{proof}
\subsection{Automorphic factor II}
For $\widetilde{g}=(g, t)\in \widetilde{\Sp}^{z_0}_{2m}(\R)$, with $g=\begin{pmatrix} a & b\\ c& d\end{pmatrix}\in \Sp_{2m}(\R)$, define:
\begin{itemize}
\item[(1)] $J_{1/2}(\widetilde{g}, z)= t^{-1} \cdot \epsilon(g; z,z_0) \cdot |\det J(g, z)|^{1/2}$;
\item[(2)] $J_{3/2}(\widetilde{g}, z)=  J_{1/2}(\widetilde{g}, z)J(g, z)$.
\end{itemize}

\begin{lemma}\label{J122}
Let  $\widetilde{g}, \widetilde{g}_i=(g_i, t_i)\in \widetilde{\Sp}^{z_0}_{2m}(\R)$.
\begin{itemize}
\item[(1)] $J_{1/2}(\widetilde{g}, z)^2=\det J(g, z)$.
\item[(2)] $J_{1/2}(\widetilde{g}_1\widetilde{g}_2, z)=J_{1/2}(\widetilde{g}_1, g_2(z))J_{1/2}(\widetilde{g}_2, z)$.
\item[(3)]$J_{3/2}(\widetilde{g}_1\widetilde{g}_2, z)=J_{3/2}(\widetilde{g}_1, g_2(z))J_{3/2}(\widetilde{g}_2, z)$.
\end{itemize}    
\end{lemma}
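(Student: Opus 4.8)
The plan is to reduce all three statements to the unbarred identities already recorded in Lemma \ref{J121} together with the defining relation $t^2\alpha_{z_0}(g)=1$ of $\widetilde{\Sp}^{z_0}_{2m}(\R)$ and the explicit multiplication law of $\Mp^{z_0}_{2m}(\R)$, namely $(g_1,t_1)(g_2,t_2)=(g_1g_2,\,t_1t_2\,\beta_{z_0}(g_1^{h_{-1}},g_2^{h_{-1}}))$ (recall $\beta'_{z_0}(g_1,g_2)=\beta_{z_0}(g_1^{h_{-1}},g_2^{h_{-1}})$). First, for (1): by definition $J_{1/2}(\widetilde{g},z)=t^{-1}J_{1/2}(g,z)$ where $J_{1/2}(g,z)=\epsilon(g;z,z_0)|\det J(g,z)|^{1/2}$, so $J_{1/2}(\widetilde{g},z)^2=t^{-2}J_{1/2}(g,z)^2=t^{-2}\alpha_{z_0}(g)^{-1}\det J(g,z)$ by Lemma \ref{J121}. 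Since $\widetilde{g}\in\widetilde{\Sp}^{z_0}_{2m}(\R)$ forces $t^2\alpha_{z_0}(g)=1$, i.e. $t^{-2}=\alpha_{z_0}(g)$, the factor $t^{-2}\alpha_{z_0}(g)^{-1}$ equals $1$ and (1) follows.

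For (2), write $\widetilde{g}_i=(g_i,t_i)$, so $\widetilde{g}_1\widetilde{g}_2=(g_1g_2,\,t_1t_2\,\beta_{z_0}(g_1^{h_{-1}},g_2^{h_{-1}}))$. Then
\[
J_{1/2}(\widetilde{g}_1\widetilde{g}_2,z)=\big[t_1t_2\,\beta_{z_0}(g_1^{h_{-1}},g_2^{h_{-1}})\big]^{-1}J_{1/2}(g_1g_2,z).
\]
Applying the cocycle formula $J_{1/2}(g_1g_2,z)=J_{1/2}(g_1,g_2(z))J_{1/2}(g_2,z)\beta_{z_0}(g_1^{h_{-1}},g_2^{h_{-1}})$ from Lemma \ref{J121}, the two occurrences of $\beta_{z_0}(g_1^{h_{-1}},g_2^{h_{-1}})$ cancel, leaving
\[
J_{1/2}(\widetilde{g}_1\widetilde{g}_2,z)=\big(t_1^{-1}J_{1/2}(g_1,g_2(z))\big)\big(t_2^{-1}J_{1/2}(g_2,z)\big)=J_{1/2}(\widetilde{g}_1,g_2(z))J_{1/2}(\widetilde{g}_2,z),
\]
which is (2). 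For (3) I will use the elementary multiplicativity of the classical automorphy factor, $J(g_1g_2,z)=J(g_1,g_2(z))J(g_2,z)$ (a direct matrix computation with $J(g,z)=cz+d$). Combining this with (2) and regrouping,
\[
J_{3/2}(\widetilde{g}_1\widetilde{g}_2,z)=J_{1/2}(\widetilde{g}_1\widetilde{g}_2,z)J(g_1g_2,z)=\big(J_{1/2}(\widetilde{g}_1,g_2(z))J(g_1,g_2(z))\big)\big(J_{1/2}(\widetilde{g}_2,z)J(g_2,z)\big),
\]
which is exactly $J_{3/2}(\widetilde{g}_1,g_2(z))J_{3/2}(\widetilde{g}_2,z)$.

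There is no substantial obstacle here: the statement is essentially a repackaging of Lemma \ref{J121} once one passes from $\Sp_{2m}(\R)$ with its $2$-cocycle twist to the covering group $\widetilde{\Sp}^{z_0}_{2m}(\R)$ where the coordinate $t$ absorbs precisely that twist. The only point requiring care is bookkeeping of the cocycle conventions — verifying that the $\beta_{z_0}$ appearing in the group law of $\Mp^{z_0}_{2m}(\R)$ is the same one (with the same exponent sign) as in the Lemma \ref{J121} factorization of $J_{1/2}$, so that the cancellation in (2) is exact; this is guaranteed by the definition $\beta'_z(g_1,g_2)=\beta_z(g_1^{h_{-1}},g_2^{h_{-1}})$ in Section \ref{autoI}. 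I would also cite \cite[pp.131--132]{Ta1} for the underlying computations, as is done for Lemma \ref{J121}.
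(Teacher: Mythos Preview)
Your proposal is correct and follows essentially the same route as the paper: for (1) you combine $J_{1/2}(\widetilde{g},z)=t^{-1}J_{1/2}(g,z)$ with Lemma~\ref{J121} and the defining relation $t^2\alpha_{z_0}(g)=1$; for (2) you use the group law $\widetilde{g}_1\widetilde{g}_2=(g_1g_2,\,t_1t_2\beta_{z_0}(g_1^{h_{-1}},g_2^{h_{-1}}))$ and cancel the $\beta_{z_0}$ factor against the one from Lemma~\ref{J121}; and (3) is the immediate consequence via $J(g_1g_2,z)=J(g_1,g_2(z))J(g_2,z)$. This is exactly what the paper does, only you have spelled out (3) where the paper simply says ``It is a consequence of (2).''
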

\begin{proof}
1) $J_{1/2}(g, z)^2=J_{1/2}(\widetilde{g}, z)^2 t^{2}=\det J(g, z)\alpha_{z_0}(g)^{-1}$.\\
2)  Note that $\widetilde{g}_1\widetilde{g}_2=[g_1g_2, \beta_{z_0}(g_1^{h_{-1}},g_2^{h_{-1}})t_1t_2]$.
\begin{align*}
&J_{1/2}(\widetilde{g}_1\widetilde{g}_2, z)\\
&=J_{1/2}(g_1g_2, z)\beta_{z_0}(g_1^{h_{-1}},g_2^{h_{-1}})^{-1}t_1^{-1}t_2^{-1}\\
&= J_{1/2}(g_1, g_2(z))J_{1/2}(g_2, z)\beta_{z_0}(g_1^{h_{-1}},g_2^{h_{-1}}) \beta_{z_0}(g_1^{h_{-1}},g_2^{h_{-1}})^{-1}t_1^{-1}t_2^{-1}\\
&=J_{1/2}(\widetilde{g}_1, g_2(z))J_{1/2}(\widetilde{g}_2, z).
\end{align*}
3) It is a consequence of (2).
\end{proof}

  \begin{corollary}
  For $\widetilde{g}=(g, t)\in \widetilde{\Sp}^{z_0}_{2m}(\R)$, we have:
  \begin{itemize}
  \item  $J_{1/2}(\widetilde{g}, z) t m_{X^{\ast}}(g)^{-1}=\sqrt{\det(cz+d)}$.
\item $J_{1/2}(\widetilde{g}, z) = \sqrt{\det(cz+d)}$ or $-\sqrt{\det(cz+d)}$.
\end{itemize}
  \end{corollary}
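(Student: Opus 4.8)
The plan is to deduce both assertions directly from the definition of $J_{1/2}(\widetilde g,z)$ together with the structure of $\widetilde{\Sp}^{z_0}_{2m}(\R)$ recalled just above. First I would record the elementary observation that, for $\widetilde g=(g,t)\in\widetilde{\Sp}^{z_0}_{2m}(\R)$ with $g=\begin{pmatrix}a&b\\c&d\end{pmatrix}$, the two flavours of the half-integral automorphy factor differ only by the scalar $t$: comparing the definition $J_{1/2}(\widetilde g,z)=t^{-1}\epsilon(g;z,z_0)|\det J(g,z)|^{1/2}$ with the definition $J_{1/2}(g,z)=\epsilon(g;z,z_0)|\det J(g,z)|^{1/2}$ from Section \ref{autoI}, one gets $J_{1/2}(\widetilde g,z)=t^{-1}J_{1/2}(g,z)$.

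For the first bullet I would then simply multiply through:
$$J_{1/2}(\widetilde g,z)\,t\,m_{X^{\ast}}(g)^{-1}=t^{-1}J_{1/2}(g,z)\,t\,m_{X^{\ast}}(g)^{-1}=J_{1/2}(g,z)\,m_{X^{\ast}}(g)^{-1},$$
and the right-hand side is $\sqrt{\det(cz+d)}$ by the very definition of that symbol.

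For the second bullet I would invoke the description $\widetilde{\Sp}^{z_0}_{2m}(\R)=\{(g,t)\mid g\in\Sp_{2m}(\R),\ t=\pm m_{X^{\ast}}(g)\}$ established right after the lemma $\alpha_{z_0}(g)=[m_{X^{\ast}}(g)]^{-2}$; this gives $t\,m_{X^{\ast}}(g)^{-1}=\pm1$, so the first bullet yields $J_{1/2}(\widetilde g,z)=\pm\sqrt{\det(cz+d)}$. As an independent check one can instead square: Lemma \ref{J122}(1) gives $J_{1/2}(\widetilde g,z)^2=\det J(g,z)=\det(cz+d)$, which equals $(\sqrt{\det(cz+d)})^2$ by the lemma preceding the definition of $\sqrt{\det(cz+d)}$, and this again forces equality up to sign.

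There is essentially no obstacle here: the statement is pure bookkeeping once the relation $J_{1/2}(\widetilde g,z)=t^{-1}J_{1/2}(g,z)$ is on the table. The only point deserving a little care is the arithmetic of roots of unity — since $m_{X^{\ast}}(g)\in\mu_8$, the product $t\,m_{X^{\ast}}(g)^{-1}$ a priori lies only in $\mu_8$, and it is precisely the defining relation $t^2\alpha_{z_0}(g)=1$ combined with $\alpha_{z_0}(g)=m_{X^{\ast}}(g)^{-2}$ that pins it down to $\mu_2$; this is exactly what the cited set-description of $\widetilde{\Sp}^{z_0}_{2m}(\R)$ records.
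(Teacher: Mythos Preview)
Your proof is correct and is exactly the immediate unwinding of definitions that the paper intends; the paper states the corollary without proof, since it follows at once from $J_{1/2}(\widetilde g,z)=t^{-1}J_{1/2}(g,z)$, the definition $\sqrt{\det(cz+d)}=J_{1/2}(g,z)m_{X^{\ast}}(g)^{-1}$, and the description $\widetilde{\Sp}^{z_0}_{2m}(\R)=\{(g,t)\mid t=\pm m_{X^{\ast}}(g)\}$.
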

  Let $\widetilde{P^{>0}_{X^{\ast}}}^{z_0} (\R)$, $\widetilde{\U}_m^{z_0}(\C)$ denote the inverse images of $P^{>0}_{X^{\ast}}(\R)$ and $\U_m(\C)$ in $\widetilde{\Sp}^{z_0}_{2m}(\R)$ respectively. Note that for $p\in P^{>0}_{X^{\ast}}(\R)$, $m_{X^{\ast}}(p)=1$. Moreover, for $g\in \Sp_{2m}(\R)$, if write $g=p_gk_g$ for $p_g\in  P^{>0}_{X^{\ast}}(\R)$ and $k_g\in \U_m(\C)$, then $m_{X^{\ast}}(g)=m_{X^{\ast}}(k_g)$. It is known that 
$$\widetilde{c}_{X^{\ast}}(g,p)=1=\widetilde{c}_{X^{\ast}}(p,g), \quad\quad g\in \Sp_{2m}(\R), p\in P_{X^{\ast}}(\R).$$
Hence $\widetilde{P^{>0}_{X^{\ast}}}^{z_0} (\R) \simeq P^{>0}_{X^{\ast}} (\R) \times \mu_8$, and 
\[P^{>0}_{X^{\ast}} (\R)  \hookrightarrow \widetilde{\Sp}^{z_0}_{2m}(\R); p \longmapsto [p,1].\] Then  there exists the positive Iwasawa decomposition:
$$\widetilde{\Sp}^{z_0}_{2m}(\R)=P^{>0}_{X^{\ast}}(\R) \cdot \widetilde{\U}_m^{z_0}(\C), \quad \quad P^{>0}_{X^{\ast}}(\R)  \cap \widetilde{\U}_m^{z_0}(\C)=1_{2m}.$$ 
\[\widetilde{g}=(g, t_g)\in \widetilde{\Sp}^{z_0}_{2m}(\R), \quad \widetilde{g}=p_g\widetilde{k_g}, \quad z_{\widetilde{g}}=z_{p_g}.\]
\[ J_{1/2}(\widetilde{g}, z_0)=t_g^{-1}J_{1/2}(g, z_0)=t_g^{-1}J_{1/2}(p_g, z_0).\]
\[ J_{3/2}(\widetilde{g}, z_0)=J_{1/2}(\widetilde{g}, z_0)(cz_0+d)=t_g^{-1}J_{3/2}(g, z_0).\]
\subsection{Siegel modular forms: $ P_{X^{\ast}}(\R)$}
Let  $p=\begin{pmatrix} a & b\\ 0& (a^T)^{-1}  \end{pmatrix} \in P_{X^{\ast}}(\R) $. We write $z_p= p(i1_m)=ba^T + iaa^T \in \mathbb{H}_m$. Recall the notations $A$, $B_k$, and $B$ from Lemma \ref{ABM}.    By (\ref{chap2representationsp2})(\ref{chap2representationsp3}), we have:
\begin{align*}
\pi_{X^{\ast}, \psi} (p)A( x)&= \pi_{X^{\ast}, \psi} (h(a) u(a^{-1}b))A( x)\\
&=|\det(a)|^{1/2}\pi_{X^{\ast}, \psi} (u(a^{-1}b))A( xa)\\
&=|\det(a)|^{1/2}\psi(\tfrac{1}{2} xa a^{-1}b a^T x^T) A( xa)\\
&= |\det(a)|^{1/2}  e^{\pi i  xba^T x^T}\cdot e^{- \pi xa a^Tx^T}\\
&=  |\det(a)|^{1/2} e^{i   \pi xz_p x^T }.
\end{align*}
\begin{align*}
\pi_{X^{\ast}, \psi} (p)B_k( x)&= \pi_{X^{\ast}, \psi} (h(a) u(a^{-1}b))B_k( x)\\
&=|\det(a)|^{1/2}\pi_{X^{\ast}, \psi} (u(a^{-1}b))B_k( xa)\\
&=|\det(a)|^{1/2}\psi(\tfrac{1}{2} xa a^{-1}b a^T x^T) B_k( xa)\\
&=  |\det(a)|^{1/2} (xa)_k e^{i   \pi xz_p x^T }; 
\end{align*}
\begin{align*}
\pi_{X^{\ast}, \psi} (p)B( x)&= \pi_{X^{\ast}, \psi} (p)\begin{pmatrix} B_1(x)\\ \vdots\\B_m(x)\end{pmatrix}\\
&=  |\det(a)|^{1/2}e^{i   \pi xz_p x^T } (xa)^T .
\end{align*}
\begin{align*}
 J_{1/2}(p, z_0)=   \epsilon(p; z_0,z_0) \cdot |\det J(p, z_0)|^{1/2}=|\det (a)|^{-1/2}.
 \end{align*}
\begin{align*}
 J_{3/2}(p, z_0)=   J_{1/2}(p, z_0) J(p, z_0)=|\det (a)|^{-1/2} (a^T)^{-1}.
 \end{align*}
 \begin{align*}
J_{1/2}(p, z_0)\pi_{X^{\ast}, \psi} (p)A( x)= e^{i   \pi xz_p x^T }.
\end{align*}
\begin{align*}
J_{3/2}(p, z_0)\pi_{X^{\ast}, \psi} (p)B( x)=x^T e^{i   \pi xz_p x^T }.
\end{align*}
 \subsection{Siegel modular forms: $ \Sp_{2m}(\R)$}
\[g=p_gk_g  (\textrm{ cf. Section }\ref{Iwa}), \quad\quad z_g=z_{p_g}.\]
\[ J_{1/2}(p_gk_g, z_0)=J_{1/2}(p_g, k_g(z_0))J_{1/2}(k_g, z_0)\beta_{z_0}(p_g^{h_{-1}},k_g^{h_{-1}})=J_{1/2}(p_g, z_0).\]
\[ J_{3/2}(p_gk_g, z_0)= J_{1/2}(p_gk_g, z_0)(cz_0+d)=(cz_0+d)J_{1/2}(p_g, z_0).\]
\begin{align*}
&J_{1/2}(g, z_0)\pi_{X^{\ast}, \psi} (g)A( x)\\
&= J_{1/2}(p_g, z_0)\widetilde{c}_{X^{\ast}}(p_g, k_g)^{-1}\pi_{X^{\ast}, \psi} (p_g)\pi_{X^{\ast}, \psi} (k_g)A( x)\\
&= J_{1/2}(p_g, z_0)\pi_{X^{\ast}, \psi} (p_g)\pi_{X^{\ast}, \psi} (k_g)A( x)\\
&\stackrel{\textrm{Lem.} \ref{ABM}}{=}J_{1/2}(p_g, z_0)\pi_{X^{\ast}, \psi} (p_g)A( x)\\
&=e^{i   \pi xz_g x^T }.
\end{align*}
\begin{align*}
&J_{3/2}(g, z_0)\pi_{X^{\ast}, \psi} (g)B( x)\\
&=(cz_0+d) J_{1/2}(p_g, z_0)\widetilde{c}_{X^{\ast}}(p_g, k_g)^{-1}\pi_{X^{\ast}, \psi} (p_g)\pi_{X^{\ast}, \psi} (k_g)B( x)\\
&=(cz_0+d) J_{1/2}(p_g, z_0)\pi_{X^{\ast}, \psi} (p_g)[\pi_{X^{\ast}, \psi} (k_g)B]( x)\\
&\stackrel{\textrm{Lem.} \ref{ABM}}{=}(cz_0+d) J_{1/2}(p_g, z_0)\pi_{X^{\ast}, \psi} (p_g)[(-\sqrt{y_g} c z_0+ \sqrt{y_g}d)^TB]( x)\\
&=(cz_0+d) e^{i   \pi xz_g x^T }(-\sqrt{y_g} c z_0+ \sqrt{y_g}d)^T  \sqrt{y_g}^Tx^T\\
&=e^{i   \pi xz_g x^T }(cz_0+d) (-cz_0+d)^Ty_g^Tx^T\\
&=e^{i   \pi xz_g x^T }x^T\\
&=J_{3/2}(p_g, z_0)\pi_{X^{\ast}, \psi} (p_g)B( x).
\end{align*}
\subsection{Siegel modular forms: $\widetilde{\Sp}^{z_0}_{2m}(\R)$} 
\[\widetilde{g}=(g, t_g)\in \widetilde{\Sp}^{z_0}_{2m}(\R), \quad \widetilde{g}=p_g\widetilde{k_g}, \quad z_{\widetilde{g}}=z_{p_g}.\]
\begin{align*}
& J_{1/2}(\widetilde{g}, z_0)\pi_{X^{\ast}, \psi} (\widetilde{g})A( x)\\
&= t_g^{-1}J_{1/2}(g, z_0)t_g\pi_{X^{\ast}, \psi} (g)A( x)\\
&= e^{i   \pi xz_g x^T }.
\end{align*}
\begin{align*}
& J_{3/2}(\widetilde{g}, z_0)\pi_{X^{\ast}, \psi} (\widetilde{g})B( x)\\
&=J_{3/2}(g, z_0)\pi_{X^{\ast}, \psi} (g)B( x)\\
&=e^{i   \pi xz_g x^T }x^T\\
&=J_{3/2}(p_g, z_0)\pi_{X^{\ast}, \psi} (p_g)B( x).
\end{align*}
\subsection{Siegel modular forms: $\Gamma_{m}(1,2)$}
\begin{align*}
 \theta_{L, X^{\ast}}(f)(0)=\sum_{l\in L\cap X}f(l)=\sum_{n\in \Z^m}f( n).
 \end{align*}
For $g\in \Sp_{2m}(\R)$, let us define:
 \begin{align*}
 \theta_{1/2}(g)&= \theta_{L, X^{\ast}}\Big(J_{1/2}(g, z_0)\pi_{X^{\ast}, \psi} (g)A\Big)(0)\\
&=\sum_{n\in \Z^m}J_{1/2}(g, z_0)\pi_{X^{\ast}, \psi} (g)A(  n)\\
&=\sum_{n\in \Z^m} e^{i   \pi nz_g n^T }.
 \end{align*}
 \begin{align*}
 \theta_{3/2}(g)&=\theta_{L, X^{\ast}}\Big(J_{3/2}(g, z_0)\pi_{X^{\ast}, \psi} (g)B\Big)(0)\\
 &=\sum_{n\in \Z^m}J_{3/2}(g, z_0)\pi_{X^{\ast}, \psi} (g)B(  n)\\
&=\sum_{n\in \Z^m} e^{i   \pi nz_g n^T }n^T.
  \end{align*}
 Note that $\theta_{1/2}(g)$ and $\theta_{3/2}(g)$ only depend on $z_g=g(z_0)$.  For $z\in \mathbb{H}_m$,  we define:
\begin{align*}
\theta_{1/2}(z)&=\sum_{n\in \Z^m} e^{i  \pi nz n^T},
\end{align*}
\begin{align*}
\theta_{3/2}(z)&=\sum_{n\in \Z^m} n^T e^{i   \pi nz n^T }.
\end{align*}
Let us consider the explicit action of $\Gamma_m(1,2)$ on $\theta_{1/2}(z)$ and  $\theta_{3/2}(z)$. For $r=\begin{pmatrix}   a & b \\  c& d\end{pmatrix} \in \Gamma_{m}(1,2)$, we have:
\begin{align*}
\theta_{1/2}(r g)&= \theta_{1/2}(r p_g)\\
&=\theta_{L, X^{\ast}}\Big(J_{1/2}(r p_g, z_0)\pi_{X^{\ast}, \psi} (r p_g)A\Big)(0)\\
&\stackrel{\widetilde{c}_{X^{\ast}}(r, p_g)=1}{=} J_{1/2}(r ,  p_g(z_0))\pi_{L, \psi}(r)\theta_{L, X^{\ast}}\Big(J_{1/2}(p_g,z_0)\pi_{X^{\ast}, \psi} ( p_g)A\Big)(0) \\
&\stackrel{(\ref{chapeq8})}{=}J_{1/2}(r , z_g)\epsilon_{r} \theta_{1/2}( g)\\
&=\sqrt{\det(cz_g+d)} m_{X^{\ast}}(r)\epsilon_{r} \theta_{1/2}( g)\\
&= m_{X^{\ast}}(r)\widetilde{\beta}^{-1}(r)\sqrt{\det(cz_g+d)}\theta_{1/2}( g)\\
&=\lambda(r)\sqrt{\det(cz_g+d)}\theta_{1/2}( g).
\end{align*} 
\begin{align*}
\theta_{3/2}(r g)&= \theta_{3/2}(r p_g)\\
&=\theta_{L, X^{\ast}}\Big(J_{3/2}(r p_g, z_0)\pi_{X^{\ast}, \psi} (r p_g)B\Big)(0)\\
&= J_{3/2}(r ,  p_g(z_0))\pi_{L, \psi}(r)\theta_{L, X^{\ast}}\Big(J_{1/2}(p_g,z_0)\pi_{X^{\ast}, \psi} ( p_g)B\Big)(0) \\
&\stackrel{(\ref{chapeq8})}{=} J_{3/2}(r , z_g)\epsilon_{r}\theta_{L, X^{\ast}}\Big(J_{3/2}(p_g,z_0)\pi_{X^{\ast}, \psi} ( p_g)B\Big)(0)\\
&=J_{3/2}(r , z_g)\epsilon_{r} \theta_{3/2}( g)\\
&= m_{X^{\ast}}(r)\widetilde{\beta}^{-1}(r)\sqrt{\det(cz_g+d)}(cz_g+d)\theta_{3/2}( g)\\
&= \lambda(r)\sqrt{\det(cz_g+d)}(cz_g+d)\theta_{3/2}( g).
\end{align*} 
 We conclude: 
 \begin{theorem}\label{mainthm1}
 For $r=\begin{pmatrix} a&b\\ c& d\end{pmatrix}\in \Gamma_{m}(1,2)$, we have:
 \begin{itemize}
 \item[(1)] $\theta_{1/2}(r z)=\lambda(r) \sqrt{\det(cz+d)}\theta_{1/2}( z)$.
 \item[(2)] $\theta_{3/2}(r z)=\lambda(r) \sqrt{\det(cz+d)}(cz+d)\theta_{3/2}( z)$. 
 \end{itemize}
\end{theorem}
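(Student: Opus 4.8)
The plan is to deduce Theorem \ref{mainthm1} directly from the computations already assembled in the preceding subsections, using the fact that $\theta_{1/2}(g)$ and $\theta_{3/2}(g)$ depend only on $z_g = g(z_0)$. The key idea is that we have two natural ways to ``push'' the theta functions through an element $r \in \Gamma_m(1,2)$: on the one hand, by the Iwasawa decomposition $g = p_g k_g$ we have $\theta_{1/2}(g) = \theta_{L,X^\ast}(J_{1/2}(g,z_0)\pi_{X^\ast,\psi}(g)A)(0)$; on the other hand, since $r$ acts via the lattice model with the explicit scalar $\epsilon_r$ coming from Lemma \ref{gamma1SP}, we can evaluate $\theta_{1/2}(rg)$ by first extracting the automorphy factor $J_{1/2}(r, z_g)$ and the intertwining constant $\epsilon_r$, then recognizing what remains as $\theta_{1/2}(g)$.

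Concretely, for the first identity I would argue as follows. Given $r = \begin{pmatrix} a & b \\ c & d\end{pmatrix} \in \Gamma_m(1,2)$ and arbitrary $z \in \mathbb{H}_m$, write $z = z_g$ for $g = p_g \in P^{>0}_{X^\ast}(\R)$ with $g(z_0) = z$. Then $\theta_{1/2}(rz) = \theta_{1/2}(rp_g)$. Decompose $rp_g$ and use $\widetilde{c}_{X^\ast}(r, p_g) = 1$ (valid since $p_g \in P_{X^\ast}(\R)$) together with the cocycle relation for $J_{1/2}$ from Lemma \ref{J121} to factor $J_{1/2}(rp_g, z_0) = J_{1/2}(r, z_g) J_{1/2}(p_g, z_0)$, which is exactly the step carried out in the display just before Theorem \ref{mainthm1}. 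Applying $\theta_{L,X^\ast} \circ \pi_{X^\ast,\psi}$ and invoking the intertwining property $\pi_{L,\psi}(r)\theta_{L,X^\ast} = \theta_{L,X^\ast}\pi_{X^\ast,\psi}(r)$ together with equation (\ref{chapeq8}) gives $\theta_{1/2}(rz) = J_{1/2}(r, z_g)\,\epsilon_r\,\theta_{1/2}(g)$. Finally, substitute $J_{1/2}(r, z_g) = \sqrt{\det(cz_g+d)}\, m_{X^\ast}(r)$ (from the Definition of $\sqrt{\det(cz+d)}$) and $m_{X^\ast}(r)\epsilon_r = m_{X^\ast}(r)\widetilde{\beta}^{-1}(r) = \lambda(r)$ (using the lemma that $\widetilde{\beta}(r)\epsilon_r = 1$ and the definition (\ref{lambda})) to obtain the claimed formula $\theta_{1/2}(rz) = \lambda(r)\sqrt{\det(cz+d)}\,\theta_{1/2}(z)$.

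For the second identity the argument is structurally identical, with $A$ replaced by $B$, $J_{1/2}$ by $J_{3/2}$, and the extra matrix factor $(cz_g+d) = J(r, z_g)$ appearing because $J_{3/2}(r, z) = J_{1/2}(r,z)J(r,z)$; one then reads off $\theta_{3/2}(rz) = \lambda(r)\sqrt{\det(cz+d)}(cz+d)\theta_{3/2}(z)$ after the same substitutions. In both cases I should note that the computation is insensitive to the choice of $p_g$ representing $z$, which follows from the already-recorded fact that $\theta_{1/2}(g)$ and $\theta_{3/2}(g)$ depend only on $g(z_0)$; this is the only genuinely ``new'' remark needed and it is immediate from the closed-form expressions $\theta_{1/2}(g) = \sum_{n} e^{i\pi n z_g n^T}$ and $\theta_{3/2}(g) = \sum_n n^T e^{i\pi n z_g n^T}$.

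I do not expect any serious obstacle here: all the hard work is in the earlier sections — establishing the trivialization $\epsilon$ of $\widetilde{c}_{X^\ast}$ over $\Gamma_m(1,2)$ (Lemma \ref{gamma1SP}), the identity $\widetilde{\beta}(g)\epsilon_g = 1$, the cocycle relations for $J_{1/2}$, the compatibility of the Schrödinger and lattice models through $\theta_{L,X^\ast}$, and the Iwasawa-decomposition computations showing $J_{1/2}(g,z_0)\pi_{X^\ast,\psi}(g)A(x) = e^{i\pi x z_g x^T}$ and the analogous statement for $B$. The only point requiring mild care is the bookkeeping of the $2$-cocycle: one must check that $\widetilde{c}_{X^\ast}(r, p_g) = 1$ so that no stray root of unity is introduced when separating $r$ from $p_g$, and that the identity $\widetilde{c}_{X^\ast}(g, p) = 1 = \widetilde{c}_{X^\ast}(p, g)$ for $p \in P_{X^\ast}(\R)$ — already cited in the subsection on Automorphic factor II — covers this. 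With those ingredients in hand, Theorem \ref{mainthm1} is essentially a transcription of the two displayed computations immediately preceding it.
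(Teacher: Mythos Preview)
Your proposal is correct and follows essentially the same route as the paper: the paper's argument is precisely the displayed computation immediately preceding the theorem, factoring $\theta_{1/2}(rp_g)$ via $\widetilde{c}_{X^{\ast}}(r,p_g)=1$, applying (\ref{chapeq8}) to extract $\epsilon_r$, and then substituting $J_{1/2}(r,z_g)=\sqrt{\det(cz_g+d)}\,m_{X^{\ast}}(r)$ together with $\epsilon_r=\widetilde{\beta}^{-1}(r)$ to produce $\lambda(r)$. Your bookkeeping remark that $\beta_{z_0}(r^{h_{-1}},p_g^{h_{-1}})=\widetilde{c}_{X^{\ast}}(r,p_g)=1$ is exactly what justifies the clean factorization of $J_{1/2}(rp_g,z_0)$ in the paper's display.
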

\subsection{Siegel modular forms: $\Sp_{2m}(\Z)$}
Let   $\widetilde{\Gamma}^{z_0}_m(1,2)$ denote the inverse image of $ \Gamma_m(1,2)$ in $\widetilde{\Sp}^{z_0}_{2m}(\R)$. Let $\widetilde{g}=(g, t_g)\in  \widetilde{\Sp}^{z_0}_{2m}(\R)$ and  $\widetilde{r}=(r, t_r)\in  \widetilde{\Gamma}^{z_0}_m(1,2)$. Let us define:
 \begin{align*}
 \theta_{1/2}(\widetilde{g})&=\theta_{L, X^{\ast}}\Big(  J_{1/2}(\widetilde{g}, z_0)\pi_{X^{\ast}, \psi} (\widetilde{g})A\Big)(0)\\
& =\theta_{L, X^{\ast}}\Big( J_{1/2}(g, z_0)\pi_{X^{\ast}, \psi} (g)A\Big)(0)\\
&=\sum_{n\in \Z^m} e^{i   \pi nz_g n^T };
 \end{align*}
 \begin{align*}
 \theta_{3/2}(\widetilde{g})&=\theta_{L, X^{\ast}}\Big(  J_{3/2}(\widetilde{g}, z_0)\pi_{X^{\ast}, \psi} (\widetilde{g})B\Big)(0)\\
 & =\theta_{L, X^{\ast}}\Big( J_{3/2}(g, z_0)\pi_{X^{\ast}, \psi} (g)B\Big)(0)\\
&=\sum_{n\in \Z^m} e^{i   \pi nz_g n^T }n^T.
  \end{align*}
  Then:
  \begin{align*}
 \theta_{1/2}(\widetilde{r}\widetilde{g})& =\theta_{L, X^{\ast}}\Big( J_{1/2}(rg, z_0)\pi_{X^{\ast}, \psi} (rg)A\Big)(0)\\
&=J_{1/2}(r , z_g)\epsilon_{r} \theta_{1/2}( g)\\
&=J_{1/2}(\widetilde{r} , z_g) t_r \epsilon_{r} \theta_{1/2}( \widetilde{g});
 \end{align*}
   \begin{align*}
 \theta_{3/2}(\widetilde{r}\widetilde{g})&=\theta_{L, X^{\ast}}\Big(  J_{3/2}(rg, z_0)\pi_{X^{\ast}, \psi} (rg)B\Big)(0)\\
 &=J_{3/2}(r , z_g)\epsilon_{r} \theta_{3/2}( g)\\
&=J_{3/2}(\widetilde{r} , z_g) t_r \epsilon_{r} \theta_{3/2}( \widetilde{g}).
  \end{align*}

  Recall notations $\mathcal{M}'_{2}$, $\mathcal{N}'_{2}$, $M_q $,  $M^i_q$, $M^{(j,k)}_q$ from Sections \ref{setminus}--\ref{modification11}. Let us write:  
$$\iota_i(\begin{pmatrix} 1& 0\\ 0& 1\end{pmatrix})=1_{2m}, \quad  u_{i}= \iota_i(\begin{pmatrix} 1& 1\\ 0& 1\end{pmatrix}), \quad  u^-_i=\iota_i(\begin{pmatrix} 1& 0\\-1& 1\end{pmatrix}), \quad  \omega_{i}= \iota_i(\begin{pmatrix} 0& 1\\-1&0\end{pmatrix}), \quad n=(n_1,\cdots,n_m)\in \Z^m.$$ 
Note that $ u_{i}$,$u_{i}^-$ are  just the previous  $u_{ii}$ and $u^-_{ii}$. Let us write $\epsilon_1=(1,0,\cdots, 0), \cdots, \epsilon_m=(0,\cdots, 0, 1)$ for the natural basis of $\R^m$.
\subsubsection{$\widetilde{u}_{i}=(u_i,t_{u_i})$}
\begin{align*}
 \theta_{L, X^{\ast}}(A)(\tfrac{1}{2} e_i^{\ast})=\sum_{l=(l_1, \cdots, l_m)\in L\cap X}\psi(\tfrac{1}{2} l_i)A(l)=\sum_{n=(n_1,\cdots,n_m)\in \Z^m}\psi(\tfrac{1}{2} n_i)A( n).
 \end{align*}
 \begin{align*}
 &\theta_{L, X^{\ast}}\Big(J_{1/2}(p_g,z_0)\pi_{X^{\ast}, \psi} ( p_g)A\Big)(\tfrac{1}{2} e_i^{\ast})\\
 &=\sum_{n\in \Z^m}\psi(\tfrac{1}{2} n_i)\Big(J_{1/2}(p_g,z_0)\pi_{X^{\ast}, \psi} ( p_g)A\Big)( n)\\
 &=\sum_{n\in \Z^m} (-1)^{n_i}e^{i   \pi nz_g n^T }.
 \end{align*}
 \begin{align*}
 &\theta_{L, X^{\ast}}\Big(J_{32}(p_g,z_0)\pi_{X^{\ast}, \psi} ( p_g)B\Big)(\tfrac{1}{2} e_i^{\ast})\\
 &=\sum_{n\in \Z^m}\psi(\tfrac{1}{2} n_i)\Big(J_{3/2}(p_g,z_0)\pi_{X^{\ast}, \psi} ( p_g)B\Big)( n)\\
 &=\sum_{n\in \Z^m} (-1)^{n_i}n^Te^{i   \pi nz_g n^T }.
 \end{align*}
 \begin{align*}
&\theta_{1/2}(\widetilde{u}_{i} \widetilde{g})\\
&=\theta_{1/2}(u_i  p_g)\\
&=\theta_{L, X^{\ast}}\Big(J_{1/2}(u_i p_g, z_0)\pi_{X^{\ast}, \psi} (u_i p_g)A\Big)(0)\\
&= J_{1/2}(u_i ,  p_g(z_0))\pi_{L, \psi}(u_i)\theta_{L, X^{\ast}}\Big(J_{1/2}(p_g,z_0)\pi_{X^{\ast}, \psi} ( p_g)A\Big)( 0) \\
&\stackrel{\textrm{Case  6 }(\ref{ui})}{=} J_{1/2}(u_i , z_g)\theta_{L, X^{\ast}}\Big(J_{1/2}(p_g,z_0)\pi_{X^{\ast}, \psi} ( p_g)A\Big)(\tfrac{1}{2} e_i^{\ast})\\
&=J_{1/2}(u_i , z_g) \sum_{n\in \Z^m} (-1)^{n_i}e^{i   \pi nz_g n^T }\\
&= J_{1/2}(\widetilde{u}_{i} , z_g) t_{u_i}\sum_{n\in \Z^m} (-1)^{n_i}e^{i   \pi nz_g n^T }.
\end{align*}   
\begin{align*}
&\theta_{3/2}(\widetilde{u}_{i} \widetilde{g})\\
&=\theta_{3/2}(u_i  p_g)\\
&=\theta_{L, X^{\ast}}\Big(J_{3/2}(u_i p_g, z_0)\pi_{X^{\ast}, \psi} (u_i p_g)B\Big)(0)\\
&= J_{3/2}(u_i ,  p_g(z_0))\pi_{L, \psi}(u_i)\theta_{L, X^{\ast}}\Big(J_{3/2}(p_g,z_0)\pi_{X^{\ast}, \psi} ( p_g)B\Big)( 0) \\
&\stackrel{\textrm{Case  6 }(\ref{ui})}{=}  J_{3/2}(u_i , z_g)\theta_{L, X^{\ast}}\Big(J_{3/2}(p_g,z_0)\pi_{X^{\ast}, \psi} ( p_g)B\Big)(\tfrac{1}{2} e_i^{\ast})\\
&=J_{3/2}(u_i , z_g) \sum_{n\in \Z^m} (-1)^{n_i}n^Te^{i   \pi nz_g n^T }\\
&= J_{3/2}(\widetilde{u}_{i} , z_g) t_{u_i}\sum_{n\in \Z^m} (-1)^{n_i}n^Te^{i   \pi nz_g n^T }.
\end{align*} 
\subsubsection{$\widetilde{u}_i^-=(u_i^-, t_{u_i^-})$} Write  $u_i^-=\begin{pmatrix} 1_m & 0 \\ c & 1_m \end{pmatrix}$, for $c=\epsilon_{ii}(-1)$.
\begin{align*}
 \theta_{L, X^{\ast}}(f)(\tfrac{1}{2} e_i)=\sum_{l\in L\cap X}f(l+\tfrac{1}{2} e_i)=\sum_{n=(n_1,\cdots,n_m)\in \Z^m}f( n+\tfrac{1}{2}\epsilon_i).
 \end{align*}
 \begin{align*}
 &\theta_{L, X^{\ast}}\Big(J_{1/2}(p_g,z_0)\pi_{X^{\ast}, \psi} ( p_g)A\Big)(\tfrac{1}{2} e_i)\\
 &=\sum_{n\in \Z^m}\Big(J_{1/2}(p_g,z_0)\pi_{X^{\ast}, \psi} ( p_g)A\Big)( n+\tfrac{1}{2}\epsilon_i)\\
 &=\sum_{n\in \Z^m} e^{i   \pi ( n+\tfrac{1}{2}\epsilon_i)z_g ( n+\tfrac{1}{2}\epsilon_i)^T }.
 \end{align*}
 \begin{align*}
 &\theta_{L, X^{\ast}}\Big(J_{3/2}(p_g,z_0)\pi_{X^{\ast}, \psi} ( p_g)B\Big)(\tfrac{1}{2} e_i)\\
 &=\sum_{n\in \Z^m}\Big(J_{3/2}(p_g,z_0)\pi_{X^{\ast}, \psi} ( p_g)B\Big)( n+\tfrac{1}{2}\epsilon_i)\\
 &=\sum_{n\in \Z^m} ( n+\tfrac{1}{2}\epsilon_i)^T  e^{i   \pi ( n+\tfrac{1}{2}\epsilon_i)z_g ( n+\tfrac{1}{2}\epsilon_i)^T }.
 \end{align*}
 \begin{align*}
&\theta_{1/2}(\widetilde{u}_i^- \widetilde{g})\\
&=\theta_{1/2}(u^-_i  p_g)\\
&=\theta_{L, X^{\ast}}\Big(J_{1/2}(u^-_i p_g, z_0)\pi_{X^{\ast}, \psi} (u^-_i p_g)A\Big)(0)\\
&= J_{1/2}(u^-_i ,  p_g(z_0))\pi_{L, \psi}(u^-_i)\theta_{L, X^{\ast}}\Big(J_{1/2}(p_g,z_0)\pi_{X^{\ast}, \psi} ( p_g)A\Big)( 0) \\
&\stackrel{\textrm{Case  7 }(\ref{ui-})}{=} J_{1/2}(u^-_i , z_g)m_{X^{\ast}}(u^-_i)^{-1} \theta_{L, X^{\ast}}\Big(J_{1/2}(p_g,z_0)\pi_{X^{\ast}, \psi} ( p_g)A\Big)(\tfrac{1}{2} e_i)\\
&=\sum_{n\in \Z^m}J_{1/2}(\widetilde{u}_i^- , z_g) t_{u_i^-}m_{X^{\ast}}(u^-_i)^{-1} e^{i   \pi ( n+\tfrac{1}{2}\epsilon_i)z_g ( n+\tfrac{1}{2}\epsilon_i)^T }.
\end{align*}
\begin{align*}
&\theta_{3/2}(\widetilde{u}_i^- \widetilde{g})\\
&=\theta_{3/2}(u^-_i  p_g)\\
&=\theta_{L, X^{\ast}}\Big(J_{3/2}(u^-_i p_g, z_0)\pi_{X^{\ast}, \psi} (u^-_i p_g)B\Big)(0)\\
&= J_{3/2}(u^-_i ,  p_g(z_0))\pi_{L, \psi}(u^-_i)\theta_{L, X^{\ast}}\Big(J_{3/2}(p_g,z_0)\pi_{X^{\ast}, \psi} ( p_g)B\Big)( 0) \\
&\stackrel{\textrm{Case  7 }(\ref{ui-})}{=} J_{3/2}(u^-_i , z_g)m_{X^{\ast}}(u^-_i)^{-1} \theta_{L, X^{\ast}}\Big(J_{3/2}(p_g,z_0)\pi_{X^{\ast}, \psi} ( p_g)B\Big)(\tfrac{1}{2} e_i)\\
&=\sum_{n\in \Z^m}J_{3/2}(\widetilde{u}_i^- , z_g) t_{u_i^-}m_{X^{\ast}}(u^-_i)^{-1}( n+\tfrac{1}{2}\epsilon_i)^T  e^{i   \pi ( n+\tfrac{1}{2}\epsilon_i)z_g ( n+\tfrac{1}{2}\epsilon_i)^T }.
\end{align*}
\subsubsection{$\widetilde{M_q^{(j,k)}}$} Let $u=M_q^{(j,k)}$,  $t_u=t_{M_q^{(j,k)}}$ and $\widetilde{u}=(u,t_u)$.
\begin{align*}
&\theta_{L, X^{\ast}}(f)(-\tfrac{1}{2} e_j^{\ast}-\tfrac{1}{2} e_k^{\ast}-\tfrac{1}{2} e_j-\tfrac{1}{2} e_k)\\
&=\sum_{l\in L\cap X}f(l-\tfrac{1}{2} e_j-\tfrac{1}{2} e_k)\psi(\langle l,-\tfrac{1}{2} e_j^{\ast}-\tfrac{1}{2} e_k^{\ast} \rangle)\psi(\tfrac{\langle -\tfrac{1}{2} e_j-\tfrac{1}{2} e_k, -\tfrac{1}{2} e_j^{\ast}-\tfrac{1}{2} e_k^{\ast}\rangle}{2})  \\
&=\sum_{n=(n_1,\cdots,n_m)\in \Z^m}f( n-\tfrac{1}{2}(\epsilon_j+\epsilon_k))\psi(-\tfrac{1}{2} n_j-\tfrac{1}{2} n_k) \psi(\tfrac{1}{4}).
 \end{align*}
 \begin{align*}
 &\theta_{L, X^{\ast}}\Big(J_{1/2}(p_g,z_0)\pi_{X^{\ast}, \psi} ( p_g)A\Big)(-\tfrac{1}{2} e_j^{\ast}-\tfrac{1}{2} e_k^{\ast}-\tfrac{1}{2} e_j-\tfrac{1}{2} e_k)\\
 &=\sum_{n\in \Z^m}\Big(J_{1/2}(p_g,z_0)\pi_{X^{\ast}, \psi} ( p_g)A\Big)(  n-\tfrac{1}{2}(\epsilon_j+\epsilon_k)) \psi(-\tfrac{1}{2} n_j-\tfrac{1}{2} n_k) \psi(\tfrac{1}{4})\\
 &=\sum_{n\in \Z^m} \psi(\tfrac{1}{4}) (-1)^{n_j+n_k} e^{i   \pi ( n-\tfrac{1}{2}\epsilon_j-\tfrac{1}{2}\epsilon_k)z_g ( n-\tfrac{1}{2}\epsilon_j-\tfrac{1}{2}\epsilon_k)^T }.
 \end{align*}
 \begin{align*}
 &\theta_{L, X^{\ast}}\Big(J_{3/2}(p_g,z_0)\pi_{X^{\ast}, \psi} ( p_g)B\Big)(-\tfrac{1}{2} e_j^{\ast}-\tfrac{1}{2} e_k^{\ast}-\tfrac{1}{2} e_j-\tfrac{1}{2} e_k)\\
 &=\sum_{n\in \Z^m}\Big(J_{3/2}(p_g,z_0)\pi_{X^{\ast}, \psi} ( p_g)B\Big)(  n-\tfrac{1}{2}(\epsilon_j+\epsilon_k)) \psi(-\tfrac{1}{2} n_j-\tfrac{1}{2} n_k) \psi(\tfrac{1}{4})\\
 &=\sum_{n\in \Z^m} \psi(\tfrac{1}{4}) (-1)^{n_j+n_k}( n-\tfrac{1}{2}\epsilon_j-\tfrac{1}{2}\epsilon_k)^T e^{i   \pi ( n-\tfrac{1}{2}\epsilon_j-\tfrac{1}{2}\epsilon_k)z_g ( n-\tfrac{1}{2}\epsilon_j-\tfrac{1}{2}\epsilon_k)^T }.
 \end{align*}
 \begin{align*}
\theta_{1/2}(\widetilde{u} \widetilde{g})&=\theta_{1/2}(u p_g)\\
&=\theta_{L, X^{\ast}}\Big(J_{1/2}(u p_g, z_0)\pi_{X^{\ast}, \psi} (u p_g)A\Big)(0)\\
&= J_{1/2}(u ,  p_g(z_0))\pi_{L, \psi}(u)\theta_{L, X^{\ast}}\Big(J_{1/2}(p_g,z_0)\pi_{X^{\ast}, \psi} ( p_g)A\Big)( 0) \\
&\stackrel{\textrm{Case  9 }(\ref{ui-})}{=} \psi(-\tfrac{1}{8})J_{1/2}(u , z_g) \theta_{L, X^{\ast}}\Big(J_{1/2}(p_g,z_0)\pi_{X^{\ast}, \psi} ( p_g)A\Big)(-\tfrac{1}{2} e_j^{\ast}-\tfrac{1}{2} e_k^{\ast}-\tfrac{1}{2} e_j-\tfrac{1}{2} e_k)\\
&=\sum_{n\in \Z^m}J_{1/2}(\widetilde{u} , z_g) t_{u}m_{X^{\ast}}(u)^{-1} (-1)^{n_j+n_k} e^{i   \pi ( n-\tfrac{1}{2}\epsilon_j-\tfrac{1}{2}\epsilon_k)z_g ( n-\tfrac{1}{2}\epsilon_j-\tfrac{1}{2}\epsilon_k)^T }.
\end{align*}
\begin{align*}
\theta_{3/2}(\widetilde{u} \widetilde{g})&=\theta_{3/2}(u p_g)\\
&=\theta_{L, X^{\ast}}\Big(J_{3/2}(u p_g, z_0)\pi_{X^{\ast}, \psi} (u p_g)B\Big)(0)\\
&= J_{3/2}(u ,  p_g(z_0))\pi_{L, \psi}(u)\theta_{L, X^{\ast}}\Big(J_{3/2}(p_g,z_0)\pi_{X^{\ast}, \psi} ( p_g)B\Big)( 0) \\
&\stackrel{\textrm{Case  9 }(\ref{ui-})}{=} \psi(-\tfrac{1}{8})J_{3/2}(u , z_g) \theta_{L, X^{\ast}}\Big(J_{3/2}(p_g,z_0)\pi_{X^{\ast}, \psi} ( p_g)B\Big)(-\tfrac{1}{2} e_j^{\ast}-\tfrac{1}{2} e_k^{\ast}-\tfrac{1}{2} e_j-\tfrac{1}{2} e_k)\\
&=\sum_{n\in \Z^m}J_{3/2}(\widetilde{u} , z_g) t_{u}m_{X^{\ast}}(u)^{-1} (-1)^{n_j+n_k} ( n-\tfrac{1}{2}\epsilon_j-\tfrac{1}{2}\epsilon_k)^Te^{i   \pi ( n-\tfrac{1}{2}\epsilon_j-\tfrac{1}{2}\epsilon_k)z_g ( n-\tfrac{1}{2}\epsilon_j-\tfrac{1}{2}\epsilon_k)^T }.
\end{align*}
\subsubsection{} Recall the notations $M_q$, $ M_q^{i}$, $M_q^{(j,k)}$, $\mathcal{S}^q_0=\{ i_1, \cdots, i_r\}$, $\mathcal{S}^q_1=\{(j_1,k_1), \cdots, (j_s, k_s)\}$ from Sections \ref{setminus}--\ref{modification11}. Write $M_q=(\prod_{i\in \mathcal{S}^q_0} M_q^{i}) \cdot (\prod_{ (j,k)\in\mathcal{S}^q_1}M_q^{(j,k)})$, with $M_q^{i}=1_{2m}$,  $u_i$ or $u_i^-$, $M_q^{(j,k)}=\iota_{(j,k)}(u_{1111}^1u_{1111}^2)$. 
For each $M_q$, let us define:
\begin{itemize}
\item $\underline{e}=(e_1, \cdots, e_m)$ and $\underline{e}^{\ast}=(e_1^{\ast}, \cdots, e_m^{\ast})$.
\item $m_q=(k_1, \cdots, k_m)$, for $k_i=\left\{ \begin{array}{ll} 1 & \textrm{ if }  i\in \mathcal{S}^q_0, M_q^i= u_i,\\
-1& \textrm{ if } i=j \textrm{ or } k, (j,k)\in \mathcal{S}^q_1, \\  0 &  \textrm{ if } i\in \mathcal{S}^q_0,   M_q^{i}=1_{2m}  \textrm{ or }u_i^- .\end{array}\right.$
\item $\epsilon_{q}=(a_1, \cdots, a_m)$, for $a_i=\left\{ \begin{array}{ll} 1 & \textrm{ if } i\in \mathcal{S}^q_0, M_q^i=u_i^-,\\ -1& \textrm{ if } i=j \textrm{ or } k, (j,k)\in \mathcal{S}^q_1, \\ 0 &  \textrm{ if } i\in \mathcal{S}^q_0, M_q^{i}=1_{2m}, \textrm{ or }u_i. \end{array}\right.$
\item $m_{X^{\ast}}(q)= m_{X^{\ast}}(M_q)=\big(\prod_{i\in \mathcal{S}^q_0} m_{X^{\ast}}(M_q^{i}) \big)\cdot \big(\prod_{ (j,k)\in\mathcal{S}^q_1}m_{X^{\ast}}(M_q^{(j,k)})\big)$.
\end{itemize}

For $\widetilde{M_q}=(M_q, t_{M_q})\in  \widetilde{\Sp}^{z_0}_{2m}(\R)$, decompose it as: 
$$\widetilde{M}_q=(\prod_{i\in \mathcal{S}^q_0} \widetilde{M}_q^{i}) \cdot (\prod_{ (j,k)\in\mathcal{S}^q_1}\widetilde{M}_q^{(j,k)}),$$ 
where $\widetilde{M}_q^{i}=(M_q^{i},t_{M_q^{i}})$ and $\widetilde{M}_q^{(j,k)}=(M_q^{(j,k)},t_{M_q^{(j,k)}})$. Given the cocycle values between any two distinct elements of  $M_q^{i}, M_q^{(j,k)}$ are all $1$, we have:
 \begin{align*}
  t_{M_q}=(\prod_{i\in \mathcal{S}^q_0}t_{M_q^{i}}) \cdot( \prod_{ (j,k)\in\mathcal{S}^q_1}t_{M_q^{(j,k)}}).
  \end{align*}
 \begin{align*}
&\theta_{1/2}(\widetilde{M_q} \widetilde{g})\\
&=\theta_{L, X^{\ast}}\Big(  J_{1/2}(\widetilde{M_q}p_g, z_0)\pi_{X^{\ast}, \psi} (\widetilde{M_q}p_g)A\Big)(0)\\
&= J_{1/2}(\widetilde{M_q},z_g)\pi_{L, \psi} (\widetilde{M_q})\theta_{L, X^{\ast}}\Big(  J_{1/2}(p_g, z_0)\pi_{X^{\ast}, \psi} (p_g)A\Big)(0)\\
&= J_{1/2}(\widetilde{M_q},z_g)\prod_{i\in \mathcal{S}^q_0}\prod_{ (j,k)\in\mathcal{S}^q_1} \pi_{L, \psi} (\widetilde{M}^i_q) \pi_{L, \psi} (\widetilde{M}_q^{(j,k)})\theta_{L, X^{\ast}}\Big(  J_{1/2}(p_g, z_0)\pi_{X^{\ast}, \psi} (p_g)A\Big)(0)\\
&= J_{1/2}(\widetilde{M_q},z_g)m_{X^{\ast}}(q)^{-1}t_{M_q}\theta_{L, X^{\ast}}\Big(  J_{1/2}(p_g, z_0)\pi_{X^{\ast}, \psi} (p_g)A\Big)(\tfrac{1}{2} \epsilon_{q} \cdot \underline{e}^T+\tfrac{1}{2} m_{q} \cdot \underline{e}^{\ast T})\\
&= J_{1/2}(\widetilde{M_q},z_g)m_{X^{\ast}}(q)^{-1}t_{M_q}\sum_{n\in \Z^m}(-1)^{m_q \cdot n^T}e^{i  \pi (n+\tfrac{1}{2}\epsilon_{q}) z_g (n+\tfrac{1}{2}\epsilon_{q})^T}.
\end{align*}
\begin{align*}
&\theta_{3/2}(\widetilde{M_q} \widetilde{g})\\
&=\theta_{L, X^{\ast}}\Big(  J_{3/2}(\widetilde{M_q}p_g, z_0)\pi_{X^{\ast}, \psi} (\widetilde{M_q}p_g)B\Big)(0)\\
&= J_{3/2}(\widetilde{M_q},z_g)\pi_{L, \psi} (\widetilde{M_q})\theta_{L, X^{\ast}}\Big(  J_{3/2}(p_g, z_0)\pi_{X^{\ast}, \psi} (p_g)B\Big)(0)\\
&= J_{3/2}(\widetilde{M_q},z_g)\prod_{i=1}^m \pi_{L, \psi} (\widetilde{M}^i_q)\theta_{L, X^{\ast}}\Big(  J_{3/2}(p_g, z_0)\pi_{X^{\ast}, \psi} (p_g)B\Big)(0)\\
&= J_{3/2}(\widetilde{M_q},z_g)m_{X^{\ast}}(q)^{-1}t_{M_q}\theta_{L, X^{\ast}}\Big(  J_{3/2}(p_g, z_0)\pi_{X^{\ast}, \psi} (p_g)B\Big)(\tfrac{1}{2} \epsilon_{q} \cdot \underline{e}^T+\tfrac{1}{2} m_{q} \cdot \underline{e}^{\ast T})\\
&= J_{3/2}(\widetilde{M_q},z_g)m_{X^{\ast}}(q)^{-1}t_{M_q}\sum_{n\in \Z^m}(-1)^{m_q \cdot n^T}(n+\tfrac{1}{2}\epsilon_{q})^Te^{i  \pi (n+\tfrac{1}{2}\epsilon_{q})^T z_g (n+\tfrac{1}{2}\epsilon_{q})^T}.
\end{align*}
For $z\in \mathbb{H}_m$,  we define:
\begin{align*}
\theta_{1/2}^{\widetilde{M_q}}(z)
&=m_{X^{\ast}}(q)^{-1}t_{M_q}\sum_{n=(n_1,\cdots,n_m)\in \Z^m} (-1)^{m_q \cdot n^T}e^{i  \pi (n+\tfrac{1}{2}\epsilon_{q}) z (n+\tfrac{1}{2}\epsilon_{q})^T}.\\
& \\
\theta_{3/2}^{\widetilde{M_q}}(z)&=m_{X^{\ast}}(q)^{-1}t_{M_q}\sum_{n=(n_1,\cdots,n_m)\in \Z^m} (-1)^{m_q \cdot n^T}(n+\tfrac{1}{2}\epsilon_{q})^Te^{i  \pi (n+\tfrac{1}{2}\epsilon_{q}) z (n+\tfrac{1}{2}\epsilon_{q})^T}.
\end{align*}
Then:
 \begin{itemize}
 \item $\theta_{1/2}(\widetilde{M_q}z)=J_{1/2}(\widetilde{M_q}, z) \theta_{1/2}^{\widetilde{M_q}}(z)$.
 \item $\theta_{3/2}(\widetilde{M_q}z)=J_{3/2}(\widetilde{M_q}, z)\theta_{3/2}^{\widetilde{M_q}}(z)$.
 \end{itemize}
 Let $\widetilde{r}=(r,t_r)\in \widetilde{\Gamma}^{z_0}_m(1,2)$.  Let us define:
\[\widetilde{\lambda}: \widetilde{\Gamma}^{z_0}_m(1,2) \longrightarrow T; \widetilde{r}\longmapsto t_r \widetilde{\beta}^{-1}(r).\]
\begin{lemma}
$\widetilde{\lambda}$ is a character of $\widetilde{\Gamma}^{z_0}_m(1,2)$.
\end{lemma}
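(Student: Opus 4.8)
The plan is to verify directly that $\widetilde{\lambda}$ is multiplicative; since it visibly takes values in $T$ — both $t_r$ and $\widetilde{\beta}(r)$ lie in $T$, in fact in $\mu_8$ by the construction of $\widetilde{\Sp}^{z_0}_{2m}(\R)$ and by Lemma~\ref{gamma1SP} — multiplicativity is all that remains. First I would record the group law on $\widetilde{\Sp}^{z_0}_{2m}(\R)$ in a usable form. By definition this cover is attached to the cocycle $\beta'_{z_0}(g_1,g_2)=\beta_{z_0}(g_1^{h_{-1}},g_2^{h_{-1}})$, and Lemma~\ref{twosame} gives $\beta_{z_0}(g_1^{h_{-1}},g_2^{h_{-1}})=\beta_{z_0}(g_1,g_2)^{-1}=\widetilde{c}_{X^{\ast}}(g_1,g_2)$; hence $(g_1,t_1)(g_2,t_2)=(g_1g_2,\ \widetilde{c}_{X^{\ast}}(g_1,g_2)\,t_1t_2)$, which is precisely the product already used in the proof of Lemma~\ref{imbed}.

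Next, for $\widetilde{r}_i=(r_i,t_i)\in\widetilde{\Gamma}^{z_0}_m(1,2)$ I would compute $\widetilde{r}_1\widetilde{r}_2=(r_1r_2,\ \widetilde{c}_{X^{\ast}}(r_1,r_2)\,t_1t_2)$, so that
\[
\widetilde{\lambda}(\widetilde{r}_1\widetilde{r}_2)=\widetilde{c}_{X^{\ast}}(r_1,r_2)\,t_1t_2\,\widetilde{\beta}^{-1}(r_1r_2).
\]
Now invoke Lemma~\ref{widetildec}, $\widetilde{c}_{X^{\ast}}(r_1,r_2)=\widetilde{\beta}(r_1)^{-1}\widetilde{\beta}(r_2)^{-1}\widetilde{\beta}(r_1r_2)$, which is legitimate because $r_1,r_2\in\Gamma_m(1,2)$ where $\widetilde{\beta}$ is defined. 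Substituting, the two factors $\widetilde{\beta}(r_1r_2)$ cancel and one is left with
\[
\widetilde{\lambda}(\widetilde{r}_1\widetilde{r}_2)=\bigl(t_1\widetilde{\beta}^{-1}(r_1)\bigr)\bigl(t_2\widetilde{\beta}^{-1}(r_2)\bigr)=\widetilde{\lambda}(\widetilde{r}_1)\,\widetilde{\lambda}(\widetilde{r}_2).
\]
Since multiplicativity forces $\widetilde{\lambda}(1)=1$ (the value lies in $T$), $\widetilde{\lambda}$ is a homomorphism into $T$, i.e.\ a character, as claimed.

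There is no serious obstacle; the only point requiring care is the bookkeeping of the three closely related normalizations of the metaplectic cocycle — $\widetilde{c}_{X^{\ast}}$, $\beta_{z_0}$, and the $h_{-1}$-twisted $\beta'_{z_0}$ — so that the inverse appearing in the group law of $\widetilde{\Sp}^{z_0}_{2m}(\R)$ matches the one in Lemma~\ref{widetildec}. It is also worth stating that $\widetilde{\beta}$ makes sense on $\Gamma_m(1,2)$ exactly because $[\widetilde{c}_{X^{\ast}}]$ is trivial there (Lemma~\ref{gamma1SP}), which is what renders Lemma~\ref{widetildec} applicable. As a sanity check, under the isomorphism $\iota$ of Lemma~\ref{imbed} the character $\widetilde{\lambda}$ pulls back on $\overline{\Gamma}_m(1,2)$ to $(r,\epsilon)\mapsto \epsilon\,\lambda(r)$ with $\lambda(r)=m_{X^{\ast}}(r)\widetilde{\beta}^{-1}(r)$, and the identity $\lambda(r_1)\lambda(r_2)=\lambda(r_1r_2)\overline{c}_{X^{\ast}}(r_1,r_2)$ of Lemma~\ref{mutirr12} becomes exact multiplicativity once the $\overline{c}_{X^{\ast}}$-twist is absorbed into the central coordinate — consistent with the computation above.
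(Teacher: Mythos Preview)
Your proof is correct and follows essentially the same approach as the paper: compute the product in $\widetilde{\Sp}^{z_0}_{2m}(\R)$ using the cocycle $\beta'_{z_0}=\widetilde{c}_{X^{\ast}}$ (via Lemma~\ref{twosame}) and then invoke Lemma~\ref{widetildec} to cancel the $\widetilde{\beta}(r_1r_2)$ factor. The only cosmetic difference is that the paper writes the product cocycle as $\beta_{z_0}(r^{h_{-1}},s^{h_{-1}})$ before converting to $\widetilde{c}_{X^{\ast}}$, whereas you make that identification up front.
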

\begin{proof}
Let $\widetilde{s}=(s,t_s)\in \widetilde{\Gamma}^{z_0}_m(1,2)$. Then:
\[\widetilde{\lambda}(\widetilde{r})\widetilde{\lambda}(\widetilde{s})=t_r \widetilde{\beta}^{-1}(r)t_s \widetilde{\beta}^{-1}(s);\]
\[\widetilde{r}\widetilde{s}=[rs, t_rt_s \beta_{z_0}(r^{h_{-1}},s^{h_{-1}})];\]
\begin{align*}
\widetilde{\lambda}(\widetilde{r}\widetilde{s})&=t_rt_s \beta_{z_0}(r^{h_{-1}},s^{h_{-1}})\widetilde{\beta}^{-1}(rs)\\
&=t_rt_s \widetilde{c}_{X^{\ast}}(r,s)\widetilde{\beta}^{-1}(rs)\\
&\stackrel{\textrm{Lem.} \ref{widetildec}}{=}t_rt_s \widetilde{\beta}^{-1}(r)\widetilde{\beta}^{-1}(s).
\end{align*}
\end{proof}

\subsubsection{}\label{gamma12433}
Let $\widetilde{r}=(r,t_r)\in \widetilde{\Gamma}^{z_0}_m(1,2)$. Let $z=z_g\in \mathbb{H}_m$.  Write $\widetilde{M_q}\widetilde{r}=\widetilde{s}\widetilde{M_p}$, for some $\widetilde{M_p}\in \mathcal{M}_{2m}$ , $\widetilde{s}\in \widetilde{\Gamma}^{z_0}_m(1,2)$.  Then:
\begin{align*}
&\theta_{1/2}^{\widetilde{M_q}}(\widetilde{r}z)\\
&=J_{1/2}(\widetilde{M_q},\widetilde{r}z)^{-1}\theta_{1/2}(\widetilde{M_q}\widetilde{r}z)\\
&=J_{1/2}(\widetilde{M_q},\widetilde{r}z)^{-1}\theta_{1/2}(\widetilde{s}\widetilde{M_p}z)\\
&=J_{1/2}(\widetilde{M_q},\widetilde{r}z)^{-1}J_{1/2}(\widetilde{s} ,\widetilde{M_p}(z)) t_s \widetilde{\beta}(s)^{-1} \theta_{1/2}( \widetilde{M_p}z)\\
&=J_{1/2}(\widetilde{M_q},\widetilde{r}z)^{-1}J_{1/2}(\widetilde{s} ,\widetilde{M_p}(z))J_{1/2}(\widetilde{M_p}, z) t_s \widetilde{\beta}(s)^{-1} \theta_{1/2}^{M_p}(z)\\
&=J_{1/2}(\widetilde{r},z)\widetilde{\lambda}(\widetilde{s}) \theta_{1/2}^{\widetilde{M_p}}(z).
\end{align*}
\begin{align*}
&\theta_{3/2}^{\widetilde{M_q}}(\widetilde{r}z)\\
&=J_{3/2}(\widetilde{M_q},\widetilde{r}z)^{-1}\theta_{3/2}(\widetilde{M_q}\widetilde{r}z)\\
&=J_{3/2}(\widetilde{M_q},\widetilde{r}z)^{-1}\theta_{3/2}(\widetilde{s}\widetilde{M_p}z)\\
&=J_{3/2}(\widetilde{M_q},\widetilde{r}z)^{-1}J_{3/2}(\widetilde{s} ,\widetilde{M_p}(z)) t_s \widetilde{\beta}(s)^{-1} \theta_{3/2}( \widetilde{M_p}z)\\
&=J_{3/2}(\widetilde{M_q},\widetilde{r}z)^{-1}J_{3/2}(\widetilde{s} ,\widetilde{M_p}(z))J_{3/2}(\widetilde{M_p}, z) t_s \widetilde{\beta}(s)^{-1} \theta_{3/2}^{\widetilde{M_p}}(z)\\
&=J_{3/2}(\widetilde{r},z)\widetilde{\lambda}(\widetilde{s}) \theta_{3/2}^{\widetilde{M_p}}(z).
\end{align*}
\subsubsection{}\label{Mqpr} Let $\widetilde{M_q}, \widetilde{M_p}\in \mathcal{M}_{2m}$. Write $\widetilde{M_q}\widetilde{M_p}=\widetilde{s}\widetilde{M_r}$, for some $M_r\in \mathcal{M}_{2m}$ , $\widetilde{s}\in \widetilde{\Gamma}^{z_0}_m(1,2)$.  
\begin{align*}
&\theta_{1/2}^{\widetilde{M_q}}(\widetilde{M_p}z)\\
&=J_{1/2}(\widetilde{M_q},\widetilde{M_p}z)^{-1}\theta_{1/2}(\widetilde{M_q}\widetilde{M_p}z)\\
&=J_{1/2}(\widetilde{M_q},\widetilde{M_p}z)^{-1}\theta_{1/2}(\widetilde{s}\widetilde{M_r}z)\\
&=J_{1/2}(\widetilde{M_q},\widetilde{M_p}z)^{-1}J_{1/2}(\widetilde{s} ,\widetilde{M_r}(z)) t_s \widetilde{\beta}(s)^{-1} \theta_{1/2}( \widetilde{M_r}z)\\
&=J_{1/2}(\widetilde{M_q},\widetilde{M_p}z)^{-1}J_{1/2}(\widetilde{s} ,\widetilde{M_r}(z))J_{1/2}(\widetilde{M_r}, z) t_s \widetilde{\beta}(s)^{-1} \theta_{1/2}^{\widetilde{M_r}}(z)\\
&=J_{1/2}(\widetilde{M_p},z)\lambda(\widetilde{s}) \theta_{1/2}^{\widetilde{M_r}}(z).
\end{align*}
\begin{align*}
&\theta_{3/2}^{\widetilde{M_q}}(\widetilde{M_p}z)\\
&=J_{3/2}(\widetilde{M_q},\widetilde{M_p}z)^{-1}\theta_{3/2}(\widetilde{M_q}\widetilde{M_p}z)\\
&=J_{3/2}(\widetilde{M_q},\widetilde{M_p}z)^{-1}\theta_{3/2}(\widetilde{s}\widetilde{M_r}z)\\
&=J_{3/2}(\widetilde{M_q},\widetilde{M_p}z)^{-1}J_{3/2}(\widetilde{s} ,\widetilde{M_r}(z)) t_s \widetilde{\beta}(s)^{-1} \theta_{3/2}( \widetilde{M_r}z)\\
&=J_{3/2}(\widetilde{M_q},\widetilde{M_p}z)^{-1}J_{3/2}(\widetilde{s} ,\widetilde{M_r}(z))J_{3/2}(\widetilde{M_r}, z) t_s \widetilde{\beta}(s)^{-1} \theta_{3/2}^{\widetilde{M_r}}(z)\\
&=J_{3/2}(\widetilde{M_p},z)\lambda(\widetilde{s}) \theta_{3/2}^{\widetilde{M_r}}(z).
\end{align*}

\subsubsection{}
Recall notations from Sections \ref{setminus}--\ref{modification11}. Let $M_{q_1}, \cdots, M_{q_{n}}$ be  the sets of $\mathcal{M}_{2m}$ corresponding to each element  $q_1, \cdots, q_n$ in $Q_0^{-1}(0)$.  Let $n=|Q_0^{-1}(0)|=2^{2m-1} + 2^{m-1}$. For each $q_i$, we choose an element $ \widetilde{M_{q_i}}\in  \widetilde{\Sp}^{z_0}_{2m}(\Z)$. Let us define:
$$\widetilde{\gamma}=\Ind_{\widetilde{\Gamma}^{z_0}_m(1,2)}^{\widetilde{\Sp}^{z_0}_{2m}(\Z)} \widetilde{\lambda}^{-1}, \widetilde{M}=\Ind_{\widetilde{\Gamma}^{z_0}_m(1,2)}^{\widetilde{\Sp}^{z_0}_{2m}(\Z)} \C.$$
Then  $\widetilde{M}$ consists of the functions $f: \widetilde{\Sp}^{z_0}_{2m}(\Z) \longrightarrow \C$ such that $f(\widetilde{r}\widetilde{g})=\widetilde{\lambda}(\widetilde{r})^{-1}f(\widetilde{g})$, for $\widetilde{r}\in\widetilde{\Gamma}^{z_0}_m(1,2)$ and $\widetilde{g}\in \widetilde{\Sp}^{z_0}_{2m}(\Z)$.  Let $\widetilde{e}_i$ denote the function of $\widetilde{M}$, supported on $\widetilde{\Gamma}^{z_0}_m(1,2) \widetilde{M_{q_i}}$ and $\widetilde{e}_i(\widetilde{M_{q_i}})=1$. Then $\widetilde{M}=\oplus_{i=1}^n \C \widetilde{e}_i$.  
\begin{itemize}
 \item \begin{itemize}
\item[(a)]  $\widetilde{M_{q_i}} \widetilde{r}=\widetilde{s}\widetilde{M_{q_j}}$, for some $\widetilde{s}\in \widetilde{\Gamma}^{z_0}_m(1,2)$ iff $\widetilde{\Gamma}^{z_0}_m(1,2) \widetilde{M_{q_i}} \widetilde{r}=\widetilde{\Gamma}^{z_0}_m(1,2) \widetilde{M_{q_j}}$.
\item[(b)] If the above condition holds, then $ [\widetilde{\gamma}(\widetilde{r})(\widetilde{e}_j)](\widetilde{M_{q_i}})=\widetilde{\lambda}(\widetilde{s})^{-1}$, and $ \supp [\widetilde{\gamma}(\widetilde{r})](\widetilde{e}_j)=\widetilde{\Gamma}^{z_0}_m(1,2) \widetilde{M_{q_i}}$. So $[\widetilde{\gamma}(\widetilde{r})](\widetilde{e}_j)=\widetilde{\lambda}(\widetilde{s})^{-1}\widetilde{e}_i$, and $[\widetilde{\gamma}(\widetilde{r}^{-1})](\widetilde{e}_i)=\widetilde{\lambda}(\widetilde{s})\widetilde{e}_j$.
\end{itemize}
 \item   
 \begin{itemize}
\item[(c)] $\widetilde{M_{q_i}}\widetilde{M_{q_j}}=\widetilde{s}\widetilde{M_{q_r}}$, for some $\widetilde{s}\in \widetilde{\Gamma}^{z_0}_m(1,2)$ iff $[\widetilde{\Gamma}^{z_0}_m(1,2)\widetilde{M_{q_i}}]\widetilde{M_{q_j}}=\widetilde{\Gamma}^{z_0}_m(1,2)\widetilde{M_{q_r}}$.
    \item [(d)] If the above condition holds, then $ [\widetilde{\gamma}(\widetilde{M_{q_j}})(\widetilde{e}_r)](\widetilde{M_{q_i}})=\widetilde{\lambda}(\widetilde{s})^{-1}$,  and $ \supp \widetilde{\gamma}(\widetilde{M_{q_j}})(\widetilde{e}_r)=\widetilde{\Gamma}^{z_0}_m(1,2) \widetilde{M_{q_i}}$.  So $[\widetilde{\gamma}(\widetilde{M_{q_j}})](\widetilde{e}_r)=\widetilde{\lambda}(\widetilde{s})^{-1}\widetilde{e}_i$, and  $[\widetilde{\gamma}(\widetilde{M_{q_j}}^{-1})](\widetilde{e}_i)=\widetilde{\lambda}(\widetilde{s})\widetilde{e}_r$.
    \end{itemize} 
    \item Under the basis $\{\widetilde{e}_1, \cdots, \widetilde{e}_n\}$, we identity $\widetilde{M}$ with $M_{1n}(\C)$, and write 
    $\widetilde{\gamma}(\widetilde{g})(\widetilde{e}_1, \cdots, \widetilde{e}_n)=(\widetilde{e}_1, \cdots, \widetilde{e}_n)M(\widetilde{g})$, for some $n\times n$-matrix $M(\widetilde{g})$. Then 
    \[\begin{array}{rcll}
    \widetilde{\gamma}: & \widetilde{\Sp}^{z_0}_{2m}(\Z) & \longrightarrow &  M_n(\C);\\
                        &\widetilde{g}                    & \longmapsto     & M(\widetilde{g}),
    \end{array}\] gives a matrix representation. 
\end{itemize}

We now define:
$$\Theta_{1/2}:  \mathbb{H}_m \longrightarrow M_{1n}(\C); z\longmapsto (\theta_{1/2}^{\widetilde{M_{q_1}}}(z), \cdots, \theta_{1/2}^{\widetilde{M_{q_n}}}(z))$$
$$\Theta_{3/2}:  \mathbb{H}_m \longrightarrow M_{mn}(\C); z\longmapsto (\theta_{3/2}^{\widetilde{M_{q_1}}}(z), \cdots, \theta_{3/2}^{\widetilde{M_{q_n}}}(z))$$
\begin{theorem}
Let $\widetilde{r}=(r,t_r)\in \widetilde{\Sp}^{z_0}_{2m}(\Z)$ with  $r=\begin{pmatrix}a& b\\ c&d\end{pmatrix}$. Then:
\begin{itemize}
\item $\Theta_{1/2}(\widetilde{r}z)=J_{1/2}(\widetilde{r},z)\Theta_{1/2}(z)\widetilde{\gamma}(\widetilde{r}^{-1})=\pm\sqrt{\det(cz+d)}\Theta_{1/2}(z)\widetilde{\gamma}(\widetilde{r}^{-1})$.
\item $\Theta_{3/2}(\widetilde{r}z)=J_{3/2}(\widetilde{r},z)\Theta_{3/2}(z)\widetilde{\gamma}(\widetilde{r}^{-1})=\pm \sqrt{\det(cz+d)}(cz+d)\Theta_{3/2}(z)\widetilde{\gamma}(\widetilde{r}^{-1})$.
\end{itemize}
\end{theorem}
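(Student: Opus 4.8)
The plan is to reduce the statement to the coordinate‑wise transformation identities already obtained in Sections~\ref{gamma12433} and~\ref{Mqpr}, and then to recognise the coefficients that appear there as the entries of the matrix of $\widetilde\gamma(\widetilde r^{-1})$.

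\emph{Coset bookkeeping.} Since $\widetilde{\Gamma}^{z_0}_m(1,2)$ is the full preimage of $\Gamma_m(1,2)$ in the central cover, the canonical map $\widetilde{\Gamma}^{z_0}_m(1,2)\backslash\widetilde{\Sp}^{z_0}_{2m}(\Z)\to\Gamma_m(1,2)\backslash\Sp_{2m}(\Z)$ is a bijection; hence by Lemma~\ref{represnetcoset} the chosen lifts $\widetilde{M_{q_1}},\dots,\widetilde{M_{q_n}}$ form a complete set of representatives for the left cosets of $\widetilde{\Gamma}^{z_0}_m(1,2)$ in $\widetilde{\Sp}^{z_0}_{2m}(\Z)$. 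Fixing $\widetilde r=(r,t_r)\in\widetilde{\Sp}^{z_0}_{2m}(\Z)$, right translation by $\widetilde r$ permutes these $n$ cosets, so for each $i$ there are a unique index $j(i)$ and a unique $\widetilde s_i\in\widetilde{\Gamma}^{z_0}_m(1,2)$ with $\widetilde{M_{q_i}}\,\widetilde r=\widetilde s_i\,\widetilde{M_{q_{j(i)}}}$.

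\emph{The component identity.} The computation in Section~\ref{gamma12433} uses the hypothesis $\widetilde r\in\widetilde{\Gamma}^{z_0}_m(1,2)$ only through the cocycle property of $J_{1/2}$ (resp.\ $J_{3/2}$) and through the formula $\theta_{1/2}(\widetilde s\,\widetilde{M_p}z)=J_{1/2}(\widetilde s,\widetilde{M_p}(z))\,t_s\,\epsilon_s\,\theta_{1/2}(\widetilde{M_p}z)$, which holds because $\widetilde s\in\widetilde{\Gamma}^{z_0}_m(1,2)$ (here $t_s\epsilon_s=t_s\widetilde\beta(s)^{-1}=\widetilde\lambda(\widetilde s)$ by the lemma $\widetilde\beta(g)\epsilon_g=1$). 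Consequently that computation applies verbatim to an arbitrary $\widetilde r\in\widetilde{\Sp}^{z_0}_{2m}(\Z)$, with $\widetilde s=\widetilde s_i$ and $\widetilde{M_p}=\widetilde{M_{q_{j(i)}}}$, and gives for every $i$
\[
\theta_{1/2}^{\widetilde{M_{q_i}}}(\widetilde r z)=J_{1/2}(\widetilde r,z)\,\widetilde\lambda(\widetilde s_i)\,\theta_{1/2}^{\widetilde{M_{q_{j(i)}}}}(z),\qquad
\theta_{3/2}^{\widetilde{M_{q_i}}}(\widetilde r z)=J_{3/2}(\widetilde r,z)\,\widetilde\lambda(\widetilde s_i)\,\theta_{3/2}^{\widetilde{M_{q_{j(i)}}}}(z).
\]
(Equivalently one may factor $\widetilde r=\widetilde r_0\widetilde{M_{q_k}}$ through the coset decomposition and combine Sections~\ref{gamma12433} and~\ref{Mqpr}, using that $\widetilde\lambda$ is a character; both routes give the same outcome.)

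\emph{Assembling the matrix identity, and the main obstacle.} By the itemised properties of $\widetilde\gamma$ listed just before the statement, the relation $\widetilde{M_{q_i}}\widetilde r=\widetilde s_i\widetilde{M_{q_{j(i)}}}$ gives $\widetilde\gamma(\widetilde r^{-1})(\widetilde e_i)=\widetilde\lambda(\widetilde s_i)\,\widetilde e_{j(i)}$; with the convention $\widetilde\gamma(\widetilde g)(\widetilde e_1,\dots,\widetilde e_n)=(\widetilde e_1,\dots,\widetilde e_n)M(\widetilde g)$ this means the $i$‑th column of $M(\widetilde r^{-1})$ has its single nonzero entry, $\widetilde\lambda(\widetilde s_i)$, in row $j(i)$. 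Hence the $k$‑th entry of the row vector $\Theta_{1/2}(z)\,M(\widetilde r^{-1})$ is $\widetilde\lambda(\widetilde s_k)\,\theta_{1/2}^{\widetilde{M_{q_{j(k)}}}}(z)$, which by the component identity equals $J_{1/2}(\widetilde r,z)^{-1}\theta_{1/2}^{\widetilde{M_{q_k}}}(\widetilde r z)$, i.e.\ the $k$‑th entry of $J_{1/2}(\widetilde r,z)^{-1}\Theta_{1/2}(\widetilde r z)$; this is exactly $\Theta_{1/2}(\widetilde r z)=J_{1/2}(\widetilde r,z)\Theta_{1/2}(z)\widetilde\gamma(\widetilde r^{-1})$. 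The argument for $\Theta_{3/2}$ is identical, the only difference being that $J_{3/2}(\widetilde r,z)=J_{1/2}(\widetilde r,z)(cz+d)$ is an $m\times m$ matrix acting on the left of the $m\times n$ matrix $\Theta_{3/2}$ while $\widetilde\gamma(\widetilde r^{-1})$ still acts on the right. Finally, $J_{1/2}(\widetilde r,z)=\pm\sqrt{\det(cz+d)}$ and $J_{3/2}(\widetilde r,z)=\pm\sqrt{\det(cz+d)}(cz+d)$ by the Corollary following Lemma~\ref{J122}, which yields the explicit forms. The only friction I anticipate is keeping these three conventions mutually consistent — the right action of $\widetilde r$ on cosets, the left/right convention for $M(\widetilde g)$, and the packaging of the $\theta^{\widetilde{M_{q_i}}}$ into $\Theta_{1/2}$ (resp.\ the $m\times n$ matrix $\Theta_{3/2}$) — together with the routine check that the coset decomposition of Lemma~\ref{represnetcoset} lifts to the metaplectic cover; no new analytic input is required.
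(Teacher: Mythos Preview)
Your proposal is correct and follows essentially the same approach as the paper: both reduce to the componentwise identities of Sections~\ref{gamma12433}--\ref{Mqpr} and then read off the matrix of $\widetilde\gamma(\widetilde r^{-1})$ via items (a)--(d). The only organisational difference is that the paper verifies the formula in three cases ($\widetilde r\in\widetilde\Gamma^{z_0}_m(1,2)$, $\widetilde r=\widetilde{M_q}$, then the product) whereas you treat a general $\widetilde r$ in one pass by observing that the calculation of Section~\ref{gamma12433} never actually uses the hypothesis $\widetilde r\in\widetilde\Gamma^{z_0}_m(1,2)$---and you yourself note the paper's factorisation route as an equivalent alternative.
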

\begin{proof}
1a) If $\widetilde{r}\in \widetilde{\Gamma}^{z_0}_{m}(1,2)$,  then by the results of Section \ref{gamma12433} and the above (a)(b), the equality holds.\\
1b)  If  $\widetilde{r}=\widetilde{M_{q}}$, by the results of Section \ref{Mqpr} and the above (c)(d), the equality holds.\\
1c) If $\widetilde{r}=\widetilde{s}\widetilde{M_{q}}$, for some $\widetilde{s}\in \widetilde{\Gamma}^{z_0}_{m}(1,2)$, then 
\begin{align*}
\Theta_{1/2}(\widetilde{r}z)&=\Theta_{1/2}(\widetilde{s}\widetilde{M_{q}}z)\\
&=J_{1/2}(\widetilde{s},\widetilde{M_{q}}z)\Theta_{1/2}(\widetilde{M_{q}}z)\widetilde{\gamma}(\widetilde{s}^{-1})\\
&=J_{1/2}(\widetilde{s},\widetilde{M_{q}}z)J_{1/2}(\widetilde{M_{q}},z)\Theta_{1/2}(z)\widetilde{\gamma}(\widetilde{M_{q}}^{-1})\widetilde{\gamma}(\widetilde{s}^{-1})\\
&=J_{1/2}(\widetilde{r},z)\Theta_{1/2}(z)\widetilde{\gamma}(\widetilde{r}^{-1}).
\end{align*}
The proof of Part (2)  is similar.
\end{proof}
\subsection{The finite-dimensional representation I}\label{finitI}
\subsubsection{} Let's recall the notation for $\widetilde{\Sp}_{2m}(\R)$ from Section \ref{schod} and for  $\Mp^{z_0}_{2m}(\R)$ from Section \ref{autoI}.   By Lemma \ref{twosame}, these two groups have the same associated cocycles but differ only in their centers. Therefore,   $\widetilde{\Sp}_{2m}(\R)$ is a subgroup of $\Mp^{z_0}_{2m}(\R)$.  Note that $\widetilde{\Sp}_{2m}^{z_0}(\R)$ is another subgroup of $\Mp^{z_0}_{2m}(\R)$. It consists of elements $(g, t)$ with $t=\pm m_{X^{\ast}}(g)$, and $g\in \Sp_{2m}(\R)$. Since $\pm m_{X^{\ast}}(g)\in \mu_8$, we have:
$$\widetilde{\Sp}_{2m}^{z_0}(\R) \subseteq \widetilde{\Sp}_{2m}(\R) \subseteq \Mp^{z_0}_{2m}(\R).$$
For $\widetilde{g}\in  \widetilde{\Sp}_{2m}(\R) $, let us write $ \widetilde{g}=\widetilde{g}^{z_0} \epsilon$, for some $\widetilde{g}^{z_0}=(g,t_g)\in \widetilde{\Sp}_{2m}^{z_0}(\R)$ and $\epsilon \in \mu_8$. We now define the canonical action of $ \widetilde{\Sp}_{2m}(\R)$ on $\mathbb{H}_m$ via $\Sp_{2m}(\R)$. 
Define:
\begin{itemize}
\item $J_{1/2}(\widetilde{g},z)= \epsilon^{-1} J_{1/2}(\widetilde{g}^{z_0},z)= \epsilon^{-1} t_g^{-1} \cdot \epsilon(g; z,z_0) \cdot |\det J(g, z)|^{1/2}$.
\item  $J_{3/2}(\widetilde{g},z)=\epsilon^{-1} J_{3/2}(\widetilde{g}^{z_0},z) =\epsilon^{-1} J_{1/2}(\widetilde{g}^{z_0},z) J(g,z)$.
\end{itemize}
For $\widetilde{g}_1, \widetilde{g}_2\in \widetilde{\Sp}_{2m}(\R) $, we have:
\begin{itemize}
\item[(1)] $J_{1/2}(\widetilde{g}_1\widetilde{g}_2, z)=J_{1/2}(\widetilde{g}_1, g_2(z))J_{1/2}(\widetilde{g}_2, z)$.
\item[(2)]$J_{3/2}(\widetilde{g}_1\widetilde{g}_2, z)=J_{3/2}(\widetilde{g}_1, g_2(z))J_{3/2}(\widetilde{g}_2, z)$.
\end{itemize}    
Let $\widetilde{\Gamma}_m(1,2)$ and $\widetilde{\Sp}_{2m}(\Z)$ denote the inverse images of $\Gamma_m(1,2)$ and $\Sp_{2m}(\Z)$ in $\widetilde{\Sp}_{2m}(\R)$, respectively. 
We extend  $\widetilde{\lambda}$ to $\widetilde{\Gamma}_m(1,2)$ by defining 
\[\widetilde{\lambda}: \widetilde{\Gamma}_m(1,2) \longrightarrow T; \quad \widetilde{r}=(r, t_r\epsilon)\longmapsto t_r \epsilon \widetilde{\beta}^{-1}(r).\]
We also extend the above $\widetilde{\gamma}$ to $\widetilde{\Sp}_{2m}(\Z)$ by defining
$$\widetilde{\gamma}=\Ind_{\widetilde{\Gamma}_m(1,2)}^{\widetilde{\Sp}_{2m}(\Z)} \widetilde{\lambda}^{-1}, \quad \widetilde{M}=\Ind_{\widetilde{\Gamma}_m(1,2)}^{\widetilde{\Sp}_{2m}(\Z)} \C.$$
For  $\widetilde{r}=(r, t)\in \widetilde{\Sp}_{2m}(\Z)$, where $r=\begin{pmatrix}a& b\\ c&d\end{pmatrix}$ and  $t\in \mu_8$, the following hold:
\begin{itemize}
\item $\Theta_{1/2}(\widetilde{r}z)=J_{1/2}(\widetilde{r},z)\Theta_{1/2}(z)\widetilde{\gamma}(\widetilde{r}^{-1})$,
\item $\Theta_{3/2}(\widetilde{r}z)=J_{3/2}(\widetilde{r},z)\Theta_{3/2}(z)\widetilde{\gamma}(\widetilde{r}^{-1})$.
\end{itemize}
We now  reduce $\widetilde{\gamma}$ to a representation of a finite group.
\subsubsection{$m\geq 2$}  
For  $g\in \Sp_{2m}(\Z)$, write $g=r M_q$, for  unique  $r\in \Gamma_m(1,2)$ and $M_q\in \mathcal{M}_{2m}$. Define the function:
$$f: \Sp_{2m}(\Z) \longmapsto \mu_8; \quad g \longmapsto \widetilde{\beta}(r) \widetilde{c}_{X^{\ast}}(r, M_q).$$
We  modify the cocycle $\widetilde{c}_{X^{\ast}}$ by $f$  to obtain a new cocycle: 
$$\widetilde{c}'_{X^{\ast}}(g_1,g_2)=\widetilde{c}_{X^{\ast}}(g_1,g_2) f(g_1)f(g_2) f(g_1g_2)^{-1}.$$
\begin{lemma}\label{crg}
Let $r, r_1,r_2\in \Gamma_m(1,2)$ and $g\in \Sp_{2m}(\Z)$. Then $\widetilde{c}'_{X^{\ast}}(r,g)=1=\widetilde{c}'_{X^{\ast}}(r_1,r_2)$.
\end{lemma}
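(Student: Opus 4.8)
The goal is to show that the modified cocycle $\widetilde{c}'_{X^{\ast}}$ is trivial on the two ``blocks'' $\Gamma_m(1,2)\times\Sp_{2m}(\Z)$ and $\Gamma_m(1,2)\times\Gamma_m(1,2)$. The strategy is to unwind the definition of $f$ and $\widetilde{c}'_{X^{\ast}}$ and to reduce everything to the coboundary relation for $\widetilde{\beta}$ on $\Gamma_m(1,2)$ (Lemma \ref{widetildec}) together with the cocycle identity for $\widetilde{c}_{X^{\ast}}$. The key structural fact that makes this work is the uniqueness of the decomposition $g=rM_q$ with $r\in\Gamma_m(1,2)$, $M_q\in\mathcal{M}_{2m}$ (Lemma \ref{represnetcoset}): multiplying on the left by an element of $\Gamma_m(1,2)$ changes $r$ but leaves $M_q$ fixed, so $f$ transforms in a controlled way under left multiplication by $\Gamma_m(1,2)$.

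\textbf{Step 1: the transformation law for $f$ under left multiplication by $\Gamma_m(1,2)$.} First I would prove that for $r\in\Gamma_m(1,2)$ and $g\in\Sp_{2m}(\Z)$ with unique decomposition $g=r'M_q$ (so $rg=(rr')M_q$ is the unique decomposition of $rg$), one has
\begin{align*}
f(rg)&=\widetilde{\beta}(rr')\,\widetilde{c}_{X^{\ast}}(rr',M_q)\\
&=\widetilde{\beta}(r)\widetilde{\beta}(r')\widetilde{c}_{X^{\ast}}(r,r')^{-1}\,\widetilde{c}_{X^{\ast}}(rr',M_q),
\end{align*}
using Lemma \ref{widetildec} in the form $\widetilde{c}_{X^{\ast}}(r,r')=\widetilde{\beta}(r)^{-1}\widetilde{\beta}(r')^{-1}\widetilde{\beta}(rr')$. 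Then I would apply the $3$-cocycle (associativity) identity for $\widetilde{c}_{X^{\ast}}$ to the triple $(r,r',M_q)$, namely $\widetilde{c}_{X^{\ast}}(r,r')\widetilde{c}_{X^{\ast}}(rr',M_q)=\widetilde{c}_{X^{\ast}}(r,r'M_q)\widetilde{c}_{X^{\ast}}(r',M_q)=\widetilde{c}_{X^{\ast}}(r,g)\widetilde{c}_{X^{\ast}}(r',M_q)$, to rewrite the above as
\begin{align*}
f(rg)&=\widetilde{\beta}(r)\widetilde{\beta}(r')\widetilde{c}_{X^{\ast}}(r,g)\,\widetilde{c}_{X^{\ast}}(r',M_q)\,\widetilde{c}_{X^{\ast}}(r,r')^{-2}\\
&=\widetilde{\beta}(r)\,f(g)\,\widetilde{c}_{X^{\ast}}(r,g)\,\widetilde{c}_{X^{\ast}}(r,r')^{-2}.
\end{align*}
At this point I would need to handle the square $\widetilde{c}_{X^{\ast}}(r,r')^{-2}$; since $r,r'\in\Gamma_m(1,2)$, by Lemma \ref{gamma1SP} (trivialization on $\Gamma_m(1,2)$) we have $\widetilde{c}_{X^{\ast}}(r,r')^{-2}=\overline{c}_{X^{\ast}}(r,r')^{-2}\cdot(\text{stuff})$... actually cleaner: use that $\widetilde{c}_{X^{\ast}}$ restricted to $\Gamma_m(1,2)$ is a coboundary, so its square is too, and absorb it; alternatively note the factor $\widetilde{c}_{X^{\ast}}(r,r')$ appearing here should really cancel cleanly if I track $f$ more carefully by first substituting $\widetilde{c}_{X^{\ast}}(r',M_q)=f(g)\widetilde{\beta}(r')^{-1}$ before invoking associativity — I expect the correct bookkeeping yields the clean identity $f(rg)=\widetilde{\beta}(r)\,\widetilde{c}_{X^{\ast}}(r,g)^{-1}\cdots$; the precise sign/exponent is exactly the routine calculation I will grind through.

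\textbf{Step 2: conclude $\widetilde{c}'_{X^{\ast}}(r,g)=1$ and $\widetilde{c}'_{X^{\ast}}(r_1,r_2)=1$.} Once the transformation law from Step 1 is in hand, I substitute into $\widetilde{c}'_{X^{\ast}}(r,g)=\widetilde{c}_{X^{\ast}}(r,g)f(r)f(g)f(rg)^{-1}$, observing that $f(r)=\widetilde{\beta}(r)\widetilde{c}_{X^{\ast}}(r,1_{2m})=\widetilde{\beta}(r)$ since the decomposition of $r\in\Gamma_m(1,2)$ is $r\cdot 1_{2m}$ with $1_{2m}=M_{q}$ for $q=0\in Q_0^{-1}(0)$ and $\widetilde{c}_{X^{\ast}}(r,1_{2m})=1$. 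The factors $\widetilde{\beta}(r)$, $f(g)$, $\widetilde{c}_{X^{\ast}}(r,g)$ then cancel, leaving $\widetilde{c}'_{X^{\ast}}(r,g)=1$. For $\widetilde{c}'_{X^{\ast}}(r_1,r_2)$ this is the special case $g=r_2\in\Gamma_m(1,2)\subseteq\Sp_{2m}(\Z)$, so it follows immediately. The main obstacle I anticipate is purely the careful bookkeeping of the $\mu_8$-valued factors in Step 1 — making sure every application of the $3$-cocycle identity is to the right triple and that the squares of $\widetilde{c}_{X^{\ast}}$-values on $\Gamma_m(1,2)$ (which by Lemma \ref{gamma1SP} lie in the image of a coboundary, hence contribute only coboundary terms that $f$ is designed to kill) are accounted for. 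There is no conceptual difficulty beyond the uniqueness of the $rM_q$ decomposition and the coboundary identity for $\widetilde{\beta}$; everything else is formal manipulation in $H^2(\Gamma_m(1,2),\mu_8)$.
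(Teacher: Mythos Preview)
Your overall strategy is sound and is essentially the same as the paper's, but your Step~1 contains a sign slip that is the source of the ``spurious square'' you are worried about. From Lemma~\ref{widetildec} one has $\widetilde{\beta}(rr')=\widetilde{c}_{X^{\ast}}(r,r')\,\widetilde{\beta}(r)\widetilde{\beta}(r')$, so
\[
f(rg)=\widetilde{\beta}(rr')\,\widetilde{c}_{X^{\ast}}(rr',M_q)
=\widetilde{\beta}(r)\widetilde{\beta}(r')\,\widetilde{c}_{X^{\ast}}(r,r')\,\widetilde{c}_{X^{\ast}}(rr',M_q),
\]
with a $+1$ exponent on $\widetilde{c}_{X^{\ast}}(r,r')$, not $-1$. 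Now the cocycle identity for the triple $(r,r',M_q)$ gives $\widetilde{c}_{X^{\ast}}(r,r')\widetilde{c}_{X^{\ast}}(rr',M_q)=\widetilde{c}_{X^{\ast}}(r,g)\widetilde{c}_{X^{\ast}}(r',M_q)$, and since $f(g)=\widetilde{\beta}(r')\widetilde{c}_{X^{\ast}}(r',M_q)$ this yields cleanly
\[
f(rg)=\widetilde{\beta}(r)\,\widetilde{c}_{X^{\ast}}(r,g)\,f(g).
\]
Substituting into $\widetilde{c}'_{X^{\ast}}(r,g)=\widetilde{c}_{X^{\ast}}(r,g)f(r)f(g)f(rg)^{-1}$ with $f(r)=\widetilde{\beta}(r)$ gives $\widetilde{c}'_{X^{\ast}}(r,g)=1$ immediately; no squares appear and no appeal to Lemma~\ref{gamma1SP} is needed here.

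The paper organizes the same argument slightly differently: it first checks $\widetilde{c}'_{X^{\ast}}(r_1,r_2)=1$ directly from~(\ref{trivgamma}), then checks the special case $\widetilde{c}'_{X^{\ast}}(r,M_q)=1$ by hand, and finally bootstraps to general $g=r_gM_q$ by applying the cocycle identity for $\widetilde{c}'_{X^{\ast}}$ (rather than for $\widetilde{c}_{X^{\ast}}$) to the triple $(r,r_g,M_q)$. This avoids computing $f(rg)$ altogether. Your direct computation of $f(rg)$ is equally valid once the sign is corrected, and arguably more transparent about why $f$ was defined the way it was.
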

\begin{proof}
1) By (\ref{trivgamma}), $\widetilde{c}'_{X^{\ast}}(r_1,r_2)=1$.\\
2) If $g=M_q$, then $$\widetilde{c}'_{X^{\ast}}(r,g)=\widetilde{c}_{X^{\ast}}(r,M_q) f(r)f(M_q) f(rM_q)^{-1}=\widetilde{c}_{X^{\ast}}(r,M_q)\widetilde{\beta}(r)1 \widetilde{\beta}(r)^{-1} \widetilde{c}_{X^{\ast}}(r, M_q)^{-1}=1.$$
3) For  general $g$,  write $g=r_gM_q$. Then: 
\begin{equation*}
\widetilde{c}'_{X^{\ast}}(r,g)=\widetilde{c}'_{X^{\ast}}(r,r_gM_q)=\widetilde{c}'_{X^{\ast}}(r,r_gM_q)\widetilde{c}'_{X^{\ast}}(r_g,M_q)
=\widetilde{c}'_{X^{\ast}}(r,r_g)\widetilde{c}'_{X^{\ast}}(rr_g,M_q)=1.
\end{equation*}
\end{proof}
\begin{lemma}\label{ep}
$\widetilde{c}'_{X^{\ast}}(g,-)$ defines a character of $\Gamma_m(2)$.  In particular, $\widetilde{c}'_{X^{\ast}}(g,r)=1$, for any $r\in \Gamma_m(4,8)$.
\end{lemma}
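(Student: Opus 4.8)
The plan is to deduce both assertions formally from the $2$-cocycle identity for $\widetilde{c}'_{X^{\ast}}$, from Lemma \ref{crg}, and from the fact that $\Gamma_m(2)$ is a normal subgroup of $\Sp_{2m}(\Z)$. Fix $g\in \Sp_{2m}(\Z)$ and put $\phi_g(r)=\widetilde{c}'_{X^{\ast}}(g,r)$ for $r\in \Gamma_m(2)$; since $\widetilde{c}_{X^{\ast}}$ and $f$ both take values in $\mu_8$, so does $\phi_g$. First I would record, as immediate consequences of Lemma \ref{crg} together with $\Gamma_m(2)\subseteq \Gamma_m(1,2)$, that $\widetilde{c}'_{X^{\ast}}(r_1,r_2)=1$ and $\widetilde{c}'_{X^{\ast}}(r_1,h)=1$ for all $r_1,r_2\in \Gamma_m(2)$ and all $h\in \Sp_{2m}(\Z)$.

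Next I would apply the cocycle identity $\widetilde{c}'_{X^{\ast}}(a,b)\widetilde{c}'_{X^{\ast}}(ab,c)=\widetilde{c}'_{X^{\ast}}(a,bc)\widetilde{c}'_{X^{\ast}}(b,c)$ in two ways. Taking $(a,b,c)=(g,r_1,r_2)$ with $r_i\in\Gamma_m(2)$ and cancelling $\widetilde{c}'_{X^{\ast}}(r_1,r_2)=1$ gives
$$\phi_g(r_1r_2)=\phi_g(r_1)\,\widetilde{c}'_{X^{\ast}}(gr_1,r_2).$$
Taking instead $(a,b,c)=(r,g,r_2)$ with $r\in\Gamma_m(2)$ and cancelling $\widetilde{c}'_{X^{\ast}}(r,g)=1=\widetilde{c}'_{X^{\ast}}(r,gr_2)$ gives $\widetilde{c}'_{X^{\ast}}(rg,r_2)=\widetilde{c}'_{X^{\ast}}(g,r_2)$. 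Since $\Gamma_m(2)$ is normal, $gr_1=(gr_1g^{-1})g$ with $gr_1g^{-1}\in\Gamma_m(2)$, so $\widetilde{c}'_{X^{\ast}}(gr_1,r_2)=\widetilde{c}'_{X^{\ast}}(g,r_2)=\phi_g(r_2)$; feeding this back into the displayed equation yields $\phi_g(r_1r_2)=\phi_g(r_1)\phi_g(r_2)$, i.e.\ $\phi_g\colon\Gamma_m(2)\to\mu_8$ is a character.

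For the second statement, since $\mu_8$ is abelian any character of $\Gamma_m(2)$ is trivial on the commutator subgroup $[\Gamma_m(2),\Gamma_m(2)]$, which by Proposition \ref{gamma2q}(2) with $d=2$ (and $m\geq 2$, which is assumed here) equals $\Gamma_m(4,8)$; hence $\widetilde{c}'_{X^{\ast}}(g,r)=\phi_g(r)=1$ for every $r\in\Gamma_m(4,8)$. I do not anticipate a real obstacle here: the argument is purely formal, and the only place demanding attention is keeping the three slots of the cocycle identity in the correct order so that Lemma \ref{crg} applies verbatim, together with the correct use of normality to pass between the left coset $g\Gamma_m(2)$ and the right coset $\Gamma_m(2)g$.
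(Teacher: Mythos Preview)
Your proof is correct and follows essentially the same approach as the paper: both use Lemma~\ref{crg}, the cocycle identity, and normality of $\Gamma_m(2)$ to show $\widetilde{c}'_{X^{\ast}}(gr_1,r_2)=\widetilde{c}'_{X^{\ast}}(g,r_2)$ and then $\widetilde{c}'_{X^{\ast}}(g,r_1r_2)=\widetilde{c}'_{X^{\ast}}(g,r_1)\widetilde{c}'_{X^{\ast}}(g,r_2)$, and both appeal to Proposition~\ref{gamma2q}(2) for the commutator identity $[\Gamma_m(2),\Gamma_m(2)]=\Gamma_m(4,8)$. Your write-up is just slightly more explicit about how the cocycle identity is being applied.
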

\begin{proof}
Let $g_1,g_2,g \in \Sp_{2m}(\Z)$ and $r,r_1,r_2\in \Gamma_m(2)$.
\begin{align*}
\widetilde{c}'_{X^{\ast}}(rg_1,g_2)
&=\widetilde{c}'_{X^{\ast}}(r,g_1)\widetilde{c}'_{X^{\ast}}(rg_1,g_2)\\
&=\widetilde{c}'_{X^{\ast}}(r,g_1g_2)\widetilde{c}'_{X^{\ast}}(g_1,g_2)\\
&=\widetilde{c}'_{X^{\ast}}(g_1,g_2).
\end{align*}
\begin{align*}
\widetilde{c}'_{X^{\ast}}(g,r_1r_2)
&=\widetilde{c}'_{X^{\ast}}(g,r_1r_2)\widetilde{c}'_{X^{\ast}}(r_1,r_2)\\
&=\widetilde{c}'_{X^{\ast}}(g,r_1)\widetilde{c}'_{X^{\ast}}(gr_1, r_2)\\
&=\widetilde{c}'_{X^{\ast}}(g,r_1)\widetilde{c}'_{X^{\ast}}(gr_1g^{-1} g, r_2)\\
&=\widetilde{c}'_{X^{\ast}}(g,r_1)\widetilde{c}'_{X^{\ast}}( g, r_2).
\end{align*}
By Prop.\ref{gamma2q}, $[\Gamma_m(2), \Gamma_m(2)]=\Gamma_m(4,8)$. So the last statement holds.
\end{proof}
\begin{lemma}
 $\widetilde{c}'_{X^{\ast}}(g,r)=-1$  for some $g\in  \Sp_{2m}(\Z)$ and  $r\in \Gamma_m(4) $.
\end{lemma}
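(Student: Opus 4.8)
The plan is to exhibit a single explicit pair $(g, r)$ with $g\in\Sp_{2m}(\Z)$, $r\in\Gamma_m(4)$, for which $\widetilde{c}'_{X^{\ast}}(g,r)=-1$, thereby proving the cocycle $\widetilde{c}'_{X^{\ast}}$ does not descend to $\Sp_{2m}(\Z)/\Gamma_m(4)$ (in contrast to Lemma \ref{ep}, which controls it on $\Gamma_m(4,8)$). Since by Lemma \ref{ep} the map $r\mapsto\widetilde{c}'_{X^{\ast}}(g,r)$ is a character of $\Gamma_m(2)$ killing $\Gamma_m(4,8)$, the only candidates for a nontrivial value on $\Gamma_m(4)$ come from the generators of $\Gamma_m(4)/\Gamma_m(4,8)$, which by Proposition \ref{gamma2q}(5) (taken with $d=2$) are the images of $u_{ii}(4)$ and $u_{ii}^-(4)$. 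So it suffices to work with $r=u_{ii}^-(4)$ (or $u_{ii}(4)$) for a single index $i$, and because all indices are symmetric and the relevant phenomenon already occurs for $\SL_2\hookrightarrow\Sp_{2m}$ via $\iota_i$, I may as well reduce to $m=1$-type computations inside the $i$-th factor.

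First I would reduce $\widetilde{c}'_{X^{\ast}}$ back to $\widetilde{c}_{X^{\ast}}$: by definition $\widetilde{c}'_{X^{\ast}}(g,r)=\widetilde{c}_{X^{\ast}}(g,r)f(g)f(r)f(gr)^{-1}$. Here $r\in\Gamma_m(4)\subseteq\Gamma_m(1,2)$, so $f(r)=\widetilde{\beta}(r)\widetilde{c}_{X^{\ast}}(r,1_{2m})=\widetilde{\beta}(r)$, and for suitable $g$ I can also arrange $g\in\Gamma_m(1,2)$ — indeed if I take $g\in\Gamma_m(1,2)$ then $gr\in\Gamma_m(1,2)$ too, $f(g)=\widetilde{\beta}(g)$, $f(gr)=\widetilde{\beta}(gr)$, and using Lemma \ref{widetildec}, $\widetilde{c}'_{X^{\ast}}(g,r)=\widetilde{c}_{X^{\ast}}(g,r)\widetilde{\beta}(g)\widetilde{\beta}(r)\widetilde{\beta}(gr)^{-1}=\widetilde{c}_{X^{\ast}}(g,r)\widetilde{c}_{X^{\ast}}(g,r)^{-1}=1$ — so that choice is useless. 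This tells me $g$ must genuinely leave $\Gamma_m(1,2)$. The natural choice is $g=\omega$ (or $\omega_{S}$ for a suitable $S$), since conjugation by $\omega$ turns $u_{ii}^-(4)$ into $u_{ii}(-4)\in\Gamma_m(2)$ and the cocycle values of Perrin--Rao around $\omega$ are exactly the Weil indices computed throughout Section \ref{schod}. Concretely I would compute $\widetilde{c}'_{X^{\ast}}(\omega, u_{ii}^-(4))=\widetilde{c}_{X^{\ast}}(\omega,u_{ii}^-(4))f(\omega)f(u_{ii}^-(4))f(\omega u_{ii}^-(4))^{-1}$: the term $f(u_{ii}^-(4))=\widetilde{\beta}(u_{ii}^-(4))$ can be read off from the symplectic Gauss sum $G(d,c)$ with $c=\epsilon_{ii}(-4)$, $d=1_m$ (reducing to the $1$-variable Gauss sum $\sum_{x\bmod 4}e^{-\pi i x^2/4}$, whose value is a known eighth root of unity); $f(\omega)=\widetilde{\beta}(\omega)=1$ by Example \ref{exampleP}; and $f(\omega u_{ii}^-(4))$ requires decomposing $\omega u_{ii}^-(4)=r'M_q$ with $r'\in\Gamma_m(1,2)$ and tracking the $\widetilde{\beta}$ and $\widetilde{c}_{X^{\ast}}$ contributions. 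The cocycle $\widetilde{c}_{X^{\ast}}(\omega,u_{ii}^-(4))$ itself is obtained from the Bruhat-cell formulas: $\omega u_{ii}^-(4)$ has bottom-left block $\epsilon_{ii}(-4)$ which is noninvertible, so one lands in a lower cell and the Weil index $\gamma(x(-),\psi^{1/2})$ and $\gamma(\psi^{1/2})^{-j}$ factors are explicit.

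The cleanest route, which I would actually carry out, is to localize everything to the subgroup $\iota_i(\SL_2(\Z))$: inside this $\SL_2$ the elements $\omega_i$, $u_i^-(4)=\iota_i\!\begin{pmatrix}1&0\\-4&1\end{pmatrix}$, and their product generate the relevant piece, and $m=1$ formulas for $m_{X^{\ast}}$, $\widetilde{c}_{X^{\ast}}$, and $\widetilde{\beta}$ are all available (Section on the symplectic Gauss sum, plus \cite{Wa}, \cite{LiVe}). Since $\widetilde{c}_{X^{\ast}}$ restricted to $\iota_i(\SL_2(\Z))$ and the function $f$ restricted there factor through the $\SL_2$ computation, and one already knows from the classical theory of the Weil representation of $\widetilde{\SL}_2(\Z)$ that the relevant cocycle class on $\SL_2(\Z/4\Z)$ is nontrivial, I can pin down the sign. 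In fact the expected value is $\widetilde{c}'_{X^{\ast}}(\omega_i, u_i^-(4))=-1$, coming from the discrepancy between the Gauss sum $G(1,-4)$ normalization and the eighth-root $m_{X^{\ast}}$ factor. The main obstacle is purely bookkeeping: correctly decomposing $\omega u_{ii}^-(4)$ (or $\omega_i u_i^-(4)$) into the form $r'M_q$ and keeping every eighth-root-of-unity factor — the Weil index in $\widetilde{c}_{X^{\ast}}$, the $\widetilde{\beta}$ values, and the Gauss sum — consistently oriented, so that the product collapses to exactly $-1$ and not $1$ or $\pm i$. Once the single explicit computation is done the lemma follows immediately.
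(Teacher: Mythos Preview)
Your reduction at the start is sound, and your observation that $g\in\Gamma_m(1,2)$ forces $\widetilde{c}'_{X^{\ast}}(g,r)=1$ (via Lemma~\ref{widetildec}, or directly Lemma~\ref{crg}) is exactly right. The gap is in the very next step: you choose $g=\omega$ (or $\omega_i$), but these elements \emph{are} in $\Gamma_m(1,2)$. The paper states this explicitly ($\omega$ is even listed as a generator of $\Gamma_m(1,2)$, and all $\omega_S$ lie there). For $\omega_i=\iota_i\!\begin{pmatrix}0&1\\-1&0\end{pmatrix}$ one has $ac^T$ and $bd^T$ with zero diagonals, so $\omega_i\in\Gamma_m(1,2)$ as well. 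Hence by your own argument $\widetilde{c}'_{X^{\ast}}(\omega,u_{ii}^-(4))=1$, and the proposed computation cannot produce $-1$.

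What the paper does instead is take $g$ to be one of the coset representatives $M_q$ with $M_q\notin\Gamma_m(1,2)$: concretely $g=u_{ii}=\iota_i\!\begin{pmatrix}1&1\\0&1\end{pmatrix}$, which has $bd^T$ with an odd diagonal entry. With $r=u_i^-(4)$ one gets $gr=r'M_q$ with $r'=\iota_i\!\begin{pmatrix}-3&4\\-4&5\end{pmatrix}\in\Gamma_m(1,2)$ and the same $M_q$. Since $g=M_q\in P_{X^{\ast}}$ the Perrin--Rao cocycle $\widetilde{c}_{X^{\ast}}(g,r)$ is trivial, $f(M_q)=1$, and the whole computation reduces to comparing the two $\SL_2$ Gauss sums $\widetilde{\beta}\!\begin{pmatrix}1&0\\-4&1\end{pmatrix}=e^{\pi i/4}$ and $\widetilde{\beta}\!\begin{pmatrix}-3&4\\-4&5\end{pmatrix}=\bigl(\tfrac{-2}{5}\bigr)e^{\pi i/4}=-e^{\pi i/4}$, yielding the sign $-1$. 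Your instinct to localize to $\iota_i(\SL_2(\Z))$ and invoke $m=1$ Gauss-sum formulas is correct; you just need $g=u_i$ rather than $g=\omega_i$.
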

\begin{proof}
By Prop.\ref{gamma2q}, the images of $u_i(4)$, $u_i^-(4)$ generate $\Gamma_m(4)/\Gamma_m(4,8)$. Let us consider $r=u_i^-(4)=\iota_i(u_1^-(4))$ and $g=M_q=\iota_i(u_1)$,  for $u_1= \begin{pmatrix} 1& 1\\ 0& 1\end{pmatrix}$, $u_1^-(4)=  \begin{pmatrix} 1&0 \\ -4& 1\end{pmatrix}$. Then:
\[\begin{pmatrix} 1& 1\\ 0& 1\end{pmatrix}\begin{pmatrix} 1& 0\\-4& 1\end{pmatrix}=\begin{pmatrix} -3& 1\\-4& 1\end{pmatrix}=\begin{pmatrix} -3 & 4\\ -4& 5\end{pmatrix}\cdot \begin{pmatrix}1 & 1\\ 0& 1\end{pmatrix};\]
\[f(M_q)=1, \quad\quad f(r)= \widetilde{\beta}(r)=  \widetilde{\beta}(u_1^-(4))=e^{\tfrac{\pi i}{4}},\]
 \begin{align*}
 f(M_qr)&=f(\iota_i(\begin{pmatrix} -3& 1\\-4& 1\end{pmatrix}))=f(\iota_i(\begin{pmatrix} -3 & 4\\ -4& 5\end{pmatrix})\cdot \iota_i( \begin{pmatrix}1 & 1\\ 0& 1\end{pmatrix}))\\
 &=\widetilde{\beta}(\iota_i(\begin{pmatrix} -3 & 4\\ -4& 5\end{pmatrix})) \widetilde{c}_{X^{\ast}}(\iota_i(\begin{pmatrix} -3 & 4\\ -4& 5\end{pmatrix}),\iota_i( \begin{pmatrix}1 & 1\\ 0& 1\end{pmatrix}))\\
 &=\widetilde{\beta}(\iota_i(\begin{pmatrix} -3 & 4\\ -4& 5\end{pmatrix}))=\widetilde{\beta}(\begin{pmatrix} -3 & 4\\ -4& 5\end{pmatrix})=\big(\tfrac{-2}{5}\big)e^{\tfrac{\pi i}{4}}=-e^{\tfrac{\pi i}{4}}.
 \end{align*}
 \begin{align*}
\widetilde{c}'_{X^{\ast}}(g,r)
&=\widetilde{c}_{X^{\ast}}(M_q,r) f(M_q)f(r)f(M_qr)^{-1}\\
&=f(M_q)f(r)f(M_qr)^{-1}\\
&=-1.
\end{align*}
\end{proof}

$\widetilde{c}'_{X^{\ast}}$ is indeed a cocycle on $\tfrac{\Sp_{2m}(\Z)}{\Gamma_m(4,8)}$.  Let $\widetilde{\Sp}'_{2m}(\Z)$, $\widetilde{\Gamma}'_m(1,2)$, $\widetilde{\big(\tfrac{\Sp_{2m}(\Z)}{\Gamma_m(4,8)}\big)}$,  $\widetilde{\big(\tfrac{\Gamma_m(1,2)}{\Gamma_m(4,8)}\big)}$ be the corresponding covering groups associated to $\widetilde{c}'_{X^{\ast}}$. Note that there exists an isomorphism: 
$$\widetilde{\kappa}: \widetilde{\Sp}'_{2m}(\Z) \longrightarrow \widetilde{\Sp}_{2m}(\Z); [g,t] \longmapsto [g, f(g)t].$$
\begin{lemma}
\begin{itemize}
\item[(a)] $\widetilde{\lambda}\circ \widetilde{\kappa}([r,t])= t$, for $[r,t]\in \widetilde{\Gamma}'_m(1,2)$.
\item[(b)] $\widetilde{\gamma}\circ \widetilde{\kappa} \simeq \Ind_{\widetilde{\Gamma}'_m(1,2)}^{\widetilde{\Sp}'_{2m}(\Z)} [\widetilde{\lambda}\circ \widetilde{\kappa}]^{-1}\simeq \Ind_{\widetilde{\Gamma}'_m(1,2)}^{\widetilde{\Sp}'_{2m}(\Z)} [1_{\Gamma_m(1,2)} \cdot \Id_{\mu_8}]^{-1}\simeq  \Ind^{\widetilde{\big(\tfrac{\Sp_{2m}(\Z)}{\Gamma_m(4,8)}\big)}}_{\widetilde{\big(\tfrac{\Gamma_m(1,2)}{\Gamma_m(4,8)}\big)}} [\widetilde{\lambda}\circ \widetilde{\kappa}]^{-1}$.
    \end{itemize}
\end{lemma}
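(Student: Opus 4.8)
The plan is to dispose of part (a) by a short direct computation and then deduce the three isomorphisms of part (b) one at a time, the last being the only one with genuine content.

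For (a): every $r\in\Gamma_m(1,2)$ is its own coset representative, i.e. in the decomposition $g=rM_q$ one takes $M_q=1_{2m}$, so $f(r)=\widetilde{\beta}(r)\,\widetilde{c}_{X^{\ast}}(r,1_{2m})=\widetilde{\beta}(r)$. Hence $\widetilde{\kappa}([r,t])=[r,f(r)t]=[r,\widetilde{\beta}(r)t]\in\widetilde{\Gamma}_m(1,2)$, and feeding this into the (extended) definition of $\widetilde{\lambda}$ gives $\widetilde{\lambda}\circ\widetilde{\kappa}([r,t])=\widetilde{\beta}(r)\,t\,\widetilde{\beta}^{-1}(r)=t$. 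This is precisely the one-dimensional character $[r,t]\mapsto t$ of $\widetilde{\Gamma}'_m(1,2)$, which is what the symbol $1_{\Gamma_m(1,2)}\cdot\Id_{\mu_8}$ abbreviates.

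For the first isomorphism in (b), I would use that $\widetilde{\kappa}$ is a group isomorphism covering the identity on $\Sp_{2m}(\Z)$, hence it carries $\widetilde{\Gamma}'_m(1,2)$ (the inverse image of $\Gamma_m(1,2)$) onto $\widetilde{\Gamma}_m(1,2)$; since induction is compatible with pullback along an isomorphism of a group/subgroup pair, $\widetilde{\gamma}\circ\widetilde{\kappa}=\big(\Ind_{\widetilde{\Gamma}_m(1,2)}^{\widetilde{\Sp}_{2m}(\Z)}\widetilde{\lambda}^{-1}\big)\circ\widetilde{\kappa}\simeq\Ind_{\widetilde{\Gamma}'_m(1,2)}^{\widetilde{\Sp}'_{2m}(\Z)}(\widetilde{\lambda}^{-1}\circ\widetilde{\kappa})$. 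The second isomorphism is then immediate from (a), since $\widetilde{\lambda}\circ\widetilde{\kappa}$ and $1_{\Gamma_m(1,2)}\cdot\Id_{\mu_8}$ are literally the same character, so they induce the same module.

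The third isomorphism is the crux: the plan there is to show the inducing datum descends modulo $\Gamma_m(4,8)$. By Proposition \ref{gamma2q}, $\Gamma_m(4,8)$ is normal in $\Sp_{2m}(\Z)$, and since $\Gamma_m(4,8)\subseteq\Gamma_m(2)\subseteq\Gamma_m(1,2)$, Lemma \ref{crg} gives $\widetilde{c}'_{X^{\ast}}|_{\Gamma_m(4,8)\times\Gamma_m(4,8)}=1$; combined with the already-established fact that $\widetilde{c}'_{X^{\ast}}$ is a cocycle on $\tfrac{\Sp_{2m}(\Z)}{\Gamma_m(4,8)}$, this means the lift $\{[r,1]\mid r\in\Gamma_m(4,8)\}$ is a normal subgroup of $\widetilde{\Sp}'_{2m}(\Z)$ with quotient $\widetilde{\big(\tfrac{\Sp_{2m}(\Z)}{\Gamma_m(4,8)}\big)}$, and likewise inside $\widetilde{\Gamma}'_m(1,2)$ with quotient $\widetilde{\big(\tfrac{\Gamma_m(1,2)}{\Gamma_m(4,8)}\big)}$. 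By part (a) the character $\widetilde{\lambda}\circ\widetilde{\kappa}$, being just the projection to $\mu_8$, is trivial on that lifted copy of $\Gamma_m(4,8)$, hence factors through $\widetilde{\big(\tfrac{\Gamma_m(1,2)}{\Gamma_m(4,8)}\big)}$; finally $[\widetilde{\Sp}'_{2m}(\Z):\widetilde{\Gamma}'_m(1,2)]=[\Sp_{2m}(\Z):\Gamma_m(1,2)]=[\widetilde{\big(\tfrac{\Sp_{2m}(\Z)}{\Gamma_m(4,8)}\big)}:\widetilde{\big(\tfrac{\Gamma_m(1,2)}{\Gamma_m(4,8)}\big)}]$, so induction commutes with this descent. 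I expect the main obstacle to be exactly this last bookkeeping — verifying that the cocycle, the subgroup, and the character all pass to the quotients compatibly, so that the two induced representations are identified as modules rather than merely seen to have the same dimension — and this is what forces the use of the normality from Proposition \ref{gamma2q} together with the vanishing statements in Lemmas \ref{crg} and \ref{ep}.
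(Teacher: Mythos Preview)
Your proof is correct and follows essentially the same approach as the paper, which for (a) gives the identical one-line computation $\widetilde{\lambda}\circ\widetilde{\kappa}([r,t])=\widetilde{\lambda}([r,\widetilde{\beta}(r)t])=t$ and for (b) writes only ``It is a consequence of part (1).'' Your elaboration of the three isomorphisms in (b) --- pullback of induction along the isomorphism $\widetilde{\kappa}$, identification of the inducing character via (a), and descent to the quotient by $\Gamma_m(4,8)$ using Lemmas \ref{crg}, \ref{ep} and the stated fact that $\widetilde{c}'_{X^{\ast}}$ is a cocycle on $\Sp_{2m}(\Z)/\Gamma_m(4,8)$ --- is exactly the argument the paper leaves implicit.
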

\begin{proof}
1) $\widetilde{\lambda}\circ \widetilde{\kappa}([r,t])=\widetilde{\lambda}([r, f(r)t])=\widetilde{\lambda}([r, \widetilde{\beta}(r) t])=t$.\\
2) It is a consequence of part (1).
\end{proof}

\subsection{The finite-dimensional representation II}\label{finitII}
Recall the isomorphism: $\overline{\Sp}_{2m}(\R) \stackrel{\iota}{\to} \widetilde{\Sp}_{2m}^{z_0}(\R)$ from Lemma \ref{imbed}.  Let $\overline{\Sp}_{2m}(\Z)$, $\overline{\Gamma}_m(1,2)$ and  $\overline{\Gamma}_m(2)$ denote the inverse images of 
$ \widetilde{\Sp}^{z_0}_{2m}(\Z)$,    $\widetilde{\Gamma}^{z_0}_m(1,2)$ and   $\widetilde{\Gamma}_m(2)$ in $\overline{\Sp}_{2m}(\R)$, respectively. Let $\overline{\lambda}=\widetilde{\lambda}\circ\iota$. Recall the notation $\lambda$ from  (\ref{lambda}). Then:
\[\widetilde{\kappa}^{-1}\circ \iota: \overline{\Sp}_{2m}(\Z)  \stackrel{\iota}{\to} \widetilde{\Sp}_{2m}^{z_0}(\Z) \hookrightarrow \widetilde{\Sp}_{2m}(\Z) \stackrel{\widetilde{\kappa}^{-1}}{\longrightarrow}\widetilde{\Sp}'_{2m}(\Z).\]
\[(\widetilde{\lambda}\circ \widetilde{\kappa})\circ (\widetilde{\kappa}^{-1}\circ \iota)= \widetilde{\lambda}\circ \iota=\overline{\lambda}.\]
\begin{lemma}\label{lambdaoverlin}
$\overline{\lambda}(r,\epsilon)=\lambda(r) \epsilon$, for $(r,\epsilon)\in \overline{\Gamma}_m(1,2)$.
\end{lemma}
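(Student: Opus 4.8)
The statement is a direct unwinding of the three definitions involved, so the plan is to chain them together. First I would recall the isomorphism $\iota\colon\overline{\Sp}_{2m}(\R)\to\widetilde{\Sp}^{z_0}_{2m}(\R)$ of Lemma \ref{imbed}, given by $[g,t]\mapsto[g,m_{X^{\ast}}(g)t]$. Since $\iota$ covers the identity on $\Sp_{2m}(\R)$, it carries the inverse image $\overline{\Gamma}_m(1,2)$ of $\Gamma_m(1,2)$ in $\overline{\Sp}_{2m}(\R)$ isomorphically onto the inverse image $\widetilde{\Gamma}^{z_0}_m(1,2)$ of $\Gamma_m(1,2)$ in $\widetilde{\Sp}^{z_0}_{2m}(\R)$. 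Hence for $(r,\epsilon)\in\overline{\Gamma}_m(1,2)$ we have $\iota(r,\epsilon)=(r,\,m_{X^{\ast}}(r)\epsilon)\in\widetilde{\Gamma}^{z_0}_m(1,2)$, with second coordinate $t_r=m_{X^{\ast}}(r)\epsilon$.

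Next I would substitute this into the definition $\overline{\lambda}=\widetilde{\lambda}\circ\iota$ and the formula $\widetilde{\lambda}(r,t_r)=t_r\widetilde{\beta}^{-1}(r)$, obtaining
\[
\overline{\lambda}(r,\epsilon)=\widetilde{\lambda}\bigl(\iota(r,\epsilon)\bigr)=\widetilde{\lambda}\bigl(r,\,m_{X^{\ast}}(r)\epsilon\bigr)=m_{X^{\ast}}(r)\,\epsilon\,\widetilde{\beta}^{-1}(r).
\]
Finally, invoking the definition $\lambda(r)=m_{X^{\ast}}(r)\widetilde{\beta}^{-1}(r)$ from \eqref{lambda} gives $\overline{\lambda}(r,\epsilon)=\lambda(r)\,\epsilon$, as claimed. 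One should also note that this is consistent with the identification of $\epsilon\in\mu_2$ as an element of the center, so that multiplication by $\epsilon$ is unambiguous.

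There is no real obstacle here: the only point requiring a word of care is that $\iota$ genuinely restricts to an isomorphism $\overline{\Gamma}_m(1,2)\xrightarrow{\sim}\widetilde{\Gamma}^{z_0}_m(1,2)$, which is immediate from the fact that $\iota$ is a group isomorphism lying over $\mathrm{id}_{\Sp_{2m}(\R)}$ and that both subgroups are by definition the preimages of the same subgroup $\Gamma_m(1,2)$. Everything else is substitution of definitions.
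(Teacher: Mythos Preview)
Your proposal is correct and follows exactly the same approach as the paper: the paper's proof is the single line $\overline{\lambda}(r,\epsilon)=m_{X^{\ast}}(r)\epsilon\,\widetilde{\beta}^{-1}(r)=\epsilon\,\lambda(r)$, which is precisely the chain of substitutions you spell out in detail.
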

\begin{proof}
$\overline{\lambda}(r,\epsilon)=m_{X^{\ast}}(r)\epsilon \widetilde{\beta}^{-1}(r) =\epsilon \lambda(r)$.
\end{proof}
\begin{lemma}
$\overline{\lambda}(\overline{r})\in \mu_2$, for $\overline{r}=(r,t)\in \overline{\Gamma}_m(2)$.
\end{lemma}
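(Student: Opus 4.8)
The strategy is to reduce, via Lemma~\ref{lambdaoverlin}, to the claim that $\lambda(r)\in\mu_2$ for every $r\in\Gamma_m(2)$, and then to check this on a generating set of $\Gamma_m(2)$ modulo its commutator subgroup. Indeed, for $\overline{r}=(r,t)\in\overline{\Gamma}_m(2)$ we have $t\in\mu_2$ and $\overline{\lambda}(\overline{r})=\lambda(r)\,t$ by Lemma~\ref{lambdaoverlin}, so the lemma is equivalent to $\lambda(r)\in\mu_2$. Recall $\lambda=m_{X^{\ast}}\cdot\widetilde{\beta}^{-1}$ is $\mu_8$-valued on $\Gamma_m(1,2)$ (as $m_{X^{\ast}}$ is $\mu_8$-valued and $\widetilde{\beta}^{-1}=\epsilon$ there), while by Lemma~\ref{mutirr12} its failure to be a homomorphism is measured by the $\mu_2$-valued cocycle $\overline{c}_{X^{\ast}}$; hence
\[
\overline{\chi}\colon\ \Gamma_m(1,2)\ \xrightarrow{\ \lambda\ }\ \mu_8\ \longrightarrow\ \mu_8/\mu_2
\]
is a genuine homomorphism into the cyclic group $\mu_8/\mu_2\cong\mu_4$, and we must show $\overline{\chi}$ vanishes on $\Gamma_m(2)$. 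Since $\mu_4$ is abelian, $\overline{\chi}|_{\Gamma_m(2)}$ factors through $\Gamma_m(2)/[\Gamma_m(2),\Gamma_m(2)]=\Gamma_m(2)/\Gamma_m(4,8)$ by Proposition~\ref{gamma2q}(2), and by Proposition~\ref{gamma2q}(4) this quotient is generated by the images of $u_{ii}(2)$, $u_{ij}(2)$, $u_{ii}^-(2)$, $u_{ij}^-(2)$ $(i\ne j)$ and $v_{ij}(2)$ $(i\ne j)$; for $m=1$ one instead uses that $\Gamma_1(2)$ is generated by $u(2)$ and $u_-(2)$. So it suffices to verify $\lambda(g)\in\mu_2$ on each of these generators.

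Each of $u_{ii}(2)$, $u_{ij}(2)$ $(i\ne j)$ and $v_{ij}(2)$ lies in $P_{X^{\ast}}(\Z)\cap\Gamma_m(1,2)$, so $\widetilde{\beta}=1$ on it by Example~\ref{exampleP}; moreover each has the shape $u(b)$ or $h(a)$ with $\det a>0$, so in Rao's formula~\eqref{chap2mx} one has $j(g)=0$ and $x(g)\in(\R^{\times})^{2}$, whence $m_{X^{\ast}}(g)=1$ and $\lambda(g)=1$. For $g=u_{ij}^-(2)$ $(i\ne j)$ we have $\widetilde{\beta}(g)=1$ by Example~\ref{exampleineqj}, so $\lambda(g)=m_{X^{\ast}}(g)$, and a direct computation of the Rao constant (equivalently, $m_{X^{\ast}}(g)^{2}=\alpha_{z_0}(g)^{-1}$, where $J(g,z_0)=ci+d$ restricts on the $\{i,j\}$-block to $\bigl(\begin{smallmatrix}1&-2i\\-2i&1\end{smallmatrix}\bigr)$, of determinant $5>0$) shows $m_{X^{\ast}}(g)\in\mu_2$. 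The remaining generator $g=u_{ii}^-(2)=\iota_i\bigl(\begin{smallmatrix}1&0\\-2&1\end{smallmatrix}\bigr)$ is the genuine case: the formula for $\widetilde{\beta}$ on $\iota_i(\Gamma_1(1,2))$ gives $\widetilde{\beta}(g)=\tfrac{1}{\sqrt2}\,\overline{G(1,-2)}$ with symplectic Gauss sum $G(1,-2)=1-i$, so $\widetilde{\beta}(g)$ is a primitive eighth root of unity; and the $2\times 2$ Bruhat decomposition of $\bigl(\begin{smallmatrix}1&0\\-2&1\end{smallmatrix}\bigr)$, together with the Weil indices $\gamma(\psi^{1/2})=e^{\pi i/4}$ and $\gamma(-1,\psi^{1/2})=e^{-\pi i/2}$ entering~\eqref{chap2mx}, yields the matching value of $m_{X^{\ast}}(g)$, so the two eighth-root contributions cancel and $\lambda(g)\in\mu_2$.

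Combining these, $\overline{\chi}$ is trivial on $\Gamma_m(2)$, i.e.\ $\lambda(r)\in\mu_2$ for all $r\in\Gamma_m(2)$, and therefore $\overline{\lambda}(\overline{r})=\lambda(r)\,t\in\mu_2$ for $\overline{r}=(r,t)\in\overline{\Gamma}_m(2)$. The one nonroutine point is the generator $u_{ii}^-(2)$: there $\widetilde{\beta}$ is a primitive eighth root of unity, and one must pin down the Rao normalizing constant $m_{X^{\ast}}$ precisely — through the Bruhat decomposition and the quadratic Weil indices in~\eqref{chap2mx} — to see that it exactly cancels the Gauss-sum contribution; all the other generators reduce immediately to Examples~\ref{exampleP} and~\ref{exampleineqj} together with the identity $\alpha_{z_0}=m_{X^{\ast}}^{-2}$.
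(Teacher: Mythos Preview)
Your proof is correct and follows essentially the same approach as the paper: reduce via Lemma~\ref{lambdaoverlin} and Lemma~\ref{mutirr12} to checking $\lambda(g)\in\mu_2$ on the generators $u_{ij}(2)$, $u_{ij}^-(2)$, $v_{ij}(2)$ of $\Gamma_m(2)$, then handle each case using Examples~\ref{exampleP} and~\ref{exampleineqj} together with the explicit Weil indices. Your passage through the quotient $\mu_8/\mu_2$ makes the ``reduction to generators'' step slightly more explicit than the paper does, and for $u_{ij}^-(2)$ ($i\neq j$) you invoke $\alpha_{z_0}=m_{X^{\ast}}^{-2}$ where the paper instead computes $m_{X^{\ast}}=1$ directly from~\eqref{chap2mx} with $j(r)=2$, $x(r)=-4$; both routes are fine and yield the same conclusion.
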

\begin{proof}
By Lemma \ref{lambdaoverlin}, $\overline{\lambda}(\overline{r})=\lambda(r)t$.  According to Lemma \ref{mutirr12},  for $r_1,r_2\in \Gamma_m(2)$, we have:
\[ \lambda(r_1r_2)=\lambda(r_1)\lambda(r_2) \overline{c}_{X^{\ast}}(r_1,r_2).\]
Hence, it suffices to show the result holds for the generators $u_{ij}(2)$,  $u^-_{ij}(2)$, $v_{ij}(2)$   of $\Gamma_m(2)$.  
\begin{itemize}
\item[1)] If $r=u_{ij}(2)$ or $v_{ij}(2)$, by Example \ref{exampleP}, $\widetilde{\beta}(r)=1$. In this case, $m_{X^{\ast}}(r)=1$.
\item[2)] If $r=u^-_{ij}(2)$, with $i\neq j$, then by Example \ref{exampleineqj}, $\widetilde{\beta}(r)=1$. In this case, $j(r)=2$, $x(r)=-4$, 
$$m_{X^{\ast}}(r)=\gamma(x(r), \psi^{\tfrac{1}{2}})^{-1} \gamma(\psi^{\tfrac{1}{2}})^{-j(r)}=\gamma(-1, \psi^{\tfrac{1}{2}})^{-1} \gamma(\psi^{\tfrac{1}{2}})^{-2}=e^{\tfrac{\pi i}{2}} e^{-\tfrac{\pi i}{2}}=1.$$ 
\item[3)] If $r=u^-_{ii}(2)$, then $r=\iota_i(u^-(2))$,  for $u^-(2)=\begin{pmatrix}1 & 0\\-2& 1\end{pmatrix}\in \SL_2(\Z)$. Hence $$m_{X^{\ast}}(r) \widetilde{\beta}^{-1}(r)=m_{X^{\ast}}(u^-(2)) \widetilde{\beta}^{-1}(u^-(2))= e^{\tfrac{i \pi}{4}[\sgn(2)]}e^{\tfrac{i \pi}{4}[- \sgn(2)]}=1.$$
\end{itemize}
\end{proof}
\begin{definition}
$\overline{\gamma}\stackrel{Def.}{=} \Ind_{\overline{\Gamma}_m(1,2)}^{\overline{\Sp}_{2m}(\Z)} \overline{\lambda}^{-1} \simeq \widetilde{\gamma}\circ \iota$.
\end{definition}
 For $\overline{g}=(g,\epsilon)\in  \overline{\Sp}_{2m}(\R) $, $\iota(\overline{g})=[g, m_{X^{\ast}}(g)\epsilon]\in  \widetilde{\Sp}^{z_0}_{2m}(\R)$.  Let us define the canonical action of $ \overline{\Sp}_{2m}(\R)$ on $\mathbb{H}_m$ via $\Sp_{2m}(\R)$. 
Define:
\begin{itemize}
\item $J_{1/2}(\overline{g},z)=\epsilon^{-1}\cdot \epsilon(g; z,z_0) \cdot |\det J(g, z)|^{1/2}$.
\item $J_{3/2}(\overline{g},z)=J_{1/2}(\overline{g},z)J(g,z)$.
\end{itemize}
\begin{lemma}
For $\overline{g}_1, \overline{g}_2\in \overline{\Sp}_{2m}(\R) $, we have:
\begin{itemize}
\item[(1)] $J_{1/2}(\overline{g}_1\overline{g}_2, z)=J_{1/2}(\overline{g}_1, g_2(z))J_{1/2}(\overline{g}_2, z)m_{X^{\ast}}(g_1g_2)m_{X^{\ast}}(g_1)^{-1}m_{X^{\ast}}(g_2)^{-1}$.
\item[(2)]$J_{3/2}(\overline{g}_1\overline{g}_2, z)=J_{3/2}(\overline{g}_1, g_2(z))J_{3/2}(\overline{g}_2, z)m_{X^{\ast}}(g_1g_2)m_{X^{\ast}}(g_1)^{-1}m_{X^{\ast}}(g_2)^{-1}$.
\end{itemize} 
\end{lemma}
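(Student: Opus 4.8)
The plan is to deduce both identities from the already-proved multiplicativity of $J_{1/2}$ on the Satake--Takase metaplectic group $\widetilde{\Sp}^{z_0}_{2m}(\R)$ (Lemma \ref{J122}), transported through the isomorphism $\iota$ of Lemma \ref{imbed}. First I would record the elementary comparison of the two $J_{1/2}$'s. For $\overline{g}=(g,\epsilon)\in\overline{\Sp}_{2m}(\R)$ we have $\iota(\overline{g})=[g,m_{X^{\ast}}(g)\epsilon]$, so from the definition of $J_{1/2}$ on $\widetilde{\Sp}^{z_0}_{2m}(\R)$ in Section \ref{autoI} and the definition of $J_{1/2}(\overline{g},z)$ in Section \ref{finitII},
\[J_{1/2}(\overline{g},z)=\epsilon^{-1}\,\epsilon(g;z,z_0)\,|\det J(g,z)|^{1/2}=m_{X^{\ast}}(g)\,J_{1/2}(\iota(\overline{g}),z),\]
and correspondingly $J_{3/2}(\overline{g},z)=m_{X^{\ast}}(g)\,J_{3/2}(\iota(\overline{g}),z)$, since in both settings $J_{3/2}=J_{1/2}\cdot J(g,z)$ with the same $J(g,z)=cz+d$.

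Next, since $\iota$ is a homomorphism, $\iota(\overline{g}_1\overline{g}_2)=\iota(\overline{g}_1)\iota(\overline{g}_2)$, and the underlying symplectic matrix of $\overline{g}_1\overline{g}_2$ is $g_1g_2$. Applying Lemma \ref{J122}(2) to $\widetilde{g}_i=\iota(\overline{g}_i)$ gives
\[J_{1/2}(\iota(\overline{g}_1\overline{g}_2),z)=J_{1/2}(\iota(\overline{g}_1),g_2(z))\,J_{1/2}(\iota(\overline{g}_2),z).\]
Substituting the comparison identity of the first step into each of the three factors, i.e. replacing $J_{1/2}(\iota(\overline{h}),\cdot)$ by $m_{X^{\ast}}(h)^{-1}J_{1/2}(\overline{h},\cdot)$ with $h=g_1g_2,g_1,g_2$ respectively, and cancelling, I obtain
\[J_{1/2}(\overline{g}_1\overline{g}_2,z)=J_{1/2}(\overline{g}_1,g_2(z))\,J_{1/2}(\overline{g}_2,z)\,m_{X^{\ast}}(g_1g_2)\,m_{X^{\ast}}(g_1)^{-1}\,m_{X^{\ast}}(g_2)^{-1},\]
which is part (1). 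Part (2) then follows by multiplying through by the classical cocycle identity $J(g_1g_2,z)=J(g_1,g_2(z))\,J(g_2,z)$ for $J(g,z)=cz+d$ (equivalently, by running the same substitution on Lemma \ref{J122}(3)).

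The only point requiring care -- and hence the ``hard part,'' such as it is -- is the bookkeeping of two different normalizations: the center element $\epsilon$ carried by a point of $\overline{\Sp}_{2m}(\R)$ versus the constraint $t=\pm m_{X^{\ast}}(g)$ cutting out $\widetilde{\Sp}^{z_0}_{2m}(\R)$ inside $\Mp^{z_0}_{2m}(\R)$, together with keeping each factor $m_{X^{\ast}}(\cdot)$ attached to the correct argument ($g_1$, $g_2$, or $g_1g_2$) at every occurrence. As a cross-check, and as a remark, I would also note a direct derivation bypassing $\iota$: expand the group law $\overline{g}_1\overline{g}_2=(g_1g_2,\epsilon_1\epsilon_2\,\overline{c}_{X^{\ast}}(g_1,g_2))$, apply Lemma \ref{J121} for the $\beta_{z_0}$-cocycle relation of $J_{1/2}$, use Lemma \ref{twosame}(1) to rewrite $\beta_{z_0}$ as $\widetilde{c}_{X^{\ast}}$, and finally use (\ref{chap228inter}) to pass from $\widetilde{c}_{X^{\ast}}$ to $\overline{c}_{X^{\ast}}$; the residual factor that appears is exactly $m_{X^{\ast}}(g_1g_2)\,m_{X^{\ast}}(g_1)^{-1}\,m_{X^{\ast}}(g_2)^{-1}$, confirming the statement. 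I would present the $\iota$-route as the main proof, as it is shortest and reuses Lemma \ref{J122} verbatim.
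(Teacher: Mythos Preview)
Your proof is correct and follows essentially the same route as the paper: both establish the comparison $J_{1/2}(\overline{g},z)=m_{X^{\ast}}(g)\,J_{1/2}(\iota(\overline{g}),z)$, use that $\iota$ is a homomorphism, apply Lemma \ref{J122}(2) on $\widetilde{\Sp}^{z_0}_{2m}(\R)$, and convert back. Your additional cross-check via Lemma \ref{J121} and (\ref{chap228inter}) is not in the paper but is a sound alternative.
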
  
\begin{proof}
$\overline{g}_1\overline{g}_2=(g_1,\epsilon_1)(g_2,\epsilon_2)=(g_1g_2, \epsilon_1\epsilon_2 \overline{c}_{X^{\ast}}(g_1,g_2))=\overline{(g_1g_2)}$.\\
1) \begin{align*}
&J_{1/2}(\overline{g}_1\overline{g}_2, z)\\
&=J_{1/2}(\iota(\overline{(g_1g_2)}), z) m_{X^{\ast}}(g_1g_2)\\
&=J_{1/2}(\iota(\overline{g_1})\iota(\overline{g_2}), z) m_{X^{\ast}}(g_1g_2)\\
&=J_{1/2}(\iota(\overline{g}_1), g_2(z))J_{1/2}(\iota(\overline{g}_2), z)m_{X^{\ast}}(g_1g_2)\\
&=J_{1/2}(\overline{g}_1, g_2(z))J_{1/2}(\overline{g}_2, z)m_{X^{\ast}}(g_1g_2)m_{X^{\ast}}(g_1)^{-1}m_{X^{\ast}}(g_2)^{-1}.
\end{align*}
2) It follows form part (1).
\end{proof} 
\begin{theorem}\label{mainthm112}
Let $\overline{r}=(r,\epsilon)\in \overline{\Sp}_{2m}(\Z)$ with  $r=\begin{pmatrix}a& b\\ c&d\end{pmatrix}$, $\epsilon\in \mu_2$. Then:
\begin{itemize}
\item $\Theta_{1/2}(\overline{r}z)=J_{1/2}(\overline{r},z)m_{X^{\ast}}(r)^{-1}\Theta_{1/2}(z)\overline{\gamma}(\overline{r}^{-1})=\epsilon\sqrt{\det(cz+d)}\Theta_{1/2}(z)\overline{\gamma}(\overline{r}^{-1})$.
\item $\Theta_{3/2}(\overline{r}z)=J_{3/2}(\overline{r},z)m_{X^{\ast}}(r)^{-1}\Theta_{3/2}(z)\overline{\gamma}(\overline{r}^{-1})=\epsilon \sqrt{\det(cz+d)}(cz+d)\Theta_{3/2}(z)\overline{\gamma}(\overline{r}^{-1})$.
\end{itemize}
\end{theorem}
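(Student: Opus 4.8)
The plan is to deduce this theorem from the already-established theorem over $\widetilde{\Sp}^{z_0}_{2m}(\Z)$ (stated at the end of the previous subsection, for $\widetilde{r}=(r,t_r)\in\widetilde{\Sp}^{z_0}_{2m}(\Z)$), transporting everything through the group isomorphism $\iota\colon\overline{\Sp}_{2m}(\R)\to\widetilde{\Sp}^{z_0}_{2m}(\R)$, $[g,t]\mapsto[g,m_{X^{\ast}}(g)t]$, of Lemma \ref{imbed}. For $\overline{r}=(r,\epsilon)\in\overline{\Sp}_{2m}(\Z)$ we have $\iota(\overline{r})=[r,m_{X^{\ast}}(r)\epsilon]$, which lies in $\widetilde{\Sp}^{z_0}_{2m}(\Z)$ by the very definition of $\overline{\Sp}_{2m}(\Z)$ as the $\iota$-preimage of $\widetilde{\Sp}^{z_0}_{2m}(\Z)$. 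Since the canonical action on $\mathbb{H}_m$ of both covering groups factors through their common quotient $\Sp_{2m}(\R)$, one has $\overline{r}z=rz=\iota(\overline{r})z$, hence $\Theta_{1/2}(\overline{r}z)=\Theta_{1/2}(\iota(\overline{r})z)$ and likewise for $\Theta_{3/2}$; and because $\iota$ is a homomorphism, $\iota(\overline{r})^{-1}=\iota(\overline{r}^{-1})$, so $\widetilde{\gamma}(\iota(\overline{r})^{-1})=(\widetilde{\gamma}\circ\iota)(\overline{r}^{-1})=\overline{\gamma}(\overline{r}^{-1})$ by the definition $\overline{\gamma}=\widetilde{\gamma}\circ\iota$.

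Next I would compare automorphy factors. From the definition $J_{1/2}(\overline{g},z)=\epsilon^{-1}\epsilon(g;z,z_0)|\det J(g,z)|^{1/2}$ in Section \ref{finitII} and the definition $J_{1/2}(\widetilde{g},z)=t^{-1}\epsilon(g;z,z_0)|\det J(g,z)|^{1/2}$ for $\widetilde{g}=(g,t)$ in Section \ref{autoI}, substituting $t=m_{X^{\ast}}(r)\epsilon$ gives the identity $J_{1/2}(\iota(\overline{r}),z)=m_{X^{\ast}}(r)^{-1}J_{1/2}(\overline{r},z)$, and multiplying by $J(r,z)=cz+d$ yields the same relation for $J_{3/2}$. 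Applying the theorem over $\widetilde{\Sp}^{z_0}_{2m}(\Z)$ to $\widetilde{r}=\iota(\overline{r})$ and using the two observations from the first paragraph then produces the first displayed equality $\Theta_{1/2}(\overline{r}z)=J_{1/2}(\overline{r},z)m_{X^{\ast}}(r)^{-1}\Theta_{1/2}(z)\overline{\gamma}(\overline{r}^{-1})$, and its $\theta_{3/2}$-analogue.

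For the second (explicit) equality I would invoke the Corollary following Lemma \ref{J122}, which asserts $J_{1/2}(\widetilde{g},z)\,t\,m_{X^{\ast}}(g)^{-1}=\sqrt{\det(cz+d)}$ for $\widetilde{g}=(g,t)\in\widetilde{\Sp}^{z_0}_{2m}(\R)$. Taking $\widetilde{g}=\iota(\overline{r})$ with $t=m_{X^{\ast}}(r)\epsilon$, the two $m_{X^{\ast}}(r)$ factors cancel and one is left with $J_{1/2}(\iota(\overline{r}),z)\,\epsilon=\sqrt{\det(cz+d)}$; since $\epsilon\in\mu_2$ this says $J_{1/2}(\overline{r},z)m_{X^{\ast}}(r)^{-1}=J_{1/2}(\iota(\overline{r}),z)=\epsilon\sqrt{\det(cz+d)}$. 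Feeding this, together with $J_{3/2}=J_{1/2}(cz+d)$, into the equalities of the previous paragraph gives exactly the claimed $\Theta_{1/2}(\overline{r}z)=\epsilon\sqrt{\det(cz+d)}\,\Theta_{1/2}(z)\overline{\gamma}(\overline{r}^{-1})$ and $\Theta_{3/2}(\overline{r}z)=\epsilon\sqrt{\det(cz+d)}(cz+d)\Theta_{3/2}(z)\overline{\gamma}(\overline{r}^{-1})$.

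I do not anticipate a genuine obstacle: every ingredient is already available, and the argument is pure bookkeeping with the covering groups and their cocycles. The only points demanding care are tracking the $\mu_8$-valued constant $m_{X^{\ast}}(r)$ consistently through $\iota$, confirming that $\iota$ carries $\overline{\Sp}_{2m}(\Z)$ and $\overline{\Gamma}_m(1,2)$ onto $\widetilde{\Sp}^{z_0}_{2m}(\Z)$ and $\widetilde{\Gamma}^{z_0}_m(1,2)$ (immediate from $m_{X^{\ast}}$ being $\mu_8$-valued and the inverse-image definitions), and using $\epsilon^{-1}=\epsilon$ for $\epsilon\in\mu_2$ at the final step.
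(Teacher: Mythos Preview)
Your proposal is correct and follows essentially the same route as the paper: transport through $\iota$ to $\widetilde{\Sp}^{z_0}_{2m}(\Z)$, apply the theorem already proved there, then unwind $J_{1/2}(\iota(\overline{r}),z)=m_{X^{\ast}}(r)^{-1}J_{1/2}(\overline{r},z)$ and the definition $\sqrt{\det(cz+d)}=J_{1/2}(r,z)m_{X^{\ast}}(r)^{-1}$ together with $\epsilon^{-1}=\epsilon$. The paper's proof is the same bookkeeping, compressed into a chain of equalities.
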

\begin{proof}
\begin{align*}
\Theta_{1/2}(\overline{r}z)&=\Theta_{1/2}(\iota(\overline{r})z)\\
&=J_{1/2}(\iota(\overline{r}),z)\Theta_{1/2}(z)\widetilde{\gamma}(\iota(\overline{r})^{-1})\\
&=J_{1/2}(\overline{r},z)m_{X^{\ast}}(r)^{-1}\Theta_{1/2}(z)\overline{\gamma}(\overline{r})^{-1}\\
&=\epsilon^{-1} J_{1/2}(r,z)m_{X^{\ast}}(r)^{-1}\Theta_{1/2}(z)\overline{\gamma}(\overline{r})^{-1}\\
&=\epsilon\sqrt{\det(cz+d)}\Theta_{1/2}(z)\overline{\gamma}(\overline{r}^{-1}).
\end{align*}
\begin{align*}
\Theta_{3/2}(\overline{r}z)&=\Theta_{3/2}(\iota(\overline{r})z)\\
&=J_{3/2}(\iota(\overline{r}),z)\Theta_{3/2}(z)\widetilde{\gamma}(\iota(\overline{r})^{-1})\\
&=J_{3/2}(\overline{r},z)m_{X^{\ast}}(r)^{-1}\Theta_{3/2}(z)\overline{\gamma}(\overline{r})^{-1}\\
&=\epsilon^{-1}J_{3/2}(r,z)m_{X^{\ast}}(r)^{-1}\Theta_{3/2}(z)\overline{\gamma}(\overline{r})^{-1}\\
&=\epsilon \sqrt{\det(cz+d)}(cz+d)\Theta_{3/2}(z)\overline{\gamma}(\overline{r}^{-1}).
\end{align*}
\end{proof}
\labelwidth=4em
\setlength\labelsep{0pt}
\addtolength\parskip{\smallskipamount}


\begin{thebibliography}{99}


 \bibitem[AnMa]{AnMa}  A. N. Andrianov, G.N. Maloletkin,
 {\it Behavior of theta series of degree n under modular substitutions}, Math. USSR-Izv. 9(2), 227-241 (1975).

\bibitem[Be]{Be}  D. Benson,
 {\it Theta functions and a presentation of $2^{1+(2g+1)}\Sp(2g,2)$}, J. Algebra, 527 (2019),  204-240.
 
 
\bibitem[BeCaRaRo]{BeCaRaRo}  D. Benson, C. Campagnolo,  A. Ranicki, C. Rovi,
 {\it Signature cocycles on the mapping class group and symplectic groups}, J. Pure Appl. Algebra 224 (2020).

 \bibitem[BoTi]{BoTi}  A. Borel, J. Tits,
 {\it Groupes r\'eductifs}, Publ. Math. IHES 27 (1965), 55-150.
  
   \bibitem[BoJa]{BoJa}  A. Borel, H. Jacquet,
 {\it Automorphic forms and automorphic representations}, PRC. Sump. Pure Math., vol. 33, Part I, Amer. Math. Soc.(1979),  184-202.

\bibitem[Ca]{Ca} R. W. Carter,
 {\it Finite Groups of Lie Type: Conjugacy classes and complex characters}, John Wiley \& Sons, New York, 1985.
 
 \bibitem[DiMi]{DiMi}  F. Digne, J. Michel,  
 {\it Representations of Finite Groups of Lie Type}, London Mathematical Society Student Texts, Vol. 95. (2020). 
 
 \bibitem[DaGe]{DaGe} F. Dalla Piazza,  B. van Geemen,  
{\it Siegel modular forms and finite symplectic groups}, Adv. Theor. Math. Phys. 13 (2009), no. 6, 1771-1814.
 

 \bibitem[D1]{Dy1}   R. H. Dye,  
 {\it On the Arf invariant}, J. Algebra 53 (1978), 36-39. 
 
  \bibitem[D2]{Dy2}   R. H. Dye,  
 {\it Interrelations of symplectic and orthogonal groups in characteristic two}, J. Algebra 59 (1979), 202-221. 


 \bibitem[Fr]{Fr} S. Friedberg, 
{\it On theta functions associated to indefinite quadratic forms},  J. Number Theory 23 (1986), 255-267. 


 \bibitem[Ge]{Ge} G. van der Geer, 
{\it Siegel Modular Forms}, arXiv:math/0605346, 2006.
 
 \bibitem[GuPrSp]{GuPrSp} S. Guest, A. Previtali, P. Spiga, 
{\it A remark on the permutation representations afforded by the embeddings of $O_{2m}^{\pm}(2^f)$ in $\Sp_{2m}(2^f)$},  Bull. Aust. Math. Soc. 89 (2014), no. 2, 331-336.


\bibitem[Fo]{Fo} G. Folland,
{\it Harmonic Analysis in Phase Space}, Princeton University Press, Princeton (1989). 
 
 
\bibitem[Fr1]{Fr1} E. Freitag, A. H. Waschbüsch, 
{\it Multiplier systems for Siegel modular groups},
Int. J. Number Theory 20 (2024), 1383-1398.


\bibitem[Fr2]{Fr2} E. Freitag,
{\it Singular Modular Forms and Theta Relations}, vol. 1487. Springer-Verlag, Berlin (1991). 

 \bibitem[Ig]{Ig} J.-I. Igusa, 
{\it Theta Functions}, Grundlehren der Mathematischen Wissenschaften, Vol. 194, 1972. 

 \bibitem[In]{In} N. F. J. Inglis, 
{\it The embedding $O(2m,2k)\leq Sp(2m,2k)$}, Arch. Math. 54 (1990), 327-330. 

 \bibitem[JLRo1]{JLRo1} J. Johnson-Leung, B. Roberts,
{\it Twisting of paramodular vectors}, Int. J. Number Theory 10 (2014) 1043-1065.

 \bibitem[JLRo2]{JLRo2} J. Johnson-Leung, B. Roberts,
{\it Fourier coefficients for twists of Siegel paramodular forms}, J. Ramanujan Math. Soc. 32 (2017), no. 2, 101-119.

 \bibitem[KiLe]{KiLe} D. S. Kim, I.-S. Lee,
 {\it Gauss sums for $O^+(2n,q)$}, Acta Arith. 78 (1996), 75-89.


 \bibitem[KiPa]{KiPa} D. S. Kim, Y. H. Park,
 {\it Gauss sums for orthogonal groups over a finite field of characteristic two}, Acta Arith. 82 (1997), 331-357. 

\bibitem[Ki]{Ki} D. S. Kim,
 {\it Gauss sums for symplectic groups over a finite field}, Mh.Math. 126 (1998),  55-71.

\bibitem[Kl]{Kl} H. Klingen,
 {\it Introductory Lectures on Siegel Modular Forms}, Cambridge Studies in Advanced Mathematics, 1990.
 
\bibitem[Ku]{Ku} S. S. Kudla,
 {\it  Notes on the local theta correspondence},
 preprint, available at http://www.math.utotonto.ca/~skudla/castle.pdf, 1996.


 \bibitem[LiVe]{LiVe} G. Lion, M. Vergne,
 {\it The Weil representation Maslov index and Theta series},
 Progress in Math., vol.6, Birkhauser, Boston, 1980.

\bibitem[MVW]{MoViWa} C. M\oe glin,  M.-F. Vign\'eras,  J.-L. Waldspurger,
{\it Correspondances de Howe sur un corps $p$-adique},
Lect. Notes Math. 1291, Springer, 1987.

\bibitem[Mu]{Mu} D. Mumford,
{\it Tata Lectures on theta. I}, With the assistance of C. Musili, M. Nori, E. Previato and M. Stillman, Progress in Mathematics, vol. 28. Birkhäuser Boston Inc., Boston (1983) 


\bibitem[Na]{Na} Y. Namikawa,
{\it Toroidal Compactification of Siegel Spaces},
Lect. Notes Math.  812, Springer, Berlin, 1980.

\bibitem[Pe]{Pe} P. Perrin,
{\it Repr\'esentations de Schr\" odinger. Indice de Maslov et groupe metaplectique},
in ``Non commutative Harmonic Analysis and Lie Groups'', Lect. Notes Math. 880 (1981), 370-407.


\bibitem[PoYu]{PoYu} C. Poor, D. Yuen, 
{\it Paramodular cusp forms}, Math. Comp., 84 (2015) 1401-1438. 


\bibitem[Rao]{Ra} R. R. Rao,
{\it On some explicit formulas in the theory of the Weil representation},
Pacific J. Math. 157 (1993), 335-371.

\bibitem[Ri1]{Ri1} O. Richter,
{\it A remark on the behavior of theta series of degree $n$ under modular transformations}, Internat. Math. Res. Notices 7 (2001), 371-379.  
 
 \bibitem[Ri2]{Ri2} O. Richter,
{\it On transformation laws for theta functions}, Rocky Mountain J. Math. 34(4) (2004), 1473-1481.
 
\bibitem[Sa1]{Sa1} I. Satake, 
{\it Fock representations and theta-functions}, in Advances in the Theory of Riemann Surfaces, Ann. of Math. Studies 66, Princeton (1971), 393-405.

\bibitem[Sa2]{Sa2} I. Satake, 
{\it  Factors of Automorphy and Fock Representations}, Advances in Math. 7 (1971), 83-110.  

\bibitem[Sato]{Sato} M. Sato, 
{\it The abelianization of the level d mapping class group}, J. Topol. 3(4) (2010), 847-882. 

\bibitem[Sc]{Sc} B. Schoeneberg, 
{\it  Elliptic Modular Functions: An Introduction}, Die Grundlehren der Mathematischen Wissenschaften, vol. 203, Springer-Verlag, New York-Heidelberg, 1974.

\bibitem[St]{St} H. Stark, 
{\it  On the transformation formula for the symplectic theta function and applications}, J. Fac. Sci. Univ. Tokyo Sect. 1A Math. 29 (1982), 1-12.

\bibitem[Sty1]{Sty1} R. Styer,
{\it   Evaluating symplectic Gauss sums and Jacobi symbols}, Nagoya Math. J. 95 (1984), 1-22.

\bibitem[Sty2]{Sty2} R. Styer,
{\it   Prime determinant matrices and the symplectic theta function},  Amer. J. Math. 106 (1984), 645-664. 

\bibitem[Ta0]{Ta0} K. Takase,
{\it A note on automorphic forms}, J. Reine Angew. Math. 409 (1990), 138-171. 

\bibitem[Ta1]{Ta1} K. Takase,
{\it On Two-fold covering group of $\Sp(n,R)$ and automorphic factor of weight $1/2$}, Comment. Math. Univ. Sancti Pauli 45 (1996), 117-145. 

\bibitem[Ta2]{Ta2} K. Takase,
{\it  On Siegel modular forms of half-integral weights and Jacobi forms}, Transactions of the American Mathematical Society, 351(1999), 735-780 .

\bibitem[Vi]{Vi} M.-F. Vign\'eras,
{\it  S\'eries th\^eta des formes quadratiques ind\'efinies}, S\'eminaire Delange-Pisot-Poitou. Th\'eorie des nombres, 17(1), (1975-1976). 


\bibitem[Wa]{Wa}  C.-H. Wang,
{\it   Extended  Weil representations: the real  field case}, arXiv:2307.01581.

\bibitem[We]{We} A. Weil,
{\it  Sur certains groupes d'op\'erateurs unitaires},
Acta Math. 111 (1964), 143-211.

\bibitem[Ya1]{Ya1} J.-H. Yang,
{\it  Construction of Modular Forms from Jacobi Forms},  Canadian J. of Math. 47 (1995), 1329-1339.
 
\bibitem[Ya2]{Ya2} J.-H. Yang,
{\it  Fock representations of the Heisenberg group $\mathbb{H}_{\R}^{(g,h)}$}, J. Korean Math. Soc. 34 (1997), no.2, 345-370.

\bibitem[Zh]{Zh}  K. Zhu,
{\it   Analysis on Fock Spaces}, Graduate Texts in Mathematics, vol. 263. Springer, New York (2012).


\end{thebibliography}
\end{document}